\documentclass[twoside,11pt]{article}
\usepackage{jmlr2e}
\usepackage{mathrsfs,amsmath,amsfonts,amssymb,bm,bbm,eufrak,dsfont,pifont,amscd,
stmaryrd,euscript,appendix,color,epsfig,url}
\usepackage[breaklinks=true]{hyperref}
\usepackage{breakcites}
\definecolor{darkblue}{rgb}{0,0.08,0.45}
\usepackage{soul}
\hypersetup{  
  colorlinks = true, 
  linkcolor  = darkblue, 
  citecolor  = darkblue,
  filecolor  = darkblue, 
  urlcolor   = darkblue
} 
\usepackage{graphicx}
\allowdisplaybreaks
\setlength{\arraycolsep}{0.0em}
\newtheorem{appxthm}{Theorem}[section]
\newtheorem{appxlem}[appxthm]{Lemma}

\newtheorem{appxpro}[appxthm]{Proposition}

\newcommand{\QEDA}{\hfill\ensuremath{\blacksquare}}

\newcommand{\bb}{\mathbb}
\newcommand{\eu}{\EuScript}
\newcommand{\Scr}{\mathscr}
\newcommand{\Cal}{\mathcal}

  \newcommand{\lv}{\left\Vert}
\newcommand{\rv}{\right\Vert}

\newcommand{\R}{{\mathbb{R}}}

\newcommand{\hxi}{\hat{\xi}}




\firstpageno{1}

\begin{document}
\title{Density Estimation in Infinite Dimensional
Exponential Families}
 
\author{\name Bharath Sriperumbudur \email bks18@psu.edu \\ 
       \addr Department of Statistics, Pennsylvania State University \\
       University Park, PA 16802, USA.
       \AND
       \name Kenji Fukumizu \email fukumizu@ism.ac.jp \\ 
       \addr The Institute of Statistical Mathematics \\ 
       10-3 Midoricho, Tachikawa, Tokyo 190-8562 Japan.              
       \AND 
       \name Arthur Gretton \email arthur.gretton@gmail.com \\
       \addr Gatsby Computational Neuroscience Unit, University College London \\
       Sainsbury Wellcome Centre, 25 Howland Street, London W1T 4JG, UK
       \AND
       \name Aapo Hyv\"{a}rinen \email aapo.hyvarinen@helsinki.fi \\
       \addr Department of Computer Science, University of Helsinki \\
       P.O.~Box 68, FIN-00014, Finland.
       \AND
       \name Revant Kumar \email rkumar74@gatech.edu\\
       \addr College of Computing, Georgia Institute of Technology\\
       801 Atlantic Drive, Atlanta, GA 30332, USA.
       }


\maketitle

\begin{abstract}
In this paper, we consider an infinite dimensional 
exponential family $\Cal{P}$ of probability densities, which 
are parametrized by functions in a reproducing
kernel Hilbert space $\eu{H}$, and show it to be quite rich in
the sense that a broad class of densities on 
$\bb{R}^d$ 
can be approximated arbitrarily well in Kullback-Leibler
(KL) divergence 
by
elements in $\Cal{P}$. Motivated by this approximation property, the paper addresses the 
question of estimating an unknown density $p_0$ through an element in $\Cal{P}$.
Standard techniques like maximum likelihood estimation
(MLE) or pseudo MLE (based on the method of sieves), which are based on
minimizing the KL divergence between $p_0$ and $\Cal{P}$, do not yield
practically
useful estimators because of their inability to efficiently handle the
log-partition
function. We propose an estimator $\hat{p}_n$ based on minimizing
the \emph{Fisher divergence}, $J(p_0\Vert p)$ between $p_0$ and $p\in \Cal{P}$,
which
involves solving a simple finite-dimensional linear
system. When $p_0\in\Cal{P}$, we show that the proposed estimator is consistent,
and provide a convergence rate of
$n^{-\min\left\{\frac{2}{3},\frac{2\beta+1}{2\beta+2}\right\}}$ in Fisher
divergence
under the smoothness assumption that $\log p_0\in\Cal{R}(C^\beta)$
for some $\beta\ge 0$, where $C$ is a certain Hilbert-Schmidt
operator on $\eu{H}$ and
$\Cal{R}(C^\beta)$ denotes the image of $C^\beta$. We also
investigate the
misspecified case of $p_0\notin\Cal{P}$ and show that
$J(p_0\Vert\hat{p}_n)\rightarrow \inf_{p\in\Cal{P}}J(p_0\Vert p)$ as
$n\rightarrow
\infty$, and provide a rate for this
convergence under a similar smoothness condition as above.
Through numerical simulations we demonstrate that
the proposed estimator outperforms the non-parametric kernel density
estimator, and that the advantage of the proposed estimator grows as $d$ increases.
\end{abstract}
\begin{keywords}
density estimation, exponential family, Fisher divergence, kernel density estimator,
maximum likelihood, interpolation space, inverse problem, reproducing kernel Hilbert space,
Tikhonov regularization, score matching. 
\end{keywords}

\setlength{\parskip}{4pt}

\section{Introduction}\label{Sec:Introduction}\vspace{0mm}
Exponential families are among the most important classes of parametric models
studied in statistics, and include many common distributions such as the normal, exponential, gamma, and Poisson. 
In its ``natural
form'', the family generated by a probability density $q_0$ (defined
over $\Omega\subseteq\bb{R}^d$) and \emph{sufficient
statistic}, $T:\Omega\rightarrow\bb{R}^m$ is defined as
\begin{equation}\Scr{P}_{\text{fin}}:=\left\{p_\theta(x)=q_0(x)e^{\theta^T
T(x)-A(\theta)},\,x\in\Omega\,:\,\theta\in\Theta\subset\bb{R}^m\right\}
\label{Eq:finite-exp}\end{equation} where $A(\theta):=\log \int_\Omega
e^{\theta^T
T(x)}q_0(x)\,dx$ is the cumulant generating function (also called the
log-partition function), $\Theta\subset \{\theta\in\bb{R}^m : A(\theta)<\infty\}$ is the \emph{natural parameter space} and $\theta$ is a finite-dimensional vector called the
\emph{natural parameter}. 
Exponential families have a  number of properties that make them extremely useful for statistical analysis (see \citealp*{Brown-86} for more details).

In this paper, we consider an infinite dimensional generalization
\citep{Canu-05,Fukumizu-09a} of (\ref{Eq:finite-exp}), 
\begin{equation}
 \Cal{P}=\left\{
p_f(x)=e^{f(x)-A(f)}q_0(x),\,x\in\Omega:f\in\Cal{F}\right\},\nonumber
\end{equation}
where the function space
$\Cal{F}$ is defined as $$\Cal{F}=\left\{f\in\eu{H}:e^{A(f)}<\infty\right\},\,\,\,\text{with}\,\,\,A(f):=\log\int_\Omega
e^{f(x)}q_0(x)\,dx$$ 
being the cumulant generating function, and $(\eu{H},\langle
\cdot,\cdot\rangle_\eu{H})$ a reproducing
kernel
Hilbert space (RKHS) \citep{Aronszajn-50} with $k$ as its reproducing
kernel. 
While various generalizations are possible for different choices of
$\Cal{F}$ (e.g., an Orlicz space as in \citealp*{Pistone-95}), 
the connection of $\Cal{P}$ to the
natural exponential family in (\ref{Eq:finite-exp}) is particularly
enlightening when $\eu{H}$ is an RKHS. This is due to the reproducing property
of the kernel, $f(x)=\langle f,
k(x,\cdot)\rangle_\eu{H}$, 
through which 
$k(x,\cdot)$
takes the role of the sufficient statistic. In fact, it can be shown
(see Section~\ref{Sec:approximation}
and
Example~\ref{exm:finite-dim} for more details) that
every $\Scr{P}_{\text{fin}}$ is generated by $\Cal{P}$ induced
by a finite dimensional RKHS $\eu{H}$, and therefore the family $\Cal{P}$ with
$\eu{H}$ being an infinite dimensional RKHS 
is a natural infinite dimensional
generalization of $\Scr{P}_{\text{fin}}$. Furthermore, this generalization is particularly interesting as in contrast to $\Scr{P}_{\text{fin}}$,
it can be shown that $\Cal{P}$ is a rich class of
densities (depending on the choice of $k$ and therefore
$\eu{H}$) that can approximate a broad class of probability densities
arbitrarily well (see
Propositions~\ref{Thm:approx}, \ref{Thm:approx-fd} and Corollary~\ref{cor:approx}). This
generalization is not only of theoretical interest, but also has implications for 
statistical and machine learning applications. For example, in Bayesian non-parametric density estimation, 
the densities in $\Cal{P}$ are chosen as prior distributions on a collection of probability
densities (e.g., see \citealp*{Vaart-08}). $\Cal{P}$ has also found applications in nonparametric hypothesis testing
\citep{Gretton-12,Fukumizu-08a} and dimensionality
reduction~\citep{Fukumizu-04,Fukumizu-09} through the mean
and covariance operators, which are obtained as the first and second Fr\'{e}chet derivatives of $A(f)$ \cite[see][Section
1.2.3]{Fukumizu-09a}. Recently, the infinite dimensional exponential family, $\Cal{P}$ has been used to develop a gradient-free adaptive MCMC algorithm 
based on Hamiltonian Monte Carlo \citep{Heiko-15} and also has been used in the context of learning the structure of graphical models \citep{Sun-15}.
 
\par Motivated by the richness of the infinite dimensional
generalization and its statistical applications, it is of interest to model densities by $\Cal{P}$, and therefore the goal of this paper is to estimate
unknown densities by elements in $\Cal{P}$ 
when $\eu{H}$ is an infinite dimensional RKHS. 
Formally, given i.i.d.~random samples $(X_a)^n_{a=1}$ drawn from an unknown
density $p_0$, the goal is to estimate $p_0$ through $\Cal{P}$. Throughout the
paper, we refer to case of $p_0\in\Cal{P}$ as \emph{well-specified}, in contrast
to the \emph{misspecified} case where $p_0\notin\Cal{P}$. The setting is
useful because $\Cal{P}$ is a rich class of
densities that can approximate a broad class of probability densities
arbitrarily well, 
hence it may be widely used in place of non-parametric density
estimation methods (e.g., kernel density estimation (KDE)). 
In fact, through numerical simulations, we show in Section~\ref{Sec:experiments} that estimating $p_0$
through $\Cal{P}$ performs better than KDE, and that the advantage of the proposed estimator grows with increasing dimensionality.

In
the finite-dimensional case where $\theta\in\Theta\subset\bb{R}^m$,
estimating $p_\theta$ through maximum
likelihood (ML) leads to solving elegant likelihood equations \cite[Chapter
5]{Brown-86}. However,
in the infinite dimensional case (assuming $p_0\in\Cal{P}$), as in many
non-parametric estimation methods,
a straightforward extension of maximum likelihood estimation (MLE) suffers from
the problem of ill-posedness \cite[Section 1.3.1]{Fukumizu-09a}. To
address this problem, \cite{Fukumizu-09a} proposed a  method of sieves
involving pseudo-MLE by restricting the infinite
dimensional
manifold $\Cal{P}$ to a series of finite-dimensional submanifolds, which enlarge
as the
sample size increases, i.e., $p_{\hat{f}^{(l)}}$ is the density estimator with \vspace{-1mm}
\begin{equation}\hat{f}^{(l)}=\arg\max_{f\in\Cal{F}^{(l)}}\frac{1}{n}\sum^n_{a=1}f(X_a)-A(f),\label{Eq:ml}\vspace{-1mm}\end{equation}
where
$\Cal{F}^{(l)}=\{f\in\eu{H}^{(l)}:e^{A(f)}<\infty\}$ and
$(\eu{H}^{(l)})^\infty_{l=1}$ is a sequence of finite-dimensional
subspaces of $\eu{H}$ such that $\eu{H}^{(l)}\subset\eu{H}^{(l+1)}$ for all
$l\in\bb{N}$. While the consistency of $p_{\hat{f}^{(l)}}$ is proved in
Kullback-Leibler (KL) divergence \cite[Theorem 6]{Fukumizu-09a}, the method
suffers from many drawbacks that are both theoretical and computational in
nature. On the theoretical front, the consistency in \citet[Theorem
6]{Fukumizu-09a} is established by assuming a decay rate on the eigenvalues of the covariance operator (see (A-2) and the discussion in Section
1.4 of \cite{Fukumizu-09a} for details), which is usually difficult to
check in practice. Moreover, it is
not clear which classes of RKHS should be used to obtain a consistent
estimator \cite[(A-1)]{Fukumizu-09a} and the paper does not
provide any discussion about the convergence rates. On the practical side, the
estimator is not attractive as it can be quite difficult to construct the
sequence $(\eu{H}^{(l)})^\infty_{l=1}$ that satisfies the assumptions in
\citet[Theorem 6]{Fukumizu-09a}. In fact, the impracticality of the estimator,
$\hat{f}^{(l)}$ is accentuated by the difficulty in efficiently handling $A(f)$
(though it can be approximated by numerical integration). 

A related work was
carried out by \citet{Barron-91}---also see references
therein---where the goal is to estimate a density, $p_0$ by approximating its logarithm
as an expansion in terms of basis functions, 
such as polynomials, splines or
trigonometric series. 
Similar to \cite{Fukumizu-09a},
Barron and Sheu proposed the ML estimator $p_{\hat{f}_m}$, where
$$\hat{f}_m=\arg\max_{f\in \Cal{F}_m}\frac{1}{n}\sum^n_{a=1}f(X_a)-A(f)$$ and
$\Cal{F}_m$ is the linear space of dimension $m$ spanned by
the chosen basis functions.
Under the assumption that $\log p_0$ has square-integrable derivatives up to order $r$, they showed
that
$KL(p_0\Vert p_{\hat{f}_m})= O_{p_0}(n^{-2r/(2r+1)})$ with
$m=n^{1/(2r+1)}$ 
for each of the approximating families, where $KL(p\Vert q)=\int
p(x)\log(p(x)/q(x))\,dx$ is the
KL divergence between $p$ and $q$. Similar work was carried out by \citet{Gu-93}, who assumed that
$\log p_0$ lies in an RKHS, and proposed an estimator based on penalized MLE, with consistency and rates
established in Jensen-Shannon divergence. Though these
results are theoretically interesting, these estimators are obtained via a
procedure similar to that in \cite{Fukumizu-09a}, and therefore
suffers from the practical drawbacks discussed above. 

The discussion so far shows that the MLE
approach to learning $p_0\in\Cal{P}$ results in estimators that are of limited practical interest. 
To alleviate this, one can treat
the problem of
estimating $p_0\in\Cal{P}$ in a completely non-parametric fashion by using KDE,
which is well-studied \cite[Chapter 1]{Tsybakov-09} and easy to implement. This
approach ignores the structure
of $\Cal{P}$, however, and is known to perform poorly for moderate to large $d$
\cite[Section 6.5]{Wasserman-06} (see also Section~\ref{Sec:experiments} of
this paper).
\subsection{Score Matching and Fisher Divergence}\label{subsec:fisher} 
To counter the disadvantages
of KDE and pseudo/penalized-MLE, in this paper, we
propose to use the \emph{score matching method} introduced by
\citet{Hyvarinen-05, Hyvarinen-07}. 
While MLE is based on minimizing the KL divergence, the
score matching method involves minimizing the \emph{Fisher
divergence} (also called the Fisher information distance; see Definition
1.13 in \cite{Johnson-04}) between two continuously differentiable densities,
$p$ and $q$ on
an open set $\Omega\subseteq\bb{R}^d$, given as
\begin{equation}J(p\Vert q)=\frac{1}{2}\int_\Omega p(x) \left\Vert \nabla\log p(x)-\nabla \log q(x)
\right\Vert^2_2\,dx,\label{Eq:fisher}\end{equation}
where $\nabla\log p(x)=
\left(\partial_1\log p(x),\ldots,\partial_d\log p(x)\right)$ with $\partial_i\log p(x):=\frac{\partial}{\partial x_i}\log
p(x)$. 
Fisher
divergence
 is
closely related to the KL divergence through de
Bruijn's identity \cite[Appendix C]{Johnson-04} and it can be shown that 
$KL(p\Vert q)=\int^\infty_0
J(p_t\Vert q_t)\,dt,$ 
where $p_t=p\ast N(0,t I_d)$, $q_t=q\ast N(0,t I_d)$, $\ast$ denotes the
convolution, and $N(0,t I_d)$ denotes a normal distribution on $\bb{R}^d$ with
mean zero and diagonal covariance with $t>0$ (see
Proposition~\ref{pro:supp-kl-J} 
for a precise statement; also
see Theorem 1 in \citealp*{Lyu-09}). Moreover, convergence in Fisher divergence is a 
stronger form of convergence than that in KL, total variation and Hellinger distances 
(see Lemmas E.2 \& E.3 in \citealp*{Johnson-04} and Corollary
5.1 in \citealp*{Ley-13}).

To understand
the advantages associated with the score matching method, let us consider the
problem of density estimation where the data
generating distribution (say $p_0$) belongs to $\Scr{P}_{\text{fin}}$ in (\ref{Eq:finite-exp}). 
In other words, given
random samples $(X_a)^n_{a=1}$ drawn i.i.d.~from $p_0:=p_{\theta_0}$,
the goal is to estimate $\theta_0$ as $\hat{\theta}_n$, and use
$p_{\hat{\theta}_n}$ as an estimator of $p_0$. While the MLE approach is
well-studied and enjoys nice statistical properties in asymptopia (i.e.,
asymptotically unbiased, efficient, and normally distributed), the computation
of $\hat{\theta}_n$ can be intractable in many situations as discussed
above. In
particular,
this is the case for $p_\theta(x)=\frac{r_\theta(x)}{A(\theta)}$ where
$r_\theta\ge 0$ for all
$\theta\in\Theta$, $A(\theta)=\int_\Omega r_\theta(x)\,dx$, and the functional
form of $r$ is known (as a function of $\theta$ and $x$); yet we do not know how
to easily compute $A$, which is often analytically intractable. In this
setting (which is exactly the setting of this paper), assuming
$p_\theta$ to be differentiable (w.r.t.~$x$), and $\int_\Omega
p_0(x)\Vert\nabla \log p_\theta(x)\Vert^2_2\,dx<\infty,\,\forall\,\theta\in\Theta$,
$J(p_0\Vert p_\theta)=:J(\theta)$ in (\ref{Eq:fisher}) reduces to
\begin{equation}
J(\theta)=\sum^d_{i=1}\int_\Omega p_0(x)
\left(\frac{1}{2}\left(\partial_i\log
p_\theta(x)\right)^2+\partial^2_i\log
p_\theta(x)\right)\,dx+\frac{1}{2}\int_\Omega p_0(x)\left\Vert \nabla\log p_0(x)\right\Vert^2_2\,dx,\label{Eq:score-1}
\end{equation}
through integration by parts \cite[see][Theorem 1]{Hyvarinen-05}, under appropriate regularity conditions on $p_0$
and $p_\theta$ for all $\theta\in\Theta$. 
Here $\partial^2_i\log p_\theta(x):=\frac{\partial^2}{\partial x^2_i}\log p_\theta(x)$. 
The main advantage of the objective in (\ref{Eq:fisher})
(and also (\ref{Eq:score-1})) is that when it is applied to the situation
discussed above where $p_\theta(x)=\frac{r_\theta(x)}{A(\theta)}$, $J(\theta)$
is independent of $A(\theta)$, and an estimate of
$\theta_0$ can be obtained by simply minimizing the empirical counterpart of
$J(\theta)$, given by
$$J_n(\theta):=\frac{1}{n}\sum^n_{a=1}\sum^d_{i=1}\left(\frac{1}{2}
\left(\partial_i\log
p_\theta(X_a)\right)^2+\partial^2_i\log
p_\theta(X_a)\right)+\frac{1}{2}\int_\Omega p_0(x)\left\Vert \nabla\log p_0(x)\right\Vert^2_2\,dx.$$
Since $J_n(\theta)$ is also independent of
$A(\theta)$, $\hat{\theta}_n=\arg\min_{\theta\in\Theta}J_n(\theta)$ may 
be easily computable, unlike the MLE. We would like to highlight that
while the score matching approach may have computational advantages over MLE, it
only estimates $p_\theta$ up to the scaling factor $A(\theta)$, and
therefore requires the approximation or computation of $A(\theta)$ through
numerical integration to estimate $p_\theta$. Note that this issue (of computing $A(\theta)$ 
through numerical integration) exists even with MLE, but not with KDE. In score matching, however, numerical
integration is needed only once, while MLE would typically require a functional form of the log-partition function which is approximated through 
numerical integration at every step of an iterative
optimization algorithm (for example, see (\ref{Eq:ml})), thus leading to major computational savings. An important
application that does not require the computation of $A(\theta)$ is in finding modes of the distribution, which has
recently become very popular in image processing \citep{Comaniciu-02}, and has already been investigated in the score matching framework \citep{Sasaki-14}. Similarly, 
in sampling methods such as sequential Monte Carlo \citep{Doucet-01}, it is often
the case that the evaluation of unnormalized densities is sufficient to calculate required importance weights.
\subsection{Contributions}\label{subsec:contributions}
\emph{(i)} We present an estimate of 
$p_0\in\Cal{P}$ in the well-specified case
through the minimization of Fisher divergence, 
in Section~\ref{Sec:Theory}. First, we show that estimating
$p_0:=p_{f_0}$ using
the score matching method reduces to estimating $f_0$ by solving a simple finite-dimensional linear system
(Theorems~\ref{Thm:score} and \ref{Thm:representer}).
\citet{Hyvarinen-07} obtained a similar result for $\Scr{P}_{\text{fin}}$ where the estimator is obtained by solving
a
linear system, which in the case of Gaussian family matches the
MLE \citep{Hyvarinen-05}. The estimator obtained
in the infinite dimensional case is not a simple extension of its
finite-dimensional counterpart, however, as the former requires an
appropriate
regularizer (we use $\Vert\cdot\Vert^2_\eu{H}$) to make the
problem well-posed. 
 We would like to highlight that to the best of our knowledge, the proposed estimator is the 
 first practically computable estimator of $p_0$ with consistency guarantees (see below).
\vspace{1mm}\\
\emph{(ii)} In contrast to \cite{Hyvarinen-07} where no guarantees on
consistency or
convergence rates are provided for the density estimator in $\Scr{P}_{\text{fin}}$, we establish
in Theorem~\ref{Thm:rates} the
consistency and rates of convergence for the proposed
estimator of $f_0$, and use these to prove consistency and rates of
convergence for the corresponding plug-in estimator of $p_0$
(Theorems~\ref{Thm:density} and \ref{Thm:density-2}), 
even when $\eu{H}$ is
infinite dimensional. 
Furthermore, while the estimator of $f_0$ (and therefore $p_0$) is
obtained by minimizing the Fisher divergence, the resultant density
estimator is also shown to be consistent in KL divergence (and
therefore in Hellinger and total-variation distances) and we provide
convergence rates in all these distances. 

Formally, we
show that
the proposed estimator  $\hat{f}_n$ is converges as
$$\Vert
f_0-\hat{f}_n\Vert_\eu{H}=O_{p_0}(n^{-\alpha}),\,\,KL(p_0\Vert p_{\hat{f}_n})=O_{p_0}(n^{-2\alpha})\,\,\text{and}\,\,J(p_0\Vert p_{\hat{f}_n})=O_{p_0}
\left(n^{-\min\left\{\frac{2}{3},\frac{2\beta+1}{
2\beta+2}\right\}}\right)$$
if $f_0\in \Cal{R}(C^\beta)$ for some $\beta>0$, 
where 
$\Cal{R}(A)$ denotes the range or
image of an operator $A$,
$\alpha=\min\{\frac{1}{4},\frac{\beta}
{2\beta+2}\}$,
and $C:=\sum^d_{i=1}\int_\Omega
\partial_i k(x,\cdot)\otimes\partial_i
k(x,\cdot)\,p_0(x)\,dx$ is a Hilbert-Schmidt 
operator on
$\eu{H}$ (see
Theorem~\ref{Thm:score}) with $k$ being the reproducing kernel and $\otimes$
denoting the tensor product. When $\eu{H}$ is a finite-dimensional RKHS, we show
that the estimator enjoys parametric rates of convergence, i.e.,
$$\Vert
f_0-\hat{f}_n\Vert_\eu{H}=O_{p_0}(n^{-1/2}),\,\,KL(p_0\Vert
p_{\hat{f}_n})=O_{p_0 } (n^
{-1})\,\,\text{and}\,\,J(p_0\Vert p_{\hat{f}_n})=O_{p_0}(n^
{-1}).$$ Note that the convergence rates are obtained under a
non-classical smoothness assumption on $f_0$, namely that it lies in the image 
of certain fractional power of $C$, which reduces to a more classical
assumption if we choose $k$ to be a Mat\'{e}rn kernel (see
Section~\ref{Sec:notation} for its definition), as it induces a Sobolev space.
In Section~\ref{subsec:range}, we discuss in detail the smoothness assumption on
$f_0$ for the Gaussian (Example~\ref{exm:gaussian}) and
Mat\'{e}rn (Example~\ref{exm:matern}) kernels.
Another interesting point to observe is that unlike
in the classical function estimation methods (e.g., kernel density estimation and regression), the rates presented above for the
proposed estimator tend to saturate for $\beta>1$
($\beta>\frac{1}{2}$ w.r.t.~$J$), with the best rate attained
at $\beta=1$ ($\beta=\frac{1}{2}$ w.r.t.~$J$), which means the smoothness of
$f_0$ is not fully captured
by the estimator. Such a saturation behavior is well-studied in the inverse
problem
literature \citep{Engl-96} where it has been attributed to the choice
of regularizer. In Section~\ref{subsec:regularizer}, we discuss alternative
regularization strategies
using ideas from \citet{Bauer-07},
which covers non-parametric least squares regression: we show that for
appropriately chosen regularizers, the above mentioned rates hold for any
$\beta>0$, and do not saturate for the aforementioned ranges of $\beta$ (see
Theorem~\ref{Thm:rates-new}).\vspace{1mm}\\
\emph{(iii)} In Section~\ref{Sec:misspecified}, we study the
problem of density
estimation in the misspecified setting, i.e., $p_0\notin\Cal{P}$, which is not
addressed in \citet{Hyvarinen-07} and \citet{Fukumizu-09a}. Using a more sophisticated analysis than in
the well-specified case, we show in Theorem~\ref{Thm:misspecified-1} that
$J(p_0\Vert p_{\hat{f}_n})\rightarrow \inf_{p\in\Cal{P}}J(p_0\Vert p)$ as
$n\rightarrow \infty$. Under an appropriate smoothness assumption on
$\log\frac{p_0}{q_0}$ (see
the statement of Theorem~\ref{Thm:misspecified-1} for details), we show that
$J(p_0\Vert p_{\hat{f}_n})\rightarrow 0$ as $n\rightarrow\infty$ along with a
rate for this convergence, even though $p_0\notin\Cal{P}$. However, unlike in
the
well-specified case, where the
consistency is obtained not
only in $J$ but also in other distances, we obtain 
convergence only in $J$ for the misspecified case. Note that while
\cite{Barron-91} considered the estimation of $p_0$ in the
misspecified setting, the results are restricted to the
approximating families consisting of polynomials, splines, or
trigonometric series. Our results are more
general, as they hold for abstract RKHSs.\vspace{1mm}\\
\emph{(iv)} 
In
Section~\ref{Sec:experiments}, we present preliminary numerical results comparing the
proposed estimator with KDE in estimating a Gaussian and mixture of Gaussians,
with the goal of empirically evaluating  performance as $d$ gets large for a fixed sample
size. In these two estimation problems, we show that the proposed estimator
outperforms KDE, and the advantage 
grows as $d$ increases. Inspired by this preliminary empirical investigation, our proposed estimator
(or computationally efficient approximations)
 has been used  by
\citet{Heiko-15} in a  gradient-free adaptive MCMC sampler, and by \citet{Sun-15}  
 for graphical model structure learning. These applications demonstrate the practicality and performance of the proposed
estimator.

Finally, we would like to make clear that 
our principal goal is not to construct density estimators
that improve uniformly upon KDE, but to provide a novel flexible modeling technique for approximating an unknown density by a rich parametric family of densities,
with the parameter being infinite dimensional, in contrast to the classical approach of finite dimensional approximation.
\vspace{1mm}

Various notations and definitions that are used throughout the paper are
collected in Section~\ref{Sec:notation}. The proofs of
the results are
provided in Section~\ref{Sec:proofs}, along with some supplementary results in
an appendix.
\section{Definitions \& Notation}\label{Sec:notation}
We introduce the notation used throughout the paper. Define
$[d]:=\{1,\ldots,d\}$. For
$a:=(a_1,\ldots,a_d)\in\bb{R}^d$ and $b:=(b_1,\ldots,b_d)\in\bb{R}^d$, $\Vert
a\Vert_2:=\sqrt{\sum^d_{i=1}a^2_i}$ and $\langle a,b\rangle:=\sum^d_{i=1}a_ib_i$. For $a,b>0$, we write $a\lesssim b$ if $a\le \gamma b$ for
some positive universal constant $\gamma$. For a topological
space $\Cal{X}$, $C(\Cal{X})$ (\emph{resp.} $C_b(\Cal{X})$) denotes the space of
all continuous (\emph{resp.} bounded continuous) functions on $\Cal{X}$. For a
locally compact Hausdorff space $\Cal{X}$, $f\in C(\Cal{X})$ is said to
\emph{vanish at infinity} if for every $\epsilon > 0$ the set $\{x :
|f(x)|\ge\epsilon\}$ is compact. The class of all continuous $f$ on $\Cal{X}$
which vanish at infinity is denoted as $C_0(\Cal{X})$. For
open $\Cal{X}\subset\bb{R}^d$, $C^1(\Cal{X})$ denotes the space of
continuously differentiable functions on $\Cal{X}$. For $f\in
C_b(\Cal{X})$, $\Vert f\Vert_\infty:=\sup_{x\in\Cal{X}}|f(x)|$
denotes the supremum norm of $f$. $M_b(\Cal{X})$ denotes the set of all finite
Borel measures on $\Cal{X}$. For $\mu\in M_b(\Cal{X})$, $L^r(\Cal{X},\mu)$
denotes the Banach space of $r$-power ($r\ge
1$) $\mu$-integrable functions. For $\Cal{X}\subset\bb{R}^d$, we will use
$L^r(\Cal{X})$ for $L^r(\Cal{X},\mu)$ if $\mu$ is a Lebesgue measure on
$\Cal{X}$. For $f\in L^p(\Cal{X},\mu)$, $\Vert
f\Vert_{L^r(\Cal{X},\mu)}:=\left(\int_\Cal{X}|f|^r\,d\mu\right)^{1/r}$ denotes
the $L^r$-norm of $f$ for $1\le r<\infty$ and we denote it as
$\Vert\cdot\Vert_{L^r(\Cal{X})}$ if $\Cal{X}\subset\bb{R}^d$ and $\mu$ is the
Lebesgue measure. The convolution $f\ast g$
of two measurable functions $f$ and $g$ on $\bb{R}^d$ is defined as $$(f\ast
g)(x):=\int_{\bb{R}^d}f(y)g(x-y)\,dy,$$ provided the integral exists for all
$x\in\bb{R}^d$. The Fourier transform of $f\in
L^1(\bb{R}^d)$ is defined
as $$f^\wedge(y)=(2\pi)^{-d/2}\int_{\bb{R}^d}f(x)\,e^{-i\langle
y,x\rangle}\,dx$$
where $i$ denotes the imaginary unit $\sqrt{-1}$. 

In the following, for the sake of completeness and simplicity, we present
definitions restricted to Hilbert spaces. Let $H_1$ and $H_2$ be abstract Hilbert spaces. A map $S:H_1\rightarrow H_2$ is called a \emph{linear operator}
if it satisfies $S(\alpha x)=\alpha Sx$ and $S(x+x')=Sx+Sx'$ for all
$\alpha\in\bb{R}$ and $x,x'\in H_1$, where $Sx:=S(x)$. A linear operator $S$ is
said to be \emph{bounded}, i.e., the image $SB_{H_1}$ of $B_{H_1}$ under $S$ is
bounded if and only if there exists a constant $c\in [0,\infty)$ such that for
all $x\in H_1$ we have $\Vert Sx\Vert_{H_2}\le c\Vert x\Vert_{H_1}$, where
$B_{H_1}:=\{x\in H_1:\Vert x\Vert_{H_1}\le 1\}$. In this case, the
\emph{operator norm} of $S$ is defined as $\Vert S\Vert:=\sup\{\Vert
Sx\Vert_{H_2}:x\in B_{H_1}\}$. Define $\Cal{L}(H_1,H_2)$ be the space of bounded
linear operators from $H_1$ to $H_2$. $S\in\Cal{L}(H_1,H_2)$ is said to be
\emph{compact} if $\overline{SB_{H_1}}$ is a compact subset in $H_2$. The
\emph{adjoint operator} $S^*:H_2\rightarrow H_1$ of $S\in \Cal{L}(H_1,H_2)$ is
defined by $\langle x,S^*y\rangle_{H_1}=\langle Sx,y \rangle_{H_2},\,x\in
H_1,\,y\in H_2$. $S\in\Cal{L}(H):=\Cal{L}(H,H)$ is called \emph{self-adjoint} if
$S^*=S$ and is called \emph{positive} if $\langle Sx,x\rangle_H\ge 0$ for all
$x\in H$. $\alpha\in\bb{R}$ is called an \emph{eigenvalue} of $S\in\Cal{L}(H)$
if there exists an $x\ne 0$ such that $Sx=\alpha x$ and such an $x$ is called
the \emph{eigenvector} of $S$ and $\alpha$. For compact, positive, self-adjoint
$S\in \Cal{L}(H)$, $S^r: H\rightarrow H$, $r\ge 0$ is called a \emph{fractional
power} of $S$ and $S^{1/2}$ is the \emph{square root} of $S$, which we write as
$\sqrt{S}:=S^{1/2}$. An operator $S\in \Cal{L}(H_1,H_2)$ is
\emph{Hilbert-Schmidt} if $\Vert
S\Vert_{HS}:=(\sum_{j\in J}\Vert Se_j\Vert^2_{H_2})^{1/2}<\infty$
where $(e_j)_{j\in J}$ is an arbitrary orthonormal basis of separable
Hilbert space $H_1$. $S\in\Cal{L}(H_1,H_2)$ is said to be of \emph{trace class}
if $\sum_{j\in J}\langle (S^* S)^{1/2}e_j,e_j\rangle_{H_1}<\infty$. For $x\in
H_1$ and $y\in H_2$, $x\otimes y$ is an element of the tensor product space
$H_1\otimes H_2$ which can also be seen as an operator from $H_2$ to $H_1$ as
$(x\otimes y)z=x\langle y,z\rangle_{H_2}$ for any $z\in H_2$. $\Cal{R}(S)$
denotes the \emph{range space (or image)} of $S$.

A real-valued symmetric function $k:\Cal{X}\times \Cal{X}\rightarrow\bb{R}$ is
called a
positive definite (pd) kernel if, for all $n\in\bb{N}$,
$\alpha_1,\ldots,\alpha_n\in\bb{R}$ and all $x_1,\ldots,x_n\in \Cal{X}$, we
have $\sum^n_{i,j=1}\alpha_i\alpha_jk(x_i,x_j)\ge 0$. 
A function $k:\Cal{X}\times\Cal{X}\rightarrow\bb{R},\, (x,y)\mapsto k(x,y)$ is
a \emph{reproducing kernel} of the Hilbert space $(\Cal{H}_k,\langle
\cdot,\cdot\rangle_{\Cal{H}_k})$ of functions if and only if (i)
$\forall\,y\in\Cal{X}$,
$k(y,\cdot)\in\Cal{H}_k$ and (ii) $\forall\,y\in\Cal{X}$,
$\forall\,f\in\Cal{H}_k$, $\langle
f,k(y,\cdot)\rangle_{\Cal{H}_k}=f(y)$ hold. If such a $k$ exists,
then $\Cal{H}_k$ is called a
\emph{reproducing kernel Hilbert space}. Since $\langle
k(x,\cdot),k(y,\cdot)\rangle_{\Cal{H}_k}=k(x,y),\,\forall\,x,y\in\Cal{X}$, it
is easy to show that every reproducing kernel (r.k.) $k$ is symmetric
and positive definite. 
Some examples of kernels that appear throughout the paper are: 
\emph{Gaussian kernel}, $k(x,y)=\exp(-\sigma\Vert
x-y\Vert^2_2),\,x,y\in\bb{R}^d,\,\sigma>0$ that induces the following
\emph{Gaussian RKHS},
$$\Cal{H}_k=\eu{H}_\sigma:=\Big\{f\in L^2(\bb{R}^d)\cap C(\bb{R}^d)\,:\,\int
|f^\wedge(\omega)|^2e^{\Vert
\omega\Vert^2_2/4\sigma}\,d\omega<\infty\Big\},$$ the \emph{inverse
multiquadric kernel}, $k(x,y)=(1+\Vert\frac{x-y}{c}
\Vert^2_2)^{-\beta},
\,x,y\in\bb{R}^d,\,\beta>0,\,c\in (0,\infty)$ and the \emph{Mat\'{e}rn
kernel},
$k(x,y)=\frac{2^{1-\beta}}{\Gamma(\beta)}\Vert
x-y\Vert^{\beta-d/2}_2\frak{K}_{d/2-\beta}(\Vert
x-y\Vert_2),\,x,y\in\bb{R}^d,\,\beta>d/2$ that induces the Sobolev space,
$H^\beta_2$,
\begin{equation}\Cal{H}_k=H^\beta_2:=\Big\{f\in L^2(\bb{R}^d)\cap
C(\bb{R}^d)\,:\,\int
(1+\Vert
\omega\Vert^2_2)^\beta|f^\wedge(\omega)|^2\,d\omega<\infty\Big\},\nonumber
\end{equation}
where $\Gamma$ is the Gamma
function, and $\frak{K}_v$ is the modified Bessel
function of the third kind of order $v$ ($v$ controls the smoothness of
$k$).

For any real-valued function $f$ defined on open $\Cal{X}\subset\bb{R}^d$, $f$ is said to be $m$-times continuously differentiable if for $\alpha\in\bb{N}^d_0$ 
with $|\alpha|:=\sum^d_{i=1}\alpha_i\le m$, $\partial^\alpha f(x)=\partial^{\alpha_1}_1\ldots\partial^{\alpha_d}_d f(x)=\frac{\partial^{|\alpha|}}{\partial x^{\alpha_1}_1\ldots\partial x^{\alpha_d}_d}f(x)$ exists. A kernel $k$ is said to be $m$-times continuously differentiable if $\partial^{\alpha,\alpha}k:\Cal{X}\times\Cal{X}\rightarrow\bb{R}$ 
exists and is continuous for all $\alpha\in\bb{N}^d_0$ with $|\alpha|\le m$ where $\partial^{\alpha,\alpha}:=\partial^{\alpha_1}_1\ldots\partial^{\alpha_d}_d\partial^{\alpha_1}_{1+d}\ldots\partial^{\alpha_d}_{2d}$. 
Corollary 4.36 in \citet{Steinwart-08} and Theorem 1 in \citet{Zhou-08} state that if $\partial^{\alpha,\alpha}k$ exists and is continuous, then 
$\partial^\alpha k(x,\cdot)=\partial^{\alpha_1}_{1}\ldots\partial^{\alpha_d}_{d}k(x,\cdot)$ $=\frac{\partial^{|\alpha|}}{\partial x^{\alpha_1}_1\ldots\partial x^{\alpha_d}_d}k((x_1,\ldots,x_d),\cdot)\in\Cal{H}_k$ with $x=(x_1,\ldots,x_d)$ and for every $f\in\Cal{H}_k$, we have $\partial^\alpha f(x)=\langle \partial^\alpha k(x,\cdot),f\rangle_{\Cal{H}_k}$ and $\partial^{\alpha,\alpha}k(x,x')=\langle \partial^\alpha k(x,\cdot), \partial^\alpha k(x',\cdot)\rangle_{\Cal{H}_k}$.

Given two probability
densities, $p$ and $q$ on $\Omega\subset\bb{R}^d$, the Kullback-Leibler
divergence (KL) and
Hellinger distance ($h$) are defined as $KL(p\Vert q)=\int
p(x)\log\frac{p(x)}{q(x)}\,dx$ and $h(p,q)=\Vert
\sqrt{p}-\sqrt{q}\Vert_{L^2(\Omega)}$ respectively. We refer to 
$\Vert p-q\Vert_{L^1(\Omega)}$ as the total
variation (TV) distance between $p$ and $q$. 

\section{Approximation of Densities by $\Cal{P}$}\label{Sec:approximation}
In this section, we first show that every finite dimensional exponential family, $\Scr{P}_{\text{fin}}$
is generated by the family $\Cal{P}$ induced by a finite dimensional RKHS, which naturally leads to the 
infinite dimensional generalization of $\Scr{P}_{\text{fin}}$ when $\eu{H}$ is an infinite dimensional RKHS. 
Next, we investigate the
approximation properties of $\Cal{P}$ in Proposition~\ref{Thm:approx} and
Corollary~\ref{cor:approx} when $\eu{H}$ is an infinite dimensional RKHS.

Let us consider a $r$-parameter
exponential family, $\Scr{P}_{\text{fin}}$ with sufficient statistic
$T(x):=(T_1(x),\ldots,T_r(x))$ 
and construct a Hilbert space, $\eu{H}=\text{span}\{T_1(x),\ldots,T_r(x)\}$. It is easy to
verify that $\Cal{P}$ induced by $\eu{H}$ is exactly the same as
$\Scr{P}_{\text{fin}}$ since any $f\in \eu{H}$ can be written as
$f(x)=\sum^r_{i=1}\theta_i T_i(x)$ for some $(\theta_i)^r_{i=1}\subset\bb{R}$. In fact, by defining 
the inner product between $f=\sum^r_{i=1}\theta_i T_i$ and $g=\sum^r_{i=1}\gamma_i T_i$ as $\langle f,g\rangle_\eu{H}:=\sum^r_{i=1}\theta_i\gamma_i$, 
it follows that $\eu{H}$ is an RKHS with the r.k.~$k(x,y)=\langle
T(x),T(y)\rangle_{\bb{R}^r}$ since $\langle f,k(x,\cdot)\rangle_\eu{H}=\sum^r_{i=1}\theta_i T_i(x)=f(x)$. Based on this equivalence between $\Scr{P}_{\text{fin}}$ and $\Cal{P}$
induced by a finite dimensional RKHS, it is therefore clear that $\Cal{P}$ induced by a infinite dimensional RKHS is a strict 
generalization to $\Scr{P}_{\text{fin}}$ with $k(\cdot,x)$ playing the role of a sufficient statistic. 
\begin{example}\label{exm:finite-dim} 
The following are some popular examples of probability distributions that belong to $\Scr{P}_{\emph{fin}}$. Here we show the corresponding RKHSs $(\eu{H},k)$
that generate these distributions. In some of these examples, we choose $q_0(x)=1$ and ignore
the fact that $q_0$ is a probability distribution as assumed in the definition
of $\Cal{P}$.\vspace{-2mm}
\begin{itemize}
 \item[] \text{Exponential:} $\Omega=\bb{R}_{++}:=\bb{R}_+\backslash\{0\}$,
$k(x,y)=xy$. \vspace{-1mm}
\item[] \text{Normal:} $\Omega=\bb{R}$, $k(x,y)=xy+x^2y^2$.  \vspace{-1mm}
\item[] \text{Beta:} $\Omega=(0,1)$, $k(x,y)=\log x\log y+\log(1-x)\log(1-y)$. \vspace{-1mm}
\item[] \text{Gamma:} $\Omega=\bb{R}_{++}$, $k(x,y)=\log x \log y+xy$. \vspace{-1mm}
 \item[] \text{Inverse Gaussian:} $\Omega=\bb{R}_{++}$,
 $k(x,y)=xy+\frac{1}{xy}$. \vspace{-1mm}
\item[] \text{Poisson:} $\Omega=\bb{N}\cup\{0\}$, $k(x,y)=xy$, $q_0(x)=(x!\,
e)^{-1}$. \vspace{-1mm}
\item[] \text{Binomial:} $\Omega=\{0,\ldots,m\}$, $k(x,y)=xy$,
$q_0(x)=2^{-m}{ m \choose c}$.
\end{itemize}
\end{example}

While Example~\ref{exm:finite-dim} shows that all popular probability distributions 
are contained in $\Cal{P}$
for an appropriate choice of finite-dimensional $\eu{H}$, it is of interest to
understand the richness of $\Cal{P}$ (i.e., what class of distributions can be approximated arbitrarily
well by $\Cal{P}$?) when $\eu{H}$ is an infinite dimensional
RKHS. This is addressed by the following result, which is proved in Section~\ref{subsec:thm-approx}.
\begin{proposition}\label{Thm:approx}
Define
$\Cal{P}_0:=\left\{\pi_f(x)=e^{f(x)- A(f)}q_0(x),\,x\in\Omega:f\in
C_0(\Omega)\right\}$ where $\Omega\subseteq\bb{R}^d$ is locally compact Hausdorff. Suppose $k(x,\cdot)\in
C_0(\Omega),\,\forall\,x\in \Omega$ and
\begin{equation}\int\int
k(x,y)\,d\mu(x)\,d\mu(y)>0,\,\,\forall\,\mu\in
M_b(\Omega)\backslash\{0\}.\label{Eq:ispd}\end{equation} Then $\Cal{P}$ is
dense
in $\Cal{P}_0$ w.r.t.~Kullback-Leibler
divergence, total variation ($L^1$ norm) and Hellinger distances. In addition,
if $q_0\in L^1(\Omega)\cap L^r(\Omega)$ for some $1<
r\le\infty$, then $\Cal{P}$ is also dense in $\Cal{P}_0$ w.r.t.~$L^r$ norm.\vspace{-2mm}
\end{proposition}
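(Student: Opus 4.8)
The argument rests on one analytic fact: condition (\ref{Eq:ispd}) is exactly what makes $\eu{H}$ dense in $C_0(\Omega)$ in the uniform norm --- i.e.\ $k$ is $C_0$-universal. Indeed, since $k(\cdot,x)\in C_0(\Omega)$ and $k$ is bounded, convergence in $\Vert\cdot\Vert_{\eu{H}}$ implies uniform convergence (via $|f(x)|\le\sqrt{k(x,x)}\,\Vert f\Vert_{\eu{H}}$), so $\eu{H}\subseteq C_0(\Omega)$; every $f\in\eu{H}$ is therefore bounded, giving $|A(f)|\le\Vert f\Vert_\infty<\infty$, so $\Cal{F}=\eu{H}$ and $\Cal{P}\subseteq\Cal{P}_0$ (and $\Cal{P}_0$ is well-defined, since $g\in C_0(\Omega)$ bounded with $\int q_0=1$ gives $e^{-\Vert g\Vert_\infty}\le\int e^{g}q_0\le e^{\Vert g\Vert_\infty}$). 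If $\eu{H}$ were not $\Vert\cdot\Vert_\infty$-dense in $C_0(\Omega)$, then by Hahn--Banach and the Riesz representation theorem there would exist a nonzero $\mu\in M_b(\Omega)$ annihilating $\eu{H}$; evaluating this on each $k(\cdot,x)\in\eu{H}$ and applying Fubini (valid since $k$ is bounded and $\mu$ finite) would force $\int\int k(x,y)\,d\mu(x)\,d\mu(y)=0$, contradicting (\ref{Eq:ispd}). I would cite the known characterization of $C_0$-universal kernels here rather than reprove it.

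Granting uniform density of $\eu{H}$ in $C_0(\Omega)$, the remainder is elementary. Fix a target $\pi_g\in\Cal{P}_0$ (so $g\in C_0(\Omega)$) and $\eps>0$, and choose $f\in\eu{H}$ with $\Vert f-g\Vert_\infty<\eps$, so that $p_f\in\Cal{P}$. The workhorse observation is that $f\mapsto A(f)$ is $1$-Lipschitz in $\Vert\cdot\Vert_\infty$: writing $Z(h):=\int e^{h}q_0\,dx=e^{A(h)}$, the pointwise inequalities $e^{-\Vert f-g\Vert_\infty}e^{g}\le e^{f}\le e^{\Vert f-g\Vert_\infty}e^{g}$ integrate against $q_0$ to give $|A(f)-A(g)|\le\Vert f-g\Vert_\infty$. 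Since $\log(\pi_g/p_f)=(g-f)+(A(f)-A(g))$, this yields $KL(\pi_g\Vert p_f)=\int\pi_g\,(g-f)\,dx+(A(f)-A(g))\le 2\Vert f-g\Vert_\infty<2\eps$, establishing density in KL. Total variation and Hellinger follow from standard comparison inequalities: Pinsker gives $\Vert p_f-\pi_g\Vert_{L^1(\Omega)}\le\sqrt{2\,KL(\pi_g\Vert p_f)}$, and $h(p_f,\pi_g)^2=2-2\int\sqrt{p_f\pi_g}\,dx\le KL(\pi_g\Vert p_f)$.

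For the $L^r$ claim, assume in addition $q_0\in L^1(\Omega)\cap L^r(\Omega)$ with $1<r\le\infty$, and write $p_f-\pi_g=q_0\big(e^{f}/Z(f)-e^{g}/Z(g)\big)$. Bounding the bracketed factor in sup norm via $e^{g(x)}/Z(g)\le e^{2\Vert g\Vert_\infty}$ and $\big|e^{f(x)-g(x)}Z(g)/Z(f)-1\big|\le e^{2\Vert f-g\Vert_\infty}-1$ gives $\Vert p_f-\pi_g\Vert_{L^r(\Omega)}\le e^{2\Vert g\Vert_\infty}\big(e^{2\eps}-1\big)\Vert q_0\Vert_{L^r(\Omega)}$. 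Letting $\eps\downarrow0$ in each of the four estimates finishes the proof.

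The single nontrivial ingredient is the first paragraph --- identifying (\ref{Eq:ispd}) with $C_0$-universality of $k$, together with the minor technical point that $k$ should be bounded (a standard assumption in this context) so that $\eu{H}\subseteq C_0(\Omega)$ and RKHS-norm approximation upgrades to uniform approximation. Everything after that is just the Lipschitz continuity of $A$ in $\Vert\cdot\Vert_\infty$ plus the elementary fact that KL --- and, through the explicit density ratio, the $L^r$ distance --- dominates the discrepancies being bounded.
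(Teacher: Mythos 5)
Your proof is correct and follows essentially the same route as the paper: reduce to $\Vert\cdot\Vert_\infty$-density of $\eu{H}$ in $C_0(\Omega)$ (the paper cites \cite[Proposition 5]{Sriperumbudur-11} for the equivalence with (\ref{Eq:ispd}); you cite the $C_0$-universality characterization, which is the same result), then bound the four metrics by $\Vert f-g\Vert_\infty$. The only cosmetic difference is that the paper invokes its own Lemma~\ref{lem:distances} for the KL, TV, Hellinger and $L^r$ bounds, whereas you rederive them on the spot and route TV and Hellinger through KL via Pinsker and $h^2\le KL$ rather than bounding each directly; both give the required convergence as $\Vert f-g\Vert_\infty\to0$.
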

A sufficient condition for $\Omega\subseteq\bb{R}^d$ to be locally compact Hausdorff is that it is either open or closed. Condition (\ref{Eq:ispd}) is equivalent to $k$ being $c_0$-universal \citep[p. 2396]{Sriperumbudur-11}. If $k(x,y)=\psi(x-y),\,x,y\in\Omega=\bb{R}^d$ where $\psi\in C_b(\bb{R}^d)\cap
L^1(\bb{R}^d)$, then
(\ref{Eq:ispd}) can be shown to be equivalent to
$\text{supp}(\psi^\wedge)=\bb{R}^d$ \citep[Proposition
5]{Sriperumbudur-11}. Examples of kernels that satisfy the conditions in
Proposition~\ref{Thm:approx} include the Gaussian, Mat\'{e}rn and inverse
multiquadrics. In fact, any compactly supported non-zero $\psi\in C_b(\bb{R}^d)$
satisfies the assumptions in Proposition~\ref{Thm:approx} as
$\text{supp}(\psi^\wedge)=\bb{R}^d$ \cite[Corollary 10]{Sriperumbudur-10a}.
Though $\Cal{P}_0$ is still a parametric family of densities indexed by a
Banach space (here $C_0(\Omega)$), the following corollary (proved in Section~\ref{subsec:cor-approx-1}) 
to Proposition~\ref{Thm:approx} shows that a
broad class of continuous densities are contained
in $\Cal{P}_0$ and therefore can be approximated arbitrarily well in $L^r$ norm
($1\le r\le\infty$), Hellinger distance, and KL divergence by $\Cal{P}$.\vspace{-1mm} 
\begin{corollary}\label{cor:approx}
Let $q_0\in C(\Omega)$ be a
probability density 
such that $q_0(x)>0$ for all
$x\in\Omega$, where $\Omega\subseteq\bb{R}^d$ is locally compact Hausdorff. Suppose there exists a constant $\ell$ such that for any $\epsilon > 0$, $\exists\,R > 0$ that satisfies
$|\frac{p(x)}{q_0(x)} - \ell | \leq \epsilon$ for any $x$ with $\Vert x\Vert_2>R$. Define
$$\Cal{P}_{c}:=\left\{p\in
C(\Omega):\int_\Omega
p(x)\,dx=1,p(x)\ge 0,\,\forall\,x\in
\Omega\,\,\emph{and}\,\,
\frac{p}{q_0}-\ell\in C_0(\Omega)\right\}.$$ Suppose $k(x,\cdot)\in
C_0(\Omega),\,\forall\,x\in\Omega$ and (\ref{Eq:ispd}) holds. Then
$\Cal{P}$ is dense in $\Cal{P}_c$ w.r.t.~KL divergence, TV and
Hellinger distances. Moreover, if
$q_0\in L^1(\Omega)\cap L^r(\Omega)$ for some $1<
r\le\infty$, then $\Cal{P}$ is also dense in $\Cal{P}_c$ w.r.t.~$L^r$ norm.\vspace{-2mm}
\end{corollary}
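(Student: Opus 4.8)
The plan is to approximate each $p\in\Cal{P}_c$ by elements of the family $\Cal{P}_0$ of Proposition~\ref{Thm:approx} and then invoke that proposition. Fix $p\in\Cal{P}_c$ and set $g:=\log(p/q_0)$, a continuous map $\Omega\to[-\infty,\infty)$ (because $p$ is continuous and $q_0$ is continuous and strictly positive) that is bounded above by $c_0:=\log\max\{\Vert p/q_0\Vert_\infty,1\}\ge 0$. Since $\Omega\subset\bb{R}^d$ is locally compact and $\sigma$-compact, and $q_0\in L^1(\Omega)$ (being a density), choose compact sets $K_n\uparrow\Omega$ with $\bigcup_nK_n=\Omega$ and $\int_{\Omega\setminus K_n}q_0\le e^{-n}$, and, by Urysohn's lemma for locally compact Hausdorff spaces, continuous $\chi_n$ with compact support in $\Omega$, $0\le\chi_n\le1$, $\chi_n\equiv1$ on $K_n$. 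Put $g_n:=\max\{g,-n\}\in C_b(\Omega)$ and $f_n:=g_n\chi_n$; then $f_n$ is continuous with compact support, $-n\le f_n\le c_0$ on $\Omega$, $A(f_n)=\int e^{f_n}q_0\le e^{c_0}<\infty$, and hence $\pi_{f_n}=e^{f_n}q_0/A(f_n)\in\Cal{P}_0$. The lower truncation in $g_n$ keeps $f_n$ finite at the zeros of $p$.

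Next I would decompose $\Omega$ into the good set $G_n:=K_n\cap\{g\ge-n\}$, on which $e^{f_n}q_0=e^gq_0=p$ and therefore $\pi_{f_n}=p/A(f_n)$, and two bad pieces: $B_n':=\Omega\setminus K_n$, on which $e^{-n}q_0\le e^{f_n}q_0\le e^{c_0}q_0$ and $\int_{B_n'}q_0\le e^{-n}$, and $B_n'':=K_n\cap\{g<-n\}$, on which $\chi_n\equiv1$ so $f_n\equiv-n$, giving $e^{f_n}q_0=e^{-n}q_0\ge p$ with $\int_{B_n''}e^{-n}q_0\le e^{-n}$. Since $G_n\uparrow\{p>0\}$, $\int_{G_n}p\to1$, while the integrals of $p$, of $\pi_{f_n}$ and of $e^{f_n}q_0$ over $B_n:=B_n'\cup B_n''$ are all $O(e^{-n})$; in particular $A(f_n)\to1$. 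Hence $\Vert p-\pi_{f_n}\Vert_{L^1(\Omega)}\le|1-A(f_n)^{-1}|+\int_{B_n}(p+\pi_{f_n})\to0$ (the TV case), and $\int_\Omega\sqrt{p\,\pi_{f_n}}=A(f_n)^{-1/2}\int_{G_n}p+\int_{B_n}\sqrt{p\,\pi_{f_n}}\to1$ by Cauchy--Schwarz on $B_n$, i.e.\ $h(p,\pi_{f_n})\to0$. Under the extra hypothesis $q_0\in L^r(\Omega)$, since $p\le e^{c_0}q_0$ and $\pi_{f_n}\le e^{c_0}q_0/A(f_n)$ everywhere, the same decomposition with dominated convergence gives $\Vert p-\pi_{f_n}\Vert_{L^r(\Omega)}\to0$.

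For the KL divergence, $\int_{G_n}p\log(p/\pi_{f_n})=\log A(f_n)\int_{G_n}p\to0$, and on $B_n$ I would write $p\log(p/\pi_{f_n})=p(\log(p/\pi_{f_n}))_+-p(\log(\pi_{f_n}/p))_+$: since $f_n\ge-n$ we get $p/\pi_{f_n}\le A(f_n)e^n\Vert p/q_0\Vert_\infty$, so $(\log(p/\pi_{f_n}))_+\le C+n$ for a constant $C$, and $(C+n)\int_{B_n}p=(C+n)\,O(e^{-n})\to0$; for the other sign, $\log t\le t-1$ yields $p(\log(\pi_{f_n}/p))_+\le(\pi_{f_n}-p)_+\le\pi_{f_n}$, whose integral over $B_n$ is $O(e^{-n})$. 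Hence $KL(p\Vert\pi_{f_n})\to0$. This shows $\Cal{P}_c\subseteq\overline{\Cal{P}_0}$ in all four modes; by transitivity of denseness and Proposition~\ref{Thm:approx} the corollary follows for TV, Hellinger and $L^r$. For KL, which has no triangle inequality, I would additionally use that Proposition~\ref{Thm:approx}'s argument produces, for $f_n\in C_0(\Omega)$ and any $\eps>0$, some $h\in\Cal{F}$ with $\Vert h-f_n\Vert_\infty<\eps$; then $p_h\in\Cal{P}$, $\log(p_h/\pi_{f_n})=(h-f_n)-(\log A(h)-\log A(f_n))$ is $O(\eps)$ uniformly on $\Omega$, so $|KL(p\Vert p_h)-KL(p\Vert\pi_{f_n})|\le\int p\,|\log(\pi_{f_n}/p_h)|=O(\eps)$ and $KL(p\Vert p_h)\to0$.

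I expect the delicate point to be the KL bound on the bad set: $\log(p/\pi_{f_n})$ can be as large as $n$ there (where $p$ is tiny relative to $q_0$), so the construction must be arranged — via the fast exhaustion $\int_{\Omega\setminus K_n}q_0\le e^{-n}$ and the truncation $f_n\equiv-n$ on $K_n\cap\{g<-n\}$ — so that the vanishing mass beats these logarithmic terms, while the $\log t\le t-1$ trick handles the opposite sign using only $q_0\in L^1(\Omega)$. A secondary point is that, because KL is not a metric, the passage from $\pi_{f_n}\in\Cal{P}_0$ to an element of $\Cal{P}$ must be carried out with uniform sup-norm control read off from the proof of Proposition~\ref{Thm:approx} rather than treating it as a pure black box. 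Everything else is routine truncation plus dominated convergence using $q_0\in L^1(\Omega)$ (resp.\ $L^r(\Omega)$).
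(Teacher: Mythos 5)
Your proof is correct, and it takes a genuinely different route from the paper's. The paper works with the mixture $p_\delta := (p + \delta q_0)/(1+\delta)$: this is automatically strictly positive, so $\log(p_\delta/q_0)$ is bounded, and the KL estimate $KL(p\Vert p_\delta)\le 2\delta$ drops out of a one-line $\log t\le t-1$ computation; the remaining work is to verify $\log(p_\delta/q_0)\in C_0(\Omega)$. You instead truncate $\log(p/q_0)$ from below at level $-n$ and multiply by a compactly supported Urysohn cutoff on an exhaustion $K_n\uparrow\Omega$ chosen so that $\int_{\Omega\setminus K_n}q_0\le e^{-n}$, which makes membership of $f_n$ in $C_0(\Omega)$ immediate (compact support), at the cost of a more elaborate good/bad set decomposition and a delicate balance between the $O(n)$ size of $\log(p/\pi_{f_n})$ on the bad set and its $O(e^{-n})$ mass. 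Both proofs handle the failure of the triangle inequality for KL identically, by reading off the uniform sup-norm control from Proposition~\ref{Thm:approx} rather than using it as a black box. A further point in favor of your construction: making $f_n$ compactly supported sidesteps the paper's step that $\{f\ge\eta\}\subset\{q_0\ge\lambda_\eta\}$ for some $\lambda_\eta>0$, which is not justified as written (a lower bound on $q_0$ on that set is precisely what one would need the compactness to supply), whereas your $f_n\in C_0(\Omega)$ is immediate by construction.
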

By choosing $\Omega$ to be compact and $q_0$ to be a uniform distribution on
$\Omega$, Corollary~\ref{cor:approx} reduces to an easily interpretable result
that any continuous density $p_0$ on $\Omega$ can be approximated arbitrarily
well by densities in $\Cal{P}$ in KL, Hellinger and $L^r$ ($1\le r\le\infty$)
distances.

Similar to the results so far, an approximation result for
$\Cal{P}$ can also be obtained w.r.t.~Fisher divergence (see
Proposition~\ref{Thm:approx-fd}). Since this result is heavily based on the
notions and results developed in
Section~\ref{Sec:misspecified}, we defer its presentation until that section. 
Briefly, this result states that if $\eu{H}$ is
sufficiently rich (i.e., dense in an appropriate class of functions), then any
$p\in C^1(\Omega)$ with $J(p\Vert q_0)<\infty$ can
be approximated arbitrarily well by elements in $\Cal{P}$ w.r.t.~Fisher
divergence, where $q_0\in C^1(\Omega)$.


 \section{Density Estimation in $\Cal{P}$: Well-specified Case}\label{Sec:Theory}
In this section,   we present our score matching  estimator for an unknown density
$p_0:=p_{f_0}\in
\Cal{P}$ (well-specified case) from i.i.d.~random samples $(X_a)^n_{a=1}$
drawn from it. This involves
choosing the minimizer of
the (empirical) Fisher divergence between $p_0$ and $p_f\in\Cal{P}$ as the
estimator, $\hat{f}$ which we show in
Theorem~\ref{Thm:representer} to be obtained by solving a simple
finite-dimensional linear system. In contrast, we would like to remind the
reader that the MLE is  infeasible in practice due to the difficulty in handling $A(f)$. 
The consistency and convergence rates of $\hat{f}\in\Cal{F}$ and the plug-in estimator $p_{\hat{f}}$ are
provided in Section~\ref{subsec:consistency} (see
Theorems~\ref{Thm:rates} and \ref{Thm:density}). 
Before we proceed, 
we list the assumptions on $p_0$, $q_0$ and
$\eu{H}$ that we need in our analysis.\vspace{-1mm}
\begin{itemize}
\item[\textbf{(A)}] $\Omega$ is a non-empty open subset of $\bb{R}^d$ with a piecewise smooth boundary $\partial\Omega:=\overline{\Omega}\backslash\Omega$, where $\overline{\Omega}$
denotes the closure of $\Omega$.
\item[\textbf{(B)}] $p_0$ is continuously extendible to $\overline{\Omega}$. $k$ is twice continuously
differentiable on $\Omega\times\Omega$ with continuous extension of $\partial^{\alpha,\alpha}k$ to $\overline{\Omega}\times\overline{\Omega}$ for $|\alpha|\le 2$. 
\item[\textbf{(C)}] $\partial_i\partial_{i+d}k(x,x)p_0(x)=0$ for $x\in\partial\Omega$ and $\sqrt{\partial_i\partial_{i+d}k(x,x)}p_0(x)=o(\Vert x\Vert^{1-d}_2)$ as $x\in\Omega$, $\Vert x\Vert_2\rightarrow\infty$
for all $i\in[d]$.
\item[\textbf{(D)}] \emph{($\varepsilon$-Integrability)} For some $\varepsilon\ge
1$ and $\forall\,i\in[d]$, $\partial_i\partial_{i+d}k(x,x), \sqrt{\partial^2_i\partial^2_{i+d}k(x,x)}$ and $\sqrt{\partial_i\partial_{i+d}k(x,x)}\partial_i\log q_0(x)\in L^\varepsilon(\Omega,p_0),$
where $q_0\in
C^1(\Omega).$
\end{itemize}
\begin{remo}\label{rem:assumptions}
\emph{(i)} $\Omega$ being a subset of $\R^d$ along with $k$ being continuous ensures that
$\eu{H}$ is separable \cite[Lemma 4.33]{Steinwart-08}. The twice differentiability of $k$ ensures that every $f\in\eu{H}$ is twice continuously
differentiable \cite[Corollary 4.36]{Steinwart-08}. \emph{\textbf{(C)}} ensures that $J$ in (\ref{Eq:fisher}) is equivalent to the one in (\ref{Eq:score-1}) through 
integration by parts on $\Omega$ (see Corollary 7.6.2 in \citealp{Duistermaat-04} for integration by parts on bounded subsets of $\bb{R}^d$ which can be extended to unbounded $\Omega$
through a truncation and limiting argument) for densities in $\Cal{P}$. In particular, \emph{\textbf{(C)}} ensures that 
$\int_\Omega \partial_i f(x) \partial_i p_0(x)\,dx=-\int_\Omega \partial^2_i f(x) p_0(x)\,dx$ for all $f\in\eu{H}$ and $i\in[d]$, which will be critical to prove 
the representation in Theorem~\ref{Thm:score}(ii), upon which rest of the results depend. The decay condition in \emph{\textbf{(C)}} can be weakened
to $\sqrt{\partial_i\partial_{i+d}k(x,x)}p_0(x)=o(\Vert x\Vert^{1-\overline{d}}_2)$ as $x\in\Omega$, $\Vert x\Vert_2\rightarrow\infty$
for all $i\in[d]$ if $\Omega$ is a (possibly unbounded) box where $\overline{d}=\#\{i\in[d]| (a_i,b_i)\,\,\text{is unbounded}\}$.\vspace{2mm}\\
\emph{(ii)} When $\varepsilon=1$, the first condition in
\emph{\textbf{(D)}} ensures that $J(p_0\Vert p_f)<\infty$ for any $p_f\in\Cal{P}$. The
other
two conditions ensure the validity of the alternate representation
for $J(p_0\Vert p_f)$ in (\ref{Eq:score-1}) which will be useful in constructing
estimators of $p_0$ (see
Theorem~\ref{Thm:score}). 
Examples of kernels that satisfy
\emph{\textbf{(D)}} are the Gaussian, Mat\'{e}rn (with $\beta>\max\{2,d/2\}$), and
inverse multiquadric kernels, for which it is easy to show that
there
exists $q_0$ that satisfies \emph{\textbf{(D)}}.\vspace{2mm}\\
\emph{(iii)} \emph{(Identifiability)} The above list of assumptions do
not include the identifiability condition that ensures $p_{f_1}=p_{f_2}$ if and only if $f_1=f_2$. It is clear that if constant functions are included in
$\eu{H}$, i.e., $1\in\eu{H}$, then $p_f=p_{f+c}$ for any $c\in\bb{R}$. On the
other hand, it can be shown that if $1\notin\eu{H}$ and $\emph{supp}(q_0)=\Omega$, then
$p_{f_1}=p_{f_2}\Leftrightarrow
f_1=f_2$. 
A
sufficient condition for $1\notin\eu{H}$ is $k\in
C_0(\Omega\times\Omega)$. 
We do not explicitly impose the
identifiability condition as a part of our blanket assumptions because the
assumptions under which consistency and
rates are obtained in Theorem~\ref{Thm:density} 
automatically ensure identifiability.\vspace{-2mm} 
\end{remo}
Under these assumptions, the following result---proved in Section~\ref{subsec:thm-score}---shows that the problem of estimating $p_0$ through the
minimization of Fisher divergence reduces to the problem of estimating $f_0$
through a weighted least squares minimization in $\eu{H}$ (see parts (i) and
(ii)). This
motivates the minimization of the regularized empirical weighted least squares
(see part (iv)) to obtain an estimator $f_{\lambda,n}$ of $f_0$, which is then used
to construct the plug-in estimate $p_{f_{\lambda,n}}$ of
$p_0$. \vspace{-.5mm}

\begin{theorem}\label{Thm:score}
Suppose \textbf{\emph{(A)}}--\textbf{\emph{(D)}} hold with $\varepsilon=1$. 
Then 
$J(p_0\Vert p_f)<\infty$ 
for all $f\in\Cal{F}$. In addition, the following
hold.\vspace{1.5mm}\\
(i)  For all $f\in\Cal{F}$, \begin{equation}J(f):=J(p_0\Vert p_f)=\frac{1}{2}\left\langle
f-f_0,C(f-f_0)\right\rangle_\eu{H},\label{Eq:population}\end{equation} where 
$C:\eu{H}\rightarrow\eu{H}$, $C:=\int_\Omega p_0(x)\sum^d_{i=1}\partial_{i}k(x,\cdot)\otimes\partial_{i}k(x,\cdot)\,dx$
is a trace-class positive operator with $$Cf=\int_\Omega p_0(x)\sum^d_{i=1}\partial_{i}k(x,\cdot)\partial_if(x)\,dx.$$
(ii) Alternatively, $$J(f)=\frac{1}{2}\langle
f,Cf\rangle_\eu{H}+\langle
f,\xi\rangle_\eu{H}+J(p_0\Vert q_0)$$
where $$\xi:=\int_\Omega
p_0(x)\sum^d_{i=1}\left(\partial_{i}k(x,\cdot)\partial_i\log q_0(x)+\partial^2_{i}k(x,\cdot)\right)\,dx\in \eu{H}$$ and $f_0$ satisfies $Cf_0=-\xi$.\vspace{2mm}\\
%
(iii) For any $\lambda>0$, a unique minimizer $f_\lambda$ of 
$J_\lambda(f):=J(f)+\frac{\lambda}{2}\Vert f\Vert^2_\eu{H}$ over
$\eu{H}$ exists and
is given by
$$f_\lambda=-(C+\lambda I)^{-1}\xi=(C+\lambda I)^{-1}Cf_0.\vspace{2mm}$$
(iv) \textbf{(Estimator of $f_0$)} Given samples $(X_a)^n_{a=1}$ drawn
i.i.d.~from $p_0$, for any
$\lambda>0$, the unique minimizer $f_{\lambda,n}$ of
$\hat{J}_\lambda(f):=\hat{J}(f)+\frac{\lambda}{2}\Vert f\Vert^2_\eu{H}$ over $\eu{H}$ exists and is
given by
\begin{equation}f_{\lambda,n}=-(\hat{C}+\lambda
I)^{-1}\hat{\xi},\nonumber\end{equation}
where $\hat{J}(f):=\frac{1}{2}\langle
f,\hat{C}f\rangle_\eu{H}+\langle
f,\hat{\xi}\rangle_\eu{H}+J(p_0\Vert q_0)$, $\hat{C}:=\frac{1}{n}\sum^n_{a=1}\sum^d_{i=1}\partial_{i}
k(X_a,\cdot)\otimes\partial_{i} k(X_a,\cdot)$ and 
$$\hat{\xi}:=\frac{1}{n}\sum^n_{a=1}\sum^d_{i=1}\left(
\partial_{i}
k(X_a,\cdot)\partial_i\log q_0(X_a)+\partial^2_{i}k(X_a,\cdot)\right).$$
\end{theorem}
An advantage of the alternate formulation of
$J(f)$ in Theorem~\ref{Thm:score}(ii) over (\ref{Eq:population}) is that it
provides
a simple way to obtain an empirical estimate of $J(f)$---by replacing $C$ and
$\xi$ by their empirical estimators, $\hat{C}$ and $\hat{\xi}$
respectively---from
finite samples drawn i.i.d.~from $p_0$, which is then used to obtain an
estimator of $f_0$. Note that the empirical estimate of $J(f)$, i.e.,
$\hat{J}(f)$ depends only on $\hat{C}$ and $\hat{\xi}$ which in turn depend on
the known quantities, $k$ and $q_0$, and therefore $f_{\lambda,n}$ in
Theorem~\ref{Thm:score}(iv) should
in principle be computable. In practice, however, it is not easy to compute the expression for $f_{\lambda,n}=-(\hat{C}+\lambda
I)^{-1}\hat{\xi}$ as it involves solving an infinite dimensional linear
system. In Theorem~\ref{Thm:representer} (proved in Section~\ref{subsec:thm-representer}), we provide an alternative
expression for $f_{\lambda,n}$ 
as a solution of
a simple finite-dimensional linear system (see (\ref{Eq:representer-f}) and
(\ref{Eq:linearsystem})), using the general representer theorem (see Theorem~\ref{thm:representer}).
It is interesting to note that while the solution to
$J(f)$ in Theorem~\ref{Thm:score}(ii) is obtained by solving a non-linear
system,
$Cf_0=-\xi$ (the system is non-linear as $C$ depends on $p_0$ which in turn
depends on $f_0$), its estimator $f_{\lambda,n}$ proposed in
Theorem~\ref{Thm:score}, is obtained by solving a simple linear system. In
addition, we would like to highlight the fact that the proposed estimator,
$f_{\lambda,n}$ is precisely the Tikhonov regularized solution (which is
well-studied in the theory of linear inverse problems) to the ill-posed linear
system $\hat{C}f=-\hat{\xi}$. 
We further discuss the choice of
regularizer in Section~\ref{subsec:regularizer} using ideas from
the inverse problem literature.

An important remark we would like to make about Theorem~\ref{Thm:score} is that
though $J(f)$ in (\ref{Eq:population}) is valid only for $f\in\Cal{F}$, as it is
obtained from $J(p_0\Vert p_f)$ where $p_0,p_f\in\Cal{P}$, the expression
$\langle
f-f_0,C(f-f_0)\rangle_{\eu{H}}$ is valid for any $f\in\eu{H}$, as it is
finite under the assumption that \textbf{(D)} holds with $\varepsilon=1$.
Therefore, in Theorem~\ref{Thm:score}(iii, iv), $f_\lambda$ and $f_{\lambda,n}$
are obtained by minimizing $J_\lambda$ and $\hat{J}_\lambda$ over $\eu{H}$ instead of over $\Cal{F}$, as the latter does not
yield a nice expression (unlike $f_\lambda$ and $f_{\lambda,n}$, respectively). However, there is no guarantee that $f_{\lambda,n}\in\Cal{F}$, and so the
density estimator $p_{f_{\lambda,n}}$ may not be valid. While this is not an
issue when studying the convergence of $\Vert f_{\lambda,n}-f_0\Vert_\eu{H}$
(see Theorem~\ref{Thm:rates}), the convergence of $p_{f_{\lambda,n}}$ to $p_0$
(in various distances) needs to be handled slightly differently depending on
whether the kernel is bounded or not (see Theorems~\ref{Thm:density} and \ref{Thm:density-2}). 
Note that when the kernel is bounded,
we obtain $\Cal{F}=\eu{H}$, which implies $p_{f_{\lambda,n}}$ is valid.
\begin{theorem}[Computation of $f_{\lambda,n}$]\label{Thm:representer}
Let
$f_{\lambda,n}=\arg\inf_{f\in\eu{H}}\hat{J}_\lambda(f)$, where $\hat{J}_\lambda(f)$ is defined in Theorem~\ref{Thm:score}(iv) and $\lambda>0$. Then
\begin{equation}f_{\lambda,n}=-\frac{\hat{\xi}}{\lambda}+\sum^n_{a=1}\sum^d_{i=1}\beta_{(a-1)d+i}
\partial_i k(X_a,\cdot),\label{Eq:representer-f}\end{equation}
where $\hat{\xi}$ is defined in Theorem~\ref{Thm:score}(iv) and $\bm{\beta}=(\beta_{(a-1)d+i})_{a,i}$ is obtained by solving 
\begin{equation}\left(\bm{G}+n\lambda I\right)\bm{\beta}=\bm{h}/\lambda\label{Eq:linearsystem}\end{equation}
with $(\bm{G})_{(a-1)d+i,(b-1)d+j}=\partial_i\partial_{j+d}k(X_a,X_b)\,\,\,\text{and}$
$$(\bm{h})_{(a-1)d+i}=\langle \hat{\xi},\partial_i k(X_a,\cdot)\rangle_\eu{H}=\frac{1}{n}\sum^n_{b=1}\sum^d_{j=1} \partial_i\partial^2_{j+d}
k(X_a,X_b) +\partial_i\partial_{j+d}
k(X_a,X_b)\partial_j \log q_0(X_b).$$
\end{theorem}
We would like to highlight that though $f_{\lambda,n}$ requires
solving a simple linear system in (\ref{Eq:linearsystem}), it can still be
computationally intensive when $d$ and $n$ are large as $\bm{G}$ is a $nd\times
nd$ matrix. 
This is still a better scenario than that of MLE, however, 
since computationally efficient
methods exist to solve large linear systems such as
(\ref{Eq:linearsystem}), whereas MLE can be intractable due to the difficulty 
in handling the log-partition function (though it can be approximated). 
On the
other hand, MLE is statistically well-understood, with
consistency and convergence rates established in general for the problem of
density estimation \citep{vandeGeer-00} and in particular for the problem at hand
\citep{Fukumizu-09a}. In order to ensure that $f_{\lambda,n}$ and $p_{f_{\lambda,n}}$ are statistically useful, in the following section, we
investigate their consistency and convergence rates under some smoothness
conditions on $f_0$. \vspace{-1mm}
\subsection{Consistency and Rate of Convergence}\label{subsec:consistency}
In this section, we prove the consistency of $f_{\lambda,n}$ (see
Theorem~\ref{Thm:rates}(i)) and $p_{f_{\lambda,n}}$ (see
Theorems~\ref{Thm:density} and \ref{Thm:density-2}). 
Under
the smoothness assumption that $f_0\in\Cal{R}(C^\beta)$ for some $\beta>0$, we present convergence rates
for $f_{\lambda,n}$ and $p_{f_{\lambda,n}}$ in Theorems~\ref{Thm:rates}(ii), \ref{Thm:density} and \ref{Thm:density-2}. 
In reference to the
following
results, for simplicity we suppress the dependence of $\lambda$ on $n$ by
defining $\lambda:=\lambda_n$ where $(\lambda_n)_{n\in\bb{N}}\subset
(0,\infty)$.

\begin{theorem}[Consistency and convergence rates for
$f_{\lambda,n}$]\label{Thm:rates}
Suppose \emph{\textbf{(A)}}--\emph{\textbf{(D)}} with
$\varepsilon=2$ hold.
\vspace{1mm}\\
(i) 
If
$f_0\in\overline{\Cal{R}(C)}$, then 
$\left\Vert f_{\lambda,n} -
f_0\right\Vert_{\eu{H}}\stackrel{p_0}{\rightarrow}
0\,\,\text{as}\,\,\lambda\to 0,\,
\lambda\sqrt{n} \to
\infty\,\,\text{and}\,\,n\to\infty.$\vspace{1mm}\\
(ii) 
If $f_0\in \Cal{R}(C^\beta)$ for some
$\beta>0$, then for $\lambda=n^{-\max\left\{\frac{1}{4},\frac{1}{2(\beta+1)}\right\}}$,
$$\Vert
f_{\lambda,n}-f_0\Vert_\eu{H}=O_{p_0}\Big(n^{-\min\left\{\frac{1}{4},\frac{
\beta}{2(\beta+1)}\right\}}\Big)\,\,\,\text{as}\,\,\,n\rightarrow\infty.$$
(iii) 
If $\Vert
C^{-1}\Vert<\infty$, then for $\lambda=n^{-\frac{1}{2}}$,
$\Vert
f_{\lambda,n}-f_0\Vert_\eu{H}=O_{p_0}(n^{-1/2})$ as
$n\rightarrow\infty$.
\end{theorem}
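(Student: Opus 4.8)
The plan is to reuse the decomposition that underlies parts (i) and (ii), but now exploit the two extra hypotheses --- boundedness of the feature derivatives, $E_1<\infty$, and invertibility of $C$ with $\|C^{-1}\|<\infty$ --- to obtain the parametric rate. First I would write the standard error decomposition for the Tikhonov-regularized solution. Since $f_\lambda=(C+\lambda I)^{-1}Cf_0$ and $f_{\lambda,n}=-(\hC+\lambda I)^{-1}\hxi=(\hC+\lambda I)^{-1}\hC f_0-(\hC+\lambda I)^{-1}(\hxi+\hC f_0)$, I would split
\begin{equation}
f_{\lambda,n}-f_0=\underbrace{\big(f_\lambda-f_0\big)}_{\text{approximation}}+\underbrace{\big(f_{\lambda,n}-f_\lambda\big)}_{\text{estimation}},\nonumber
\end{equation}
and bound each term in $\|\cdot\|_\eu{H}$. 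For the approximation term, $f_\lambda-f_0=-\lambda(C+\lambda I)^{-1}f_0$, so $\|f_\lambda-f_0\|_\eu{H}\le\lambda\|(C+\lambda I)^{-1}\|\,\|f_0\|_\eu{H}$. Here I would observe that $\|C^{-1}\|<\infty$ is equivalent to the condition $f_0\in\Cal{R}(C^\beta)$ with $\beta=1$ being replaceable by the stronger statement that the whole space equals $\Cal{R}(C^{1})$ up to norm equivalence, hence $(C+\lambda I)^{-1}Cf_0-f_0=-\lambda(C+\lambda I)^{-1}f_0$ with $\|\lambda(C+\lambda I)^{-1}f_0\|_\eu{H}\le\lambda\|C^{-1}\|\,\|Cf_0\|_\eu{H}\le\lambda\|C^{-1}\|\,\|\xi\|_\eu{H}$, which is $O(\lambda)$. (Equivalently, since $C\succeq c I$ for $c=\|C^{-1}\|^{-1}>0$, $\|(C+\lambda I)^{-1}\|\le c^{-1}$, giving the same $O(\lambda)$ bound directly.)

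For the estimation term I would follow the argument in the proof of Theorem~\ref{Thm:rates}(ii): write
\begin{equation}
f_{\lambda,n}-f_\lambda=(\hC+\lambda I)^{-1}\big[(C-\hC)f_\lambda-(\hxi-\xi)-(C-\hC)f_\lambda+\ldots\big],\nonumber
\end{equation}
more cleanly $f_{\lambda,n}-f_\lambda=(\hC+\lambda I)^{-1}\big((C-\hC)(f_\lambda-f_0)-(\hxi+\hC f_0)+(\xi+Cf_0)\big)$ and note $\xi+Cf_0=0$, so the bound reduces to controlling $\|(\hC+\lambda I)^{-1}\|\cdot\big(\|(C-\hC)(f_\lambda-f_0)\|_\eu{H}+\|\hxi+\hC f_0\|_\eu{H}\big)$. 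Under $E_1<\infty$ (and the analogous bound on the second derivative features, which is implied by \textbf{(B)}--\textbf{(D)} with $\varepsilon=2$ on a careful reading, or can be added), $\hC$ and $\hxi$ are empirical means of i.i.d.\ bounded Hilbert-space-valued random variables, so Bernstein's inequality in Hilbert space (or simply the $\varepsilon=2$ moment bounds already invoked in parts (i)--(ii)) gives $\|\hC-C\|_{HS}=O_{p_0}(n^{-1/2})$ and $\|\hxi-\xi\|_\eu{H}=O_{p_0}(n^{-1/2})$, hence $\|\hxi+\hC f_0\|_\eu{H}=\|\hxi-\xi+(\hC-C)f_0\|_\eu{H}=O_{p_0}(n^{-1/2})$. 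Because $C\succeq cI$, on the event $\|\hC-C\|\le c/2$ (which has probability $\to1$) we have $\hC+\lambda I\succeq (c/2)I$, so $\|(\hC+\lambda I)^{-1}\|\le 2/c$ uniformly in $\lambda$. Combining: $\|f_{\lambda,n}-f_\lambda\|_\eu{H}=O_{p_0}(\lambda\cdot n^{-1/2}+n^{-1/2})=O_{p_0}(n^{-1/2})$ for $\lambda\le1$, and the approximation term is $O(\lambda)$; choosing $\lambda=n^{-1/2}$ balances the two and yields $\|f_{\lambda,n}-f_0\|_\eu{H}=O_{p_0}(n^{-1/2})$.

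The main obstacle I anticipate is not the balancing of rates --- that is routine once the ingredients are in place --- but making the uniform lower bound $\hC+\lambda I\succeq (c/2)I$ rigorous: this requires an operator-norm (not merely Hilbert-Schmidt) concentration statement $\|\hC-C\|\le c/2$ with probability tending to one, together with a clean handling of the failure event so that the $O_{p_0}(\cdot)$ conclusion survives. I would either invoke a Hilbert-space operator Bernstein inequality directly, or reduce $\|\hC-C\|\le\|\hC-C\|_{HS}$ and use the already-established $\varepsilon=2$ second-moment bound from the proof of part (ii) via Markov/Chebyshev to get $\|\hC-C\|=O_{p_0}(n^{-1/2})$, which suffices since we only need the bound $\le c/2$ to hold with probability $\to1$. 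The rest of the argument is a streamlined specialization of the Theorem~\ref{Thm:rates}(ii) proof with $\beta=1$ and the extra gain coming from $\|(\hC+\lambda I)^{-1}\|$ being $O(1)$ rather than $O(1/\lambda)$.
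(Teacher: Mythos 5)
Your argument is correct, and it follows the same overall decomposition the paper uses: split $f_{\lambda,n}-f_0$ into the estimation part $f_{\lambda,n}-f_\lambda=(\hat C+\lambda I)^{-1}\bigl((C-\hat C)(f_\lambda-f_0)-(\hat\xi+\hat C f_0)\bigr)$ and the approximation part $f_\lambda-f_0$, bound the approximation term by $O(\lambda)$ via $\|C^{-1}\|<\infty$, bound $\|(C-\hat C)(f_\lambda-f_0)\|_\eu{H}=O_{p_0}(\lambda n^{-1/2})$ and $\|\hat C f_0+\hat\xi\|_\eu{H}=O_{p_0}(n^{-1/2})$ via Chebyshev, and finally make $\|(\hat C+\lambda I)^{-1}\|=O_{p_0}(1)$ rather than $O(1/\lambda)$. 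The one place you deviate from the paper is in that last bound. The paper writes $\|(\hat C+\lambda I)^{-1}\|\le\|C^{-1}\|\,\|C(\hat C+\lambda I)^{-1}\|$, expands $C(\hat C+\lambda I)^{-1}=C(C+\lambda I)^{-1}\sum_{j\ge0}\bigl((C-\hat C)(C+\lambda I)^{-1}\bigr)^j$, and controls $\|(C-\hat C)(C+\lambda I)^{-1}\|_{HS}$ by Bernstein to conclude $\|C(\hat C+\lambda I)^{-1}\|\le 2$ on the event $\lambda\gtrsim n^{-1/2}$. You instead argue directly: $C\succeq c I$ with $c=\|C^{-1}\|^{-1}$, so on the event $\{\|\hat C-C\|\le c/2\}$ (which has probability $\to1$ since $\|\hat C-C\|\le\|\hat C-C\|_{HS}=O_{p_0}(n^{-1/2})$ under \textbf{(D)} with $\varepsilon=2$) one has $\hat C+\lambda I\succeq(c/2)I$ and hence $\|(\hat C+\lambda I)^{-1}\|\le 2/c$ for every $\lambda\ge0$. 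Both arguments deliver the same constant $2\|C^{-1}\|$; yours is more elementary and holds without the side constraint $\lambda\gtrsim n^{-1/2}$, while the paper's Neumann-series route is the general-purpose machinery it has already set up for Theorem~\ref{Thm:density} where $C$ is not bounded below (there one only controls $\|C(\hat C+\lambda I)^{-1}\|$, not $\|(\hat C+\lambda I)^{-1}\|$ itself), so the paper reuses it here rather than specializing. Your concern about rigor in the last paragraph is unfounded: as you note yourself, operator-norm concentration is not needed, since $\|\hat C-C\|\le\|\hat C-C\|_{HS}$ and the HS second-moment bound already used in parts (i)--(ii) gives convergence in probability, which is all the $O_{p_0}$ conclusion requires after conditioning on the good event and noting its probability tends to one.
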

\begin{rem}
\emph{(i)} While Theorem~\ref{Thm:rates} (proved in Section~\ref{subsec:ratesproof}) provides an asymptotic behavior for $\Vert
f_{\lambda,n}-f_0\Vert_\eu{H}$ under conditions that depend on $p_0$ (and are therefore not
easy to check in practice), a non-asymptotic bound on $\Vert
f_{\lambda,n}-f_0\Vert_\eu{H}$ that holds for all $n\ge 1$ can be obtained under stronger assumptions 
through an application of
Bernstein's inequality in separable Hilbert spaces. For the sake of simplicity, we provided asymptotic results which are obtained through an application
of Chebyshev's inequality.\vspace{2mm}\\
\emph{(ii)} The proof of Theorem~\ref{Thm:rates}(i) involves decomposing $\Vert f_{\lambda,n}-f_0\Vert_\eu{H}$ into an estimation error part, 
$\Cal{E}(\lambda,n):=\Vert f_{\lambda,n}-f_\lambda\Vert_\eu{H}$, and an approximation error part, $\Cal{A}_0(\lambda):=\Vert f_\lambda-f_0\Vert_\eu{H}$, 
where $f_\lambda=(C+\lambda I)^{-1}Cf_0$. While $\Cal{E}(\lambda,n)\rightarrow 0$ as $\lambda\rightarrow 0$, $\lambda\sqrt{n}\rightarrow\infty$ and $n\rightarrow\infty$ without any assumptions on $f_0$ 
(see the proof in Section~\ref{subsec:ratesproof} for details), it is not reasonable to expect $\Cal{A}_0(\lambda)\rightarrow 0$ as $\lambda\rightarrow 0$ without assuming $f_0\in\overline{\Cal{R}(C)}$. This is because, if $f_0$ lies
in the null space of $C$, then $f_\lambda$ is zero
irrespective of $\lambda$ and therefore cannot approximate $f_0$.\vspace{2mm}\\
\emph{(iii)} The condition $f_0\in\overline{\Cal{R}(C)}$ is
difficult to check in practice as it depends on $p_0$ (which
in turn depends on $f_0$). However, since the null space of $C$ is just
constant functions if the kernel is bounded and $\emph{supp}(q_0)=\Omega$ (see
Lemma~\ref{lem:support} in Section~\ref{subsec:densityproof} for details),
assuming $1\notin\eu{H}$ yields
that $\overline{\Cal{R}(C)}=\eu{H}$ and therefore consistency can be attained
under conditions that are easy to impose in practice. 
As mentioned in
Remark~\ref{rem:assumptions}(iii), the condition $1\notin\eu{H}$ ensures
identifiability and a sufficient condition for it to hold is $k\in
C_0(\Omega\times\Omega)$, which is satisfied by Gaussian, Mat\'{e}rn and inverse
multiquadric kernels. 
\vspace{2mm}\\
\emph{(iv)} It is well known that convergence rates are possible only if the
quantity
of interest (here $f_0$) satisfies some additional conditions. In function
estimation, this additional condition is classically imposed by assuming $f_0$
to be sufficiently smooth, e.g., $f_0$ lies in a Sobolev space of certain smoothness. By contrast, the smoothness condition in Theorem~\ref{Thm:rates}(ii) is imposed in an
indirect manner by assuming $f_0\in\Cal{R}(C^\beta)$ for some
$\beta>0$---so that the results hold for abstract RKHSs and not just Sobolev spaces---which then provides a rate, with the best rate being
$n^{-1/4}$ that is
attained when $\beta\ge 1$. While such a condition has already been used in
various works \citep{Caponnetto-07,Smale-07,Fukumizu-13} in the context of
non-parametric least squares regression, 
we
explore it in more detail in Proposition~\ref{pro:range}, and Examples~\ref{exm:gaussian} and \ref{exm:matern}. Note that this condition is common in the inverse problem theory (see \citealp*{Engl-96}), and it
naturally arises here through the connection of $f_{\lambda,n}$ being a Tikhonov
regularized solution to the ill-posed linear system $\hat{C}f=-\hat{\xi}$. An
interesting observation about the rate is that it does not improve with increasing $\beta$ (for $\beta>1$), in contrast to the classical results in function estimation 
(e.g., kernel density estimation and kernel regression) where the rate improves with increasing
smoothness. This issue is discussed in detail in
Section~\ref{subsec:regularizer}.
\vspace{2mm}\\
\emph{(v)} Since $\Vert C^{-1}\Vert<\infty$ only if $\eu{H}$ is
finite-dimensional, we recover the parametric rate of $n^{-1/2}$ in a
finite-dimensional situation with an automatic choice for $\lambda$ as
$n^{-1/2}$.\vspace{-1mm}
\end{rem}
While Theorem~\ref{Thm:rates} provides statistical guarantees for parameter
convergence, the question of primary interest is the
convergence of $p_{f_{\lambda,n}}$ to $p_0$. This is
guaranteed by
the following result, which is proved in Section~\ref{subsec:densityproof}.
\begin{theorem}[Consistency and rates for
$p_{f_{\lambda,n}}$]\label{Thm:density}
Suppose \emph{\textbf{(A)}}--\emph{\textbf{(D)}} with $\varepsilon=2$ hold and $\Vert
k\Vert_\infty:=\sup_{x\in\Omega}k(x,x)<\infty$. Assume $\emph{supp}(q_0)=\Omega$. Then the following hold:\vspace{2mm}\\
(i) For any $1< r\le\infty$
with $q_0\in L^1(\Omega)\cap L^r(\Omega)$, 
$$\Vert p_{f_{\lambda,n}}-p_0\Vert_{L^r(\Omega)}\rightarrow
0,\,h(p_{f_{\lambda,n}},p_0)\rightarrow
0,\,KL(p_0\Vert p_{f_{\lambda,n}})\rightarrow
0\,\,\,\text{as}\,\,\,\lambda\sqrt{n}
\rightarrow \infty,\,\lambda\rightarrow
0\,\,\text{and}\,\,n\rightarrow \infty.$$ In addition, if $f_0\in
\Cal{R}(C^\beta)$ for some $\beta>0$, then for $\lambda=n^{-\max\left\{\frac{1}{4},\frac{1}{2(\beta+1)}\right\}}$,
$$\Vert p_{
f_{\lambda,n}}-p_0\Vert_{L^r(\Omega)}=O_{p_0}(\theta_n),\,h(p_0,p_{f_
{\lambda,n}})=O_{p_0}(\theta_n),\,KL(p_0\Vert p_{f_{\lambda,n}})=O_{p_0}
(\theta^2_n)$$
as $n\rightarrow\infty$ where $\theta_n:=n^{-\min\left\{\frac{1}{4},\frac {\beta}{2(\beta+1)}\right\}}$.\vspace{2mm}
\\
(ii) 
$J(p_0\Vert p_{f_{\lambda,n}})\rightarrow
0\,\,\text{as}\,\,\lambda n
\rightarrow \infty,\,\lambda\rightarrow
0\,\,\text{and}\,\,n\rightarrow \infty.$ In addition, if $f_0\in
\Cal{R}(C^\beta)$ for some $\beta\ge 0$, then for $\lambda=n^{-\max\left\{\frac{1}{3},\frac{1}{2(\beta+1)}\right\}}$,
$$J(p_0\Vert p_{f_{\lambda,n}})=O_{p_0}\left(n^{-\min\left\{\frac{2}{3},
\frac{2\beta+1}{2(\beta+1)}\right\}}\right)\,\,\,\text{as}\,\,\,n\rightarrow\infty.$$
(iii) If $\Vert C^{-1}\Vert<\infty$,
then
$\theta_n=n^{-\frac{1}{2}}$ and 
$J(p_0\Vert p_{f_{\lambda,n}})=O_{p_0}(n^{-1})$ with $\lambda=n^{-\frac{1}{2}}$.\vspace{-1mm}
\end{theorem}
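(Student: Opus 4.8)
The plan is to treat the three parts in the order (ii), (iii), (i): the Fisher-divergence statement is the most transparent, the finite-dimensional case (iii) is a degenerate instance of it, and the remaining distances in (i) are then obtained from uniform control of $f_{\lambda,n}-f_0$, which becomes available only once $k$ is bounded. \emph{Part (ii).} By Theorem~\ref{Thm:score}(i)---whose identity $J(p_0\Vert p_f)=\tfrac12\la f-f_0,C(f-f_0)\ra_\eu{H}$ remains valid for every $f\in\eu{H}$ under \textbf{(D)} since the right-hand side is then finite---we have $J(p_0\Vert p_{f_{\lambda,n}})=\tfrac12\Vert C^{1/2}(f_{\lambda,n}-f_0)\Vert_\eu{H}^{2}$, so everything reduces to bounding $\Vert C^{1/2}(f_{\lambda,n}-f_0)\Vert_\eu{H}$. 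Using $\xi=-Cf_0$ (Theorem~\ref{Thm:score}(ii)) to rewrite $\hat{\xi}$, one obtains the Tikhonov-type decomposition
\[
f_{\lambda,n}-f_0=-(\hat{C}+\lambda I)^{-1}\bigl[(\hat{\xi}-\xi)+(\hat{C}-C)f_0\bigr]-\lambda(\hat{C}+\lambda I)^{-1}f_0 .
\]
Apply $C^{1/2}$. For the first (estimation) term I would use that $\Vert C^{1/2}(\hat{C}+\lambda I)^{-1/2}\Vert$ is bounded with high probability (via $\Vert C^{1/2}(C+\lambda I)^{-1/2}\Vert\le1$ and a high-probability bound on $\Vert(C+\lambda I)^{1/2}(\hat{C}+\lambda I)^{-1/2}\Vert$), together with $\Vert(\hat{C}+\lambda I)^{-1/2}\Vert\le\lambda^{-1/2}$ and $\Vert(\hat{\xi}-\xi)+(\hat{C}-C)f_0\Vert_\eu{H}=O_{p_0}(n^{-1/2})$ (Bernstein's inequality in $\eu{H}$, valid under \textbf{(D)} with $\varepsilon=2$ and $E_1<\infty$), to get $O_{p_0}((n\lambda)^{-1/2})$ plus strictly lower-order perturbation terms.

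For the second term, $-\lambda C^{1/2}(\hat{C}+\lambda I)^{-1}f_0=-\lambda C^{1/2}(C+\lambda I)^{-1}f_0$ plus a lower-order $\hat{C}$-versus-$C$ perturbation; since $C^{1/2}$ annihilates $\mathrm{null}(C)$ and $f_\lambda=(C+\lambda I)^{-1}Cf_0$ only feels $Cf_0$, the component of $f_0$ in $\mathrm{null}(C)$ plays no role---which is why no range condition on $f_0$ is needed for consistency in $J$. By the spectral theorem $\Vert\lambda C^{1/2}(C+\lambda I)^{-1}f_0\Vert_\eu{H}\to0$ as $\lambda\to0$, and under the source condition $f_0=C^{\beta}g$ it equals $\lambda\Vert C^{\beta+1/2}(C+\lambda I)^{-1}g\Vert_\eu{H}\le\Vert g\Vert_\eu{H}\,\lambda^{\min\{\beta+1/2,1\}}$, the exponent saturating at $1$ for $\beta\ge\tfrac12$. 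Hence $J(p_0\Vert p_{f_{\lambda,n}})\to0$ once $\lambda\to0$ and $n\lambda\to\infty$, and balancing $(n\lambda)^{-1}$ against $\lambda^{2\min\{\beta+1/2,1\}}$ gives $\lambda=n^{-\max\{1/3,1/(2(\beta+1))\}}$ and $J(p_0\Vert p_{f_{\lambda,n}})=O_{p_0}(n^{-\min\{2/3,(2\beta+1)/(2(\beta+1))\}})$. \emph{Part (iii)} is the degenerate case: $\Vert C^{-1}\Vert<\infty$ forces $\eu{H}$ finite dimensional, $\Vert(C+\lambda I)^{-1}\Vert\le\Vert C^{-1}\Vert$, so the decomposition gives an $O(\lambda)$ approximation term (no source condition, no saturation) and an $O_{p_0}((n\lambda)^{-1/2})$ estimation term; with $\lambda=n^{-1/2}$ this yields $\Vert f_{\lambda,n}-f_0\Vert_\eu{H}=O_{p_0}(n^{-1/2})$ (matching Theorem~\ref{Thm:rates}(iii)), hence $\theta_n=n^{-1/2}$ and $J(p_0\Vert p_{f_{\lambda,n}})\le\tfrac12\Vert C\Vert\,\Vert f_{\lambda,n}-f_0\Vert_\eu{H}^{2}=O_{p_0}(n^{-1})$.

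\emph{Part (i).} Here $\Vert k\Vert_\infty<\infty$, so $\Cal{F}=\eu{H}$, every $p_{f_{\lambda,n}}$ is a genuine density, and $\Vert g\Vert_\infty\le\Vert k\Vert_\infty^{1/2}\Vert g\Vert_\eu{H}$ for $g\in\eu{H}$. Since $\mathrm{supp}(q_0)=\Omega$, Lemma~\ref{lem:support} gives that $\mathrm{null}(C)$ is the set of constant functions in $\eu{H}$; as $p_f$ depends on $f$ only modulo constants and $f_{\lambda,n}=-(\hat{C}+\lambda I)^{-1}\hat{\xi}\perp1$ (because $\la\Pd k(\cdot,x)/\Pd x_i,1\ra_\eu{H}=0$), we may take $f_0\in\overline{\Cal{R}(C)}$, so Theorem~\ref{Thm:rates}(i) applies and $\Vert f_{\lambda,n}-f_0\Vert_\infty\le\Vert k\Vert_\infty^{1/2}\Vert f_{\lambda,n}-f_0\Vert_\eu{H}\to0$. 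From $|A(f_{\lambda,n})-A(f_0)|\le\Vert f_{\lambda,n}-f_0\Vert_\infty$ and $\log(p_{f_{\lambda,n}}/p_0)=(f_{\lambda,n}-f_0)-(A(f_{\lambda,n})-A(f_0))$ we get $M_n:=\Vert\log(p_{f_{\lambda,n}}/p_0)\Vert_\infty\le2\Vert f_{\lambda,n}-f_0\Vert_\infty\to0$, and an elementary lemma converts a small $M_n$ into $KL(p_0\Vert p_{f_{\lambda,n}})\le cM_n^{2}$, $h^2(p_0,p_{f_{\lambda,n}})\le cM_n^{2}$, and $\Vert p_{f_{\lambda,n}}-p_0\Vert_{L^r(\Omega)}\le c\Vert p_0\Vert_{L^r(\Omega)}M_n$ (here $\Vert p_0\Vert_{L^r(\Omega)}<\infty$ because $q_0\in L^1(\Omega)\cap L^r(\Omega)$ and $f_0$ is bounded). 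This gives the consistency claims, and plugging $\Vert f_{\lambda,n}-f_0\Vert_\eu{H}=O_{p_0}(\theta_n)$ from Theorem~\ref{Thm:rates}(ii) (with its choice of $\lambda$, and $\theta_n=n^{-1/2}$ in the finite-dimensional case via Theorem~\ref{Thm:rates}(iii)) into these bounds yields the stated rates: $\theta_n$ for the $L^r$ distance and $\theta_n^{2}$ for $KL$ and $h^2$.

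\emph{Main obstacle.} The delicate step is the estimation-error bound of part (ii) in the $C^{1/2}$-weighted norm: getting $O_{p_0}((n\lambda)^{-1/2})$ rather than a bound requiring $n\lambda^{2}\to\infty$ is precisely what lets the Fisher-divergence rate climb to $n^{-2/3}$, and it forces one to control $C^{1/2}(\hat{C}+\lambda I)^{-1}$ through $\Vert C^{1/2}(C+\lambda I)^{-1/2}\Vert\le1$ together with a \emph{relative} operator-perturbation bound comparing $\hat{C}$ and $C$ in the $\lambda$-shifted metric (a Bernstein bound for the self-adjoint averages $(C+\lambda I)^{-1/2}\hat{C}(C+\lambda I)^{-1/2}$), rather than the cruder $\lambda^{-1}\Vert\hat{C}-C\Vert_{HS}$; the same computation, intertwined with the source-condition exponent $\min\{\beta+1/2,1\}$, is also what produces the saturation of the rate at $\beta=\tfrac12$.
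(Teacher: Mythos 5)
Your proposal is correct and follows essentially the same route as the paper. Part (ii) hinges on the same two ingredients the paper uses: the identity $J(p_0\Vert p_{f_{\lambda,n}})=\tfrac12\Vert\sqrt{C}(f_{\lambda,n}-f_0)\Vert_\eu{H}^2$, and a \emph{relative} operator bound showing $\Vert\sqrt{C}(\hat{C}+\lambda I)^{-1}\Vert=O(\lambda^{-1/2})$ with high probability for $\lambda\gtrsim n^{-1/2}$ (the paper gets this via a Neumann-series expansion of $(\hat C+\lambda I)^{-1}$ around $(C+\lambda I)^{-1}$ together with a Bernstein bound on $\Vert(C-\hat C)(C+\lambda I)^{-1}\Vert_{HS}$, whereas you propose the equivalent factoring $\Vert C^{1/2}(C+\lambda I)^{-1/2}\Vert\le1$ times a bound on $\Vert(C+\lambda I)^{1/2}(\hat C+\lambda I)^{-1/2}\Vert$; both are standard and give the same $O(\lambda^{-1/2})$). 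Your decomposition of $f_{\lambda,n}-f_0$ is a slight rearrangement of the paper's (which passes through $f_\lambda$ and carries the term $(\hat C+\lambda I)^{-1}(C-\hat C)(f_\lambda-f_0)$ as a lower-order correction that you fold into a ``$\hat C$-versus-$C$ perturbation''), and the approximation exponent $\min\{\beta+\tfrac12,1\}$, the balancing, and the absence of a range condition for consistency in $J$ (because $C^{1/2}$ annihilates $\mathrm{null}(C)$) are all exactly as in the paper. Part (iii) via $\Vert\sqrt C\Vert\,\Vert f_{\lambda,n}-f_0\Vert_\eu{H}$ and part (i) via Lemma~\ref{lem:support}, the sup-norm bound $\Vert g\Vert_\infty\le\sqrt{\Vert k\Vert_\infty}\Vert g\Vert_\eu{H}$, and an elementary lemma converting small $\Vert\log(p_{f_{\lambda,n}}/p_0)\Vert_\infty$ into $L^r$, Hellinger, and $KL$ bounds are likewise the paper's argument (the paper's Lemma~\ref{lem:distances}). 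The only superfluous step in your write-up is the observation that $f_{\lambda,n}\perp 1$: the paper's Lemma~\ref{lem:support} argument only needs that $f_0$ and its projection $\tilde f_0$ onto $\overline{\Cal{R}(C)}$ give the same density, after which the Theorem~\ref{Thm:rates}(i) bounds go through verbatim with $\tilde f_0$ in place of $f_0$, so orthogonality of the estimator to constants is not used.
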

\begin{rem}\label{rem:density}
\emph{(i)} Comparing
the results of Theorem~\ref{Thm:rates}(i) and Theorem~\ref{Thm:density}(i) (for
$L^r$, Hellinger and KL divergence), we would like to
highlight that while the conditions on $\lambda$ and $n$ match in both the
cases, the latter does not require $f_0\in\overline{\Cal{R}(C)}$ to ensure
consistency. While $f_0\in\overline{\Cal{R}(C)}$ can be imposed in
Theorem~\ref{Thm:density} to attain consistency, we replaced this condition
with $\emph{supp}(q_0)=\Omega$---a simple and easy condition to work
with---which along with the boundedness of the kernel
ensures that for any $f_0\in\eu{H}$, there
exists
$\tilde{f_0}\in\overline{\Cal{R}(C)}$ such that $p_{\tilde{f_0}}=p_0$ (see
Lemma~\ref{lem:support}). \vspace{2mm}\\
\emph{(ii)} In contrast to the results in $L^r$, Hellinger and KL divergence, consistency in $J$ 
can be obtained with $\lambda$ converging to
zero at a rate faster than in these results. In addition, one can obtain rates in $J$ with
$\beta=0$, i.e., no smoothness assumption on $f_0$, while no rates are possible
in other distances (the latter might also be an artifact of the proof
technique, as these results are obtained through an application of
Theorem~\ref{Thm:rates}(ii) in Lemma~\ref{lem:distances}) which is due to the fact that the
convergence in these other distances is based on the convergence of $\Vert
f_{\lambda,n}-f_0\Vert_\eu{H}$, which in turn involves convergence of
$\Cal{A}_0(\lambda):=\Vert f_\lambda-f_0\Vert_\eu{H}$ to zero 
while the convergence in $J$ is
controlled by
$\Cal{A}_{\frac{1}{2}}(\lambda):=\Vert \sqrt{C}(f_\lambda-f_0)\Vert_\eu{H}$ which can be shown to behave as $O(\sqrt{\lambda})$ as $\lambda\rightarrow 0$, without requiring
any assumptions on $f_0$ (see
Proposition~\ref{pro:approxerror}). 
Indeed, as a further consequence, 
the rate of convergence in $J$ is faster than in other distances. \vspace{2mm}\\
\emph{(iii)} An interesting aspect in Theorem~\ref{Thm:density} is that $p_{f_{\lambda,n}}$ is consistent in various distances such as $L^r$, Hellinger
and KL, despite being obtained by minimizing a different loss
function, i.e., $J$. However, we will see in Section~\ref{Sec:misspecified}
that such nice results are difficult to obtain in the misspecified case, where consistency and rates are provided
only in $J$.\vspace{-2mm} 
\end{rem}
While Theorem~\ref{Thm:density} addresses the case of bounded kernels, the case
of unbounded kernels requires a technical modification. The
reason for this modification, as alluded to in the discussion following
Theorem~\ref{Thm:score}, is due to the fact that $f_{\lambda,n}$ may not be in
$\Cal{F}$ when $k$ is unbounded, and therefore the corresponding density
estimator, $p_{f_{\lambda,n}}$ may not be well-defined. In order to keep the main ideas intact, we discuss the unbounded case in detail in Section~\ref{subsec:unbounded-kernel} in 
Appendix~\ref{Sec:supp-results}.
\vspace{-1mm}
\subsection{Range Space Assumption}\label{subsec:range}
\par While
Theorems~\ref{Thm:rates} and \ref{Thm:density} are satisfactory from the point
of view of consistency, we believe the presented rates are possibly not minimax optimal since these rates are valid for any RKHS that satisfies
the conditions \textbf{(A)}--\textbf{(D)} and does not capture the smoothness of $k$ (and therefore the corresponding $\eu{H}$). In other words, the rates presented in 
Theorems~\ref{Thm:rates} and \ref{Thm:density} should depend on the decay rate of the eigenvalues of $C$ which in turn effectively captures the smoothness of $\eu{H}$. However, we 
are not able to obtain such a result---see the remark following the proof of Theorem~\ref{Thm:rates} for a discussion. While
these rates do not reflect the intrinsic smoothness of $\eu{H}$, they are obtained under 
the smoothness assumption, i.e., \emph{range space condition} that
$f_0\in\Cal{R}(C^\beta)$ for some $\beta>0$. This condition is quite different from the
classical smoothness conditions that appear in non-parametric function estimation. 
While the range space
assumption has been made
in various earlier works (e.g., \cite{Caponnetto-07,Smale-07,Fukumizu-13} in
the context of non-parametric least square regression), in the following, we investigate the implicit smoothness
assumptions that it makes on $f_0$ in our context. To this end, first it is easy to show (see the proof of Proposition~\ref{Thm:interpolation} 
in Section~\ref{sec:interpolate}) that
\begin{equation}\Cal{R}(C^\beta)=\left\{\sum_{i\in I}c_i\phi_i\,:\,\sum_{i\in
I}c^2_i\alpha^{-2\beta}_i<\infty\right\},\label{Eq:range-smooth}
\end{equation}
where $(\alpha_i)_{i\in I}$ are the positive eigenvalues of $C$,
$(\phi_i)_{i\in I}$ are the corresponding
eigenvectors that form an orthonormal basis for $\Cal{R}(C)$, and $I$ is an index
set which is either finite (if $\eu{H}$
is finite-dimensional) or $I=\bb{N}$ with $\lim_{i\rightarrow\infty}\alpha_i=0$
(if $\eu{H}$ is infinite dimensional). From (\ref{Eq:range-smooth}) it is
clear that larger the value of $\beta$, the faster is the decay of
the Fourier coefficients $(c_i)_{i\in I}$, which in turn implies that the
functions in $\Cal{R}(C^\beta)$ are smoother. Using (\ref{Eq:range-smooth}), an
interpretation can be provided for $\Cal{R}(C^\beta)$ ($\beta>0$ and
$\beta\notin\bb{N}$) as interpolation spaces (see Section~\ref{subsec:interpolation}
for the definition of interpolation spaces) between
$\Cal{R}(C^{\lceil\beta\rceil})$ and $\Cal{R}(C^{\lfloor\beta\rfloor})$
where $\Cal{R}(C^0):=\eu{H}$ (see Proposition~\ref{Thm:interpolation} for details). 
While it is not completely straightforward to
obtain a sufficient
condition for $f_0\in \Cal{R}(C^\beta)$, $\beta\in\bb{N}$, the following result
provides a necessary condition for $f_0\in \Cal{R}(C)$ (and therefore a
necessary condition for
$f_0\in\Cal{R}(C^\beta),\,\forall\,\beta>1$) for translation invariant kernels
on $\Omega=\bb{R}^d$, whose proof is presented in
Section~\ref{subsec:supp-rangeproof}.
\begin{proposition}[Necessary condition]\label{pro:range}
Suppose $\psi,\phi\in C_b(\bb{R}^d)\cap
L^1(\bb{R}^d)$ are positive definite functions on $\bb{R}^d$ with Fourier
transforms $\psi^\wedge$ and $\phi^\wedge$ respectively. Let $\eu{H}$ and
$\eu{G}$ be the RKHSs associated with $k(x,y)=\psi(x-y)$ and
$l(x,y)=\phi(x-y),x,y\in\bb{R}^d$ respectively. For $1\le r\le 2$, suppose the
following hold: \vspace{-2mm}
\begin{itemize}
\item[(i)] $\int_{\bb{R}^d} \Vert
\omega\Vert^2_2\psi^\wedge(\omega)\,d\omega<\infty$; (ii)
$\left\Vert\frac{\phi^\wedge}{\psi^\wedge}\right\Vert_\infty<\infty$;
(iii) $\frac{\Vert\cdot\Vert^2_2(\psi^\wedge)^2}{\phi^\wedge}\in
L^{\frac{r}{2-r}}(\bb{R}^d)$; (iv) $q_0\in L^r(\bb{R}^d)$.\vspace{-2mm}
\end{itemize}
Then $f_0\in \Cal{R}(C)$ implies $f_0\in \eu{G}\subset\eu{H}$.\vspace{-2mm}
\end{proposition}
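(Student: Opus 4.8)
The plan is to characterize membership in $\Cal{R}(C)$ via the spectral decomposition of $C$ and then exploit the explicit form of $C$ for translation-invariant kernels. Recall from Theorem~\ref{Thm:score} that $C=\int p_0(x)\sum^d_{i=1}\frac{\partial k(\cdot,x)}{\partial x_i}\otimes\frac{\partial k(\cdot,x)}{\partial x_i}\,dx$, and from \eqref{Eq:range-smooth} (with $\beta=1$) that $f_0\in\Cal{R}(C)$ forces $\sum_{i\in I}c_i^2\alpha_i^{-2}<\infty$ where $c_i=\langle f_0,\phi_i\rangle_\eu{H}$ and $(\alpha_i,\phi_i)$ is the eigensystem of $C$. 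The first step is therefore to obtain an \emph{upper} bound on the operator $C$ (in the sense of positive operators), since $f_0\in\Cal{R}(C)$ is equivalent to $f_0\in\Cal{R}(C^{1/2})$ together with $C^{-1/2}f_0\in\Cal{R}(C^{1/2})$, and an upper bound $C\preceq D$ for a nicer operator $D$ gives $\Cal{R}(C^{1/2})\subseteq\Cal{R}(D^{1/2})$ only in the reverse direction — so actually I want to show that $f_0\in\Cal{R}(C)$ implies $f_0$ lies in the range of (a power of) a comparison operator, and then identify that range with $\eu{G}$.

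Concretely, I would work on the Fourier/spectral side. For a translation-invariant kernel $k(x,y)=\psi(x-y)$, the RKHS $\eu{H}$ is the space of $f$ with $\int |f^\wedge(\omega)|^2/\psi^\wedge(\omega)\,d\omega<\infty$, and $\frac{\partial k(\cdot,x)}{\partial x_i}$ has a concrete Fourier representation. The key computation is to express the quadratic form $\langle g,Cg\rangle_\eu{H}$ for $g\in\eu{H}$ in Fourier variables: one finds, using the reproducing property and Plancherel, that $\langle g,Cg\rangle_\eu{H}=\int p_0(x)\sum_i |\partial_i g(x)|^2\,dx$. The next step is to bound this by a weighted $L^2$-type quantity involving $q_0$ and $\Vert\omega\Vert_2^2$; here condition (iv), $\Vert q_0\Vert_{L^r(\bb{R}^d)}<\infty$, together with $p_0$ being comparable to $q_0$ (both in $\Cal{P}$, or via the assumptions ensuring $p_0/q_0$ controlled), and condition (i), $\int\Vert\omega\Vert_2^2\psi^\wedge(\omega)\,d\omega<\infty$, let one dominate $C$ by an operator $D$ whose Fourier multiplier is essentially $\Vert\omega\Vert_2^2\,\psi^\wedge(\omega)^2/\phi^\wedge(\omega)$ acting appropriately — conditions (ii) and (iii) are precisely what make this multiplier bounded/integrable in the right $L^{r/(2-r)}$ sense (via a Hölder argument splitting the integrand using the exponent pair dual to $r$). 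Then $f_0\in\Cal{R}(C)=\Cal{R}(C^{1/2})\cap(\text{finite }C^{-1/2}\text{-norm})$ implies $\sum c_i^2\alpha_i^{-2}<\infty$, which in Fourier terms translates into $\int |f_0^\wedge(\omega)|^2/\phi^\wedge(\omega)\,d\omega<\infty$, i.e.\ $f_0\in\eu{G}$; and $\eu{G}\subset\eu{H}$ follows from (ii) since $\Vert\phi^\wedge/\psi^\wedge\Vert_\infty<\infty$ gives $\int|f_0^\wedge|^2/\psi^\wedge\le\Vert\phi^\wedge/\psi^\wedge\Vert_\infty\int|f_0^\wedge|^2/\phi^\wedge<\infty$.

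The main obstacle I anticipate is making the operator-comparison step rigorous: $C$ is not literally a Fourier multiplier operator (it is conjugated by the weight $p_0$ and lives on the RKHS $\eu{H}$, not on $L^2$), so one cannot simply diagonalize it. The clean way around this is to argue at the level of quadratic forms and use that $f_0\in\Cal{R}(C)$ means $f_0=Cg$ for some $g\in\eu{H}$, hence $\langle f_0,h\rangle_\eu{H}=\langle g,Ch\rangle_\eu{H}$ for all $h$; then testing against suitable $h$ (built from approximations to $\phi^\wedge$-weighted exponentials) and applying Cauchy–Schwarz with the form bound $\langle h,Ch\rangle_\eu{H}\lesssim\int |h^\wedge|^2\cdot(\text{multiplier})\,d\omega$ yields the desired Fourier-side estimate on $f_0^\wedge$. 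The bookkeeping with the three Hölder exponents coming from $r$, $r/(2-r)$, and the $L^2$ pairing is the fussy part, but conditions (i)–(iv) are each used exactly once in that estimate, which is a good sign that the hypotheses are tight and the argument goes through as sketched.
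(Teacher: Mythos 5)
The paper's proof takes a much more direct route than the one you sketch: since $f_0\in\Cal{R}(C)$ means $f_0=Cg$ for some $g\in\eu{H}$, and $Cg(y)=\int\sum_j\frac{\partial k(y,x)}{\partial x_j}\frac{\partial g(x)}{\partial x_j}p_0(x)\,dx$, one can Fourier-transform this integral formula directly. Using Observations that $\omega_j\psi^\wedge(\omega)$ and $\omega_j g^\wedge(\omega)$ are in $L^1$ (both from condition (i)), the paper obtains the explicit product-convolution identity
$f^\wedge_0(\omega)=\sum_j\overline{\left(i(\cdot)_jg^\wedge\ast p^\wedge_0\right)(-\omega)}(-i\omega_j)\psi^\wedge(\omega)$,
and then bounds $\|f_0\|^2_{\eu{G}}=\int |f^\wedge_0|^2/\phi^\wedge$ by applying H\"older with exponent pair $(\frac{r}{2(r-1)},\frac{r}{2-r})$ (this is where (iii) enters), the generalized Young inequality for the convolution, and Hausdorff--Young $\|p^\wedge_0\|_{r/(r-1)}\le\|p_0\|_r$ (this is where (iv) enters). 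Condition (ii) supplies the embedding $\eu{G}\subset\eu{H}$ separately.

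Your proposal never reaches this computation, and the route you do sketch has two genuine problems. First, the spectral-decomposition opening is a detour: the eigenbasis of $C$ depends on $p_0$ and is not the Fourier basis, so \eqref{Eq:range-smooth} gives no foothold here, and a form bound $C\preceq D$ would only yield $\Cal{R}(\sqrt{C})\subseteq\Cal{R}(\sqrt{D})$, not a statement about $\Cal{R}(C)$ — and $\Cal{R}(\sqrt{D})$ is not $\eu{G}$ anyway. You recognize this and abandon it, which is right. Second, the duality workaround you propose does not obviously land on the correct target. Writing $\langle f_0,h\rangle_\eu{H}=\int f^\wedge_0\overline{h^\wedge}/\psi^\wedge$ and testing against $h\in\eu{H}$, the dual pairing weight is $1/\psi^\wedge$, not $1/\phi^\wedge$; a straightforward Cauchy--Schwarz with a form bound on $\langle h,Ch\rangle_\eu{H}$ would control something like $\int|f^\wedge_0|^2\,\phi^\wedge/(\psi^\wedge)^2$ rather than $\int|f^\wedge_0|^2/\phi^\wedge$, and making the bookkeeping produce the latter is exactly the non-trivial part. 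The paper sidesteps the whole issue by computing $f^\wedge_0$ explicitly rather than arguing dually. Your instinct to move everything to Fourier variables and to use $f_0=Cg$ is correct, but the proposal as written is missing the key step — the explicit Fourier-side formula for $Cg$ — and the ``each condition used exactly once'' sanity check is not a substitute for carrying out the estimate.
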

In the following, we apply the above result in two examples involving
Gaussian and Mat\'{e}rn kernels to get insights into the range space
assumption.
\begin{example}[Gaussian kernel]\label{exm:gaussian}
Let $\psi(x)=e^{-\sigma\Vert x\Vert^2}$ with $\eu{H}_\sigma$ as its
corresponding RKHS (see Section~\ref{Sec:notation} for its definition).
 By Proposition~\ref{pro:range}, it is easy to verify
that $f_0\in\Cal{R}(C)$ implies $f_0\in\eu{H}_\alpha\subset\eu{H}_\sigma$ for
$\frac{\sigma}{2}<\alpha\le\sigma$. Since $\eu{H}_\beta\subset \eu{H}_\gamma$
for $\beta<\gamma$ (i.e., Gaussian RKHSs are nested), $f_0\in \Cal{R}(C)$
ensures that $f_0$ lies
in $\eu{H}_{\frac{\sigma}{2}+\epsilon}$ for arbitrary small $\epsilon>0$.
\vspace{-2mm}
\end{example}
\begin{example}[Mat\'{e}rn kernel]\label{exm:matern}
Let
$\psi(x)=\frac{2^{1-s}}{\Gamma(s)}\Vert x\Vert^{s-\frac{d}{2}}_2\frak{K}_{d/2-s}
(\Vert x\Vert_2),\,x\in\bb{R}^d$ with $H^s_2(\bb{R}^d)$ as its
corresponding RKHS (see Section~\ref{Sec:notation} for its definition) where
$s>\frac{d}{2}$. 
By Proposition~\ref{pro:range}, we have
that for $q_0\in L^1(\bb{R}^d)$, if 
$f_0\in\Cal{R}(C)$,
then $f_0\in H^\alpha_2(\bb{R}^d)\subset
H^s_2(\bb{R}^d)$ for $1+\frac{d}{2}<s\le\alpha<2s-1-\frac{d}{2}$. 
Since
$H^\delta_2(\bb{R}^d)\subset H^\gamma_2(\bb{R}^d)$ for $\gamma<\delta$ (i.e.,
Sobolev spaces are nested), this means
$f_0$ lies in $H^{2s-1-\frac{d}{2}-\epsilon}_2(\bb{R}^d)$ 
for
arbitrarily small $\epsilon>0$, i.e., $f_0$ has
at least $2s-1-\lceil\frac{d}{2}\rceil$ 
weak-derivatives. By the minimax theory \cite[Chapter 2]{Tsybakov-09}, it is well known that 
for any $\alpha>\delta\ge 0$,
\begin{equation}\inf_{\hat{f}_n}\sup_{f_0\in H^\alpha_2(\bb{R}^d)}\Vert
\hat{f}_n-f_0\Vert_{H^{\delta}_2(\bb{R}^d)}\asymp
n^{-\frac{\alpha-\delta}{2(\alpha-\delta)+d}},\label{Eq:minimax}\end{equation}
where the infimum is taken over
all possible estimators. Here $a_n\asymp b_n$ means that for any two sequences $a_n,b_n>0$, $a_n/b_n$ 
is bounded away from zero and infinity as $n\rightarrow\infty$. 
Suppose $f_0\notin H^\alpha_2(\bb{R}^d)$ for
$\alpha\ge 2s-1-\frac{d}{2}$, 
which means $f_0\in H^{2s-1-\frac{d}{2}-\epsilon}_2(\bb{R}^d)$
for arbitrarily small
$\epsilon>0$.
This implies that the rate of $n^{-1/4}$ obtained in Theorem~\ref{Thm:rates} is
minimax optimal if $\eu{H}$ is chosen to be
$H^{1+d+\epsilon}_2(\bb{R}^d)$ 
(i.e., choose $\alpha=2s-1-\frac{d}{2}-\epsilon$ and $\delta=s$ in (\ref{Eq:minimax}) and
 solve for $s$
 by equating the exponent in the r.h.s.~of (\ref{Eq:minimax}) to $-\frac{1}{4}$). Similarly, it can be shown that
 if $q_0\in L^2(\bb{R}^d)$, then the rate of $n^{-1/4}$ in Theorem~\ref{Thm:rates} is
minimax optimal if $\eu{H}$ is chosen to be
$H^{1+\frac{d}{2}+\epsilon}_2(\bb{R}^d)$. This example also explains away the dimension
independence of the rate provided by
Theorem~\ref{Thm:rates} by showing
that the dimension effect is captured in the relative smoothness
of $f_0$ w.r.t.~$\eu{H}$.
\vspace{-2mm}
\end{example}
While Example~\ref{exm:matern} provides some understanding about the minimax
optimality of $f_{\lambda,n}$ under additional assumptions on $f_0$, the
problem is not completely resolved. 
In the following section, however, we show that
the rate in
Theorem~\ref{Thm:rates} is not optimal for
$\beta>1$, and that improved rates can be obtained by choosing the
regularizer appropriately.
%
\subsection{Choice of Regularizer}\label{subsec:regularizer}
We understand from the characterization of $\Cal{R}(C^\beta)$ in
(\ref{Eq:range-smooth}) that
larger $\beta$ values yield smoother functions in $\eu{H}$. 
However, the smoothness
of $f_0\in \Cal{R}(C^\beta)$ for $\beta>1$ is not captured in the rates in
Theorem~\ref{Thm:rates}(ii), where the rate saturates at $\beta=1$ providing the
best possible rate of $n^{-1/4}$ (irrespective of the size of $\beta$).
This is unsatisfactory on the part of the estimator, as it does
not effectively capture the smoothness of $f_0$, i.e., the estimator is not adaptive to the smoothness of $f_0$. We remind the
reader that the estimator $f_{\lambda,n}$ is obtained by minimizing the
regularized empirical Fisher divergence (see Theorem~\ref{Thm:score}(iv))
yielding
$f_{\lambda,n}=-(\hat{C}+\lambda I)^{-1}\hat{\xi}$, which can be seen as a
heuristic to solve the (non-linear) inverse problem $Cf_0=-\xi$ (see
Theorem~\ref{Thm:score}(ii)) from finite samples, by replacing $C$ and $\xi$ with
their empirical counterparts. This heuristic, which ensures that the finite
sample inverse problem is well-posed, is popular in inverse problem literature
under the name of Tikhonov regularization \cite[Chapter 5]{Engl-96}. Note
that Tikhonov regularization helps to make the ill-posed inverse problem a
well-posed one by approximating $\alpha^{-1}$ by
$(\alpha+\lambda)^{-1}$, $\lambda>0$, where $\alpha^{-1}$ appears
as the inverse of the eigenvalues of $C$ while computing $C^{-1}$. 
In
other
words, if $\hat{C}$ is invertible, then an estimate of $f_0$ can be obtained as
$\hat{f}_n=-\hat{C}^{-1}\hat{\xi}$, i.e.,
$\hat{f}_n=-\sum_{i\in I}\frac{\langle
\hat{\xi},\hat{\phi}_i\rangle_\eu{H}}{\hat{\alpha}_i}\hat{\phi}_i,$ where
$(\hat{\alpha}_i)_{i\in I}$ and $(\hat{\phi}_i)_{i\in I}$ are the
eigenvalues and
eigenvectors of $\hat{C}$ respectively. However, $\hat{C}$ being a rank $n$
operator
defined on $\eu{H}$ (which can be
infinite dimensional) is not invertible and 
therefore the regularized
estimator is 
constructed as $f_{\lambda,n}=-g_\lambda(\hat{C})\hat{\xi}$ where
$g_\lambda(\hat{C})$ is defined through functional calculus (see \citealp*[Section 2.3]{Engl-96}) as 
$$g_\lambda(\hat{C})=\sum_{i\in I}g_\lambda(\hat{\alpha}_i)\langle
\cdot,\hat{\phi}_i\rangle_\eu{H}\hat{\phi}_i$$
with $g_\lambda:\bb{R}_+\rightarrow\bb{R}$ and 
$g_\lambda(\alpha):=(\alpha+\lambda)^{-1}$. Since the Tikhonov regularization
is well-known to saturate (as explained above)---see \citet[Sections 4.2 and
5.1]{Engl-96} for details---, better approximations to $\alpha^{-1}$ have been
used in the inverse problems literature to improve the rates by using
$g_\lambda$ other than $(\cdot+\lambda)^{-1}$ where $g_\lambda(\alpha)\rightarrow \alpha^{-1}$ as $\lambda\rightarrow
0$. In the
statistical context, \citet{Rosasco-05} and \citet{Bauer-07} have used the ideas from
\cite{Engl-96} in non-parametric regression for learning a square integrable
function from finite samples through regularization in RKHS. In the following,
we use these ideas to construct an alternate estimator for $f_0$ (and
therefore for $p_0$) that appropriately captures the smoothness of $f_0$ by
providing a better convergence rate when $\beta>1$. To this end, we need the
following assumption---quoted from \citet[Theorems 4.1--4.3 and Corollary 4.4]{Engl-96} and
\citet[Definition 1]{Bauer-07}---that is standard in the theory of inverse problems.
\begin{itemize}
\item[\textbf{(E)}] There exists finite positive constants $A_g$, $B_g$,
$C_g$, $\eta_0$ and $(\gamma_\eta)_{\eta\in (0,\eta_0]}$ (all independent of $\lambda>0$) such that
$g_\lambda:[0,\chi]\rightarrow\bb{R}$
satisfies: \vspace{1mm}\\ $(a)\, \sup_{\alpha\in \Cal{D}} |\alpha g_\lambda(\alpha)|\le
A_g$, $(b)\, \sup_{\alpha\in
\Cal{D}}|g_\lambda(\alpha)|\le\frac{B_g}{\lambda}$, $(c)\,
\sup_{\alpha\in\Cal{D}}|1-\alpha
g_\lambda(\alpha)|\le C_g$ and $(d)\,\sup_{\alpha\in\Cal{D}}|1-\alpha
g_\lambda(\alpha)|\alpha^\eta\le\gamma_\eta\lambda^\eta,\,\,\forall\,
\eta\in (0,\eta_0]$ where $\Cal{D}:=[0,\chi]$ and $\chi:=d\sup_{x\in\Omega,i\in[d]}\partial_i\partial_{i+d}k(x,x)<\infty$.
\end{itemize}
The constant $\eta_0$ is called the \emph{qualification} of
$g_\lambda$ which is what determines the point of saturation of $g_\lambda$. We
show in Theorem~\ref{Thm:rates-new} that if $g_\lambda$ has a
finite qualification, then the resultant estimator cannot fully exploit the
smoothness of $f_0$ and therefore the rate of convergence will suffer for
$\beta>\eta_0$. Given $g_\lambda$ that satisfies \textbf{(E)}, we
construct our estimator of $f_0$ as
\begin{equation}
 f_{g,\lambda,n}=-g_\lambda(\hat{C})\hat{\xi}.\nonumber
\end{equation}
Note that the above estimator can be obtained by using the data dependent
regularizer, $\frac{1}{2}\langle
f,((g_\lambda(\hat{C}))^{-1}-\hat{C})f\rangle_\eu{H}$ in the minimization of
$\hat{J}(f)$ defined in Theorem~\ref{Thm:score}(iv), i.e.,
$$f_{g,\lambda,n}=\arg\inf_{f\in\eu{H}}\hat{J}(f)+\frac{1}{2}\langle
f,((g_\lambda(\hat{C}))^{-1}-\hat{C})f\rangle_\eu{H}.$$ 
However, unlike $f_{\lambda,n}$ for which a simple form is available in
Theorem~\ref{Thm:representer} by solving a linear system, we are not able to
obtain such a nice expression for $f_{g,\lambda,n}$. The
following result (proved in Section~\ref{subsec:supp-rates-new}
) presents an analog of Theorems~\ref{Thm:rates} and
\ref{Thm:density} for the new
estimators, $f_{g,\lambda,n}$ and $p_{f_{g,\lambda,n}}$.
\begin{theorem}[Consistency and convergence rates for
$f_{g,\lambda,n}$ and $p_{f_{g,\lambda,n}}$]\label{Thm:rates-new}
Suppose \textbf{\emph{(A)}}--\textbf{\emph{(E)}} hold with $\varepsilon=2$. 
\vspace{2mm}\\
(i) If $f_0\in\Cal{R}(C^\beta)$ for some $\beta>0$, then for any $\lambda\ge n^{-1/2}$, $$\Vert
f_{g,\lambda,n}-f_0\Vert_\eu{H}=O_{p_0}\left(\theta_n \right),$$ where $\theta_n:=n^{-\min\left\{\frac{\beta}{2(\beta
+1)},\frac{\eta_0}{2(\eta_0+1)}\right\}}$ 
with
$\lambda=n^{-\max\left\{\frac{1}{2(\beta+1)},\frac{1}{
2(\eta_0+1) } \right\}}$. In addition, if $\Vert k\Vert_\infty<\infty$, then for any $1<
r\le\infty$ with $q_0\in L^1(\Omega)\cap L^r(\Omega)$, $$\Vert p_{
f_{g,\lambda,n}}-p_0\Vert_{L^r(\Omega)}=O_{p_0}(\theta_n),\, h(p_0,p_{f_
{g,\lambda,n}})=O_{p_0}(\theta_n)\,\,\,\text{and}\,\,\,KL(p_0\Vert p_{f_{g,\lambda,n}})=O_{p_0}
(\theta^2_n).$$ 
(ii) If $f_0\in\Cal{R}(C^\beta)$ for some $\beta\ge 0$, then for any $\lambda\ge n^{-1/2}$,
$$J(p_0\Vert p_{f_{g,\lambda,n}})=O_{p_0}\left(n^{-\frac{\min\{2\beta+1 ,
2\eta_0\}}{\min\{ 2\beta+2 , 2\eta_0+1\}}}\right)$$
with
$\lambda=n^{-\frac{1}{\min\{2\beta+2,2\eta_0+1\}}}$.\vspace{1mm}\\
(iii) If $\Vert C^{-1}\Vert<\infty$, 
then for any $\lambda\ge n^{-1/2}$,
$$\Vert
f_{g,\lambda,n}-f_0\Vert_\eu{H}=O_{p_0}(\theta_n)\,\,\,\text{and}\,\,\,J(p_0\Vert p_{f_{g,\lambda,n}})=O_{p_0}(\theta^2_n)$$ with $\theta_n=n^{-\frac{1}{2}}$ 
and
$\lambda=n^{-\frac{1}{\min\{2,2\eta_0\}}}$.
\end{theorem}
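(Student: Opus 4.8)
The plan is to reduce the whole statement to an $\eu{H}$-norm error bound for $f_{g,\lambda,n}$, after which convergence in $L^r$, Hellinger and $KL$ follows from the machinery already used for Theorem~\ref{Thm:density}. The starting point is the population identity $Cf_0=-\xi$ from Theorem~\ref{Thm:score}(ii) together with the definition $f_{g,\lambda,n}=-g_\lambda(\hat{C})\hat{\xi}$. First I would introduce the centered ``noise''
\[
\zeta:=\hat{C}f_0+\hat{\xi}=(\hat{C}-C)f_0+(\hat{\xi}-\xi),
\]
which has mean zero, and record that---since $\eu{H}$ is separable by Remark~\ref{rem:assumptions}(i) and \textbf{(D)} holds with $\varepsilon=2$, so the random elements defining $\hat{\xi}$ and $\hat{C}f_0$ have finite second moment in $\eu{H}$ and those defining $\hat{C}$ finite fourth moment---a standard second-moment argument in $\eu{H}$ gives $\|\zeta\|_\eu{H}=O_{p_0}(n^{-1/2})$ and $\|\hat{C}-C\|_{HS}=O_{p_0}(n^{-1/2})$. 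Substituting $\hat{\xi}=\zeta-\hat{C}f_0$ into $f_{g,\lambda,n}=-g_\lambda(\hat{C})\hat{\xi}$ produces the clean decomposition $f_{g,\lambda,n}-f_0=-g_\lambda(\hat{C})\zeta-r_\lambda(\hat{C})f_0$ with $r_\lambda(\alpha):=1-\alpha g_\lambda(\alpha)$, splitting the error into a \emph{sampling} term and an \emph{approximation} term.

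For part (i) in $\eu{H}$-norm I would bound the sampling term by the deterministic estimate $\|g_\lambda(\hat{C})\|\le B_g/\lambda$ from \textbf{(E)}$(b)$ (legitimate since $\|\hat{C}\|\le dE_1^2$, the spectral interval on which \textbf{(E)} is imposed), giving $\|g_\lambda(\hat{C})\zeta\|_\eu{H}=O_{p_0}(\lambda^{-1}n^{-1/2})$; and the approximation term using the range assumption $f_0=C^\beta v$ by writing $r_\lambda(\hat{C})C^\beta v=r_\lambda(\hat{C})(C^\beta-\hat{C}^\beta)v+r_\lambda(\hat{C})\hat{C}^\beta v$. The first piece is $\le C_g\,\|C^\beta-\hat{C}^\beta\|\,\|v\|_\eu{H}=O_{p_0}(n^{-\min\{\beta,1\}/2})$ by \textbf{(E)}$(c)$ and the H\"older/Lipschitz continuity of $t\mapsto t^\beta$ on $[0,dE_1^2]$, while the second is $\le\big(\sup_{\alpha\in[0,dE_1^2]}|r_\lambda(\alpha)|\alpha^{\min\{\beta,\eta_0\}}\big)(dE_1^2)^{(\beta-\eta_0)_+}\|v\|_\eu{H}\lesssim\lambda^{\min\{\beta,\eta_0\}}$ by \textbf{(E)}$(c),(d)$---this is where the qualification $\eta_0$ forces the saturation at $\beta=\eta_0$. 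Balancing $\lambda^{-1}n^{-1/2}$ against $\lambda^{\min\{\beta,\eta_0\}}$ yields the stated $\theta_n$ and $\lambda$, and the stated choice of $\lambda$ automatically satisfies $\lambda\ge n^{-1/2}$. When $\|k\|_\infty<\infty$ the kernel is bounded, hence $\Cal{F}=\eu{H}$ and $p_{f_{g,\lambda,n}}$ is a genuine density; the $L^r$, Hellinger and $KL$ rates then follow verbatim from the proof of Theorem~\ref{Thm:density}, i.e.\ by feeding the $\eu{H}$-norm bound into Lemma~\ref{lem:distances}, which converts $\|f_{g,\lambda,n}-f_0\|_\eu{H}$ into bounds of order $\theta_n$ for $\|p_{f_{g,\lambda,n}}-p_0\|_{L^r}$ and $h(p_0,p_{f_{g,\lambda,n}})$ and of order $\theta_n^2$ for $KL$.

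For part (ii) I would use $J(p_0\Vert p_{f_{g,\lambda,n}})=\tfrac{1}{2}\|\sqrt{C}(f_{g,\lambda,n}-f_0)\|_\eu{H}^2$ from Theorem~\ref{Thm:score}(i), apply $\sqrt{C}$ to the decomposition above, and control the $\sqrt{C}$-weighted quantities via the resolvent comparison $\|(C+\lambda I)^{1/2}(\hat{C}+\lambda I)^{-1/2}\|=O_{p_0}(1)$, which holds because that norm is bounded by a function of $\lambda^{-1}\|C-\hat{C}\|=O_{p_0}(\lambda^{-1}n^{-1/2})$, and this ratio stays bounded \emph{precisely} under the hypothesis $\lambda\ge n^{-1/2}$. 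One then gets $\|\sqrt{C}g_\lambda(\hat{C})\|\lesssim\|(\hat{C}+\lambda I)^{1/2}g_\lambda(\hat{C})\|=O(\lambda^{-1/2})$ and $\|\sqrt{C}r_\lambda(\hat{C})\hat{C}^\beta\|\lesssim\|(\hat{C}+\lambda I)^{1/2}r_\lambda(\hat{C})\hat{C}^\beta\|=O(\lambda^{\min\{\beta+1/2,\eta_0\}})$, both by \textbf{(E)}$(a),(c),(d)$ (the cross term $\sqrt{C}r_\lambda(\hat{C})(C^\beta-\hat{C}^\beta)v$ being absorbed by the sharper bound $\|\sqrt{C}r_\lambda(\hat{C})\|=O(\lambda^{1/2})$ combined once more with $\lambda\ge n^{-1/2}$), so that $J=O_{p_0}(\lambda^{-1}n^{-1}+\lambda^{\min\{2\beta+1,2\eta_0\}})$, whence the stated $\lambda$ and rate after optimization. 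Part (iii) is the degenerate case: $\|C^{-1}\|<\infty$ forces $\eu{H}$ to be finite-dimensional, so $C$---and, with probability tending to $1$, $\hat{C}$---has spectrum bounded away from $0$; then $g_\lambda(\hat{C})$ is a bounded perturbation of $\hat{C}^{-1}$ and hence uniformly bounded, the sampling term becomes $O_{p_0}(n^{-1/2})$ without the $\lambda^{-1}$ blow-up, the bias is $O_{p_0}(\lambda^{\min\{1,\eta_0\}})$, and $\lambda=n^{-1/\min\{2,2\eta_0\}}$ balances these to $\theta_n=n^{-1/2}$ and $J=O_{p_0}(n^{-1})$.

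The main obstacle is the operator-perturbation bookkeeping underlying part (ii): proving the resolvent comparison $\|(C+\lambda I)^{1/2}(\hat{C}+\lambda I)^{-1/2}\|=O_{p_0}(1)$ and verifying that the constraint $\lambda\ge n^{-1/2}$ is exactly what keeps the relevant ratios bounded (both for this comparison and for absorbing the $\|C^\beta-\hat{C}^\beta\|$ perturbation into the bias), together with the bound $\|C^\beta-\hat{C}^\beta\|=O_{p_0}(n^{-\min\{\beta,1\}/2})$, which for $\beta>1$ requires a Lipschitz estimate for the operator power on a bounded spectral interval rather than the elementary sub-additivity available when $\beta\le1$. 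Everything else is assembling pieces already in place: the filter-function inequalities \textbf{(E)}, the $J$-representation and regularized-solution identities from Theorem~\ref{Thm:score}, the concentration of $\hat{C}$ and $\hat{\xi}$, and the density-distance conversions from the proof of Theorem~\ref{Thm:density}.
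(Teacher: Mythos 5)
Your proposal is correct and follows the same overall strategy as the paper: the key decomposition $f_{g,\lambda,n}-f_0=-g_\lambda(\hat{C})(\hat{\xi}+\hat{C}f_0)+r_\lambda(\hat{C})\hat{C}^\beta h+r_\lambda(\hat{C})(C^\beta-\hat{C}^\beta)h$ with $f_0=C^\beta h$, the filter-function bounds from \textbf{(E)}, the Chebyshev concentration $\Vert\hat{\xi}+\hat{C}f_0\Vert_\eu{H}=O_{p_0}(n^{-1/2})$ and $\Vert\hat{C}-C\Vert_{HS}=O_{p_0}(n^{-1/2})$, the operator-monotone ($\beta\le 1$) versus Lipschitz ($\beta>1$, via Lemma~\ref{lem:lipschitz}) control of $\Vert C^\beta-\hat{C}^\beta\Vert$, and the conversion to $L^r$, Hellinger, KL via Lemma~\ref{lem:distances} all match the paper's argument for part (i). The differences are technical rather than structural. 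In part (ii) the paper writes $\sqrt{C}=(\sqrt{C}-\sqrt{\hat{C}})+\sqrt{\hat{C}}$ and uses operator monotonicity of $\alpha\mapsto\sqrt{\alpha}$ to bound $\Vert\sqrt{C}-\sqrt{\hat{C}}\Vert\le c_{1/2}\Vert C-\hat{C}\Vert_{HS}^{1/2}$, then bounds the $\sqrt{\hat{C}}$-weighted pieces directly; you instead push $\sqrt{C}$ through the decomposition using the resolvent comparison $\Vert(C+\lambda I)^{1/2}(\hat{C}+\lambda I)^{-1/2}\Vert=O_{p_0}(1)$ under $\lambda\ge n^{-1/2}$, which is an equally standard device (cf.\ Caponnetto--De Vito) and yields the same $\lambda^{-1/2}n^{-1/2}$ and $\lambda^{\min\{\beta+1/2,\eta_0\}}$ terms. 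In part (iii) the paper factors through $\Vert C^{-1}\Vert\,\Vert C(\cdot)\Vert$ and then inserts $C=\hat{C}+(C-\hat{C})$; you instead argue directly that the spectrum of $\hat{C}$ is bounded below with probability tending to one, so by \textbf{(E)}$(a)$ the filter $g_\lambda(\hat{C})$ is uniformly bounded without the $\lambda^{-1}$ loss and $\Vert r_\lambda(\hat{C})\Vert$ picks up the rate $\lambda^{\min\{1,\eta_0\}}$ from \textbf{(E)}$(d)$. Both (ii)--(iii) variations are sound and produce the stated rates, so I would call this essentially the paper's proof with cosmetic differences in the operator-perturbation bookkeeping.
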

Theorem~\ref{Thm:rates-new} shows that if $g_\lambda$ has infinite
qualification, then smoothness of $f_0$ is fully captured in the rates and as
$\beta\rightarrow \infty$, we attain $O_{p_0}(n^{-1/2})$ rate for $\Vert
f_{g,\lambda,n}-f_0\Vert_\eu{H}$ in contrast to $n^{-1/4}$ (similar improved
rates are also obtained for $p_{f_{g,\lambda,n}}$ in various distances) in
Theorem~\ref{Thm:rates}. 
In the following example, we present two choices of $g_\lambda$ that improve on
Tikhonov regularization. We refer the reader to \citet[Section 3.1]{Rosasco-05}
for more examples of $g_\lambda$.
\begin{example}[Choices of $g_\lambda$] (i) Tikhonov regularization
involves $g_\lambda(\alpha)=(\alpha+\lambda)^{-1}$ for which it is easy to verify that $\eta_0=1$ and 
therefore the rates saturate at $\beta=1$, leading to the results in
Theorems~\ref{Thm:rates} and \ref{Thm:density}.
\vspace{1mm}\\
(ii) Showalter's method and spectral cut-off 
use $$g_\lambda(\alpha)=\frac{1-e^{-\alpha/\lambda}}{\alpha}\,\,\,\text{and}\,\,\,g_\lambda(\alpha)=\begin{cases}\frac{1}{\alpha}, &\mbox {} 
\alpha\ge\lambda\\
 0,&\mbox{} \alpha<\lambda\end{cases}$$ respectively for which it is easy to verify that $\eta_0=+\infty$ (see \citealp*[Examples 4.7 \& 4.8]{Engl-96} for
details) and therefore improved rates are obtained for $\beta>1$ in Theorem~\ref{Thm:rates-new} compared to
that of Tikhonov regularization.
\end{example}

\section{Density Estimation in $\Cal{P}$: Misspecified Case}
\label{Sec:misspecified}
In
this section, we analyze the misspecified case where $p_0\notin\Cal{P}$,
which is a more reasonable case than the well-specified one, as in
practice it is not easy to check whether $p_0\in\Cal{P}$. To this end, we
consider the same estimator $p_{f_{\lambda,n}}$ as considered in the
well-specified case where $f_{\lambda,n}$ is obtained from
Theorem~\ref{Thm:representer}. The following result shows that
$J(p_0\Vert p_{f_{\lambda,n}})\rightarrow \inf_{p\in\Cal{P}}J(p_0\Vert p)$ as
$\lambda\rightarrow 0$, $\lambda n\rightarrow\infty$ and $n\rightarrow \infty$ under the assumption that there exists $f^*\in\Cal{F}$ such
that $J(p_0\Vert p_{f^*})=\inf_{p\in\Cal{P}}J(p_0\Vert p)$. We present the result for bounded kernels although it can be easily extended to
unbounded kernels as in Theorem~\ref{Thm:density-2}. Also, the presented result for Tikhonov regularization extends easily to $p_{f_{g,\lambda,n}}$
using the ideas in the proof of Theorem~\ref{Thm:rates-new}. Note that unlike in the well-specified case where convergence in other
distances can be shown even though the estimator is constructed from $J$, it is difficult to show such a result in the
misspecified case.
\begin{theorem}\label{Thm:misspecified}
Let $p_0,\,q_0\in C^1(\Omega)$ be
probability densities such that $J(p_0\Vert q_0)<\infty$
where $\Omega$
satisfies \textbf{\emph{(A)}}. Assume that
\textbf{\emph{(B)}}, \textbf{\emph{(C)}} and \textbf{\emph{(D)}} with
$\varepsilon=2$ hold. Suppose $\Vert k\Vert_\infty<\infty$, $\emph{supp}(q_0)=\Omega$
 and
there exists $f^\ast\in \Cal{F}$
such that
$$J(p_0\Vert p_{f^*})=\inf_{p\in\Cal{P}}J(p_0\Vert p).$$ Then for an estimator
$p_{f_{\lambda,n}}$ constructed from random samples $(X_a)^n_{a=1}$ drawn
i.i.d.~from $p_0$, where $f_{\lambda,n}$ is defined in
(\ref{Eq:representer-f})---also see Theorem~\ref{Thm:score}(iv)---with
$\lambda>0$, we have
$$J(p_0\Vert p_{f_{\lambda,n}})\rightarrow
\inf_{p\in\Cal{P}}J(p_0\Vert p)\,\,\,\text{as}\,\,\lambda\rightarrow 0,\,\lambda
n\rightarrow\infty\,\,\text{and}\,\,n\rightarrow\infty.$$
In addition, if $f^*\in\Cal{R}(C^\beta)$ for some
$\beta\ge 0$, then
$$\sqrt{J(p_0\Vert p_{f_{\lambda,n}})}\le
\sqrt{\inf_{p\in\Cal{P}}J(p_0\Vert p)}+O_{p_0}\left(n^{-\min\left\{\frac{1}{3},
\frac{2\beta+1}{4(\beta+1)}\right\}}\right)$$ with
$\lambda=n^{-\max\left\{\frac{1}{3},\frac{1}{2(\beta+1)}\right\}}$. If $\Vert
C^{-1}\Vert<\infty$, then for $\lambda=n^{-\frac{1}{2}}$,
$$\sqrt{J(p_0\Vert p_{f_{\lambda,n}})}\le
\sqrt{\inf_{p\in\Cal{P}}J(p_0\Vert p)}+O_{p_0}(n^{-1/2}).$$
with
$\lambda=n^{-\frac{1}{2}}$.
\end{theorem}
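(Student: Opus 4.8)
The strategy is to reduce the misspecified analysis to the quadratic‑form machinery already developed for the well‑specified case, with $f^\ast$ taking over the role played there by $f_0$. The starting observation is that the alternate representation of the Fisher divergence in Theorem~\ref{Thm:score}(ii), namely $J(p_0\Vert p_f)=\frac12\langle f,Cf\rangle_\eu{H}+\langle f,\xi\rangle_\eu{H}+J(p_0\Vert q_0)$ with $C,\xi$ as defined there, is obtained purely by partial integration under \textbf{(A)}--\textbf{(D)}, so it is valid for \emph{any} $p_0,q_0\in C^1(\Omega)$ with $J(p_0\Vert q_0)<\infty$; it never uses $p_0\in\Cal{P}$, and $C,\xi$ depend only on $p_0$. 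Since $\Vert k\Vert_\infty<\infty$ forces $\Cal{F}=\eu{H}$, the hypothesis that $f^\ast\in\Cal{F}$ realises $\inf_{p\in\Cal{P}}J(p_0\Vert p)$ means precisely that $f^\ast$ minimises this finite‑valued convex quadratic over all of $\eu{H}$, so first‑order optimality yields $Cf^\ast=-\xi$ (the exact analogue of $Cf_0=-\xi$). Completing the square then gives the exact identity
\[
J(p_0\Vert p_{f_{\lambda,n}})=\inf_{p\in\Cal{P}}J(p_0\Vert p)+\tfrac12\big\Vert C^{1/2}(f_{\lambda,n}-f^\ast)\big\Vert_\eu{H}^2,
\]
so by $\sqrt{a+b}\le\sqrt a+\sqrt b$ it suffices to control $\Vert C^{1/2}(f_{\lambda,n}-f^\ast)\Vert_\eu{H}$.

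I would then split $f_{\lambda,n}-f^\ast=(f_{\lambda,n}-f_\lambda)+(f_\lambda-f^\ast)$ with $f_\lambda:=-(C+\lambda I)^{-1}\xi=(C+\lambda I)^{-1}Cf^\ast$, and bound each piece after applying $C^{1/2}$. For the approximation piece, $C^{1/2}(f_\lambda-f^\ast)=-\lambda\,C^{1/2}(C+\lambda I)^{-1}f^\ast$, so a direct functional‑calculus estimate (this is Proposition~\ref{pro:approxerror}) gives $\Vert C^{1/2}(f_\lambda-f^\ast)\Vert_\eu{H}=O(\sqrt\lambda)$ with no hypothesis on $f^\ast$, and $=O\big(\lambda^{\min\{\beta+1/2,\,1\}}\big)$ when $f^\ast\in\Cal{R}(C^\beta)$, the saturation at exponent $1$ being the standard Tikhonov phenomenon, here made rigorous by the boundedness of $C$, which controls $\sup_{t\le\Vert C\Vert}t^{\beta+1/2}/(t+\lambda)$ when $\beta>1/2$.

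For the estimation piece, the crucial point is that $\hC,\hxi$ are empirical means over $X_1,\dots,X_n\sim p_0$ of the \emph{same} integrands whose $p_0$‑expectations define $C,\xi$; hence the concentration bounds $\Vert\hC-C\Vert_{HS}=O_{p_0}(n^{-1/2})$, $\Vert\hxi-\xi\Vert_\eu{H}=O_{p_0}(n^{-1/2})$ and the operator‑perturbation estimate $\Vert(C+\lambda I)^{-1/2}(C-\hC)(C+\lambda I)^{-1/2}\Vert=O_{p_0}\big((\lambda n)^{-1/2}\big)$ (whence $\Vert(C+\lambda I)^{1/2}(\hC+\lambda I)^{-1/2}\Vert=O_{p_0}(1)$ once $\lambda n\to\infty$) carry over verbatim from the well‑specified proof, with $E_1<\infty$ entering exactly to make the Bernstein increments bounded. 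Feeding the resolvent identity
\[
f_{\lambda,n}-f_\lambda=-(\hC+\lambda I)^{-1}(\hxi-\xi)-(\hC+\lambda I)^{-1}(\hC-C)f_\lambda
\]
into $C^{1/2}(\cdot)$, factoring through $C^{1/2}(C+\lambda I)^{-1/2}$ (norm $\le1$) and $(C+\lambda I)^{1/2}(\hC+\lambda I)^{-1/2}$ (norm $O_{p_0}(1)$), and using $\Vert(\hC+\lambda I)^{-1/2}\Vert\le\lambda^{-1/2}$ together with $\Vert f_\lambda\Vert_\eu{H}=O_{p_0}(1)$, yields $\Vert C^{1/2}(f_{\lambda,n}-f_\lambda)\Vert_\eu{H}=O_{p_0}\big((\lambda n)^{-1/2}\big)$.

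Combining the two pieces, $\Vert C^{1/2}(f_{\lambda,n}-f^\ast)\Vert_\eu{H}=O_{p_0}\big((\lambda n)^{-1/2}+\lambda^{\min\{\beta+1/2,1\}}\big)$, which tends to $0$ as $\lambda\to0$, $\lambda n\to\infty$, $n\to\infty$ (using the assumption‑free exponent $1/2$ for the first assertion), giving consistency. Balancing, i.e.\ taking $\lambda=n^{-1/(2\gamma+1)}$ with $\gamma=\min\{\beta+1/2,1\}$ --- which equals $n^{-\max\{1/3,\,1/(2(\beta+1))\}}$ --- gives $\Vert C^{1/2}(f_{\lambda,n}-f^\ast)\Vert_\eu{H}=O_{p_0}\big(n^{-\min\{1/3,\,(2\beta+1)/(4(\beta+1))\}}\big)$, which transfers to the excess term of $\sqrt J$ via the displayed identity. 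When $\Vert C^{-1}\Vert<\infty$, $\eu{H}$ is finite‑dimensional, $\hC$ is invertible for large $n$ with $\Vert\hC^{-1}\Vert=O_{p_0}(1)$, so $\Vert(\hC+\lambda I)^{-1}\Vert=O_{p_0}(1)$ and the estimation piece becomes $O_{p_0}(n^{-1/2})$ while the approximation piece is $O(\lambda)$ (via $\Vert(C+\lambda I)^{-1}\Vert\le\Vert C^{-1}\Vert$); taking $\lambda=n^{-1/2}$ gives the parametric bound. The main obstacle is the estimation step: propagating the $O_{p_0}(n^{-1/2})$ fluctuations of $\hC,\hxi$ through the ill‑conditioned resolvent $(\hC+\lambda I)^{-1}$ while retaining the $C^{1/2}$ weight, which hinges on the sharp estimate $\Vert(C+\lambda I)^{1/2}(\hC+\lambda I)^{-1/2}\Vert=O_{p_0}(1)$ under $\lambda n\to\infty$ (a Bernstein bound with effective‑dimension control, not the crude $\lambda^{-1}\Vert\hC-C\Vert$); everything else is a faithful re‑run of the well‑specified argument with $f_0$ replaced by $f^\ast$. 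The same analysis explains why no statement in KL, Hellinger or $L^r$ is available here: such a conclusion would require controlling $\Vert f_{\lambda,n}-f^\ast\Vert_\eu{H}$ (needing $f^\ast\in\overline{\Cal{R}(C)}$) \emph{and} $p_{f^\ast}=p_0$, the latter failing under misspecification.
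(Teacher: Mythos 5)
Your proposal is correct and, in its skeleton, mirrors the paper's argument: both reduce $\sqrt{J(p_0\Vert p_{f_{\lambda,n}})}-\sqrt{\inf_{p\in\Cal{P}}J(p_0\Vert p)}$ to the single quantity $\tfrac{1}{\sqrt{2}}\Vert\sqrt{C}(f_{\lambda,n}-f^\ast)\Vert_\eu{H}$ and then recycle the $\sqrt{C}$--weighted estimation/approximation bounds from the well-specified proof of Theorem~\ref{Thm:density}, with $f^\ast$ playing the role of $f_0$ and $Cf^\ast=-\xi$ replacing $Cf_0=-\xi$. Where you diverge is the route to that reduction: you exploit the quadratic structure directly by completing the square to get the \emph{identity} $J(p_0\Vert p_{f_{\lambda,n}})=\inf_{p\in\Cal{P}}J(p_0\Vert p)+\tfrac12\Vert\sqrt{C}(f_{\lambda,n}-f^\ast)\Vert^2_\eu{H}$ and then apply $\sqrt{a+b}\le\sqrt{a}+\sqrt{b}$, whereas the paper introduces the three-argument divergence $J(p\Vert q\Vert\mu)$ and a generalized triangle inequality. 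Your identity is strictly more informative (it shows the excess is exactly $\tfrac12\Vert\sqrt{C}(\cdot)\Vert^2$, without the cross-term slack the triangle inequality incurs), though after taking square roots the downstream bound is the same. You also use a slightly different resolvent split for $f_{\lambda,n}-f_\lambda$ — isolating $(\hxi-\xi)$ and $(\hC-C)f_\lambda$ rather than $(\hxi+\hC f^\ast)$ and $(C-\hC)(f_\lambda-f^\ast)$ — both are algebraically correct and give the same $O_{p_0}((\lambda n)^{-1/2})$. Two cosmetic nits: $f_\lambda$ is deterministic, so $\Vert f_\lambda\Vert_\eu{H}=O(1)$, not $O_{p_0}(1)$; and your aside invoking an effective-dimension-type bound $\Vert(C+\lambda I)^{-1/2}(C-\hC)(C+\lambda I)^{-1/2}\Vert=O_{p_0}((\lambda n)^{-1/2})$ is not what the paper proves (it establishes the cruder $\Vert(C-\hC)(C+\lambda I)^{-1}\Vert_{HS}\lesssim (\lambda\sqrt{n})^{-1}$ via Bernstein), but the cruder bound already suffices at the rates chosen, so nothing breaks.
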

While the above result is useful and interesting, the assumption
about the existence of $f^*$ is quite restrictive. This is because if 
$p_0$ (which is not in $\Cal{P}$) belongs to a family $\Cal{Q}$ where
$\Cal{P}$ is dense in
$\Cal{Q}$ w.r.t.~$J$, then there is no $f\in\eu{H}$ that attains the
infimum, i.e., $f^*$ does not exist and therefore the proof
technique employed in
Theorem~\ref{Thm:misspecified} will fail. In the following, we present a result
(Theorem~\ref{Thm:misspecified-1}) that does not require the existence of
$f^\ast$ but attains the same result as
in Theorem~\ref{Thm:misspecified}, but requiring a more complicated
proof.
Before we present
Theorem~\ref{Thm:misspecified-1}, we need to introduce some notation. 

To
this end, let us return to the objective function under consideration, 
$$J(p_0\Vert p_f)=\frac{1}{2}\int_\Omega p_0(x) \left\Vert
\nabla\log \frac{p_0}{p_f}\right\Vert^2_2\,dx=
\frac{1}{2}\int_\Omega
p_0(x)\sum^d_{i=1}\left(\partial_i
f_\star-\partial_i f\right)^2\,dx,$$
where $f_\star=\log\frac{p_0}{q_0}$ 
and $p_0\notin\Cal{P}$. 
Define
\begin{equation}
\Cal{W}_2(\Omega,p_0):=\left\{f\in C^1(\Omega)\,:\,\partial^\alpha f\in
L^2(\Omega,p_0),\,\forall\,|\alpha|=1\right\}.\nonumber
\end{equation}
This is a reasonable class of
functions to consider as under the condition $J(p_0\Vert q_0)<\infty$, it is
clear that $f_\star\in\Cal{W}_2(\Omega,p_0)$. 
Endowed with a semi-norm, 
\begin{equation}\Vert
 f\Vert^2_{\Cal{W}_2}:=\sum_{|\alpha|=1}\Vert \partial^\alpha
f\Vert^2_{L^2(\Omega,p_0)},\nonumber
\end{equation}
$\Cal{W}_2(\Omega,p_0)$ is a vector space of functions, from which a normed space can
be constructed as follows. 
Let us
define $f,f^\prime\in \Cal{W}_2(\Omega,p_0)$ to be equivalent, i.e., 
$f\sim f^\prime$,
if $\Vert f-f^\prime\Vert_{\Cal{W}_2}=0$. In other words, $f\sim f^\prime$ if
and only if $f$ and $f^\prime$ differ by a constant
$p_0$-almost everywhere. Now define the quotient space $\Cal{W}^\sim_2(\Omega,p_0):=\left\{[f]_\sim:f\in\Cal{W}_2(\Omega,p_0)\right\}$ 
where $[f]_\sim:=\{f'\in \Cal{W}_2(\Omega,p_0):f\sim f^\prime\}$ denotes the
equivalence class of $f$. 
Defining $\Vert [f]_\sim\Vert_{\Cal{W}^\sim_2}:=\Vert f\Vert_{\Cal{W}_2}$, it is
easy to
verify that $\Vert\cdot\Vert_{\Cal{W}^\sim_2}$ defines a norm on
$\Cal{W}^\sim_2(p_0)$. In addition, endowing the following bilinear
form on $\Cal{W}^\sim_2(\Omega,p_0)$
$$\langle [f]_\sim,[g]_\sim\rangle_{\Cal{W}^\sim_2}:=\int_\Omega p_0(x)
\sum_{|\alpha|=1}(\partial^\alpha f)(x) (\partial^\alpha g)(x)\,dx$$
makes it a pre-Hilbert space. 
Let $W_2(\Omega,p_0)$ be the Hilbert
space obtained by completion of $\Cal{W}^\sim_2(\Omega,p_0)$. As shown in Proposition
\ref{pro:operators} below, under some assumptions, a continuous mapping $I_k:
\eu{H} \to W_2(\Omega,p_0), f\mapsto [f]_\sim$ can be defined, which is injective
modulo constant functions. Since addition of a constant does not contribute to
$p_f$, the space $W_2(\Omega,p_0)$ can be regarded as a parameter space extended from
$\eu{H}$. In addition to $I_k$, Proposition
\ref{pro:operators} (proved in Section~\ref{subsec:supp-operators}) describes the adjoint of $I_k$ and relevant
self-adjoint operators, which will be useful in analyzing $p_{f_{\lambda,n}}$ in Theorem~\ref{Thm:misspecified-1}.

\begin{proposition}\label{pro:operators}
Let $\emph{supp}(q_0)=\Omega$ where $\Omega\subset\bb{R}^d$ is non-empty and open. Suppose $k$ satisfies \textbf{\emph{(B)}} and $\partial_{i}\partial_{i+d}k(x,x)\in L^1(\Omega,p_0)$ 
for all $i\in[d]$.
Then $I_k:\eu{H}\to W_2(\Omega,p_0)$, $f\mapsto [f]_\sim$ defines a continuous mapping with the null space $\eu{H}\cap \mathbb{R}$. The
adjoint of $I_k$ is $S_k:W_2(\Omega,p_0)\rightarrow\eu{H}$ whose restriction to $\Cal{W}^\sim_2(\Omega,p_0)$ is
given by
$$
S_k[h]_\sim(y)= \int_\Omega \sum^d_{i=1}\partial_{i} k(x,y)\partial_i h(x)\,p_0(x)\,dx,\,\qquad\,[h]_\sim\in
\Cal{W}^\sim_2(\Omega,p_0),\,y\in\Omega.
$$
In addition, $I_k$ and $S_k$ are Hilbert-Schmidt and therefore compact. Also,
$E_k:=S_kI_k$ and $T_k:=I_kS_k$ are compact, positive and self-adjoint operators
on $\eu{H}$ and $W_2(\Omega,p_0)$ respectively where
$$
E_kg(y)=\int_\Omega \sum^d_{i=1}\partial_{i} k(x,y)\partial_i g(x)p_0(x)\,dx,\,\qquad\,g\in\eu{H},\,y\in\Omega
$$
and the restriction of $T_k$ to $\Cal{W}^\sim_2(\Omega,p_0)$ is given by
$$
T_k[h]_\sim=\left[\int_\Omega \sum^d_{i=1}\partial_{i} k(x,\cdot)\partial_i h(x)\,p_0(x)\,dx\right]_\sim,\,\qquad\,[h]_\sim\in
\Cal{W}^\sim_2(\Omega,p_0).
$$
\end{proposition}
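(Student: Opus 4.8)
The plan is to verify each claim of Proposition~\ref{pro:operators} by first establishing the key pointwise identities that make $I_k$ and its adjoint well-defined, and then deducing compactness and self-adjointness as formal consequences. First I would show that $I_k:f\mapsto [f]_\sim$ is well-defined and continuous: since \textbf{(B)} guarantees (via \cite[Corollary 4.36]{Steinwart-08}) that every $f\in\eu{H}$ is continuously differentiable with $\frac{\partial f}{\partial x_i}(x)=\langle f,\frac{\partial k(\cdot,x)}{\partial x_i}\rangle_\eu{H}$, we get $\|f\|^2_{\Cal{W}_2}=\sum_i\int \langle f,\frac{\partial k(\cdot,x)}{\partial x_i}\rangle^2_\eu{H}\,p_0(x)\,dx\le \|f\|^2_\eu{H}\sum_i\int\|\frac{\partial k(\cdot,x)}{\partial x_i}\|^2_\eu{H}\,p_0(x)\,dx$, which is finite by the hypothesis $\|\frac{\partial k(\cdot,x)}{\partial x_i}\|_\eu{H}\in L^2(p_0)$; hence $I_k$ maps into $W_2(p_0)$ and is bounded. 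The null space is exactly $\eu{H}\cap\mathbb{R}$: if $\|f\|_{\Cal{W}_2}=0$ then all partials of $f$ vanish $p_0$-a.e., and since $\text{supp}(q_0)=\Omega$ is open and $f$ is $C^1$, $f$ is constant; conversely constants are clearly in the kernel.

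Next I would identify the adjoint. For $f\in\eu{H}$ and $[h]_\sim\in\Cal{W}^\sim_2(p_0)$, compute $\langle I_kf,[h]_\sim\rangle_{\Cal{W}^\sim_2}=\int p_0(x)\sum_i\frac{\partial f}{\partial x_i}(x)\frac{\partial h}{\partial x_i}(x)\,dx=\int p_0(x)\sum_i\langle f,\frac{\partial k(\cdot,x)}{\partial x_i}\rangle_\eu{H}\frac{\partial h}{\partial x_i}(x)\,dx$, and by Bochner-integrability of $x\mapsto \sum_i\frac{\partial k(\cdot,x)}{\partial x_i}\frac{\partial h}{\partial x_i}(x)$ in $\eu{H}$ (which follows from $\|\frac{\partial k(\cdot,x)}{\partial x_i}\|_\eu{H}\in L^2(p_0)$ and $\frac{\partial h}{\partial x_i}\in L^2(p_0)$ via Cauchy--Schwarz) this equals $\langle f,\int\sum_i\frac{\partial k(\cdot,x)}{\partial x_i}\frac{\partial h}{\partial x_i}(x)\,p_0(x)\,dx\rangle_\eu{H}$. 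This exhibits $S_k[h]_\sim$ as claimed on $\Cal{W}^\sim_2(p_0)$; since that subspace is dense in $W_2(p_0)$ and $S_k$ is bounded (being the adjoint of the bounded $I_k$), the formula extends by continuity. The reproducing property then gives $S_k[h]_\sim(y)=\langle S_k[h]_\sim,k(\cdot,y)\rangle_\eu{H}=\int\sum_i\frac{\partial k(y,x)}{\partial x_i}\frac{\partial h}{\partial x_i}(x)\,p_0(x)\,dx$. The expressions for $E_k=S_kI_k$ and $T_k=I_kS_k$ follow by composing: $E_kg(y)=S_k(I_kg)(y)=\int\sum_i\frac{\partial k(y,x)}{\partial x_i}\frac{\partial g}{\partial x_i}(x)\,p_0(x)\,dx$, and similarly for $T_k$ restricted to $\Cal{W}^\sim_2(p_0)$. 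Positivity and self-adjointness of $E_k,T_k$ are automatic from $\langle E_kg,g\rangle_\eu{H}=\|I_kg\|^2_{\Cal{W}^\sim_2}\ge 0$ and the operator-adjoint structure $E_k^*=(S_kI_k)^*=I_k^*S_k^*=S_kI_k=E_k$ (using $I_k^*=S_k$, $S_k^*=I_k$), and likewise for $T_k$.

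The part requiring the most care is the Hilbert--Schmidt claim for $I_k$ (hence for $S_k=I_k^*$, and thus for $E_k$, $T_k$ as compositions). I would fix an orthonormal basis $(e_j)_{j}$ of the separable space $\eu{H}$ (separability holds by Remark~\ref{rem:assumptions}(i) under \textbf{(A)} and \textbf{(B)}) and compute $\|I_k\|^2_{HS}=\sum_j\|I_ke_j\|^2_{\Cal{W}^\sim_2}=\sum_j\sum_i\int\langle e_j,\frac{\partial k(\cdot,x)}{\partial x_i}\rangle^2_\eu{H}\,p_0(x)\,dx$. Exchanging sum and integral by Tonelli (all terms nonnegative) and using Parseval, $\sum_j\langle e_j,\frac{\partial k(\cdot,x)}{\partial x_i}\rangle^2_\eu{H}=\|\frac{\partial k(\cdot,x)}{\partial x_i}\|^2_\eu{H}$, so $\|I_k\|^2_{HS}=\sum_i\int\|\frac{\partial k(\cdot,x)}{\partial x_i}\|^2_\eu{H}\,p_0(x)\,dx<\infty$ by hypothesis; note this is finite precisely under the stated $L^2(p_0)$ assumption, which is why the proposition imposes it rather than mere boundedness. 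Since Hilbert--Schmidt operators are compact and the compact operators form an ideal, $S_k$, $E_k$, $T_k$ are all compact, completing the proof.
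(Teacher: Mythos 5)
Your proof follows essentially the same route as the paper's: the continuity bound via the reproducing property and Cauchy--Schwarz, the identification of the adjoint by computing $\langle [h]_\sim, I_k g\rangle_{W_2}$ and then specializing to $g = k(\cdot,y)$, and the Hilbert--Schmidt estimate via an orthonormal basis of the separable $\eu{H}$, Tonelli, and Parseval. The paper's proof establishes only these three items and dismisses the remaining claims (null space, and the compactness/positivity/self-adjointness of $E_k$ and $T_k$) as ``straightforward''; you spell these out, and your operator-ideal and $\langle E_k g, g\rangle_\eu{H} = \Vert I_k g\Vert^2_{W_2}$ arguments are the right ones. One small wobble worth fixing: in the null-space argument you conclude that $f$ is constant from ``the partials of $f$ vanish $p_0$-a.e.'' together with $\text{supp}(q_0)=\Omega$ and $f\in C^1$; but vanishing $p_0$-a.e.\ only gives vanishing on a set whose closure is $\text{supp}(p_0)$, so the continuity argument actually needs $\text{supp}(p_0)=\Omega$ (equivalently, $p_0>0$ Lebesgue-a.e.\ on $\Omega$) rather than the stated condition on $q_0$. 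This is arguably an imprecision already present in the proposition's hypotheses (the paper's proof skips the null-space claim entirely), but your write-up should flag that the step uses $p_0$, not $q_0$.
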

Note that for $[h]_\sim\in\Cal{W}^\sim_2(\Omega,p_0)$, the derivatives $\partial_i h$
do not depend on the choice of a representative element almost surely
w.r.t.~$p_0$, and thus the above integrals are well defined. Having constructed $W_2(\Omega,p_0)$, it is clear that $J(p_0\Vert p_f)=\frac{1}{2}\Vert
[f_\star]_\sim - I_k f\Vert^2_{W_2}$, which means estimating $p_0$ is equivalent to
estimating $f_\star\in W_2(\Omega,p_0)$ by $f\in\Cal{F}$. With all these
preparations, we are now ready
to present a result (proved in Section~\ref{subsec:misspecified}) on consistency and convergence rate for $p_{f_{\lambda,n}}$ without
assuming the existence of $f^\ast$.
\begin{theorem}\label{Thm:misspecified-1}
Let $p_0,\,q_0\in C^1(\Omega)$ be
probability densities such that $J(p_0\Vert q_0)<\infty$. Assume that \textbf{\emph{(A)}}--\textbf{\emph{(D)}} hold with $\varepsilon=2$ 
and $\chi:=d\sup_{x\in\Omega,i\in[d]}\partial_i\partial_{i+d}k(x,x)<\infty$. 
Then the following hold.\vspace{2mm}\\
(i) 
As
$\lambda\rightarrow 0,\,\lambda
n\rightarrow\infty\,\,\text{and}\,\,n\rightarrow\infty,$
$J(p_0\Vert p_{f_{\lambda,n }})\rightarrow
\inf_{p\in\Cal{P}}J(p_0\Vert p).$\vspace{1mm}\\
(ii) Define $f_\star:=\log\frac{p_0}{q_0}$. If $[f_\star]_\sim\in
\overline{\Cal{R}(T_k)}$, then 
$$
J(p_0\Vert p_{f_{\lambda,n }})\rightarrow
0\,\,\,\text{as}\,\,\,\lambda\rightarrow 0,\,\lambda
n\rightarrow\infty\,\,\text{and}\,\,n\rightarrow\infty.$$ 
In addition, if
$[f_\star]_\sim\in\Cal{R}(T^\beta_k)$ for some $\beta> 0$, then for $\lambda=n^{-\max\left\{\frac{1}{3},\frac{1}{2\beta+1}\right\}}$
$$
J(p_0\Vert p_{f_{\lambda,n}})=O_{p_0}\Big(n^{-\min\left\{\frac{2}{3},
\frac{2\beta}{2\beta+1}\right\}}\Big)$$.
(iii) If $\Vert
E^{-1}_k\Vert<\infty$ and $\Vert T^{-1}_k\Vert<\infty$, then 
$
J(p_0\Vert p_{f_{\lambda,n}})=O_{p_0}\left(n^{-1}\right)
$ 
with $\lambda=n^{-\frac{1}{2}}$. 
\end{theorem}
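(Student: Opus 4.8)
The plan is to treat $f_{\lambda,n}$ as the Tikhonov-regularised solution of the empirical version of the linear inverse problem $Cf=-\xi$ and to run a bias--variance decomposition inside the Hilbert space $W_2(p_0)$ supplied by Proposition~\ref{pro:operators}. By that proposition together with integration by parts (assumption \textbf{(C)}), $J(p_0\Vert p_f)=\tfrac12\Vert[f_\star]_\sim-I_kf\Vert^2_{W_2}$ with $f_\star=\log(p_0/q_0)$, while Theorem~\ref{Thm:score} gives $C=E_k=S_kI_k$ and $\xi=-S_k[f_\star]_\sim$, so $f_\lambda=-(C+\lambda I)^{-1}\xi=(C+\lambda I)^{-1}S_k[f_\star]_\sim$ and $f_{\lambda,n}=-(\hat{C}+\lambda I)^{-1}\hat{\xi}$ (as noted after Theorem~\ref{Thm:score}, $J(p_0\Vert p_{f_{\lambda,n}})$ is well defined through this quadratic form even if $f_{\lambda,n}\notin\Cal{F}$). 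Using $\Vert I_kg\Vert^2_{W_2}=\langle g,Cg\rangle_\eu{H}$ and the triangle inequality, $\sqrt{2J(p_0\Vert p_{f_{\lambda,n}})}\le\Cal{A}(\lambda)+\Cal{E}(\lambda,n)$, where $\Cal{A}(\lambda):=\Vert[f_\star]_\sim-I_kf_\lambda\Vert_{W_2}$ is a deterministic approximation error and $\Cal{E}(\lambda,n):=\Vert C^{1/2}(f_\lambda-f_{\lambda,n})\Vert_\eu{H}$ is a stochastic estimation error.

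For $\Cal{A}(\lambda)$ I would exploit the intertwining identity $I_k(C+\lambda I)^{-1}=(T_k+\lambda I)^{-1}I_k$ (immediate from $(T_k+\lambda I)I_k=I_k(S_kI_k+\lambda I)=I_k(C+\lambda I)$) together with $T_k=I_kS_k$, which yields the closed form $[f_\star]_\sim-I_kf_\lambda=\lambda(T_k+\lambda I)^{-1}[f_\star]_\sim$. For part (i), spectral calculus for $T_k$ gives $\lambda(T_k+\lambda I)^{-1}[f_\star]_\sim\to(\mathrm{Id}-P)[f_\star]_\sim$ as $\lambda\to0$, with $P$ the orthogonal projection onto $\overline{\Cal{R}(T_k)}$; the point is then to check $\tfrac12\Vert(\mathrm{Id}-P)[f_\star]_\sim\Vert^2_{W_2}=\inf_{p\in\Cal{P}}J(p_0\Vert p)$, which follows once one shows $\overline{\Cal{R}(T_k)}=\overline{\Cal{R}(I_k)}$ (because $\Cal{N}(T_k)=\Cal{N}(S_k)=\Cal{R}(I_k)^\perp$) and a density argument for $\overline{I_k(\Cal{F})}=\overline{\Cal{R}(I_k)}$. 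For part (ii), writing $[f_\star]_\sim=T_k^\beta g$ gives $\Cal{A}(\lambda)\le\Vert g\Vert\sup_{0\le t\le\Vert T_k\Vert}\tfrac{\lambda t^\beta}{t+\lambda}=O(\lambda^{\min\{\beta,1\}})$ — the qualification-one saturation of Tikhonov regularisation at $\beta=1$. For part (iii), $\Vert T_k^{-1}\Vert<\infty$ gives directly $\Cal{A}(\lambda)\le\lambda\Vert T_k^{-1}\Vert\Vert[f_\star]_\sim\Vert=O(\lambda)$.

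For $\Cal{E}(\lambda,n)$, subtracting the two normal equations gives $f_{\lambda,n}-f_\lambda=(\hat{C}+\lambda I)^{-1}[(\xi-\hat{\xi})+(C-\hat{C})f_\lambda]$. Assumption \textbf{(D)} with $\varepsilon=2$ makes the i.i.d.\ summands defining $\hat{C}$ and $\hat{\xi}$ square-integrable in Hilbert--Schmidt, resp.\ Hilbert, norm, so Bernstein's inequality in Hilbert space (cf.\ Proposition~\ref{pro:conc}) gives $\Vert\hat{C}-C\Vert_{HS}=O_{p_0}(n^{-1/2})$ and $\Vert\hat{\xi}-\xi\Vert_\eu{H}=O_{p_0}(n^{-1/2})$, while $\Vert C^{-1}\Vert<\infty$ bounds $\Vert f_\lambda\Vert_\eu{H}\le\Vert C^{-1}\Vert\Vert S_k\Vert\Vert[f_\star]_\sim\Vert$ uniformly in $\lambda$. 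For parts (i)--(ii) I would then use $\Vert C^{1/2}(\hat{C}+\lambda I)^{-1}\Vert\le\Vert C^{1/2}(C+\lambda I)^{-1}\Vert\,\Vert(C+\lambda I)(\hat{C}+\lambda I)^{-1}\Vert\le\tfrac1{2\sqrt\lambda}\bigl(1+\Vert C-\hat{C}\Vert/\lambda\bigr)$, which on the event $\{\Vert C-\hat{C}\Vert\le\lambda/2\}$ (probability $\to1$ once $\lambda\sqrt n\to\infty$) is $O(\lambda^{-1/2})$, giving $\Cal{E}(\lambda,n)=O_{p_0}\bigl((\lambda n)^{-1/2}\bigr)$. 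For part (iii), bounded invertibility of $C$ yields $\hat{C}\succeq\tfrac12 C$ on a high-probability event, hence $\Vert(\hat{C}+\lambda I)^{-1}\Vert=O_{p_0}(1)$ and the sharper $\Cal{E}(\lambda,n)=O_{p_0}(n^{-1/2})$.

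Combining the two estimates, for (i) the inequality $J(p_0\Vert p_{f_{\lambda,n}})\le(1+\epsilon)\tfrac12\Cal{A}(\lambda)^2+(1+\epsilon^{-1})\tfrac12\Cal{E}(\lambda,n)^2$ with $\epsilon\to0$ after $\lambda\to0,\lambda n\to\infty$ gives $J(p_0\Vert p_{f_{\lambda,n}})\to\inf_{p\in\Cal{P}}J(p_0\Vert p)$, and adding $[f_\star]_\sim\in\overline{\Cal{R}(T_k)}$ makes this limit zero; for (ii), balancing $\lambda^{\min\{\beta,1\}}\asymp(\lambda n)^{-1/2}$ gives $\lambda=n^{-\max\{1/(2\beta+1),1/3\}}$ and $J(p_0\Vert p_{f_{\lambda,n}})=O_{p_0}\bigl(n^{-\min\{2\beta/(2\beta+1),2/3\}}\bigr)$; for (iii), balancing $\lambda\asymp n^{-1/2}$ against $n^{-1/2}$ gives $J(p_0\Vert p_{f_{\lambda,n}})=O_{p_0}(n^{-1})$. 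I expect the genuine difficulty to be part (i): since $p_0\notin\Cal{P}$ there is no minimiser $f^\ast$ (this is precisely the gap over Theorem~\ref{Thm:misspecified}), so one must argue purely through the closures of $\Cal{R}(I_k)$, $\Cal{R}(T_k)$ and $I_k(\Cal{F})$ in $W_2(p_0)$ to identify the limit of $\Cal{A}(\lambda)^2$ with $2\inf_{p\in\Cal{P}}J(p_0\Vert p)$; once the operator pair $(I_k,S_k)$ and the concentration bounds are in hand, the bias and variance estimates are the standard Tikhonov-regularisation estimates transplanted to this setting.
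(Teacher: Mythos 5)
Your proposal is correct and reaches the stated rates, but it follows a genuinely different route from the paper in two places. First, for the estimation error you subtract the two normal equations to get $f_{\lambda,n}-f_\lambda=(\hat C+\lambda I)^{-1}\bigl[(\xi-\hat\xi)+(C-\hat C)f_\lambda\bigr]$, then bound $\Vert\hat\xi-\xi\Vert_\eu{H}$ and $\Vert\hat C-C\Vert_{HS}$ separately by $O_{p_0}(n^{-1/2})$ and use $\Vert C^{-1}\Vert<\infty$ to keep $\Vert f_\lambda\Vert_\eu{H}\le\Vert C^{-1}\Vert\Vert S_k\Vert\Vert[f_\star]_\sim\Vert_{W_2}$ uniformly in $\lambda$. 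The paper instead adds and subtracts $\hat S_k f_\star$ and arrives at the split $f_{\lambda,n}-f_\lambda=-(\hat E_k+\lambda I)^{-1}(\hat\xi+\hat S_kf_\star)+(\hat E_k+\lambda I)^{-1}(\hat S_k-S_k)(f_\star-I_kf_\lambda)$; the first term is a zero-mean empirical average (since $\xi+S_kf_\star=0$), and the second term shrinks proportionally to $\Cal{B}(\lambda)=\Vert f_\star-I_kf_\lambda\Vert_{W_2}$, so no uniform bound on $\Vert f_\lambda\Vert_\eu{H}$ is ever needed. Your route uses $\Vert C^{-1}\Vert<\infty$ more heavily, but since it is a standing hypothesis of the theorem this costs nothing. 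Second, for the approximation error in part (i): you and the paper both use the intertwining $I_k(C+\lambda I)^{-1}=(T_k+\lambda I)^{-1}I_k$ to write $\Cal{B}(\lambda)=\Vert\lambda(T_k+\lambda I)^{-1}[f_\star]_\sim\Vert_{W_2}$, but you then invoke spectral calculus/dominated convergence directly to get $\lambda(T_k+\lambda I)^{-1}[f_\star]_\sim\to(\mathrm{Id}-P)[f_\star]_\sim$ and identify $\tfrac12\Vert(\mathrm{Id}-P)[f_\star]_\sim\Vert^2_{W_2}$ with the infimum via $\overline{\Cal R(T_k)}=\Cal N(S_k)^\perp=\overline{\Cal R(I_k)}$ plus density of $I_k(\Cal F)$ in $I_k(\eu H)$. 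The paper instead decomposes $\Cal{B}(\lambda)\le\Vert f_\star-I_kh\Vert_{W_2}+\sqrt\lambda\Vert h\Vert_\eu{H}$ for arbitrary $h\in\eu H$, infimises to obtain the $K$-functional $K(f_\star,\sqrt\lambda,W_2(p_0),I_k(\eu H))$, and appeals to its monotone concave behaviour in $t$ to pass to the limit. Your spectral argument is arguably more transparent; the $K$-functional route has the fringe benefit of setting up the interpolation-space interpretation referenced in Remark~\ref{rem:misspecified-compare}(ii). Parts (ii) and (iii) are essentially identical in both treatments (qualification-one Tikhonov saturation for the approximation error, balancing against $O_{p_0}((\lambda n)^{-1/2})$). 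One small caveat that applies to both arguments: bounding the prefactor $\Vert C^{1/2}(\hat C+\lambda I)^{-1}\Vert$ by $O(\lambda^{-1/2})$ requires the event $\{\Vert C-\hat C\Vert\lesssim\lambda\}$ to dominate, which in turn needs $\lambda\sqrt n$ bounded away from zero rather than merely $\lambda n\to\infty$; this is automatic for the rate-balanced $\lambda$ in parts (ii)--(iii), so it only bears on the range of sequences to which part (i) applies.
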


%
\begin{rem}\label{rem:misspecified-compare} 
\emph{(i)} The result in
Theorem~\ref{Thm:misspecified-1}(ii) is
particularly interesting as it shows that $[f_\star]_\sim\in
W_2(\Omega,p_0)\backslash I_k(\eu{H})$ can be consistently estimated by
$f_{\lambda,n}\in\eu{H}$, which in turn implies that certain
$p_0\notin\Cal{P}$ can be consistently estimated by
$p_{f_{\lambda,n}}\in\Cal{P}$. In particular, if $S_k$ is injective,
then $I_k(\eu{H})$ is dense in $W_2(\Omega,p_0)$ w.r.t.~$\Vert\cdot\Vert_{W_2}$, which
implies $\inf_{p\in\Cal{P}}J(p_0||p)=0$ though there does not exist
$f^\ast\in\eu{H}$ for which $J(p_0||p_{f^*})=0$. While
Theorem~\ref{Thm:misspecified} cannot handle this situation, (i) and (ii)
in Theorem~\ref{Thm:misspecified-1} coincide showing that $p_0\notin\Cal{P}$ can
be consistently estimated by
$p_{f_{\lambda,n}}\in\Cal{P}$. While the question of when $I_k(\eu{H})$  is
dense in $W_2(\Omega,p_0)$ is open, we refer the reader to
Section~\ref{subsec:supp-dense} for a related
discussion.\vspace{2mm}\\
\emph{(ii)} Replicating the proof of Theorem 4.6 in \cite{Steinwart-12}, it is
easy
to show that for all $0<\gamma<1$,
$\Cal{R}(T^{\gamma/2}_k)=\left[W_2(\Omega,p_0),I_k(\eu{H})\right]_{\gamma,2}$,
where the r.h.s.~is an interpolation space obtained
through the real interpolation of $W_2(\Omega,p_0)$ and $I_k(\eu{H})$ (see
Section~\ref{subsec:interpolation} for the notation and definition). Here
$I_k(\eu{H})$ is endowed with the Hilbert space structure by $I_k(\eu{H})\cong
\eu{H}/\eu{H}\cap\mathbb{R}$. This interpolation space interpretation means
that, for $\beta\ge
\frac{1}{2}$, $\Cal{R}(T^\beta_k)\subset \eu{H}$ modulo constant functions. It
is nice to
note that the rates in Theorem~\ref{Thm:misspecified-1}(ii) for
$\beta\ge\frac{1}{2}$ match with the rates in Theorem~\ref{Thm:density} (i.e.,
the well-specified case) w.r.t.~$J$ for $0\le\beta\le\frac{1}{2}$. We
highlight the fact that $\beta=0$ corresponds to $\eu{H}$ in
Theorem~\ref{Thm:density} whereas $\beta=\frac{1}{2}$ corresponds to $\eu{H}$ in
Theorem~\ref{Thm:misspecified-1}(ii) and therefore the range of comparison is
for $\beta\ge\frac{1}{2}$ in Theorem~\ref{Thm:misspecified-1}(ii) versus
$0\le\beta\le\frac{1}{2}$ in Theorem~\ref{Thm:density}. In contrast,
Theorem~\ref{Thm:misspecified} is very limited as it only provides a
rate for the convergence of $J(p_0||p_{f_{\lambda,n}})$ to
$\inf_{p\in\Cal{P}}J(p_0||p)$ assuming that $f^*$ is sufficiently smooth.\vspace{-2mm}
\end{rem}
Based on the observation (i) in the above remark that 
$\inf_{p\in\Cal{P}}J(p_0\Vert p)=0$ if $I_k(\eu{H})$ is dense in $W_2(\Omega,p_0)$
w.r.t.~$\Vert\cdot\Vert_{W_2}$, it is possible to obtain an approximation result for $\Cal{P}$
(similar to those discussed in Section~\ref{Sec:approximation}) w.r.t.~Fisher divergence as shown
below, whose proof is provided in Section~\ref{subsec:supp-thm-approx-fd}.

\begin{proposition}\label{Thm:approx-fd}
Suppose $\Omega\subset\bb{R}^d$ is non-empty and open. Let $q_0\in C^1(\Omega)$ be a
probability density and
$$\Cal{P}_{\emph{FD}}:=\Big\{p\in C^1(\Omega):\int_\Omega p(x)\,dx=1, p(x)\ge
0,\,\forall\,x\in\Omega\,\,\emph{and}\,\,J(p\Vert q_0)<\infty 
\Big\}.$$ 
For any $p\in \Cal{P}_{\emph{FD}}$, if $I_k(\eu{H})$ is dense in
$W_2(\Omega,p)$ w.r.t.~$\Vert\cdot\Vert_{W_2}$, then for every
$\epsilon>0$, there exists
$\tilde{p}\in\Cal{P}$ such that $J(p\Vert \tilde{p})\le\epsilon$.
\end{proposition}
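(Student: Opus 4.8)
The plan is to use the Hilbert-space reformulation of the Fisher divergence from Section~\ref{Sec:misspecified}, applied with $p$ in place of $p_0$. For $f\in\Cal{F}$ one has $\log(p/p_f)=f_\star-f+A(f)$ with $f_\star:=\log(p/q_0)$, so
\[
J(p\Vert p_f)=\tfrac12\int p(x)\sum_{i=1}^d\big(\partial_i f_\star(x)-\partial_i f(x)\big)^2\,dx=\tfrac12\big\Vert[f_\star]_\sim-I_k f\big\Vert^2_{W_2(p)},
\]
where $I_k:\eu{H}\to W_2(p)$ is the map of Proposition~\ref{pro:operators}, well defined under the hypotheses implicit in the statement (namely $\text{supp}(q_0)=\Omega$, \textbf{(B)}, and $\Vert\partial_i k(\cdot,x)\Vert_\eu{H}\in L^2(p)$). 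Differentiating the reproducing property gives $\partial_i f(x)=\langle f,\partial_i k(\cdot,x)\rangle_\eu{H}$, whence $\Vert I_k f\Vert_{W_2(p)}\le\Vert f\Vert_\eu{H}\big(\sum_i\int\Vert\partial_i k(\cdot,x)\Vert_\eu{H}^2\,p(x)\,dx\big)^{1/2}$, so $I_k$ is bounded. The proposition then reduces to showing that $I_k(\Cal{F})$ is dense in $W_2(p)$ w.r.t.\ $\Vert\cdot\Vert_{W_2(p)}$: granted this, for any $\epsilon>0$ pick $f\in\Cal{F}$ with $\Vert[f_\star]_\sim-I_k f\Vert^2_{W_2(p)}<2\epsilon$ and set $\tilde p:=p_f\in\Cal{P}$, giving $J(p\Vert\tilde p)<\epsilon$.

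The first ingredient is that $[f_\star]_\sim$ genuinely lies in $W_2(p)$: since $J(p\Vert q_0)<\infty$ we have $\partial_i f_\star\in L^2(\Omega,p)$ for every $i$, so $f_\star\in\Cal{W}_2(p)$ and $[f_\star]_\sim\in\Cal{W}^\sim_2(p)\subset W_2(p)$. The second ingredient is the hypothesis that $I_k(\eu{H})$ is dense in $W_2(p)$. It therefore remains only to show that $I_k(\Cal{F})$ is dense in $I_k(\eu{H})$. Since $\eu{H}=\overline{\text{span}}\{k(\cdot,x):x\in\Omega\}$, the finite linear combinations $g=\sum_j c_j k(\cdot,x_j)$ are $\Vert\cdot\Vert_\eu{H}$-dense in $\eu{H}$, and by boundedness of $I_k$ their images are $\Vert\cdot\Vert_{W_2(p)}$-dense in $I_k(\eu{H})$. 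Hence it suffices to check that each such $g$ lies in $\Cal{F}$, i.e.\ $A(g)=\log\int e^{g}q_0<\infty$.

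This last integrability check is the main obstacle, and it is where assumptions on $(k,q_0)$ enter. When $k$ is bounded one in fact has $\Cal{F}=\eu{H}$: $|g(x)|\le\Vert g\Vert_\eu{H}\sqrt{k(x,x)}\le\Vert g\Vert_\eu{H}\sqrt{\Vert k\Vert_\infty}$, so $\int e^{g}q_0\le e^{\Vert g\Vert_\eu{H}\sqrt{\Vert k\Vert_\infty}}<\infty$; likewise if $k(\cdot,x)\in C_0(\Omega)$ for all $x$, then each $g$ above is bounded and the same bound applies. More generally the estimate $|g(x)|\le\Vert g\Vert_\eu{H}\sqrt{k(x,x)}$ shows $g\in\Cal{F}$ whenever $\int e^{c\sqrt{k(x,x)}}q_0(x)\,dx<\infty$ for $c=\Vert g\Vert_\eu{H}$, a mild condition on $(k,q_0)$ of exactly the kind appearing in Theorem~\ref{Thm:density-2}. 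Under such a condition, $I_k(\Cal{F})$ is dense in $I_k(\eu{H})$, hence in $W_2(p)$, and the proof concludes as in the first paragraph; the null space $\eu{H}\cap\mathbb{R}$ of $I_k$ is harmless since $p_f$ is invariant under adding a constant to $f$. Everything else is a routine transcription of the $W_2(p_0)$ construction of Section~\ref{Sec:misspecified} with $p$ substituted for $p_0$.
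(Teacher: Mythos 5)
Your proof takes essentially the same route as the paper: you identify $J(p\Vert p_g)=\tfrac12\Vert[f_\star]_\sim-I_kg\Vert^2_{W_2(p)}$ with $f_\star=\log(p/q_0)$, note that $J(p\Vert q_0)<\infty$ puts $[f_\star]_\sim$ in $W_2(p)$, and then invoke the assumed density of $I_k(\eu{H})$ to find $g$ making the $W_2$-norm small. That is exactly the paper's argument (Section~\ref{subsec:supp-thm-approx-fd}).

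Where you go beyond the paper is in flagging that the approximant $g\in\eu{H}$ chosen by density need not lie in $\Cal{F}=\{f\in\eu{H}:A(f)<\infty\}$, so that $p_g\in\Cal{P}$ is not automatic; the paper's proof simply writes ``pick $p_g\in\Cal{P}$'' without checking $A(g)<\infty$. Your observation is legitimate: nothing in the stated hypotheses (\textbf{(A)}, $q_0\in C^1(\Omega)$, and the conditions needed for $I_k$ and $W_2(p)$ to exist via Proposition~\ref{pro:operators}) forces $\Cal{F}=\eu{H}$. Your fix — reducing to finite linear combinations $\sum_j c_j k(\cdot,x_j)$, which are $\Vert\cdot\Vert_\eu{H}$-dense, and observing that these lie in $\Cal{F}$ whenever $k$ is bounded (or $k(\cdot,x)\in C_0(\Omega)$), or more generally under $\int e^{c\sqrt{k(x,x)}}q_0\,dx<\infty$ — is correct and is precisely the kind of condition the paper imposes in the unbounded-kernel setting (Theorem~\ref{Thm:density-2}). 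So your proof is not just a transcription of the paper's: it closes a small rigor gap, at the modest cost of an extra integrability hypothesis on $(k,q_0)$ that the paper's terse statement leaves implicit. The remark about the null space $\eu{H}\cap\mathbb{R}$ being harmless is correct but tangential, since the argument only needs some $g\in\Cal{F}$, not uniqueness.
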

\section{Numerical Simulations}\label{Sec:experiments}
We have proposed an estimator of $p_0$ that is obtained by minimizing
the
regularized empirical Fisher divergence and presented its consistency along
with convergence rates. As discussed in Section~\ref{Sec:Introduction}, however one
can simply ignore the structure of $\Cal{P}$ and estimate $p_0$ in a completely
non-parametric fashion, for example using the kernel density estimator (KDE). In
fact, consistency and convergence rates of KDE are also
well-studied \cite[Chapter 1]{Tsybakov-09} and the kernel density
estimator is very simple to compute---requiring only $O(n)$
computations---compared to the proposed estimator, which is obtained by solving a
linear system of size $nd \times nd$.
This raises questions about the
applicability of the proposed estimator in practice, though it is very well known that KDE
performs poorly for moderate to large $d$ \cite[Section 6.5]{Wasserman-06}. In
this section, we numerically demonstrate that the proposed score matching estimator performs significantly better than the KDE, and in particular, that the
advantage with the proposed estimator grows as $d$ gets large.
Note further that the maximum
likelihood approach of \citet{Barron-91} and \citet{Fukumizu-09a} does not yield
estimators that are practically feasible, and therefore to the best of our
knowledge, the proposed estimator is the only viable estimator for
estimating densities through $\Cal{P}$.

In the following, we consider two simple scenarios of estimating a multivariate normal and mixture of normals using the proposed estimator and demonstrate the
superior performance of the proposed estimator over KDE. Inspired by this preliminary empirical investigation,
recently, the proposed estimator has been explored in two concrete applications of gradient-free adaptive MCMC sampler \citep{Heiko-15} and
graphical model structure learning \citep{Sun-15} where the superiority of working with the infinite dimensional family is demonstrated. We would like to again highlight that
the goal of this work is not to construct density estimators
that improve upon KDE but to provide a novel modeling technique of approximating an unknown density by a rich parametric family of densities
with the parameter being infinite dimensional in contrast to the classical approach of finite dimensional approximation.

We consider the problems of
estimating a standard normal
distribution on $\bb{R}^d$, $N(0,I_d)$ and mixture of Gaussians, $$
    p_0(x) = \frac{1}{2} \phi_d(x; \alpha \textbf{1}_n, I_d) + \frac{1}{2}
\phi_d(x; \beta \textbf{1}_n, I_d)$$
through
the score matching approach and KDE, and compare their estimation accuracies.
Here $\phi_d(x;\mu,\Sigma)$ is the p.d.f.~of $N(\mu,\Sigma I_d)$. By
choosing the kernel, $
    k(x,y) =  \exp(-\frac{\| x-y\|^2_2}{2\sigma^2})+ r (x^T y+c)^2,$ which is a Gaussian plus
polynomial of degree 2, it is easy to verify that Gaussian distributions lie in $\Cal{P}$, and
therefore the first problem considers the well-specified case while the second
problem deals with the misspecified case. In our simulations, we chose
$r=0.1$, $c=0.5$, $\alpha=4$ and $\beta=-4$.  The base measure of the exponential family is $N(0,10^2I_d)$.
The bandwidth parameter
$\sigma$ is chosen by cross-validation (CV) of the objective function $\hat{J}_\lambda$ (see
Theorem~\ref{Thm:score}(iv)) within the parameter set $\{0.1, 0.2, 0.4, 0.6, 0.8, 1, 1.2, 1.4, 1.6\}\times \sigma_*$, where $\sigma_*$ is the median of pairwise distances of data, and the regularization parameter $\lambda$ is set as $\lambda=0.1\times n^{-1/3}$ with sample size $n$.
For KDE, the Gaussian kernel is used for the
smoothing kernel, and the bandwidth parameter is chosen by CV from $\{0.02,0.04,0.06,0.08,0.1, 0.2, 0.4, 0.6, 0.8, 1.0\}\times \sigma_*$;
where for both the methods, 5-fold CV is applied.
\begin{figure}[t]
\begin{center}
  \includegraphics[height=5.5cm]{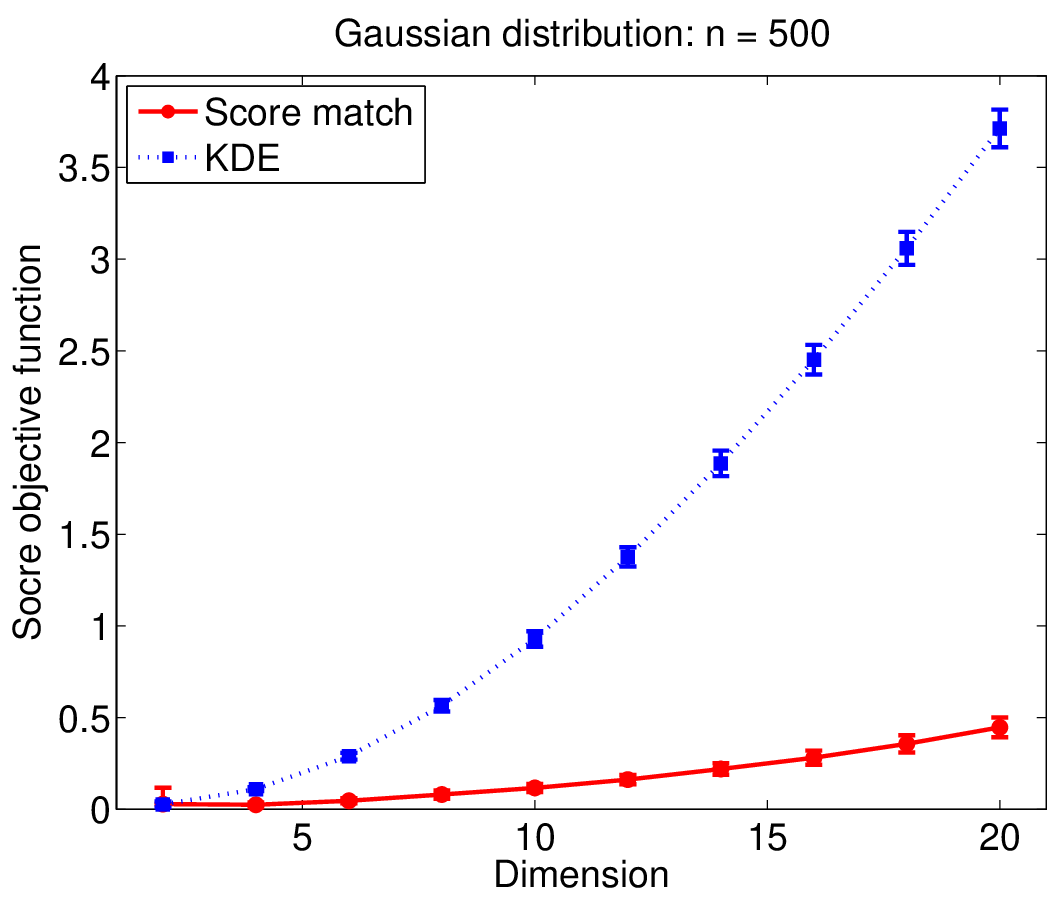}\hspace*{0.5cm}
  \includegraphics[height=5.5cm]{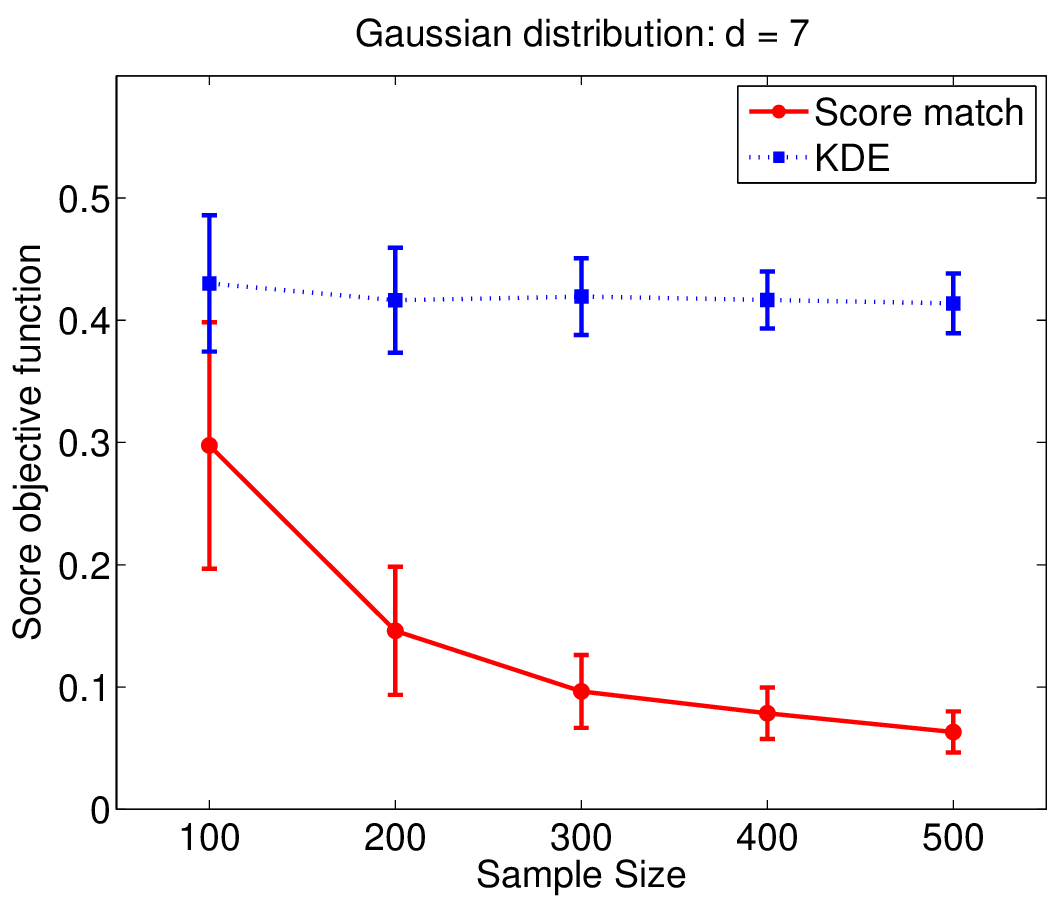}\\
\vspace{.5cm}
  \includegraphics[height=5.5cm]{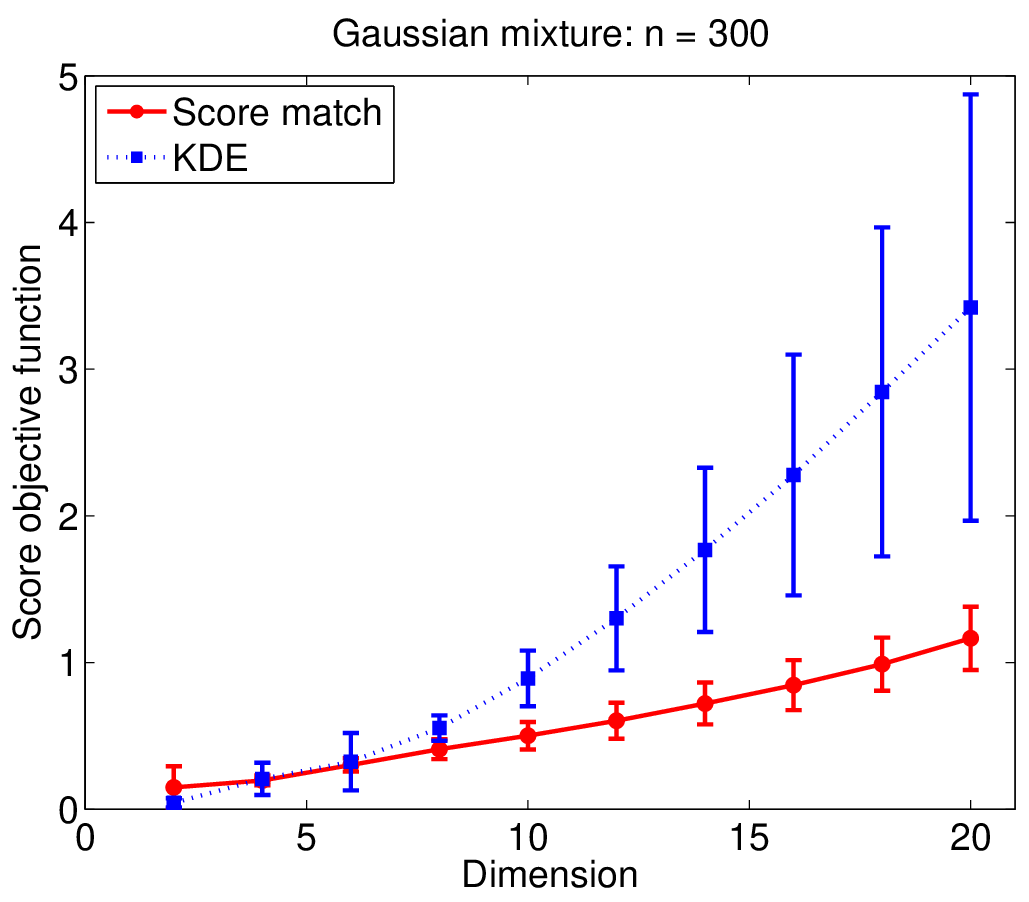}
  \includegraphics[height=5.5cm]{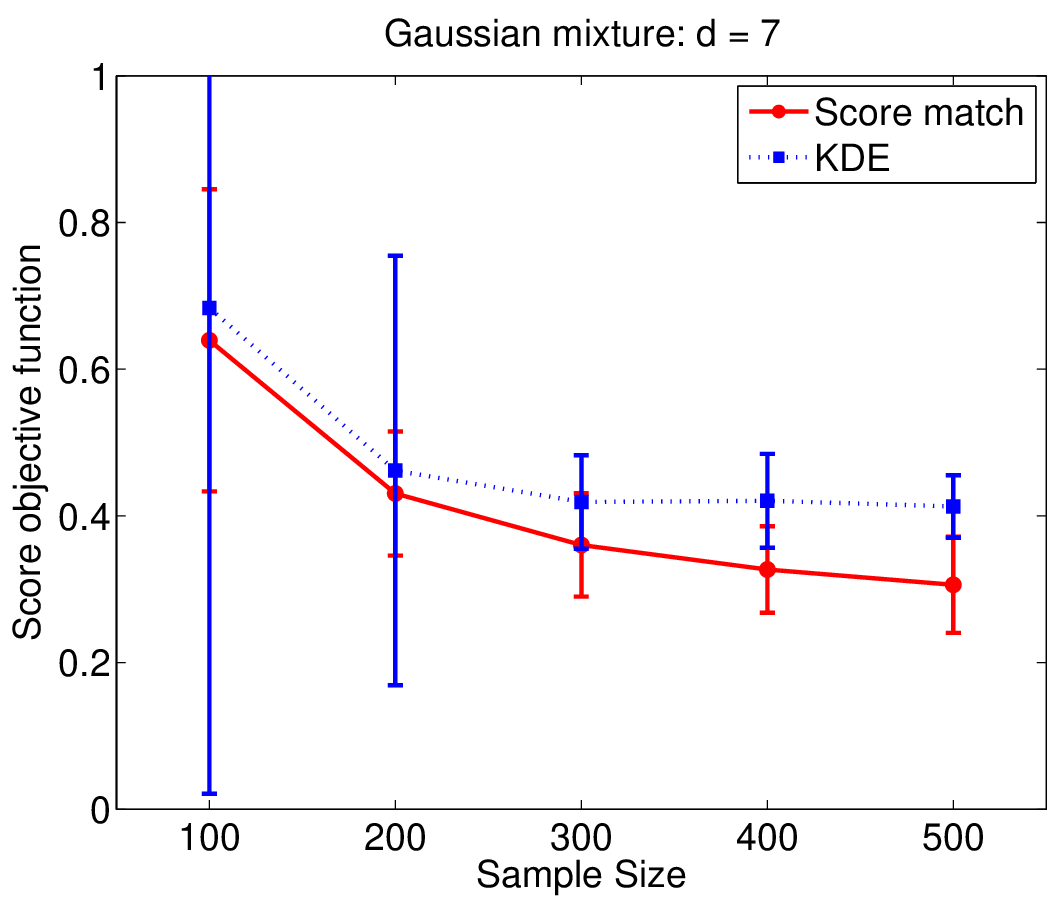}
  \caption{Experimental comparisons with the score objective function: proposed
method and
kernel density estimator}\label{fig:graphs_score}
\vspace{-13mm}
\end{center}
\end{figure}
Since it is difficult to accurately estimate the normalization constant in the
proposed method, we use two methods to evaluate the accuracy of estimation. One
is the objective function for the score matching method,
\[
    \tilde{J}(p) = \sum_{i=1}^d \int_\Omega \left( \frac{1}{2}\left| \partial_i
\log p(x) \right|^2 +
    \partial^2_i \log p(x)\right) p_0(x)dx,
\]
and the other is correlation of the estimator with the true density function,
\[
    \text{Cor}(p,p_0):=\frac{\bb{E}_R[p(X)p_0(X)]}{\sqrt{ \bb{E}_R[ p(X)^2
]\bb{E}_R[ p_0(X)^2 ]}},
\]
where $R$ is a probability distribution. For $R$, we use the empirical
distribution based on 10000 random samples drawn i.i.d.~from $p_0(x)$.

\begin{figure}[t]
\begin{center}
  \includegraphics[height=5.5cm]{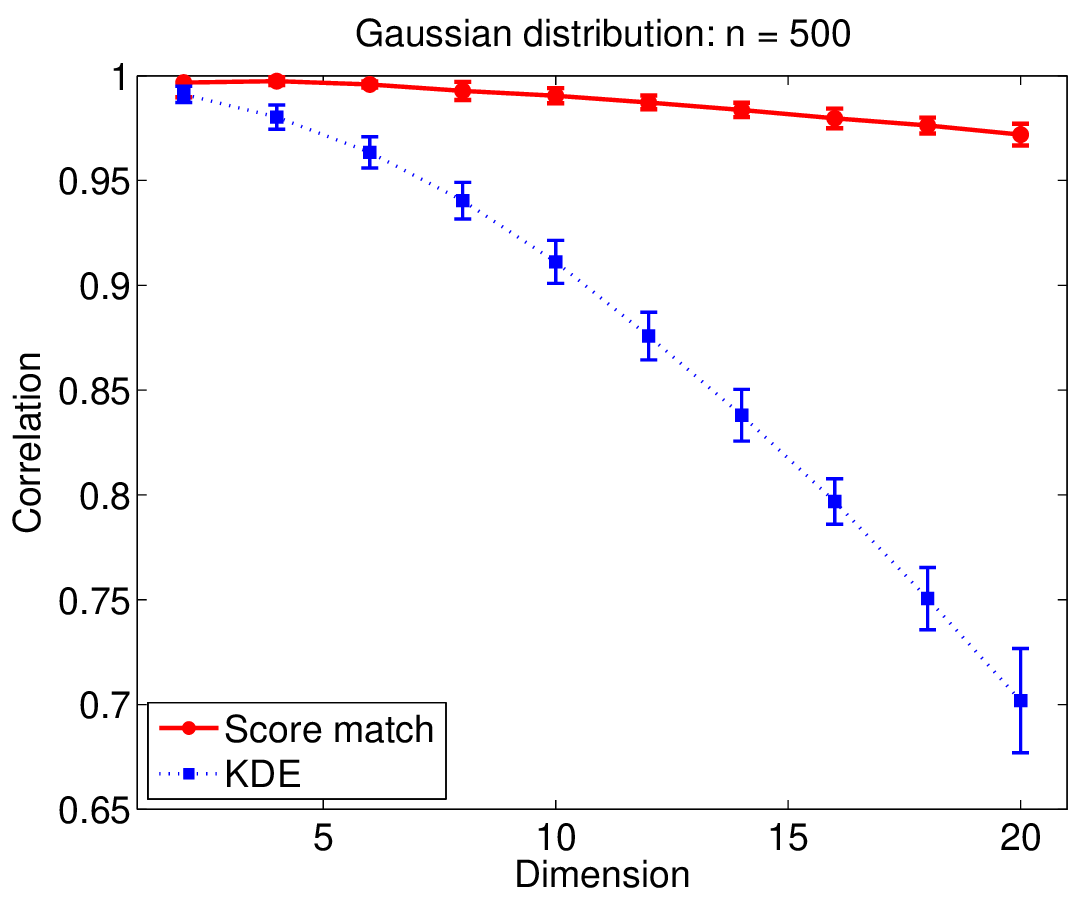}\hspace*{0.5cm}
  \includegraphics[height=5.5cm]{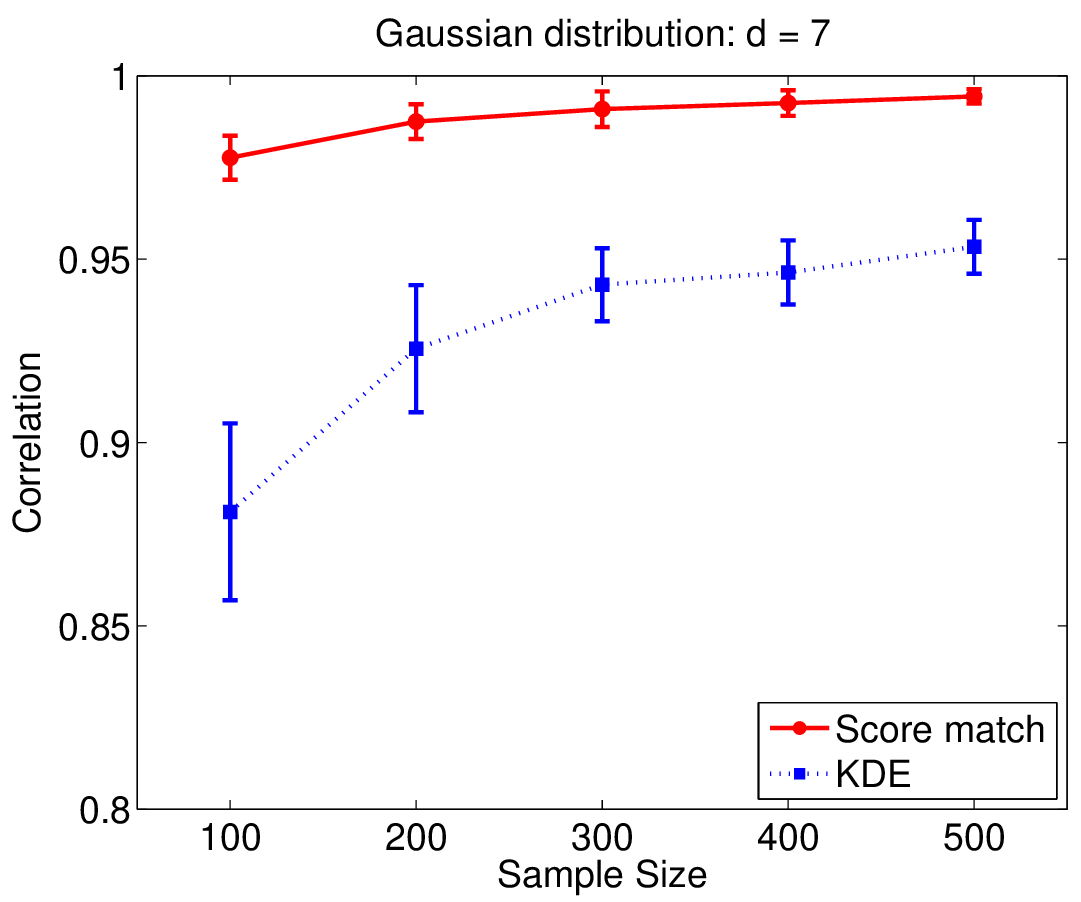}\\
\vspace{.5cm}
  \includegraphics[height=5.5cm]{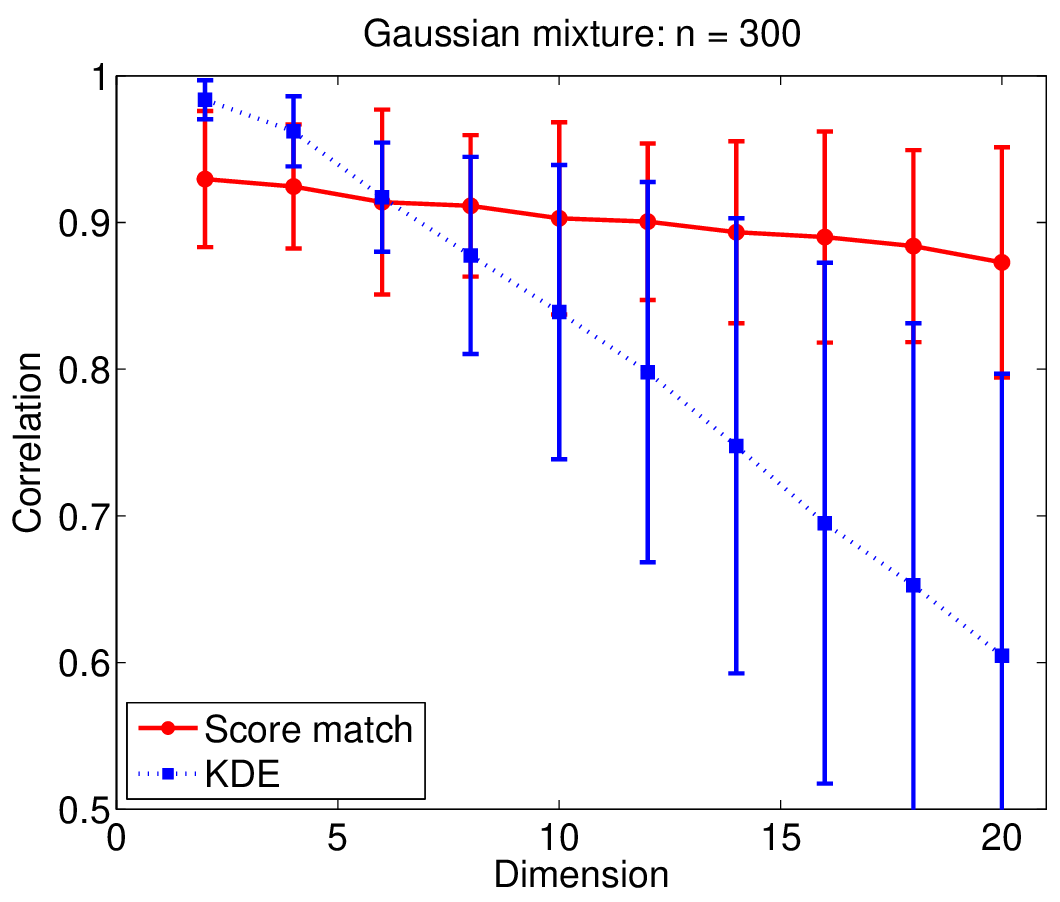}
  \includegraphics[height=5.5cm]{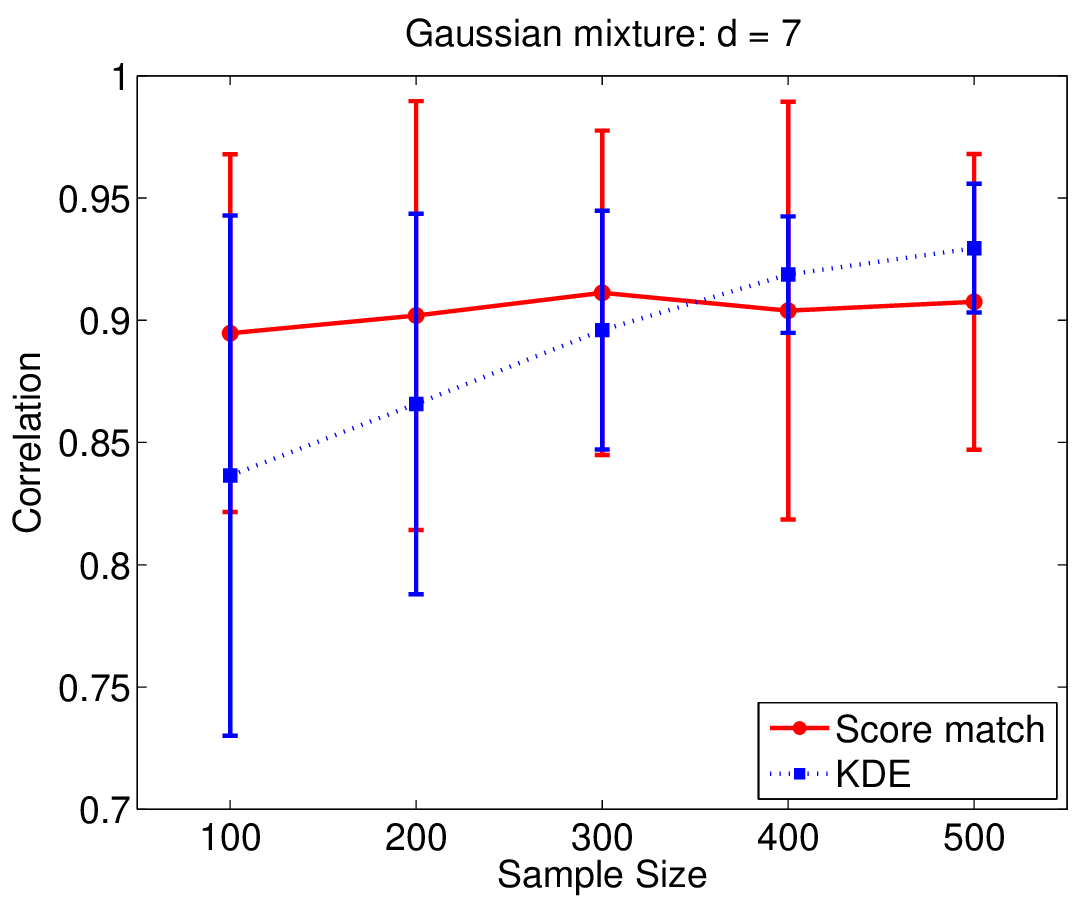}
  \caption{Experimental comparisons with the correlation: proposed
method and
kernel density estimator}\label{fig:graphs_cor}
\vspace{-12mm}
\end{center}
\end{figure}

Figures \ref{fig:graphs_score}
and \ref{fig:graphs_cor} show the score objective
function ($\tilde{J}(p)$) and the correlation ($\text{Cor}(p,p_0)$) (along with
their standard deviation as error bars) of the
proposed estimator and KDE for the tasks of estimating a Gaussian and a mixture
of Gaussians, for different sample sizes ($n$) and dimensions ($d$). From the
figures, we see that the proposed estimator
outperforms (i.e., lower function values) KDE in all the cases except
the low dimensional cases ($(n,d)=(500,2)$ for the Gaussian, and
$(n,d)=(300,2),(300,4)$ for the Gaussian mixture). In the case of the correlation measure, the
score matching method yields better results (i.e., higher correlation) besides
in the Gaussian mixture cases of $d=2,4,6$ (Fig.2, lower-left) and some cases of $d=7$ (lower-right). The proposed method shows
an increased advantage over KDE as the dimensionality
increases, thereby demonstrating the advantage of the proposed
estimator for high dimensional data.
\section{Summary \& Discussion}\label{Sec:Conclusions}	
\par We have considered an infinite dimensional generalization, $\Cal{P}$, of the
finite-dimensional exponential family, where the densities are indexed by
functions in a reproducing kernel Hilbert space (RKHS), $\eu{H}$. We showed that
$\Cal{P}$ is a rich object that
can approximate a large class of probability densities arbitrarily well in
Kullback-Leibler divergence, and addressed the main question of estimating
an unknown density, $p_0$ from finite samples drawn i.i.d.~from it, in
well-specified ($p_0\in\Cal{P}$) and misspecified ($p_0\notin\Cal{P}$)
settings. We proposed a density estimator based on minimizing the
regularized version of the empirical Fisher divergence, which results in solving
a simple finite-dimensional linear system. Our estimator provides a
computationally efficient alternative to maximum likelihood based estimators,
which suffer from the computational intractability of the log-partition
function. The proposed estimator is also shown to empirically outperform the
classical kernel density estimator, with  advantage 
 increasing as the dimension of the space increases. 
In addition to these computational and empirical
results, we have established the consistency and convergence rates under
certain smoothness assumptions (e.g., $\log p_0\in\Cal{R}(C^\beta)$) for both well-specified and misspecified
scenarios.

Three important
questions still remain open in this work which we intend to address
in our future work. 
First, the assumption $\log
p_0\in\Cal{R}(C^\beta)$ is not well understood. Though we presented a
necessary condition for this assumption (with $\beta=1$) to hold for bounded
continuous translation invariant kernels on $\bb{R}^d$, obtaining a sufficient
condition can throw light on the minimax optimality of the proposed
estimator. Another alternative is to directly study the minimax optimality of
the rates for $0<\beta\le 1$ (for $\beta>1$, we showed that the above mentioned
rates can be improved by an appropriate choice of the regularizer) by obtaining
minimax lower bounds under the source condition $\log
p_0\in\Cal{R}(C^\beta)$ and the eigenvalue decay rate of $C$, using the ideas in \cite{Devore-04}. Second, the
proposed estimator depends on the regularization parameter, which in turn
depends on the smoothness scale $\beta$. Since $\beta$ is not known in
practice, it is therefore of interest to construct estimators that are adaptive
to unknown $\beta$. Third, since the proposed estimator is computationally expensive as it involves solving a linear system of size
$nd\times nd$, it is important to study either alternate estimators or efficient implementations of the proposed estimator to
improve the applicability of the method.


\section{Proofs}\label{Sec:proofs}
We provide proofs of the results presented in
Sections~\ref{Sec:approximation}--\ref{Sec:misspecified}.

\subsection{Proof of Proposition~\ref{Thm:approx}}\label{subsec:thm-approx}
\citet[Proposition 5]{Sriperumbudur-11} showed that $\eu{H}$
is dense in
$C_0(\Omega)$ w.r.t.~uniform norm if and only if $k$ satisfies
(\ref{Eq:ispd}).
Therefore, the denseness in $L^1$, KL and
Hellinger distances follow trivially from Lemma~\ref{lem:distances}. For $L^r$
norm ($r>1$), the denseness follows by using the bound $\Vert
p_f-p_g\Vert_{L^r(\Omega)}\le 2e^{2\Vert
f-g\Vert_\infty}e^{2\Vert f\Vert_\infty}\Vert f-g\Vert_\infty\Vert
q_0\Vert_{L^r(\Omega)}$ obtained from Lemma~\ref{lem:distances}(i) with
$f\in C_0(\Omega)$ and $g\in\eu{H}$.\QEDA
\subsection{Proof of Corollary~\ref{cor:approx}}\label{subsec:cor-approx-1}
For any $p\in\Cal{P}_{c}$, define
$p_\delta:=\frac{p+\delta q_0}{1+\delta}$. Note that $p_\delta(x)>0$ for
all $x\in\Omega$ and $\Vert p-p_\delta\Vert_{L^r(\Omega)}=\frac{\delta\Vert
p-q_0\Vert_{L^r(\Omega)}}{1+\delta}$, implying that
$\lim_{\delta\rightarrow 0}\Vert
p-p_\delta\Vert_{L^r(\Omega)}=0$ for
any $1\le r\le\infty$. This means, for any $\epsilon>0$, $\exists
\delta_\epsilon>0$ such that for any $0<\theta<\delta_\epsilon$, we have $\Vert
p-p_\theta\Vert_{L^r(\Omega)}\le\epsilon$, where $p_\theta(x)>0$ for
all
$x\in\Omega$. 

Define
$f:=\log\frac{p_\theta}{q_0}-c_\theta$ where $c_\theta:=\log \frac{\ell+\theta}{1+\theta}$. It is clear that $f\in C(\Omega)$ since
$p,q\in C(\Omega)$. Fix any $\eta>0$ and define $$A:=\left\{x:f(x)\ge \eta\right\}=\left\{x:\frac{p(x)}{q_0(x)}-\ell\ge \left(\ell+\theta\right)(e^{\eta}-1)\right\}.$$
Since $\frac{p}{q_0}-\ell\in C_0(\Omega)$, it is clear that $A$ is compact and so $f\in C_0(\Omega)$. Also, it is easy to verify that
$p_\theta=e^{f-A(f)}q_0$ which
%
implies $p_\theta\in\Cal{P}_0$, where $\Cal{P}_0$ is defined in
Proposition~\ref{Thm:approx}. This means, for any $\epsilon>0$, there exists
$p_g\in\Cal{P}$ such that $\Vert
p_\theta-p_g\Vert_{L^r(\Omega)}\le\epsilon$ under the assumption that $q_0\in
L^1(\Omega)\cap L^r(\Omega)$. Therefore $\Vert
p-p_g\Vert_{L^r(\Omega)}\le 2\epsilon$ for any $1\le r\le\infty$, which
proves the denseness of $\Cal{P}$ in $\Cal{P}_c$ w.r.t.~$L^r$ norm for any
$1\le r\le\infty$. Since $h(p,q)\le \sqrt{\Vert p-q\Vert_{L^1(\Omega)}}$ for any
probability densities $p,q$, the denseness in Hellinger distance follows.

We now prove the denseness in KL divergence by noting that
\begin{eqnarray}KL(p\Vert p_\delta)
=\int_{\{p>0\}}p\log\frac{p+p\delta}{
p+q_0\delta } \,
dx&{}\le{}& \int_{\{p>0\}}p\left(\frac{p+p\delta}{p+q_0\delta}-1\right)\,
dx\nonumber\\
&{}={}&\delta\int_ { p>0 }
(p-q_0)\frac{p}{p+q_0\delta}\,dx
\le\delta\Vert p-q_0\Vert_{L^1(\Omega)}\le
2\delta,\nonumber
\end{eqnarray}
which implies $\lim_{\delta\rightarrow 0}KL(p\Vert p_\delta)=0$. This
implies, for any $\epsilon>0$, $\exists\delta_\epsilon>0$ such that for any
$0<\theta<\delta_\epsilon$, $KL(p\Vert p_\theta)\le\epsilon$. Arguing as above,
we
have $p_\theta\in\Cal{P}_0$, i.e., there exists $f\in C_0(\Omega)$ such that
$p_\theta=\frac{e^fq_0}{\int e^f q_0\,dx}$. Since $\eu{H}$ is dense in
$C_0(\Omega)$, for any $f\in C_0(\Omega)$ and any $\epsilon>0$, there exists
$g\in\eu{H}$ such that $\Vert f-g\Vert_\infty\le\epsilon$. For $p_g\in
\Cal{P}$, since
$\int p\,\log\frac{p_\theta}{p_g}\,dx\le \left\Vert
\log\frac{p_\theta}{p_g}\right\Vert_\infty\le 2\Vert f-g\Vert_\infty\le
2\epsilon,$ we have
$$KL(p\Vert p_g)=\int_\Omega p\log\frac{p}{p_g}\,dx=\int_\Omega
p\log\frac{p}{p_\theta}\,dx+\int_\Omega
p\log\frac{p_\theta}{p_g}\,dx\le 3\epsilon$$ and the result follows.\QEDA
\subsection{Proof of Theorem~\ref{Thm:score}}\label{subsec:thm-score}
(i) By the reproducing property of $\eu{H}$, since $\partial_i
f(x)=\left\langle f,\partial_{i} k(x,\cdot)\right\rangle_\eu{H}$ for all $i\in[d]$, it is easy to verify that
\begin{eqnarray}
 J(f)
&{}={}&\frac{1}{2}\int_\Omega p_0(x)\sum^d_{i=1}\left\langle
f-f_0,\partial_{i} k(x,\cdot)\right\rangle^2_\eu{H}\,dx\nonumber\\
&{}={}&\frac{1}{2}\int_\Omega p_0(x)\sum^d_{i=1} \left\langle
f-f_0,\left(\partial_{i} k(x,\cdot)\otimes\partial_{i} k(x,\cdot)\right)(f-f_0)\right\rangle_\eu{H}\,dx\nonumber\\
&{}={}&\frac{1}{2}\int_\Omega p_0(x)\left\langle
f-f_0,C_x(f-f_0)\right\rangle_\eu{H}\,dx,\label{Eq:temp}
\end{eqnarray}
where in the second line, we used $\langle a,b\rangle^2_H=\langle
a,b\rangle_H\langle a,b\rangle_H=\langle
a,(b\otimes b)a\rangle_H$ for $a,b\in H$ with $H$ being a Hilbert space
and \begin{equation}C_x:=\sum^d_{i=1}\partial_{i} k(x,\cdot)\otimes\partial_{i} k(x,\cdot).\label{Eq:cx}\end{equation} 
Observe that for all $x\in\Omega$, $C_x$ is a Hilbert-Schmidt operator
as $\Vert C_x\Vert_{HS}\le \sum^d_{i=1}\left\Vert\partial_{i}
k(x,\cdot) \right\Vert^2_\eu{H}$ $=\sum^d_{i=1}\partial_i\partial_{i+d}k(x,x)<\infty$ and $(f-f_0)\otimes
(f-f_0)$ is also Hilbert-Schmidt as $\Vert (f-f_0)\otimes
(f-f_0)\Vert_{HS}=\Vert f-f_0\Vert^2_\eu{H}<\infty$. Therefore,
(\ref{Eq:temp}) is equivalent to
$$J(f)=\frac{1}{2}\int_\Omega p_0(x)\left\langle (f-f_0)\otimes
(f-f_0),C_x\right\rangle_{HS}\,dx.$$ 
Since the first condition in \textbf{(D)} implies $\int_\Omega \Vert C_x\Vert_{HS}p_0(x)\,dx<\infty$, $C_x$ is
$p_0$-integrable in the Bochner sense (see \citealp*[Definition 1 and Theorem
2]{Diestel-77}), and therefore it follows from \citet[Theorem 6]{Diestel-77} that $$J(f)=\frac{1}{2}\left\langle
(f-f_0)\otimes (f-f_0),\int_\Omega C_x\;p_0(x)\,dx\right\rangle_{HS},$$ where $C:=\int_\Omega C_x\;p_0(x)\,dx$ is the Bochner
 integral of $C_x$, thereby yielding (\ref{Eq:population}).
\par We now show that $C$ is trace-class. Let $(e_l)_{l\in\bb{N}}$ be an
orthonormal basis in $\eu{H}$ (a countable ONB exists
as $\eu{H}$ is separable---see Remark~\ref{rem:assumptions}(i)). Define
$B:=\sum_l\langle
Ce_l,e_l\rangle_\eu{H}$ so that
\begin{eqnarray}
B&{}={}&\sum_l\int_\Omega \langle
e_l,C_x
e_l\rangle_\eu{H}p_0(x)\,dx=\sum_l\int_\Omega\sum^d_{i=1}\left\langle
e_l,\partial_{i}
k(x,\cdot)\right\rangle^2_{\eu{H}}p_0(x)\,dx\nonumber\\
&\stackrel{(*)}{=}&\int_\Omega\sum_{i\in[d],l}\left\langle e_l,\partial_{i}
k(x,\cdot)\right\rangle^2_{\eu{H}}p_0(x)\,dx
\stackrel{(**)}{=}
\int_\Omega\sum^d_{i=1}
\left\Vert \partial_{i}
k(x,\cdot)\right\Vert^2_{\eu{H}}p_0(x)\,dx<\infty,\nonumber
\end{eqnarray}
which means $C$ is trace-class and therefore compact. Here,
we
used monotone convergence theorem in $(*)$ and Parseval's identity in
$(**)$. Note that $C$ is positive since $\langle f,Cf\rangle_\eu{H}=\int_\Omega
p_0(x)\left\Vert\nabla
f\right\Vert^2_2\,dx\ge 0,\,\forall\,f\in\eu{H}.$
\vspace{2mm}\\
(ii) From (\ref{Eq:population}), we have $J(f)=\frac{1}{2}\langle
f,Cf\rangle_\eu{H}-\langle f,Cf_0\rangle_\eu{H}+\frac{1}{2}\langle
f_0,Cf_0\rangle_\eu{H}$. Using $\partial_i
f_0(x)=\partial_i
\log p_0(x)-\partial_i \log q_0(x)$ for all $i\in[d]$, we obtain that for any $f\in\eu{H}$,
\begin{eqnarray}
\langle f,Cf_0\rangle_\eu{H}&{}={}&\int_\Omega p_0(x)\sum^d_{i=1} \partial_i
f(x)\partial_i
f_0(x)\,dx\nonumber\\
&{}={}&\int_\Omega \sum^d_{i=1}\partial_i
f(x)\partial_i
p_0(x)\,dx-\int_\Omega p_0(x)\sum^d_{i=1}\partial_i
f(x)\partial_i \log q_0(x)\,dx\nonumber\\
&{}\stackrel{(b)}{=}{}&-\int_\Omega p_0(x)\sum^d_{i=1}\partial^2_i
f(x)\,dx-\int_\Omega p_0(x)\sum^d_{i=1}\partial_i
f(x)\partial_i \log q_0(x)\,dx\nonumber\\
&{}={}&-\int_\Omega p_0(x)\left\langle f,\overbrace{\sum^d_{i=1}\partial^2_{i}
k(x,\cdot)+\partial_{i}
k(x,\cdot)\partial_i \log q_0(x)}^{\xi_x}\right\rangle_\eu{H}\,dx\stackrel{(c)}{=}\langle
f,-\xi\rangle_\eu{H},\label{Eq:xix}
\end{eqnarray}
where $(b)$ follows from integration by parts under 
\textbf{(C)} and
the equality in $(c)$ is valid as $\xi_x$ 
is Bochner $p_0$-integrable 
under \textbf{(D)} with $\varepsilon=1$. Therefore $Cf_0=-\xi$. 
For the third term, $\langle f_0,Cf_0\rangle_\eu{H}=\int_\Omega
p_0(x)\sum^d_{i=1}\left(\partial_i
f_0(x)\right)^2\,dx
$
and the result follows.
\vspace{2mm}\\
(iii) Define $c_0:=J(p_0\Vert q_0)$. For any $\lambda>0$, it is easy to verify that
\begin{eqnarray}
J_\lambda(f)&{}={}&\frac{1}{2}\Vert (C+\lambda I)^{1/2}f+(C+\lambda
I)^{-1/2}\xi\Vert^2_\eu{H}-\frac{1}{2}\langle \xi, (C+\lambda
I)^{-1}\xi\rangle_\eu{H}+c_0.\nonumber 
\end{eqnarray}
Clearly, $J_\lambda(f)$ is minimized if and only if $(C+\lambda
I)^{1/2}f=-(C+\lambda
I)^{-1/2}\xi$ and therefore $f_\lambda=-(C+\lambda I)^{-1}\xi$ is the unique
minimizer of $J_\lambda(f)$.
\vspace{2mm}\\
(iv) Since $(iv)$ is similar to $(iii)$ with $C$ replaced by $\hat{C}$ and
$\xi$ replaced by $\hat{\xi}$, we obtain $f_{\lambda,n}=(\hat{C}+\lambda
I)^{-1}\hat{\xi}$.\QEDA
\subsection{Proof of
Theorem~\ref{Thm:representer}}\label{subsec:thm-representer}
We prove the result based on the general representer theorem (Theorem~\ref{thm:representer}). 
From Theorem~\ref{Thm:score}(iv), we have
\begin{eqnarray}
f_{\lambda,n}&{}={}&\arg\inf_{f\in\eu{H}}\frac{1}{2}\langle f,\hat{C}f\rangle_\eu{H}+\langle
f,\hat{\xi}\rangle_\eu{H}+\frac{\lambda}{2}\Vert f\Vert^2_\eu{H}\nonumber\\
&{}={}&\arg\inf_{f\in\eu{H}}\frac{1}{2n}\sum^n_{a=1}\sum^d_{i=1}\left\langle
f,\partial_i k(X_a,\cdot)\right\rangle^2_\eu{H}+\langle
f,\hat{\xi}\rangle_\eu{ H } +\frac{\lambda}{2} \Vert
f\Vert^2_\eu{H}\nonumber\\
&{}={}&\arg\inf_{f\in\eu{H}} V(\langle f,\phi_1\rangle_\eu{H},\ldots,\langle f,\phi_{nd}\rangle_\eu{H},\langle f,\phi_{nd+1}\rangle_\eu{H})+\frac{\lambda}{2} \Vert
f\Vert^2_\eu{H},\nonumber
\end{eqnarray}
where $V(\theta_1,\ldots,\theta_{nd},\theta_{nd+1}):=\frac{1}{2n}\sum^n_{a=1}\sum^d_{i=1}\theta^2_{(a-1)d+i}+\theta_{nd+1}$, $\phi_{(a-1)d+i}:=\partial_ik(X_a,\cdot),\,a\in[n],\,i\in[d]$
and $\phi_{nd+1}:=\hat{\xi}$. Therefore, it follows from Theorem~\ref{thm:representer} that
\begin{equation}
f_{\lambda,n}=\delta\hat{\xi}+\sum^n_{a=1}\sum^d_{i=1}\beta_{(a-1)d+i}\phi_{(a-1)d+i}
\label{Eq:rep}
\end{equation}
where $\delta$ and $\bm{\beta}$ satisfy 
\begin{equation}
\lambda \begin{pmatrix}\bm{\beta}\\\delta\end{pmatrix}+\nabla V\left(\bm{K}\begin{pmatrix}\bm{\beta}\\\delta\end{pmatrix}\right)=0
\label{Eq:condition}
\end{equation}
with $\bm{K}=\begin{pmatrix} \bm{G}\, &\,\bm{h} \\
    \bm{h}^T\, &\,\Vert\hat{\xi}\Vert^2_\eu{H}
    \end{pmatrix}.
$
Since $\nabla V\begin{pmatrix}\bm{z}\\t\end{pmatrix}=\begin{pmatrix}\frac{1}{n}\bm{z}\\1\end{pmatrix}$, (\ref{Eq:condition}) reduces to $\lambda\delta+1=0$ and $\lambda\bm{\beta}+\frac{1}{n}\bm{G\beta}+\frac{\delta}{n}\bm{h}=0$
yielding $\delta=-\frac{1}{\lambda}$ and $(\frac{1}{n}\bm{G}+\lambda I)\bm{\beta}=\frac{1}{n\lambda}\bm{h}$.\QEDA
\begin{rem}
Instead of using the general representer theorem (Theorem~\ref{thm:representer}), it is possible to see that the standard representer theorem \citep{Kimeldorf-71,Scholkopf-01} 
gives a similar, but slightly different linear system, and the solutions are the same if $\bm{K}$ is non-singular.
The general representer theorem yields that $\bm{\beta}$ and $\delta$ are solution to $\bm{F}\begin{pmatrix}\bm{\beta}\\\delta\end{pmatrix}=\begin{pmatrix}\bm{0}\\1\end{pmatrix}$, where
$\bm{F}=\begin{pmatrix}\frac{1}{n}\bm{G}+\lambda I\, &\, \frac{1}{n}\bm{h}\\
   \bm{0}^T\, &\,\lambda\end{pmatrix}$. On the other hand, by using 
the standard representer theorem, it is easy to show that $f_{\lambda,n}$ has the form in (\ref{Eq:rep}) with $\delta$ and $\bm{\beta}$ being solution to 
$\bm{K}\bm{F}\begin{pmatrix}\bm{\beta}\\\delta\end{pmatrix}=\bm{K}\begin{pmatrix}\bm{0}\\1\end{pmatrix}$. Clearly, both the solutions match if $\bm{K}$ is invertible while the latter has 
many solutions if $\bm{K}$ is not invertible.
\end{rem}
\subsection{Proof of Theorem~\ref{Thm:rates}}\label{subsec:ratesproof}
Consider
\begin{eqnarray}
  f_{\lambda,n}-f_\lambda
&{}={}&-(\hat{C}+\lambda
I)^{-1}\Big(\hat{\xi}+(\hat{C}+\lambda I)f_\lambda\Big)
\stackrel{(\ast)}{=}
-(\hat{C}+\lambda
I)^{-1}\left(\hat{\xi}+\hat{C}f_\lambda+C(f_0-f_\lambda)\right)\nonumber\\
&{}={}&(\hat{C}+\lambda
I)^{-1}(C-\hat{C})(f_\lambda-f_0)-(\hat{C}+\lambda
I)^{-1}(\hat{\xi}+\hat{C}f_0)\nonumber\\
&{}={}&(\hat{C}+\lambda
I)^{-1}(C-\hat{C})(f_\lambda-f_0)-(\hat{C}+\lambda
I)^{-1}(\hat{\xi}-\xi)+(\hat{C}+\lambda
I)^{-1}(C-\hat{C})f_0,\nonumber
\end{eqnarray}
where we used $\lambda f_\lambda=C(f_0-f_\lambda)$ in ($\ast$). Define $S_1:=\Vert (\hat{C}+\lambda
I)^{-1}(C-\hat{C})(f_\lambda-f_0)\Vert_\eu{H}$, $S_2:=\Vert (\hat{C}+\lambda
I)^{-1}(\hat{\xi}-\xi)\Vert_\eu{H}$ and $S_3:=\Vert (\hat{C}+\lambda
I)^{-1}(C-\hat{C})f_0\Vert_\eu{H}$ so that
\begin{eqnarray}
\Vert f_{\lambda,n}-f_0\Vert_\eu{H}&{}\le{}&\Vert
f_{\lambda,n}-f_\lambda\Vert_\eu{H}+\Vert
f_\lambda-f_0\Vert_\eu{H}
\le S_1+S_2+S_3+\Cal{A}_0(\lambda),
\label{Eq:chain-abcd}
\end{eqnarray}
where $\Cal{A}_0(\lambda):=\Vert f_\lambda-f_0\Vert_\eu{H}$. We now bound $S_1$, $S_2$ and $S_3$ using Proposition~\ref{pro:certain}. Note that $C=\int_\Omega C_x\,p_0(x)\,dx$ where 
$C_x$ is defined in (\ref{Eq:cx}) is a positive, self-adjoint,
trace-class operator and $\textbf{(D)}$ (with $\varepsilon=2$) implies that 
$$\int_\Omega \Vert C_x\Vert^2_{HS}p_0(x)\,dx\le \int_\Omega \left(\sum^d_{i=1}\left\Vert\partial_i k(x,\cdot)\right\Vert^2_\eu{H}\right)^2p_0(x)\,dx
\le d\sum^d_{i=1}\int_\Omega \left\Vert\partial_i k(x,\cdot)\right\Vert^4_\eu{H}p_0(x)\,dx<\infty.$$ 
Therefore, by Proposition~\ref{pro:certain}(i,iii), \begin{equation}S_1\le \Vert (\hat{C}+\lambda I)^{-1}\Vert \Vert (C-\hat{C})(f_\lambda-f_0)\Vert_\eu{H}=O_{p_0}\left(\frac{\Cal{A}_0(\lambda)}{\lambda\sqrt{n}}\right)\label{Eq:s1}\end{equation}
and \begin{equation}S_2\le \Vert (\hat{C}+\lambda I)^{-1}\Vert\Vert \hat{\xi}-\xi\Vert_\eu{H}=O_{p_0}\left(\frac{1}{\lambda\sqrt{n}}\right),\label{Eq:s2}\end{equation} where by using the technique in the proof of Proposition~\ref{pro:certain}(i), we show below that $\Vert \hat{\xi}-\xi\Vert_\eu{H}=O_{p_0}(n^{-1/2})$. 
Note that $\bb{E}_{p_0}\Vert \hat{\xi}-\xi\Vert^2_\eu{H}=\frac{\int_\Omega \Vert\xi_x\Vert^2_\eu{H}p_0(x)\,dx-\Vert\xi\Vert^2_\eu{H}}{n}\le\frac{\int_\Omega \Vert\xi_x\Vert^2_\eu{H}p_0(x)\,dx}{n},$ where
$\xi_x\in\eu{H}$ is defined in (\ref{Eq:xix}) and $\textbf{(D)}$ (with $\varepsilon=2$) implies that $\int_\Omega \Vert\xi_x\Vert^2_\eu{H}p_0(x)\,dx<\infty$. Therefore $\Vert\hat{\xi}-\xi\Vert_\eu{H}=O_{p_0}(n^{-1/2})$
follows from an application of Chebyshev's inequality. Again using Proposition~\ref{pro:certain}(i,iii), we obtain that \begin{equation}S_3\le \Vert (\hat{C}+\lambda I)^{-1}\Vert \Vert (C-\hat{C})f_0\Vert_\eu{H}=O_{p_0}\left(\frac{1}{\lambda\sqrt{n}}\right).\label{Eq:s3}\end{equation}
Using the bounds in $S_1$, $S_2$ and $S_3$ in (\ref{Eq:chain-abcd}), we obtain
\begin{equation}\Vert f_{\lambda,n}-f_0\Vert_\eu{H}=O_{p_0}\left(\frac{1}{\lambda\sqrt{n}}+\frac{\Cal{A}_0(\lambda)}{\lambda\sqrt{n}}\right)+\Cal{A}_0(\lambda).\label{eq:ffinal}\end{equation}
(i) By Proposition~\ref{pro:approxerror}(i), we have that $\Cal{A}_0(\lambda)\rightarrow 0$ as $\lambda\rightarrow 0$ if $f_0\in\overline{\Cal{R}(C)}$. Therefore, it follows from 
(\ref{eq:ffinal}) that $\Vert f_{\lambda,n}-f_0\Vert_\eu{H}\rightarrow 0$ as $\lambda\rightarrow 0$, $\lambda\sqrt{n}\rightarrow\infty$ and $n\rightarrow\infty$.\vspace{2mm}\\
(ii) If $f_0\in\Cal{R}(C^\beta)$ for $\beta>0$,
it follows from Proposition~\ref{pro:approxerror}(ii) that $$\Cal{A}_0(\lambda)\le \max\{1,\Vert C\Vert^{\beta-1}\}\Vert C^{-\beta}f_0\Vert_\eu{H}\lambda^{\min\{1,\beta\}}$$ and therefore
the result follows by choosing $\lambda=n^{-\max\left\{\frac{1}{4},\frac{1}{2(\beta+1)}\right\}}$.\vspace{2mm}\\
(iii) Note that $$S_1=\Vert (\hat{C}+\lambda I)^{-1}(C-\hat{C})(f_\lambda-f_0)\Vert_\eu{H}\le \Vert C(\hat{C}+\lambda I)^{-1}\Vert \Vert C^{-1}\Vert \Vert (C-\hat{C})(f_\lambda-f_0)\Vert_\eu{H},$$
$$S_2=\Vert (\hat{C}+\lambda I)^{-1}(\hat{\xi}-\xi)\Vert_\eu{H}\le \Vert C(\hat{C}+\lambda I)^{-1}\Vert \Vert C^{-1}\Vert \Vert\hat{\xi}-\xi\Vert_\eu{H},$$
$$S_3=\Vert (\hat{C}+\lambda I)^{-1}(C-\hat{C})f_0\Vert_\eu{H}\le \Vert C(\hat{C}+\lambda I)^{-1}\Vert \Vert C^{-1}\Vert \Vert (C-\hat{C})f_0\Vert_\eu{H}$$ and 
$$\Cal{A}_0(\lambda)=\Vert f_\lambda-f_0\Vert_\eu{H}\le \Vert C^{-1}\Vert \Vert C(f_\lambda-f_0)\Vert_\eu{H}.$$
It follows from Proposition~\ref{pro:certain}(v) that $\Vert C(\hat{C}+\lambda I)^{-1}\Vert\lesssim 1$ for $n\ge \frac{c}{\lambda^2}$ where $c$ is a sufficiently large constant 
that depends on $\sum^d_{i=1}\int_\Omega(\partial_i\partial_{i+d}k(x,x))^2p_0(x)\,dx$ but not on $n$ and $\lambda$. 
Using the bounds on $\Vert (C-\hat{C})(f_\lambda-f_0)\Vert_\eu{H}$, $\Vert\hat{\xi}-\xi\Vert_\eu{H}$ and $\Vert (C-\hat{C})f_0\Vert_\eu{H}$ from part (i) and the bound on $\Vert C(f_\lambda-f_0)\Vert_\eu{H}$
from Proposition~\ref{pro:approxerror}(ii), we therefore obtain
\begin{equation}\Vert f_{\lambda,n}-f_0\Vert_\eu{H}\lesssim O_{p_0}\left(\frac{1}{\sqrt{n}}\right)+\lambda\label{Eq:fff}\end{equation}
as $n\rightarrow\infty$ and the result follows.\QEDA\vspace{-1mm}
\begin{rem}\label{rem:better-rate}
Under slightly strong assumptions on the kernel, the bound on $S_1$ in (\ref{Eq:s1}) can be improved to obtain $S_1=O_{p_0}(n^{-1/2})$ while the one on $S_3$ in (\ref{Eq:s3}) can be refined to obtain $S_3=O_{p_0}\left(\sqrt{\frac{\eu{N}(\lambda)}{\lambda n}}\right)$ where
$\eu{N}(\lambda):=\emph{Tr}((C+\lambda I)^{-1}C)$ is the intrinsic dimension of $\eu{H}$. Using the fact that $\eu{N}(\lambda)\le\frac{1}{\lambda}$, it is easy to verify that the latter is an 
improved bound than the one in (\ref{Eq:s3}). In addition $S_3$ dominates $S_1$. However, if $S_2$ in (\ref{Eq:s2}) is not improved, then $S_2$ dominates $S_3$, thereby resulting in a bound that does not
capture the smoothness of $k$ (or the corresponding $\eu{H}$). Unfortunately, even with a refined analysis (not reported here), we are not able to improve the bound on $S_2$ wherein the difficulty lies with
handling $\xi$.
\end{rem}

\subsection{Proof of Theorem~\ref{Thm:density}}\label{subsec:densityproof}
Before we prove the result, we present a lemma.\vspace{-2mm}
\begin{lemma}\label{lem:support}
Suppose $\sup_{x\in\Omega}k(x,x)<\infty$ and $\emph{supp}(q_0)=\Omega$. Then $\Cal{F}=\eu{H}$ and for
any
$f_0\in\eu{H}$ there exists $\tilde{f_0}\in\overline{\Cal{R}(C)}$ such that
$p_{\tilde{f_0}}=p_0$.\vspace{-2mm}
\end{lemma}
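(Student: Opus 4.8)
The plan rests on two observations. First, the density $p_f$ depends on $f$ only modulo additive constants: since $A(f+c)=\log\int e^{f+c}q_0 = c+A(f)$, we have $p_{f+c}=p_f$ for every $c\in\mathbb{R}$. Second, under $\text{supp}(q_0)=\Omega$ the null space $\mathcal{N}(C)$ of the operator $C$ from Theorem~\ref{Thm:score} consists precisely of the constant functions that lie in $\eu{H}$, i.e. $\mathcal{N}(C)=\eu{H}\cap\mathbb{R}$ (which is $\{0\}$ if $1\notin\eu{H}$). Granting these, $C$ is a positive trace-class (hence bounded self-adjoint) operator on $\eu{H}$ by Theorem~\ref{Thm:score}(i), so we have the orthogonal decomposition $\eu{H}=\overline{\Cal{R}(C)}\oplus\mathcal{N}(C)$. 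Writing $f_0=\tilde{f_0}+c$ with $\tilde{f_0}\in\overline{\Cal{R}(C)}$ and $c\in\mathcal{N}(C)$ a constant function, the first observation gives $p_{\tilde{f_0}}=p_{\tilde{f_0}+c}=p_{f_0}=p_0$, which is the claim. Boundedness of $k$ enters only to ensure $\tilde{f_0}\in\Cal{F}$ so that $p_{\tilde{f_0}}\in\Cal{P}$ is a bona fide density: since $|g(x)|\le\|g\|_\eu{H}\sqrt{k(x,x)}$ for all $g\in\eu{H}$, $\sup_x k(x,x)<\infty$ forces $g\in C_b(\Omega)$ and hence $A(g)\le\|g\|_\infty<\infty$.

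To establish the characterization of $\mathcal{N}(C)$, I would use the identity $\langle f,Cf\rangle_\eu{H}=\int p_0(x)\sum_{i=1}^d\big(\frac{\partial f(x)}{\partial x_i}\big)^2\,dx$ derived in the proof of Theorem~\ref{Thm:score}(i). Thus $Cf=0$ iff this integral vanishes, i.e. iff for each $i\in[d]$ we have $\frac{\partial f}{\partial x_i}=0$ for $p_0$-a.e.~$x$. Since $p_0=e^{f_0-A(f_0)}q_0$, the set $\{x\in\Omega:p_0(x)>0\}$ equals $\{x\in\Omega:q_0(x)>0\}$, which is dense in $\Omega$ because $\text{supp}(q_0)=\Omega$. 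By assumption \textbf{(B)} each $\frac{\partial f}{\partial x_i}$ is continuous on $\Omega$ (Corollary 4.36 of \cite{Steinwart-08}), so vanishing on a dense set forces $\frac{\partial f}{\partial x_i}\equiv 0$ on all of $\Omega$; as $\Omega=\prod_j(a_j,b_j)$ is connected, $f$ must be a constant function on $\Omega$. Conversely, every constant function clearly lies in $\mathcal{N}(C)$, giving $\mathcal{N}(C)=\eu{H}\cap\mathbb{R}$.

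The main obstacle is the passage from ``$\partial f/\partial x_i=0$ for $p_0$-a.e.~$x$'' to ``$f$ is constant on $\Omega$'': this is exactly where $\text{supp}(q_0)=\Omega$, the continuity of the first derivatives supplied by \textbf{(B)}, and the connectedness of the box $\Omega$ are all used simultaneously. Everything else is routine: the orthogonal splitting $\eu{H}=\overline{\Cal{R}(C)}\oplus\mathcal{N}(C)$ valid for any bounded self-adjoint operator, the invariance $p_{f+c}=p_f$, and the bound $A(g)\le\|g\|_\infty$ for bounded kernels. One should also check that $\tilde{f_0}\in\overline{\Cal{R}(C)}$ is precisely the form of membership needed in the results that invoke this lemma (e.g.~Theorem~\ref{Thm:rates}(i) and Theorem~\ref{Thm:density}), which it is.
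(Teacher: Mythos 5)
Your proposal is correct and follows the same route as the paper's proof: characterize $\Cal{N}(C)$ as the constant functions in $\eu{H}$ using $\langle f,Cf\rangle_\eu{H}=\int p_0\,\Vert\partial f/\partial x\Vert_2^2$ together with $\text{supp}(p_0)=\Omega$, take the orthogonal projection of $f_0$ onto $\overline{\Cal{R}(C)}=\Cal{N}(C)^\perp$, and invoke the invariance $p_{f+c}=p_f$. One minor remark: the boundedness hypothesis is not actually needed to conclude $\tilde{f_0}\in\Cal{F}$ (since $\tilde{f_0}$ differs from $f_0\in\Cal{F}$ by a constant, $A(\tilde{f_0})=A(f_0)-c<\infty$ automatically), but this does not affect the argument.
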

\begin{proof}
Since $\sup_{x\in\Omega}k(x,x)<\infty$, it implies that, for every $f\in\eu{H}$, $\int_\Omega e^{f(x)}q_0(x)\,dx<\infty$ and hence $\Cal{F}=\eu{H}$. Also,
under the assumptions on $k$ and $q_0$, it is easy to verify
that $\text{supp}(p_0)=\Omega$, which implies 
$$\Cal{N}(C)=\left\{f\in\eu{H}\,:\, \int_\Omega \left\Vert
\nabla f\right\Vert^2_2
p_0(x)\,dx=0\right\}$$
is either $\bb{R}$ or $\{0\}$, where $\Cal{N}(C)$ denotes the null space of $C$.
Let $\tilde{f_0}$ be the orthogonal projection of $f_0$ onto
$\overline{\Cal{R}(C)}=\Cal{N}(C)^{\perp}$. Then $\tilde{f_0}-f_0\in\bb{R}$ and
therefore $p_{\tilde{f_0}}=p_{f_0}$.
\end{proof}
\emph{Proof of Theorem~\ref{Thm:density}.} 
From Theorem~\ref{Thm:score}(iii), $f_\lambda=(C+\lambda
I)^{-1}Cf_0=(C+\lambda I)^{-1}C\tilde{f_0}$ where the second equality follows
from the proof of Lemma~\ref{lem:support}. Now, carrying out the decomposition
as in the proof of Theorem~\ref{Thm:rates}(i), we obtain
$f_{\lambda,n}-f_\lambda=(\hat{C}+\lambda
I)^{-1}(C-\hat{C})(f_\lambda-\tilde{f_0})-(\hat{C}+\lambda
I)^{-1}(\hat{\xi}-\xi)+(\hat{C}+\lambda
I)^{-1}(C-\hat{C})\tilde{f_0}$ 
and therefore,
$$\Vert f_{\lambda,n}-\tilde{f_0}\Vert_\eu{H}\le \Vert(\hat{C}+\lambda
I)^{-1}\Vert\left(
\Vert(C-\hat{C})(f_\lambda-\tilde{f}_0)\Vert_\eu{H} +  \Vert
\xi-\hat{\xi}\Vert_\eu{H}+\Vert (C-\hat{C})\tilde{f_0}\Vert_\eu{H}\right)+\tilde{\Cal{A}_0}(\lambda),$$
where $\tilde{\Cal{A}_0}(\lambda)=\Vert f_{\lambda}-\tilde{f_0}\Vert_\eu{H}$. The
bounds on these quantities follow those in the proof of
Theorem~\ref{Thm:rates}(i) verbatim and so the consistency result in
Theorem~\ref{Thm:rates}(i) holds for $\Vert
f_{\lambda,n}-\tilde{f_0}\Vert_\eu{H}$. By
Lemma~\ref{lem:support}, since $p_{f_0}=p_{\tilde{f_0}}$, it is sufficient to
consider the convergence of $p_{f_{\lambda,n}}$ to $p_{\tilde{f_0}}$. Therefore, the convergence (along with rates) in $L^r$ (for any $1\le r\le \infty)$, Hellinger and
KL distances follow from using the bound 
$\Vert
f_{\lambda,n}-\tilde{f_0}\Vert_\infty\le
\sqrt{\Vert k\Vert_\infty}\Vert
f_{\lambda,n}-\tilde{f_0}\Vert_\eu{H}$ (obtained through the reproducing property of $k$) in Lemma~\ref{lem:distances} and
invoking Theorem~\ref{Thm:rates}. 

In the following, we obtain
a bound
on 
$J(p_0\Vert p_{f_{\lambda,n}})=\frac{1}{2}\Vert
\sqrt{C}(f_{\lambda,n}-f_0)\Vert^2_\eu{H}$. While one can trivially use the
bound $\Vert
\sqrt{C}(f_{\lambda,n}-f_0)\Vert^2_\eu{H}\le \Vert\sqrt{C}\Vert^2\Vert
f_{\lambda,n}-f_0\Vert^2_\eu{H}$ to obtain a rate on
$J(p_0\Vert p_{f_{\lambda,n}})$ through the result in 
Theorem~\ref{Thm:rates}(ii), a better rate can be obtained by carefully
bounding $\Vert
\sqrt{C}(f_{\lambda,n}-f_0)\Vert^2_\eu{H}$ as shown below. Consider
\begin{eqnarray}\Vert
\sqrt{C}(f_{\lambda,n}-f_0)\Vert_\eu{H}&{}\le{}& \Vert
\sqrt{C}(f_{\lambda,n}-f_\lambda)\Vert_\eu{H}+\tilde{\Cal{A}_{\frac{1}{2}}}(\lambda)+\Cal{A}^\star_{\frac{1}{2}}(\lambda)\nonumber
\end{eqnarray}
\begin{eqnarray}
&{}\le{}&\Vert \sqrt{C}(\hat{C}+\lambda
I)^{-1}\Vert\left(\Vert(C-\hat{C})(f_\lambda-\tilde{f_0})\Vert_\eu{H} +  \Vert
\xi-\hat{\xi}\Vert_\eu{H}+\Vert (C-\hat{C})\tilde{f_0}\Vert_\eu{H}\right)\nonumber\\
&{}{}&\qquad\qquad\qquad+\tilde{\Cal{A}_{\frac{1}{2}}}(\lambda)+\Cal{A}^\star_{\frac{1}{2}}(\lambda),\nonumber
\end{eqnarray}
where $\tilde{\Cal{A}_{\frac{1}{2}}}(\lambda):=\Vert
\sqrt{C}(f_\lambda-\tilde{f_0})\Vert_\eu{H}$ and $\Cal{A}^\star_{\frac{1}{2}}(\lambda):=\Vert
\sqrt{C}(\tilde{f_0}-f_0)\Vert_\eu{H}$. It follows from Theorem~\ref{Thm:score}(i) and Lemma~\ref{lem:support} that $\Cal{A}^\star_{\frac{1}{2}}(\lambda)=J(p_0\Vert p_{\tilde{f_0}})=0$.
Also it follows from Proposition~\ref{pro:certain}(v) that $\Vert \sqrt{C}(\hat{C}+\lambda
I)^{-1}\Vert\lesssim \frac{1}{\sqrt{\lambda}}$ for $n\ge \frac{c}{\lambda^2}$ where $c$ is a large enough constant that does not depend on $n$ and $\lambda$
and depends only on $\sum^d_{i=1}\int \Vert\partial_i k(x,\cdot)\Vert^4_{\eu{H}}\,p_0(x)\,dx$. Using the bounds from the proof of Theorem~\ref{Thm:rates}(i) 
for the rest of the terms within paranthesis, we obtain
\begin{equation}\Vert
\sqrt{C}(f_{\lambda,n}-f_0)\Vert_\eu{H}\le O_{p_0}\left(\frac{1}{\sqrt{\lambda n}}\right)+\tilde{\Cal{A}_{\frac{1}{2}}}(\lambda).\label{Eq:ff}\end{equation}
The consistency result therefore follows from Proposition~\ref{pro:approxerror}(i) by noting that $\tilde{\Cal{A}_{\frac{1}{2}}}(\lambda)\rightarrow 0$ as $\lambda\rightarrow 0$. If
$f_0\in\Cal{R}(C^\beta)$ for some $\beta\ge 0$, then Proposition~\ref{pro:approxerror}(ii) yields $\tilde{\Cal{A}_{\frac{1}{2}}}(\lambda)\le \max\{1,\Vert C\Vert^{\beta-\frac{1}{2}}\}\lambda^{\min\{1,\beta+\frac{1}{2}\}}\Vert C^{-\beta}f_0\Vert_\eu{H}$
which when used in (\ref{Eq:ff}) provides the desired rate with
$\lambda=n^{-\max\{\frac{1}{3},\frac{1}{2(\beta+1)}\}}$.
If $\Vert C^{-1}\Vert<\infty$, then the result follows by
noting $\Vert \sqrt{C}(f_{\lambda,n}-f_0)\Vert_\eu{H}\le \Vert \sqrt{C}\Vert
\Vert f_{\lambda,n}-f_0\Vert_\eu{H}$ and invoking the bound in
(\ref{Eq:fff}).\QEDA
\subsection{Proof of Proposition~\ref{pro:range}}\label{subsec:supp-rangeproof}
\emph{Observation 1:} By \cite[Theorem 10.12]{Wendland-05}, we have
$$\eu{H}=\left\{f\in L^2(\bb{R}^d)\cap
C_b(\bb{R}^d):\frac{f^\wedge}{\sqrt{\psi^\wedge}}\in
L^2(\bb{R}^d)\right\},$$ where $f^\wedge$ is defined in $L^2$ sense. Since
$$\int_{\bb{R}^d} \vert f^\wedge(\omega)\vert\,d\omega\le \left(\int_{\bb{R}^d}
\frac{|f^\wedge(\omega)|^2}{\psi^\wedge(\omega)}\,d\omega\right)^{\frac{1}
{
2 } }
\left(\int_{\bb{R}^d} \psi^\wedge(\omega)\,d\omega\right)^{\frac{1}{2}}<\infty$$ where
we
used $\psi^\wedge\in L^1(\bb{R}^d)$ (see \citealp*[Corollary 6.12]{Wendland-05}),
we
have $f^\wedge\in L^1(\bb{R}^d)$. Hence Plancherel's theorem and continuity
of $f$ along with the inverse Fourier transform of $f^\wedge$ allow to
recover any $f\in\eu{H}$ pointwise from
its Fourier transform as
\begin{equation}f(x)=\frac{1}{(2\pi)^{d/2}}\int_{\bb{R}^d}
e^{ix^T\omega}f^\wedge(\omega)\,d\omega,\,x\in\bb{R}^d.\label{Eq:ft-1}
\end{equation} 
\emph{Observation 2:} Since $\psi^\wedge\in L^1(\bb{R}^d)$ and
$\psi^\wedge\ge 0$,
we have for all $j=1,\ldots,d$, \begin{eqnarray}\left(\int_{\bb{R}^d}
|\omega_j|\psi^\wedge(\omega)\,d\omega\right)^2&=&\left(\int_{\bb{R}^d}
\psi^\wedge(\omega)\,
d\omega\right)^2\left(\int_{\bb{R}^d} |\omega_j|\frac{\psi^\wedge(\omega)}{\int_{\bb{R}^d}
\psi^\wedge(\omega)\,d\omega}\,d\omega\right)^2\nonumber\\
&\stackrel{(\ast)}{\le}&
\left(\int_{\bb{R}^d}
\psi^\wedge(\omega)\,
d\omega\right)\left(\int_{\bb{R}^d}
|\omega_j|^2\psi^\wedge(\omega)\,d\omega\right)\nonumber\\
&\le&\left(\int_{\bb{R}^d}
\psi^\wedge(\omega)\,
d\omega\right)\left(\int_{\bb{R}^d} \Vert
\omega\Vert^2\psi^\wedge(\omega)\,d\omega\right)\stackrel{(i)}{<}\infty,
\nonumber
\end{eqnarray}
where we used Jensen's inequality in ($\ast$). This means
$\omega_j\psi^\wedge(\omega)\in L^1(\bb{R}^d),\,\forall\,j\in[d]$ which
ensures the existence of its Fourier transform and so
\begin{equation}\partial_j\psi(x)=\frac{1}{(2\pi)^{d/2}}\int_{\bb{R}^d}
(i\omega_j)\psi^\wedge(\omega)e^{ix^T\omega}\,d\omega,\,x\in\bb{R}^d,\,\,
\forall\,j\in[d].\label{Eq:ft-2}\end{equation}
\emph{Observation 3:} For $g\in\eu{H}$, we have for all $j\in[d]$,
\begin{eqnarray}\int_{\bb{R}^d}
|\omega_j||g^\wedge(\omega)|\,d\omega&{}\le{}&\left(\int_{\bb{R}^d}\frac{|g^\wedge
(\omega)|^2
} {
\psi^\wedge(\omega)}\,d\omega\right)^{\frac{1}{2}}
\left(\int_{\bb{R}^d}|\omega_j|^2\psi^\wedge(\omega)\,d\omega\right)^{\frac{1}{2}}
\nonumber\\
&{}\le{} &
\left(\int_{\bb{R}^d}\frac{|g^\wedge(\omega)|^2}{\psi^\wedge(\omega)}\,
d\omega\right)^{ \frac{
1}{2}}\left(\int_{\bb{R}^d}\Vert\omega\Vert^2\psi^\wedge(\omega)\,d\omega\right)^{\frac{
1}{2}}\stackrel{(i)}{<}\infty,\nonumber
\end{eqnarray}
which implies $\omega_jg^\wedge(\omega)\in
L^1(\bb{R}^d),\,\forall\,j=1,\ldots,d$. Therefore,
\begin{equation}\partial_j g(x)=\frac{1}{(2\pi)^{d/2}}\int_{\bb{R}^d}
(i\omega_j)g^\wedge(\omega)e^{ix^T\omega}\,d\omega,\,x\in\bb{R}^d,\,\,
\forall\,j\in[d].\label{Eq:ft-3}\end{equation}
\emph{Observation 4:} For any $g\in \eu{G}$, we have
$$\int_{\bb{R}^d} \frac{|g^\wedge(\omega)|^2}{\psi^\wedge(\omega)}\,d\omega=\int_{\bb{R}^d}
\frac{|g^\wedge(\omega)|^2}{\phi^\wedge(\omega)}\frac{\phi^\wedge
(\omega)}{\psi^\wedge(\omega)}\,d\omega\le \Vert
g\Vert^2_{\eu{G}}\left\Vert\frac{\phi^\wedge}{\psi^\wedge}
\right\Vert_\infty\stackrel{(ii)}{<}\infty,$$
which implies $g\in\eu{H}$, i.e., $\eu{G}\subset\eu{H}$. 
\par We now use these observations to prove the result. Since $f_0\in
\Cal{R}(C)$, there exists $g\in\eu{H}$ such that $f_0=Cg$, which means
\begin{eqnarray}
 f_0(y)&{}={}&\int_{\bb{R}^d} \sum^d_{j=1}\partial_j k(x,y)\,\partial_j
\,p_0(x)\,dx\nonumber\\
&{}\stackrel{(\ref{Eq:ft-2})}{=}{}&\int_{\bb{R}^d}\sum^d_{j=1}\frac{1}{(2\pi)^{d/2}}\int_{\bb{R}^d}
e^{i(x-y)^T\omega} (i\omega_j)\psi^\wedge(\omega)\,d\omega\,\partial_j
g(x)\,p_0(x)\,dx\nonumber\\
&{}\stackrel{(\dagger)}{=}{}&\int_{\bb{R}^d}\sum^d_{j=1}\left(\frac{1}{(2\pi)^{d/2}}\int_{\bb{R}^d}
e^{ix^T\omega}
\partial_j g(x)\,p_0(x)\,dx\right)(i\omega_j)\psi^\wedge(\omega)e^{-iy^T\omega}\,
d\omega\nonumber\\ 
&{}\stackrel{(\ref{Eq:ft-3})}{=}{}&\frac{1}{(2\pi)^{d/2}}\int_{\bb{R}^d}\sum^d_{j=1}
\overline{
\left(i(\cdot)_jg^\wedge
\ast p^\wedge_0\right)(\omega)}(i\omega_j)\psi^\wedge
(\omega)e^{-iy^T\omega}\,d\omega\nonumber
\end{eqnarray}
which from (\ref{Eq:ft-1}) means $f^\wedge_0(\omega)=\sum^d_{j=1}\overline{\left(i(\cdot)_jg^\wedge
\ast p^\wedge_0\right)(-\omega)}(-i\omega_j)\psi^\wedge
(\omega)$
where we have invoked Fubini's theorem in $(\dagger)$ and $\ast$ represents the
convolution. Define
$\Vert\cdot\Vert_{L^r(\bb{R}^d)}:=\Vert\cdot\Vert_r$ and $\theta:=\frac{r}{r-1}$. Consider
\begin{eqnarray}
\Vert f_0\Vert^2_\eu{G}&{}=&{}\int_{\bb{R}^d} \frac{\vert
f^\wedge_0(\omega)\vert^2}{\phi^\wedge(\omega)}\,d\omega=\int_{\bb{R}^d}
\left|\sum^d_{j=1}\overline{\left(i(\cdot)_jg^\wedge
\ast
p^\wedge_0\right)(-\omega)}
(i\omega_j)\right|^2(\psi^\wedge)^2(\omega)(\phi^\wedge(\omega))^{-1}\,
d\omega\nonumber\\
&\le &\int_{\bb{R}^d}
\left(\sum^d_{j=1}\left|i(\cdot)_jg^\wedge
\ast
p^\wedge_0\right|(-\omega)|\omega_j|\right)^2(\psi^\wedge)^2(\omega)(\phi^\wedge
(\omega))^{-1}\,d\omega\nonumber\\
&\le &\int_{\bb{R}^d}
\sum^d_{j=1}\left|i(\cdot)_jg^\wedge
\ast p^\wedge_0\right|^2(-\omega)\Vert
\omega\Vert^2 (\psi^\wedge)
^2(\omega)(\phi^\wedge(\omega))^{-1}\,d\omega\nonumber\\
&\le &\left\Vert
\sum^d_{j=1}\left|i\omega_jg^\wedge(\omega)
\ast p^\wedge_0(\omega)\right|^2(\cdot)\right\Vert_{\frac{\theta}{2}}
\left\Vert\Vert
\cdot\Vert^2(\psi^\wedge)
^2(\cdot)(\phi^\wedge(\cdot))^{-1}\right\Vert_{\frac{r}{2-r}}
\stackrel{(iii)}{<}\infty,\nonumber
\end{eqnarray}
where in the following we show that $\sum^d_{j=1}\left|i\omega_jg^\wedge
(\omega)\ast p^\wedge_0(\omega)\right|^2(\cdot)\in
L^{\frac{\theta}{2}}(\bb{R}^d)$, i.e.,
\begin{eqnarray}
\lefteqn{\left\Vert
\sum^d_{j=1}\left|i\omega_jg^\wedge
(\omega)\ast p^\wedge_0(\omega)\right|^2(\cdot)\right\Vert_{\frac{\theta}{2
}}
\le
\sum^d_{j=1}\left\Vert\left|i\omega_jg^\wedge
(\omega)\ast p^\wedge_0(\omega)\right|^2(\cdot)\right\Vert_{\frac{\theta}{2
}}= \sum^d_{j=1}\left\Vert i\omega_jg^\wedge
(\omega)\ast p^\wedge_0(\omega)\right\Vert^2_{\theta}}\nonumber\\
&\qquad\qquad\qquad\qquad\qquad\qquad\stackrel{(\ast)}{\le} \sum^d_{j=1}\left\Vert i\omega_jg^\wedge
(\omega)\right\Vert^2_{1}\left\Vert p^\wedge_0\right\Vert^2_{\theta}
\stackrel{(\ast\ast)}{\le} \left\Vert
p_0\right\Vert^2_{r}\sum^d_{j=1}\left\Vert
i\omega_jg^\wedge
(\omega)\right\Vert^2_{1}\stackrel{(\ddagger)}{<}\infty,\nonumber
\end{eqnarray}
where we have invoked generalized
Young's inequality \cite[Proposition 8.9]{Folland-99} in
($\ast$), Hausdorff-Young inequality \cite[p.~253]{Folland-99} in ($\ast\ast$),
and
observation 3 combined with $(iv)$ in ($\ddagger$). This shows that $f_0\in
\Cal{R}(C)\Rightarrow f_0\in \eu{G}$, i.e., $\Cal{R}(C)\subset\eu{G}$.\QEDA
\subsection{Proof of Theorem~\ref{Thm:rates-new}}\label{subsec:supp-rates-new}
To prove Theorem~\ref{Thm:rates-new}, we need the following lemma
\cite[Lemma 5]{devito-12}, which is due to Andreas Maurer.
\begin{lemma}\label{lem:lipschitz}
Suppose $A$ and $B$ are self-adjoint Hilbert-Schmidt operators on a
separable Hilbert space $H$ with spectrum contained in the interval $[a,b]$,
and let $(\sigma_i)_{i\in I}$ and $(\tau_j)_{j\in J}$ be the eigenvalues of $A$
and $B$, respectively. Given a function $r:[a,b]\rightarrow\bb{R}$, if
there exists a finite constant $L$ such that
$|r(\sigma_i)-r(\tau_j)|\le L|\sigma_i-\tau_j|,\,\,\forall\,i\in I,\,j\in J,$
then 
$\Vert r(A)-r(B)\Vert_{HS}\le L\Vert
A-B\Vert_{HS}.$
\end{lemma}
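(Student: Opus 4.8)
The plan is to reduce the Hilbert--Schmidt inequality to a termwise scalar estimate via the spectral theorem. Since $A$ and $B$ are Hilbert--Schmidt, they are compact and self-adjoint, so each admits a complete orthonormal basis of $H$ consisting of eigenvectors: write $(e_i)_{i\in I}$ for an ONB with $Ae_i=\sigma_i e_i$ and $(f_j)_{j\in J}$ for an ONB with $Bf_j=\tau_j f_j$ (including eigenvectors associated with the eigenvalue $0$, if any, so that both families are complete orthonormal bases of $H$, not merely of the closures of the ranges). First I would record that the Lipschitz hypothesis forces $r$ to be bounded on the spectrum of $A$: fixing any single $j\in J$, $|r(\sigma_i)|\le|r(\tau_j)|+L|\sigma_i-\tau_j|\le|r(\tau_j)|+L(b-a)$ for all $i\in I$. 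Hence $r(A)=\sum_{i\in I}r(\sigma_i)\,e_i\otimes e_i$ and $r(B)=\sum_{j\in J}r(\tau_j)\,f_j\otimes f_j$ are well-defined bounded operators through the functional calculus.

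The core of the argument is to evaluate $r(A)-r(B)$ on the basis $(f_j)$. Expanding $f_j=\sum_{i}\langle e_i,f_j\rangle_H\,e_i$ and using $r(B)f_j=r(\tau_j)f_j$ one obtains $(r(A)-r(B))f_j=\sum_{i}\bigl(r(\sigma_i)-r(\tau_j)\bigr)\langle e_i,f_j\rangle_H\,e_i$, so by Parseval's identity $\Vert(r(A)-r(B))f_j\Vert_H^2=\sum_{i}|r(\sigma_i)-r(\tau_j)|^2\,|\langle e_i,f_j\rangle_H|^2$. The identical manipulation with $A-B$ in place of $r(A)-r(B)$ yields $\Vert(A-B)f_j\Vert_H^2=\sum_{i}|\sigma_i-\tau_j|^2\,|\langle e_i,f_j\rangle_H|^2$. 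Applying the hypothesis $|r(\sigma_i)-r(\tau_j)|\le L|\sigma_i-\tau_j|$ to each summand then gives, for every $j\in J$, the pointwise bound $\Vert(r(A)-r(B))f_j\Vert_H^2\le L^2\,\Vert(A-B)f_j\Vert_H^2$.

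Finally, since $(f_j)_{j\in J}$ is an orthonormal basis of $H$, I would sum over $j$ and invoke the identity $\Vert T\Vert_{HS}^2=\sum_{j}\Vert Tf_j\Vert_H^2$ to conclude $\Vert r(A)-r(B)\Vert_{HS}^2=\sum_j\Vert(r(A)-r(B))f_j\Vert_H^2\le L^2\sum_j\Vert(A-B)f_j\Vert_H^2=L^2\,\Vert A-B\Vert_{HS}^2$; taking square roots gives the lemma. This computation also shows, as a byproduct, that $r(A)-r(B)$ is itself Hilbert--Schmidt, with norm at most $L\Vert A-B\Vert_{HS}<\infty$, even though neither $r(A)$ nor $r(B)$ need be individually (e.g. $r\equiv 1$). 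There is no substantial obstacle in this proof; the only point requiring a little care is the bookkeeping around the eigenvalue $0$, i.e. ensuring that $(e_i)$ and $(f_j)$ are genuine complete orthonormal bases of $H$, so that the expansion of $f_j$, the termwise interchange of sums, and the Hilbert--Schmidt norm identity are all valid.
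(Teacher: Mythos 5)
Your proof is correct, and it is in essence the argument attributed to Maurer in \cite[Lemma 5]{devito-12}, which the paper cites without reproducing: diagonalize $A$ and $B$ in their respective orthonormal eigenbases, observe that the matrix element of $r(A)-r(B)$ (respectively $A-B$) between $e_i$ and $f_j$ is $(r(\sigma_i)-r(\tau_j))\langle e_i,f_j\rangle_H$ (respectively $(\sigma_i-\tau_j)\langle e_i,f_j\rangle_H$), apply the scalar Lipschitz hypothesis termwise, and sum using $\Vert T\Vert_{HS}^2=\sum_{j}\Vert Tf_j\Vert_H^2$. Your care with the kernel (including eigenvectors for the eigenvalue $0$ so that both families are complete orthonormal bases of $H$) is exactly the bookkeeping point that makes the Parseval expansion and the sum interchange legitimate.
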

\noindent\emph{Proof of Theorem~\ref{Thm:rates-new}.}
(i) The proof follows the ideas in the proof of Theorem 10 in
\cite{Bauer-07}, which is a more general result dealing with the smoothness
condition, $f_0\in\Cal{R}(\Theta(C))$ where $\Theta$ is operator monotone.
Recall that $\Theta$ is operator monotone on $[0,b]$ if for any pair of
self-adjoint operators $U$, $V$ with spectra in $[0,b]$ such that $U\le V$,
we have $\Theta(U)\le\Theta(V)$, where ``$\le$'' is the partial ordering for
self-adjoint operators on some Hilbert space $H$, which means for any $f\in
H$, $\langle f,Uf\rangle_H\le\langle f,Vf\rangle_H$. In our case, we adapt the
proof for $\Theta(C)=C^\beta$. Define
$r_\lambda(\alpha):=g_\lambda(\alpha)\alpha-1$. Since
$f_0\in\Cal{R}(C^\beta)$, there exists $h\in\eu{H}$ such that $f_0=C^\beta h$,
which yields
\begin{eqnarray}
f_{g,\lambda,n}-f_0&{}={}&-g_\lambda(\hat{C})\hat{\xi}-f_0=
-g_\lambda(\hat{C})(\hat{\xi}+\hat{C}f_0)+r_\lambda(\hat{C})C^\beta h\nonumber\\
&{}={}&-g_\lambda(\hat{C})(\hat{\xi}-\xi)+g_\lambda(\hat{C})(C-\hat{C})f_0+r_\lambda(\hat { C } )\hat { C } ^\beta
h+r_\lambda(\hat{C})(C^\beta-\hat{C}^\beta)h.\label{Eq:tempoo}
\end{eqnarray}
so that 
\begin{eqnarray}
\Vert f_{g,\lambda,n}-f_0\Vert_\eu{H}&{}\le{}& \underbrace{\Vert g_\lambda(\hat{C})(\hat{\xi}-\xi)\Vert_\eu{H}}_{(A)}+\underbrace{\Vert g_\lambda(\hat{C})(\hat{C}-C)f_0\Vert_\eu{H}}_{(B)}+\underbrace{\Vert r_\lambda(\hat{C})C^\beta h\Vert_\eu{H}}_{(C)}\nonumber\\
&{}{}&\qquad\qquad+\underbrace{\Vert r_\lambda(\hat{C})(C^\beta-\hat{C}^\beta)h \Vert_\eu{H}}_{(D)}.\nonumber
\end{eqnarray}
We now bound $(A)$--$(D)$. Since $(A)\le \Vert g_\lambda(\hat{C})\Vert \Vert \hat{\xi}-\xi\Vert_\eu{H}$, we have $(A)=O_{p_0}\left(\frac{1}{\lambda \sqrt{n}}\right)$ where we 
used $(b)$ in \textbf{(E)} and the bound on $\Vert
\hat{\xi}-\xi\Vert_\eu{H}$ from the proof of Theorem~\ref{Thm:rates}(i). Similarly, $(B)\le \Vert g_\lambda(\hat{C})\Vert \Vert (\hat{C}-C)f_0\Vert_\eu{H}$ implies 
$(B)=O_{p_0}\left(\frac{1}{\lambda \sqrt{n}}\right)$ where $(b)$ in \textbf{(E)} and Proposition~\ref{pro:certain}(i) are invoked. Also, $(d)$ in \textbf{(E)} implies that
$$(C)\le\Vert r_\lambda(\hat{C})\hat{C}^\beta\Vert \Vert
h\Vert_\eu{H}\le \max\{\gamma_\beta,
\gamma_{\eta_0}\}\lambda^{\min\{\beta,\eta_0\}}\Vert
C^{-\beta}f_0\Vert_\eu{H}.$$ $(D)$ can be bounded as
$$(D)\le \Vert r_\lambda(\hat{C})\Vert \Vert C^\beta-\hat{C}^\beta\Vert \Vert C^{-\beta}f_0\Vert_\eu{H}.$$
We now consider two cases:\vspace{1mm}\\
\underline{$\beta\le 1$}: 
Since $\alpha\mapsto\alpha^\theta$ is
operator monotone on $[0,\chi]$ for $0\le\theta\le 1$, by Theorem 1 in
\cite{Bauer-07}, there exists a constant $c_\theta$ such that $\Vert
\hat{C}^\theta-C^\theta\Vert\le c_\theta \Vert \hat{C}-C\Vert^\theta\le
c_\theta\Vert
\hat{C}-C\Vert^\theta_{HS}$. We now obtain a bound on $\Vert \hat{C}-C\Vert_{HS}$. To this end, consider 
\begin{eqnarray}
\bb{E}\Vert \hat{C}-C\Vert^2_{HS}&{}={}&\bb{E}\Vert \hat{C}\Vert^2_{HS}-\Vert C\Vert^2_{HS}\nonumber\\
&{}\le{}& \frac{1}{n}\int \left\Vert \sum^d_{i=1}\partial_i k(x,\cdot)\otimes \partial_i k(x,\cdot)\right\Vert^2_{HS}p_0(x)\,dx
\le\frac{d}{n}\sum^d_{i=1}\int\left\Vert\partial_i k(x,\cdot)\right\Vert^4p_0(x)\,dx,\nonumber
\end{eqnarray}
which by Chebyshev's inequality implies that \begin{equation}\Vert \hat{C}-C\Vert_{HS}=O_{p_0}(n^{-1/2})\nonumber
\end{equation}
and therefore $(D)=O_{p_0}(n^{-\beta/2})$. Since $\lambda\ge n^{-1/2}$, we have $(D)=O_{p_0}(\lambda^\beta)$.\vspace{1mm}\\
\underline{$\beta>1$}: Since $\alpha\mapsto\alpha^\theta$ is
Lipschitz on $[0,\chi]$ for $\theta\ge 1$, by Lemma~\ref{lem:lipschitz},
$\Vert C^{\beta}-\hat{C}^{\beta}\Vert\le \Vert
C^{\beta}-\hat{C}^{\beta}\Vert_{HS}\le
\beta\chi^{\beta-1} \Vert C-\hat{C}\Vert_{HS}$ and therefore $(C)=O_{p_0}(n^{-1/2})$.

Collecting
all the above bounds, we obtain 
$$\Vert f_{g,\lambda,n}-f_0\Vert_\eu{H}\le O_{p_0}\left(\frac{1}{\lambda\sqrt{n}}\right)+O_{p_0}\left(\lambda^{\min\{\beta,\eta_0\}}\right)$$
and the result follows. The proofs of the claims involving $L^r$, $h$ and $KL$ follow exactly the same ideas
as in
the proof of Theorem~\ref{Thm:density} by using the above bound on $\Vert f_{g,\lambda,n}-f_0\Vert_\eu{H}$ in Lemma~\ref{lem:distances}.\vspace{2mm}\\
(ii) We now bound
$J(p_0\Vert p_{f_{g,\lambda,n}})=\Vert
\sqrt{C}(f_{g,\lambda,n}-f_0)\Vert^2_\eu{H}$
as follows. Note that
$$\sqrt{C}(f_{g,\lambda,n}-f_0)=\underbrace{(\sqrt{C}-\sqrt{\hat{C}})(f_{g,
\lambda , n }
-f_0)}_{(I^\prime)}+\underbrace{\sqrt{\hat{C}}(f_{g,\lambda,n}-f_0)}_{(II^\prime
)}.$$
We bound $\Vert (I^\prime)\Vert_\eu{H}$ as 
\begin{eqnarray}\Vert (I^\prime)\Vert_\eu{H}\le \Vert \sqrt{C}-\sqrt{\hat{C}}\Vert
\Vert f_{g,\lambda,n}-f_0\Vert_\eu{H}&{}\le{}& c_{\frac{1}{2}}\sqrt{\Vert
C-\hat{C}\Vert}_{HS} \Vert
f_{g,\lambda,n}-f_0\Vert_\eu{H}\nonumber\\
&{}={}&  O_{p_0}\left(\frac{1}{\sqrt{\lambda n}}\right)+O_{p_0}\left(\lambda^{\min\{\beta,\eta_0\}+\frac{1}{2}}\right),\nonumber
\end{eqnarray}
where we used the fact that $\alpha\mapsto\sqrt{\alpha}$ is operator
monotone along with $\lambda\ge n^{-1/2}$. Using (\ref{Eq:tempoo}), $\Vert (II^\prime)\Vert_\eu{H}$ can be
bounded as
\begin{eqnarray}
\Vert (II^\prime)\Vert_\eu{H}
&{}\le{}& \Vert \sqrt{\hat{C}}g_\lambda(\hat{C})\Vert \Vert \hat{\xi}+\hat{
C } f_0\Vert_\eu{H}+ \Vert \sqrt {\hat{C} } r_\lambda(\hat { C } )\hat { C }
^\beta\Vert 
\Vert
C^{-\beta}f_0\Vert_\eu{H}\nonumber\\
&{}{}&\qquad\qquad+\Vert
\sqrt{\hat{C}}r_\lambda(\hat{C})\Vert \Vert C^{\beta}-\hat{C}^{\beta}\Vert \Vert C^{-\beta}f_0\Vert_\eu{H}\nonumber
\end{eqnarray}
where $$\Vert \sqrt{\hat{C}}g_\lambda(\hat{C})\Vert\le
\sqrt{\frac{A_gB_g}{\lambda}},\,\,\,\Vert
\sqrt{\hat{C}}r_\lambda(\hat{C})\hat{C}^\beta\Vert\le
(\gamma_{\beta+\frac{1}{2}}\vee
\gamma_{\eta_0})\lambda^{\min\{\beta+\frac{1}{2},\eta_0\}}$$ and $$
\Vert \sqrt{\hat{C}}r_\lambda(\hat{C})\Vert\le
(\gamma_{\frac{1}{2}}\vee\gamma_{\eta_0})\lambda^{\min\{\frac{1}{2},\eta_0\}}$$
with $\Vert \hat{C}f_0+\hat{\xi}\Vert$ and $\Vert
C^{\beta}-\hat{C}^{\beta}\Vert$ bounded as in part (i) above. Here $(a\vee
b):=\max\{a,b\}$. Combining $\Vert (I^\prime)\Vert_\eu{H}$ and $\Vert (II^\prime)\Vert_\eu{H}$,
we obtain the required result.
\vspace{2mm}\\
(iii) The proof follows the ideas in the proof of Theorems~\ref{Thm:rates} and
\ref{Thm:density}. Consider
$f_{g,\lambda,n}-f_0=-g_\lambda(\hat{C})(\hat{C}f_0+\hat{\xi})+r_\lambda(\hat{C
})f_0$
so that
\begin{eqnarray}\Vert
f_{g,\lambda,n}-f_0\Vert_\eu{H}&{}\le{}&\Vert
C^{-1}\Vert \Vert C g_\lambda(\hat{C})(\hat{C}f_0+\hat{\xi})\Vert_\eu{H}
+\Vert C^{-1}\Vert \Vert C r_\lambda(\hat { C
})f_0\Vert_\eu{H}\nonumber\\
&{}\le{}& \Vert
C^{-1}\Vert \Vert \hat{C}f_0+\hat{\xi}\Vert_\eu{H} \left(\Vert \hat{C}
g_\lambda(\hat{C})\Vert + \Vert \hat{C}-C\Vert
\Vert g_\lambda(\hat{C})\Vert \right)\nonumber\\
&{}{}& \qquad+\Vert C^{-1}\Vert \Vert f_0\Vert_\eu{H}\left(\Vert \hat{C}
r_\lambda(\hat { C
})\Vert + \Vert\hat{C}-C\Vert
\Vert r_\lambda(\hat { C
})\Vert\right).\nonumber
\end{eqnarray}
Therefore $\Vert
f_{g,\lambda,n}-f_0\Vert_\eu{H}=O_{p_0}(n^{-1/2})+O\left(\lambda^{\min\{1,\eta_0\}}\right)$ where we used the fact that $\lambda\ge n^{-1/2}$ and the result follows.\QEDA
\subsection{Proof of Theorem~\ref{Thm:misspecified}}\label{subsec:supp-misspecified}
Before we analyze
$J(p_0\Vert p_{f_{\lambda,n}})$, we need a small calculation for notational
convenience. For any probability densities $p,q\in C^1$,
it
is clear that $\sqrt{2J(p\Vert q)}=\lv \lv\nabla\log p-\nabla\log q \rv_2
\rv_{L^2(p)}$. We generalize this by defining 
$$\sqrt{2J(p\Vert q\Vert \mu)}:=\lv \lv\nabla\log p-\nabla\log q \rv_2
\rv_{L^2(\mu)}.$$
Clearly, if $\mu=p$, then $J(p\Vert q\Vert\mu)$ matches with $J(p\Vert q)$.
Therefore, for probability densities $p,q,r\in C^1$,
\begin{equation}\sqrt{J(p\Vert r\Vert p)}\le
\sqrt{J(p\Vert q\Vert p)}+\sqrt{J(q\Vert
r\Vert p)}.\label{Eq:triangle}\end{equation}
Based on (\ref{Eq:triangle}), we have
\begin{eqnarray}\sqrt{\inf_{p\in\Cal{P}}J(p_0\Vert p)}\le
\sqrt{J(p_0\Vert p_{f_{\lambda,n}}\Vert p_0)}&{}\le{}&
\sqrt{J(p_0\Vert p_{f^\ast}\Vert
p_0)}+\sqrt{J(p_{f^\ast}\Vert p_{f_{\lambda,n}}\Vert p_0)}
\nonumber
\end{eqnarray}
\begin{eqnarray}
&{}={}&\sqrt{\inf_{p\in\Cal{P}}J(p_0\Vert p\Vert
p_0)}+\sqrt{J(p_{f^\ast}\Vert p_{f_{\lambda,n}}\Vert p_0)}\nonumber\\
&{}={}&\sqrt{\inf_{p\in\Cal{P}}J(p_0\Vert p)}+\frac{1}{\sqrt{2}}\sqrt{\langle
f_{\lambda,n}-f^*,C(f_{\lambda,n}-f^*)\rangle_\eu{H}} \nonumber\\
&{}={}&\sqrt{\inf_{p\in\Cal{P}}J(p_0\Vert
p)}+\frac{1}{\sqrt{2}}\Vert\sqrt{C}(f_{
\lambda,n}-f^*)\Vert_\eu{H}\label{Eq:main}\\
&{}={} &
\sqrt{\inf_{p\in\Cal{P}}J(p_0\Vert p)}+\frac{1}{\sqrt{2}}\Vert\sqrt{C}(f_{
\lambda,n}-f_\lambda)\Vert_\eu{H}+\frac{1}{\sqrt{2}}\Cal{A}^*(\lambda),\nonumber
\end{eqnarray}
where $\Cal{A}^*(\lambda)=\Vert\sqrt{C}(f_{\lambda}
-f^*)\Vert_\eu{H}$. The result simply follows from the proof of
Theorem~\ref{Thm:density}, where we showed that
$\Vert
\sqrt{C}(f_{\lambda,n}-f_\lambda)\Vert_\eu{H}=O_{p_0}\left(\frac{1}{\sqrt{\lambda
n}}\right)$ and $\Cal{A}^*(\lambda)=O(\lambda^{\min\{1,\beta+\frac{1}{2}\}})$ if
$f^\ast\in\Cal{R}(C^\beta)$ for $\beta\ge 0$ as
$\lambda\rightarrow 0$, $n\rightarrow \infty$. When $\Vert C^{-1}\Vert<\infty$,
we bound $\Vert\sqrt{C}(f_{
\lambda,n}-f^*)\Vert_\eu{H}$ in (\ref{Eq:main}) as $\Vert\sqrt{C}\Vert \Vert f_{
\lambda,n}-f^*\Vert_\eu{H}$ where $\Vert f_{
\lambda,n}-f^*\Vert_\eu{H}$ is in turn bounded as
in (\ref{Eq:fff}).\QEDA
\subsection{Proof of
Proposition~\ref{pro:operators}}\label{subsec:supp-operators}
For $f\in\eu{H}$, we have 
\begin{eqnarray}
\Vert
f\Vert^2_{\Cal{W}_2}&{}={}&\int_\Omega
\sum^d_{i=1} \left(\partial_i f\right)^2p_0(x)\,dx
\le
\Vert f\Vert^2_\eu{H}\int_\Omega
\sum^d_{i=1}\left\Vert\partial_i k(x,\cdot)\right\Vert^2_\eu{H}p_0(x)\,dx<\infty,\nonumber
\end{eqnarray}
which means $f\in\Cal{W}_2(\Omega,p_0)$ and therefore $[f]_\sim\in W_2(\Omega,p_0)$. Since $\Vert I_kf\Vert_{W_2}=\Vert [f]_\sim\Vert_{\Cal{W}^\sim_2}=\Vert f\Vert_{\Cal{W}_2}\le c\Vert f\Vert_\eu{H}<\infty$
where $c$ is some constant, it is clear that $I_k$ is a continuous map from $\eu{H}$ to $W_2(\Omega,p_0)$. The
adjoint
$S_k:W_2(\Omega,p_0)\rightarrow \eu{H}$ of $I_k:\eu{H}\rightarrow W_2(\Omega,p_0)$ is
defined by the relation
$\langle S_kf,g\rangle_\eu{H}=\langle f,I_kg\rangle_{W_2},\,\,f\in
W_2(\Omega,p_0),\,g\in\eu{H}$. If $f:=[h]_\sim\in\Cal{W}^\sim_2(\Omega,p_0)$, then 
$$\langle [h]_\sim,I_kg\rangle_{W_2}=\langle
[h]_\sim,[g]_\sim\rangle_{\Cal{W}^\sim_2}=\sum_{|\alpha|=1}\int_\Omega (\partial^\alpha
h)(x)(\partial^\alpha g)(x)\,p_0(x)\,dx.$$
For $y\in\Omega$ and $g=k(\cdot,y)$, this yields
\begin{eqnarray}S_k[h]_\sim(y)=\langle
S_k[h]_\sim,k(\cdot,y)\rangle_\eu{H}&{}={}&\langle
[h]_\sim, I_k k(\cdot,y)\rangle_{W_2}=\int_\Omega
\sum^d_{i=1}\partial_i k(x,y)\partial_ih(x)p_0(x)\,dx.\nonumber
\end{eqnarray}
We now show that $I_k$ is Hilbert-Schmidt. Since $\eu{H}$ is separable,
let $(e_l)_{l\ge 1}$ be an ONB of $\eu{H}$. Then we have
\begin{eqnarray}
\sum_{l}\Vert I_k e_l\Vert^2_{W_2}=\sum_{l}\int_\Omega
\sum^d_{i=1}\left(\partial_i e_l(x)\right)^2p_0(x)\,dx&{}={}&
\int_\Omega\sum^d_{i=1}\sum_l\left\langle e_l,\partial_i
k(x,\cdot)\right\rangle^2_\eu{H}p_0(x)\,dx\nonumber\\
&{}={}&\int_\Omega\sum^d_{i=1}\left\Vert\partial_i
k(x,\cdot)\right\Vert^2_\eu{H}p_0(x)\,dx<\infty,\nonumber
\end{eqnarray}
which proves that $I_k$ is Hilbert-Schmidt (hence compact) and
therefore $S_k$ is also Hilbert-Schmidt and compact. The other
assertions about
$S_kI_k$ and $I_kS_k$ are straightforward.\QEDA
\subsection{Proof of Theorem~\ref{Thm:misspecified-1}}\label{subsec:misspecified}
By slight abuse of notation, $f_\star$ is used to denote $[f_\star]_\sim$ in the
proof for simplicity. For $f\in\Cal{F}$, we have
\begin{equation}
J(p_0\Vert p_f)=\frac{1}{2}\Vert
I_k f-f_\star\Vert^2_{W_2}=\frac{1}{2}\langle E_k
f,f\rangle_\eu{H}-\langle
S_kf_\star,f\rangle_\eu{H}+\frac{1}{2}\Vert f_\star\Vert^2_{W_2}.\nonumber
\end{equation}
Since $k$ satisfies \textbf{(C)} it is easy to verify that $\langle
S_kf_\star,f\rangle_\eu{H}=\langle f,-\xi\rangle_\eu{H},\,\forall\,f\in\eu{H}$ (see proof of Theorem~\ref{Thm:score}(ii)). This implies $S_kf_\star=-\xi$ 
and 
\begin{equation} J(p_0\Vert p_f)=\frac{1}{2}\langle E_k
f,f\rangle_\eu{H}+\langle
f,\xi\rangle_\eu{H}+\frac{1}{2}\Vert
f_\star\Vert^2_{W_2},\label{Eq:object}\end{equation}
where $\xi$ is defined in Theorem~\ref{Thm:score}(ii), and
$E_k$ is precisely the operator $C$ defined in
Theorem~\ref{Thm:score}(ii). Following the proof of Theorem~\ref{Thm:score}(ii),
for $\lambda>0$, it is easy to show that the unique minimizer of the regularized
objective, $J(p_0\Vert p_f)+\frac{\lambda}{2}\Vert f\Vert^2_\eu{H}$ exists and
is
given by
\begin{equation}
 f_\lambda=-(E_k+\lambda I)^{-1}\xi=(E_k+\lambda
I)^{-1}S_kf_\star.\label{Eq:flambda}
\end{equation}
We would like to reiterate that
(\ref{Eq:object}) and (\ref{Eq:flambda}) also match with their counterparts in
Theorem~\ref{Thm:score} and therefore as in Theorem~\ref{Thm:score}(iv), an
estimator of $f_\star$ is given by $f_{\lambda,n}=-(\hat{E}_k+\lambda
I)^{-1}\hat{\xi}$. In other words, this is the same as in Theorem~\ref{Thm:score}(iv) since
$\hat{E}_k=\hat{C}$,
and can be solved by a simple linear system provided in
Theorem~\ref{Thm:representer}. Here $\hat{E}_k$ is
the empirical estimator of $E_k$. Now consider
\begin{eqnarray}
\sqrt{2\,J(p_0\Vert p_{f_{\lambda,n}})}=\Vert
I_k f_{\lambda,n}-f_\star\Vert_{W_2}
&{}\le{}& \Vert I_k(f_{\lambda,n}-f_\lambda)\Vert_{W_2}+\Vert
I_kf_\lambda-f_\star\Vert_{W_2}\nonumber\\
&{}={}&\Vert
\sqrt{E_k}(f_{\lambda,n}-f_\lambda)\Vert_\eu{H}+\Cal{B}(\lambda),
\label{Eq:main-1}
\end{eqnarray}
where $\Cal{B}(\lambda):=\Vert
I_kf_\lambda-f_\star\Vert_{W_2}$. The proof now proceeds using the following
decomposition, equivalent to
the one used in the proof of Theorem~\ref{Thm:rates}(i), i.e.,
\begin{eqnarray}
f_{\lambda,n}-f_\lambda&{}={}&-(\hat{E}_k+\lambda
I)^{-1}\hat{\xi}-f_\lambda\nonumber\\
&{}={}&-(\hat{E}_k+\lambda I)^{-1}(\hat{\xi}+\hat{E}_kf_\lambda+\lambda
f_\lambda)\nonumber\\
&{}\stackrel{(\dagger)}{=}{}&-(\hat{E}_k+\lambda I)^{-1}(\hat{\xi}+\hat{E}_kf_\lambda+S_kf_\star
-E_kf_\lambda-\hat{S}_kf_\star+\hat{S}_kf_\star),\nonumber
\end{eqnarray}
where we used (\ref{Eq:flambda}) in $(\dagger)$.  $\hat{S}_kf_\star$ is well-defined as it is the empirical version of 
the restriction of $S_k$ to 
$\Cal{W}^\sim_2(p_0)$. Since $S_kf_\star-E_kf_\lambda=S_k(f_\star-I_kf_\lambda)$ 
and $\hat{S}_kf_\star-\hat{E}_kf_\lambda=\hat{S}_k(f_\star-I_kf_\lambda)$, we have
$$f_{\lambda,n}-f_\lambda=-(\hat{E}_k+\lambda I)^{-1}(\hat{\xi}+\hat{S}_kf_\star)+(\hat{E}_k+\lambda I)^{-1}(\hat{S}_k-S_k)(f_\star-I_kf_\lambda)$$
and so
\begin{equation}
\Vert
\sqrt{E_k}(f_{\lambda,n}-f_\lambda)\Vert_\eu{H}\le  \Vert\sqrt{E_k}(\hat{E}_k+\lambda
I)^{-1}\Vert\left( \Vert
\hat{\xi}+\hat{S}_kf_\star\Vert_\eu{H}+\Vert (\hat{S}_k-S_k)(f_\star-I_kf_\lambda)\Vert_\eu{H}\right). \label{Eq:main-2}
\end{equation}
It follows from Proposition~\ref{pro:certain}(v) that \begin{equation}\Vert\sqrt{E_k}(\hat{E}_k+\lambda
I)^{-1}\Vert\lesssim\frac{1}{\sqrt{\lambda}}\label{Eq:bound-t1}\end{equation} for $n\ge \frac{c}{\lambda^2}$ where $c$ is a sufficiently large
constant that does not depend on $n$ and $\lambda$. 
Following the proof of
Proposition~\ref{pro:certain}(i), we have
$$\bb{E}\Vert\hat{\xi}+\hat{S}_kf_\star\Vert^2_\eu{H}=\frac{n-1}{n}\Vert \xi+S_kf_\star\Vert^2_\eu{H}+\frac{1}{n}
\int_\Omega \left\Vert \sum^d_{i=1}\partial_i k(x,\cdot)\partial_i f_\star+\xi_x\right\Vert^2_\eu{H}p_0(x)\,dx$$
wherein the first term is zero as $S_kf_\star+\xi=0$ and since $$\left\Vert \sum^d_{i=1}\partial_i k(x,\cdot)\partial_i f_\star+\xi_x\right\Vert^2_\eu{H}
\le 2\Vert \xi_x\Vert^2_\eu{H}+2\chi\Vert\nabla f_\star\Vert^2_2,$$ the integral in the second term is finite because 
of $\textbf{(D)}$ and $f^*\in W_2(\Omega,p_0)$. Therefore, an application of Chebyshev's inequality yields
\begin{equation}\Vert\hat{\xi}+\hat{S}_kf_\star\Vert_\eu{H}=O_{p_0}(n^{-1/2}).\label{Eq:bound-tt}\end{equation}
We now show that $\Vert (\hat{S}_k-S_k)(f_\star-I_kf_\lambda)\Vert_\eu{H}=O_{p_0}(\Cal{B}(\lambda)n^{-1/2})$. 
To this end, define $g:=f_\star-I_kf_\lambda$ and consider 
\begin{eqnarray}
\bb{E}_{p_0}\Vert \hat{S}_kg-S_kg\Vert^2_\eu{H}&{}={}&\frac{\int_\Omega \Vert \sum^d_{i=1}\partial_i k(x,\cdot)\partial_i g(x)\Vert^2_\eu{H}p_0(x)\,dx-\Vert S_k g\Vert^2_\eu{H}}{n}\le\frac{\chi}{n}\Vert g\Vert^2_{W_2},\nonumber
\end{eqnarray}
which therefore yields the claim through an application of Chebyshev's inequality. Using this along with (\ref{Eq:bound-t1}) and (\ref{Eq:bound-tt}) 
in (\ref{Eq:main-2}), and using the resulting bound in (\ref{Eq:main-1}) yields
\begin{equation}
\sqrt{2\,J(p_0\Vert p_{f_{\lambda,n}})}\le
O_{p_0}\left(\frac{1}{\sqrt { \lambda n}}+\frac{\Cal{B}(\lambda)}{\sqrt{\lambda
n}} \right)+\Cal { B } (\lambda).\label{Eq:blam-0}
\end{equation}
(i) We bound $\Cal{B}(\lambda)$ as follows. First note that $$\Cal{B}(\lambda)=\Vert
I_k(S_kI_k+\lambda I)^{-1}S_kf_\star-f_\star\Vert_{W_2}=\Vert (T_k+\lambda
I)^{-1}T_kf_\star-f_\star\Vert_{W_2}$$ and so for any $h\in\eu{H}$, we have
\begin{eqnarray}
\Cal{B}(\lambda)&{}={}&\Vert
(T_k+\lambda I)^{-1}T_kf_\star-f_\star\Vert_{W_2}\nonumber\\
&{}\le{}& \underbrace{\Vert
((T_k+\lambda
I)^{-1}T_k-I)(f_\star-I_k h)\Vert_{W_2}}_{(I)}+\underbrace{\Vert (T_k+\lambda
I)^{-1}T_kI_kh-I_k h\Vert_{W_2}}_{(II)}.\label{Eq:blam}
\end{eqnarray}
Since $T_k$ is a self-adjoint compact operator, there exists $(\alpha_l)_{l\in
\bb{N}}$ and ONB $(\phi_l)_{l\in\bb{N}}$ of $\overline{\Cal{R}(T_k)}$ so
that
$
T_k=\sum_l\alpha_l\langle \phi_l,\cdot\rangle_{W_2}\phi_l.$ 
Let $(\psi_j)_{j\in\bb{N}}$ be the orthonormal basis of $\Cal{N}(T_k)$. Then we
have
\begin{eqnarray}
(I)^2&{}={}&\sum_l\left(\frac{\alpha_l}{
\alpha_l+\lambda }-1\right)^2\langle
f_\star-I_kh,\phi_l\rangle^2_{W_2}+\sum_j\langle
f_\star-I_kh,\psi_j\rangle^2_{W_2}\nonumber\\
&{}\le{}& \sum_l \langle
f_\star-I_kh,\phi_l\rangle^2_{W_2}+\sum_j\langle
f_\star-I_kh,\psi_j\rangle^2_{W_2}=\Vert
f_\star-I_kh\Vert^2_{W_2}.\label{Eq:blam-1}
\end{eqnarray}
From $(T_k+\lambda I)^{-1}T_k=I_k(E_k+\lambda I)^{-1}S_k$ and $S_kI_kh=E_kh$,
we have
\begin{eqnarray}
(II)&{}={}&\Vert
I_k(E_k+\lambda I)^{-1}E_kh-I_kh\Vert_{W_2}\nonumber\\
&{}={}&\Vert \sqrt{E_k}(E_k+\lambda
I)^{-1}E_kh-\sqrt{E_k}h\Vert_{\eu{H}}\le\Vert
h\Vert_\eu{H}\sqrt{\lambda},\label{Eq:blam-2}
\end{eqnarray}
where the inequality follows from Proposition~\ref{pro:approxerror}(ii). Using
(\ref{Eq:blam-1}) and (\ref{Eq:blam-2}) in (\ref{Eq:blam}), we obtain
$\Cal{B}(\lambda)\le \Vert f_\star-I_kh\Vert_{W_2}+\Vert
h\Vert_\eu{H}\sqrt{\lambda}$,
using which in (\ref{Eq:blam-0}) yields
$$
\sqrt{2\,J(p_0\Vert p_{f_{\lambda,n}})}\le \Vert
f_\star-I_kh\Vert_{W_2}+O_{p_0}\left(\frac{1}{\sqrt{
\lambda n}}\right)+\Vert
h\Vert_\eu{H}\sqrt{\lambda}.
$$
Since the above inequality holds for any $h\in\eu{H}$, we therefore have
\begin{eqnarray}\sqrt{2\,J(p_0\Vert p_{f_{\lambda,n}})}&{}\le{}&
\inf_{h\in\eu{H}}\left(\Vert
f_\star-I_kh\Vert_{W_2}+\sqrt{\lambda}\Vert
h\Vert_\eu{H}\right)+O_{p_0}\left(\frac{1}{\sqrt{
\lambda n}}\right)\nonumber\\
&{}={}&
K(f_\star,\sqrt{\lambda},W_2(p_0),I_k(\eu{H}))+O_{p_0}\left(\frac{1}{\sqrt{
\lambda n}}\right)\label{Eq:kfunc-final}
\end{eqnarray}
where the $K$-functional is defined in (\ref{Eq:kfunc}). Note that
$I_k(\eu{H})\cong\eu{H}/\eu{H}\cap\mathbb{R}$ is continuously embedded in
$W(p_0)$. 
From (\ref{Eq:kfunc}),
it is clear that the $K$-functional as a function of $t$ is an infimum over a
family of affine linear and increasing functions and therefore is
concave, continuous and increasing w.r.t.~$t$. This means, in
(\ref{Eq:kfunc-final}), as
$\lambda\rightarrow 0$, 
$$
K(f_\star,\sqrt{\lambda},W_2(p_0),I_k(\eu{H}))\rightarrow
\inf_{h\in\eu{H}}\Vert
f_\star-I_kh\Vert_{W_2}=\sqrt{2
\inf_{p\in\Cal{P}}J(p_0\Vert p)}.$$ Since $J(p_0\Vert p_{f_{\lambda,n}})\ge
\inf_{p\in\Cal{P}}J(p_0\Vert p)$, we have that
$J(p_0\Vert p_{f_{\lambda,n}})\rightarrow
\inf_{p\in\Cal{P}}J(p_0\Vert p)$ as $\lambda\rightarrow 0$, $\lambda
n\rightarrow\infty$ and $n\rightarrow \infty$.\vspace{2mm}\\
(ii) Recall $\Cal{B}(\lambda)$ from (i). From
Proposition~\ref{pro:approxerror}(i) it follows that
$\Cal{B}(\lambda)\rightarrow 0$ as $\lambda\rightarrow 0$ if
$f_\star\in\overline{\Cal{R}(T_k)}$. Therefore, (\ref{Eq:blam-0}) reduces to
$ \sqrt{2\,J(p_0\Vert p_{f_{\lambda,n}})}\le
O_{p_0}\left(\frac{1}{\sqrt{\lambda n}}\right)+\Cal{B}(\lambda)$ and the
consistency result follows. If $f_\star\in \Cal{R}(T^\beta_k)$ for some
$\beta>0$, then the rates follow from Proposition~\ref{pro:approxerror} by
noting that $\Cal{B}(\lambda)\le \max\{1,\Vert
T_k\Vert^{\beta-1}\}\lambda^{\min\{1,\beta\}}\Vert
T^{-\beta}_kf_\star\Vert_{W_2}$ and choosing
$\lambda=n^{-\max\left\{\frac{1}{3},\frac{1}{2\beta+1}\right\}}$.\vspace{2mm}\\
(iii) This simply follows from an analysis similar to the one used in
the proof of Theorem~\ref{Thm:rates}(iii).\QEDA
\subsection{Proof of
Proposition~\ref{Thm:approx-fd}}\label{subsec:supp-thm-approx-fd}
For any $p\in \Cal{P}_{\text{FD}}$, define $f:=\log\frac{p}{q_0}$, which implies that
$[f]_\sim\in W_2(p)$. Since $I_k(\eu{H})$ is dense in $W_2(p)$, we have
for
any
$\epsilon>0$, there exists $g\in\eu{H}$ such that $\Vert
[f]_\sim-I_kg\Vert_{W_2}\le\sqrt{2\epsilon}$. 
For a given $g\in\eu{H}$, pick
$p_g\in\Cal{P}$. Therefore, 
\begin{equation}
J(p\Vert p_g)=\frac{1}{2}\int_\Omega
p(x)\left\Vert\nabla\log p-\nabla \log p_g\right\Vert^2_2\,dx=
\frac{1}{2}\Vert [f]_\sim-I_kg\Vert^2_{W_2}\le\epsilon \nonumber
\end{equation}
and the result follows.\QEDA
\begin{appendices}\label{appendix}
\numberwithin{equation}{section}
\section{Appendix: Technical Results}\label{Sec:suppresults}
In this appendix, we present some technical results that are used in
the proofs.
\subsection{Bounds on Various Distances Between $p_f$ and $p_g$}
In the following result, 
claims (iii) and (iv) are quoted from Lemma 3.1 of
\cite{Vaart-08}.
\begin{appxlem}\label{lem:distances}
Define $\Cal{P}_\infty:=\left\{p_f=e^{f-A(f)}q_0\,:\,f\in \ell^\infty(\Omega)\right\},$ where $q_0$ is a
probability density on $\Omega\subseteq\bb{R}^d$ and $\ell^\infty(\Omega)$ is the
space of bounded measurable functions on $\Omega$. Then for any
$p_f,p_g\in\Cal{P}_\infty$, we have
\begin{itemize}
 \item[(i)] $\Vert p_f-p_g\Vert_{L^r(\Omega)}\le 2e^{2\Vert
f-g\Vert_\infty}e^{2\min\{\Vert f\Vert_\infty, \Vert
g\Vert_\infty\}}\Vert f-g\Vert_\infty\Vert q_0\Vert_{L^r(\Omega)}$ for any
$1\le r\le\infty$;\vspace{1mm}
\item[(ii)] $\Vert p_f-p_g\Vert_{L^1(\Omega)}\le  2e^{\Vert
f-g\Vert_\infty}\Vert
f-g\Vert_\infty$;\vspace{1mm}
 \item[(iii)] $KL(p_f\Vert p_g)\le c\,\Vert f-g\Vert^2_\infty e^{\Vert
 f-g\Vert_\infty}\left(1+\Vert f-g\Vert_\infty\right)$ where $c$ is a universal constant;\vspace{1mm}
\item[(iv)] $h(p_f,p_g)\le e^{\Vert
f-g\Vert_\infty/2}\Vert
f-g\Vert_\infty.$
\end{itemize}
\end{appxlem}
\begin{proof}
\emph{(i)} Define $B(f):=\int e^f q_0\,dx$. 
Consider
\begin{eqnarray}
 \left\Vert
p_f-p_g\right\Vert_{L^r(\Omega)}
&{}={}&\left\Vert\frac{e^{f}q_0}{B(f)}-\frac{e^{g}q_0}{B(g)}
\right\Vert_{L^r(\Omega)}=\frac{\left\Vert
e^{f}q_0B(g)-e^{g}q_0B(f)\right\Vert_{L^r(\Omega)}}{B(f)B(g)}
\nonumber\\
&{}={}&\frac{\left\Vert e^{f}q_0
\left(B(g)-B(f)\right)+\left(e^{f}-e^{g}\right)q_0B(f)
\right\Vert_{L^r(\Omega)}}{B(f)B(g)}\nonumber\\
&{}\le{}&\frac{\left\Vert e^{f}q_0
\left(B(g)-B(f)\right)\right\Vert_{L^r(\Omega)}}{B(f)B(g)}
+\frac{\left\Vert
\left(e^{f}-e^{g}\right)q_0B(f)\right\Vert_{L^r(\Omega)}}{B(f)B(g)}
\nonumber\\
&{}\le{}&\frac{\left|B(g)-B(f)\right|\Vert
e^{f}q_0\Vert_{L^r(\Omega)}}{B(g)B(f)}
+\frac{\left\Vert
(e^{f}-e^{g})q_0\right\Vert_{L^r(\Omega)}}{B(g)}.
\label{Eq:list}
\end{eqnarray}
Observe that
$$|B(f)-B(g)|\le \int_\Omega |e^f-e^g|q_0\,dx= \int_\Omega e^g|e^{f-g}-1|q_0\,dx\le e^{\Vert
f-g\Vert_\infty}\Vert f-g\Vert_\infty B(g)$$
since $|e^{u-v}-1|\le |u-v|e^{|u-v|}$ for any $u,v\in\bb{R}$. Similarly,
$$\left\Vert
(e^{f}-e^{g})q_0\right\Vert_{L^r(\Omega)}\le e^{\Vert
f-g\Vert_\infty}\Vert f-g\Vert_\infty \Vert e^g q_0\Vert_{L^r(\Omega)}.$$ Using
these above, we obtain
\begin{equation}\left\Vert
p_f-p_g\right\Vert_{L^r(\Omega)}\le e^{\Vert
f-g\Vert_\infty}\Vert
f-g\Vert_\infty\left(\frac{\Vert
e^fq_0\Vert_{L^r(\Omega)}}{B(f)}+\frac{\Vert
e^gq_0\Vert_{L^r(\Omega)}}{B(g)}\right).\label{Eq:f}\end{equation}
Since $\Vert e^f q_0\Vert_{L^r(\Omega)}\le e^{\Vert f\Vert_\infty}\Vert
q_0\Vert_{L^r(\Omega)}$ and $B(f)\ge e^{-\Vert f\Vert_\infty}$, from
(\ref{Eq:f}) we obtain
\begin{eqnarray}\left\Vert
p_f-p_g\right\Vert_{L^r(\Omega)}&{}\le{}& e^{\Vert
f-g\Vert_\infty}\Vert
f-g\Vert_\infty\Vert q_0\Vert_{L^r(\Omega)}\left(e^{2\Vert
f\Vert_\infty}+e^{2\Vert g\Vert_\infty}\right).\nonumber\\
&{}\le {}&2e^{\Vert
f-g\Vert_\infty}\Vert
f-g\Vert_\infty\Vert q_0\Vert_{L^r(\Omega)}e^{2\max\{\Vert
f\Vert_\infty, \Vert g\Vert_\infty\}}\nonumber\\
&{}\le {}&2e^{2\Vert
f-g\Vert_\infty}\Vert
f-g\Vert_\infty\Vert q_0\Vert_{L^r(\Omega)}e^{2\min\{\Vert
f\Vert_\infty,\Vert g\Vert_\infty\}}\nonumber
\end{eqnarray}
where we used $\max\{a, b\}\le \min\{a, b\}+|a-b|$ for $a,b\ge 0$ in the
last line
above.\vspace{1mm}\\
\emph{(ii)} This simply follows from (\ref{Eq:f}) by using $r=1$.
\vspace{-5mm}
\end{proof}
\subsection{General Representer Theorem}\label{subsec:representer}
The following is the general representer theorem for abstract Hilbert spaces.
\begin{appxthm}[General representer theorem]\label{thm:representer}
Let $H$ be a real Hilbert space and let $(\phi_i)^m_{i=1}\in H^m$. Suppose $J:H\rightarrow\bb{R}$ be such that
$J(f)=V\left(\langle f,\phi_1\rangle_H,\ldots, \langle f,\phi_m\rangle_H\right),\,\,f\in H$
where $V:\bb{R}^n\rightarrow\bb{R}$ is a convex differentiable function. Define
$$f_\lambda=\arg\inf_{f\in H}J(f)+\frac{\lambda}{2}\Vert f\Vert^2_H,$$
where $\lambda>0$. Then there exists $(\alpha_i)^m_{i=1}\in\bb{R}^m$ such that
$f_\lambda=\sum^m_{i=1}\alpha_i\phi_i$
where $\bm{\alpha}:=(\alpha_1,\ldots,\alpha_m)$ satisfies the following (possibly nonlinear) equation
$$\lambda \bm{\alpha}+\nabla V\left(\bm{K}\bm{\alpha}\right)=0,$$
with $\bm{K}$ being a linear map on $\bb{R}^m$ and $(\bm{K})_{i,j}=\langle \phi_i,\phi_j\rangle_H,\,i\in[m],\,j\in[m]$.
\end{appxthm}
\begin{proof}
Define $A:H\rightarrow\bb{R}^m$, $f\mapsto (\langle f,\phi_i\rangle_H)^m_{i=1}$. Then $f_\lambda=\arg\inf_{f\in H}V(Af)+\frac{\lambda}{2}\Vert f\Vert^2_H$.
Therefore, Fermat's rule yields
\begin{eqnarray}
0=A^*\nabla V(Af_\lambda)+\lambda f_\lambda &{}\Leftrightarrow {}& f_\lambda =A^*\left(-\frac{1}{\lambda}\nabla V(Af_\lambda)\right)\nonumber\\
&{}\Leftrightarrow{}& (\exists\,\bm{\alpha}\in\bb{R}^m)\quad f_\lambda=A^*\bm{\alpha},\,\,\bm{\alpha}=-\frac{1}{\lambda}\nabla V(Af_\lambda)\nonumber\\
&{}\Leftrightarrow{}& (\exists\,\bm{\alpha}\in\bb{R}^m)\quad f_\lambda=A^*\bm{\alpha},\,\,\bm{\alpha}=-\frac{1}{\lambda}\nabla V(AA^*\bm{\alpha}),\nonumber
\end{eqnarray}
where $A^*:\bb{R}^m\rightarrow H$ is the adjoint of $A$ which can be obtained as follows. Note that
$$\langle Af,\bm{\alpha}\rangle=\sum^m_{i=1}\alpha_i\langle f,\phi_i\rangle_H=\left\langle f,\sum^m_{i=1}\alpha_i\phi_i\right\rangle_H\qquad (\forall\,f\in H)\,\,(\forall\,\bm{\alpha}\in\bb{R}^m)$$
and thus $A^*\bm{\alpha}=\sum^m_{i=1}\alpha_i\phi_i$. Therefore $AA^*\bm{\alpha}=\sum^m_{j=1}\alpha_j A\phi_j=\sum^m_{j=1}\alpha_j (\langle \phi_j,\phi_i\rangle_H)^m_{i=1}$,
and so for every $i\in[m]$, $(AA^*\bm{\alpha})_i=\sum^m_{j=1}\langle \phi_j,\phi_i\rangle_H \alpha_j$ and hence $AA^*=\bm{K}$.\vspace{-5mm}
\end{proof}
\subsection{Bounds on Approximation Errors, $\Cal{A}_0(\lambda)$ and
$\Cal{A}_{\frac{1}{2}}(\lambda)$}
The following result is quite well-known in the linear inverse problem theory 
\citep{Engl-96}.
\begin{appxpro}\label{pro:approxerror}
Let $C$ be a bounded, self-adjoint compact operator on a separable Hilbert
space $H$. For $\lambda>0$ and $f\in H$, define $f_\lambda:=(C+\lambda
I)^{-1}Cf$ and $\Cal{A}_\theta(\lambda):=\Vert C^{\theta}(f_\lambda-f)\Vert_H$
for $\theta\ge 0$. Then the following hold.
\begin{itemize} \item[(i)] For any $\theta>0$, $\Cal{A}_\theta(\lambda)\rightarrow 0$ as
$\lambda\rightarrow 0$ and if $f\in\overline{\Cal{R}(C)}$, then $\Cal{A}_0(\lambda)\rightarrow 0$
as $\lambda\rightarrow 0$.\vspace{1mm}
\item[(ii)] If $f\in\Cal{R}(C^\beta)$ for $\beta\ge 0$ and $\beta+\theta>0$,
then $$\Cal{A}_\theta(\lambda)\le \max\{1,\Vert C\Vert^{\beta+\theta-1}\}
\lambda^{\min\{1,\beta+\theta\}} \Vert
C^{-\beta}f\Vert_H.$$
\end{itemize}
\end{appxpro}
\begin{proof}
(i) Since $C$ is bounded, compact, and self-adjoint, the Hilbert-Schmidt theorem
\citep[Theorems VI.16, VI.17]{Reed-72} ensures that $C=\sum_l \alpha_l \phi_l\langle
\phi_l,\cdot\rangle_H,$
where $(\alpha_l)_{l\in\bb{N}}$ are the positive eigenvalues and
$(\phi_l)_{l\in\bb{N}}$
are the corresponding unit eigenvectors that form an ONB for $\Cal{R}(C)$. Let
$\theta=0$. Since $f\in\overline{\Cal{R}(C)}$, 
\begin{eqnarray}
 \Cal{A}^2_0(\lambda)&{}={}&\left\Vert
(C+\lambda
I)^{-1}Cf- f\right\Vert^2_H=\left\Vert \sum_i
\frac{\alpha_i}{\alpha_i+\lambda}\langle
f,\phi_i\rangle_H\phi_i-\sum_i \langle
f,\phi_i\rangle_H\phi_i\right\Vert^2_H\nonumber\\
&{}={}&\left\Vert \sum_i\frac{\lambda}{\alpha_i+\lambda}\langle
f,\phi_i\rangle_H\phi_i\right\Vert^2_H=\sum_i\left(\frac{
\lambda}{\alpha_i+\lambda}\right)^2\langle
f,\phi_i\rangle^2_H\rightarrow 0\,\,\text{as}\,\,\lambda\rightarrow
0\nonumber 
\end{eqnarray}
by the dominated convergence theorem. For any $\theta>0$, we have
$$\Cal{A}^2_\theta(\lambda)=\left\Vert
C^\theta(C+\lambda
I)^{-1}Cf- C^\theta f\right\Vert^2_H.$$ Let $f=f_R+f_N$ where $f_R\in
\overline{\Cal{R}(C^\theta)}$, $f_N\in
\overline{\Cal{R}(C^\theta)}^\perp$ if $0<\theta\le 1$ and $f_R\in
\overline{\Cal{R}(C)}$, $f_N\in\overline{\Cal{R}(C)}^\perp$ if $\theta\ge 1$.
Then
\begin{eqnarray}
 \Cal{A}^2_\theta(\lambda)&{}={}&\left\Vert
C^\theta(C+\lambda
I)^{-1}Cf- C^\theta f\right\Vert^2_H
=\left\Vert
C^\theta(C+\lambda
I)^{-1}Cf_R- C^\theta f_R\right\Vert^2_H\nonumber\\
&{}={}&\left\Vert \sum_i
\frac{\alpha^{1+\theta}_i}{\alpha_i+\lambda}\langle
f_R,\phi_i\rangle_H\phi_i-\sum_i \alpha^\theta_i\langle
f_R,\phi_i\rangle_H\phi_i\right\Vert^2_H\nonumber\\
&{}={}&\left\Vert \sum_i\frac{\lambda\alpha^\theta_i}{\alpha_i+\lambda}\langle
f_R,\phi_i\rangle_H\phi_i\right\Vert^2_H=\sum_i\left(\frac{
\lambda\alpha^\theta_i}{\alpha_i+\lambda}\right)^2\langle
f_R,\phi_i\rangle^2_H\rightarrow 0\nonumber 
\end{eqnarray}
as $\lambda\rightarrow 0$.\vspace{1mm}\\
(ii) If $f\in\Cal{R}(C^\beta)$, then there exists $g\in H$ such that $f=C^\beta
g$. This yields
\begin{eqnarray}
 \Cal{A}^2_\theta(\lambda)&{}={}&\left\Vert
C^\theta(C+\lambda
I)^{-1}Cf- C^\theta f\right\Vert^2_H=\left\Vert
C^\theta(C+\lambda														
I)^{-1}C^{\beta+1}g- C^{\theta+\beta} g\right\Vert^2_H\nonumber\\
&{}={}&\left\Vert
\sum_i\frac{\lambda\alpha^{\theta+\beta}_i}{\alpha_i+\lambda}\langle
g,\phi_i\rangle_H\phi_i\right\Vert^2_H=\sum_i\left(\frac{
\lambda\alpha^{\theta+\beta}_i}{\alpha_i+\lambda}\right)^2\langle
g,\phi_i\rangle^2_H.\label{Eq:twist}
\end{eqnarray}
Suppose $0<\beta+\theta< 1$. Then
$$\frac{\alpha^{\beta+\theta}_i\lambda}{\alpha_i+\lambda}=\left(\frac{\alpha_i}{
\alpha_i+\lambda}\right)^{\beta+\theta}\left(\frac{\lambda}{\alpha_i+\lambda}
\right)^{ 1-\theta-\beta}\lambda^{\beta+\theta}\le\lambda^{\beta+\theta}.$$
On the other hand, for $\beta+\theta\ge 1$, we have
$$\frac{\alpha^{\beta+\theta}_i\lambda}{\alpha_i+\lambda}=\left(\frac{\alpha_i}{
\alpha_i+
\lambda} \right)\alpha^{\beta+\theta-1}_i\lambda\le \Vert C\Vert^{\beta+\theta
-1}\lambda.$$
Using the above in (\ref{Eq:twist}) yields the result.\vspace{-5mm}
\end{proof}
\subsection{Bound on the Norm of Certain Operators and Functions}\label{subsec:certain}
The following result is used in many places throughout the paper. We would like to highlight that special cases of this result are known, e.g., see the proof of Theorem 4 in \cite{Caponnetto-07} where
concentration inequalites are obtained for the quantities in Proposition~\ref{pro:certain} using Bernstein's inequality. Here, we provide asymptotic statements using Chebyshev's inequality.
\begin{appxpro}\label{pro:certain}
Let $\Cal{X}$ be a topological space, $H$ be a separable Hilbert space and $\Cal{L}^+_2(H)$ be the space of positive, self-adjoint Hilbert-Schmidt operators
on $H$. Define $R:=\int_\Cal{X} r(x)\,d\bb{P}(x)$ and $\hat{R}:=\frac{1}{n}\sum^m_{a=1}r(X_a)$ where $\bb{P}\in M^1_+(\Cal{X})$, $(X_a)^m_{a=1}\stackrel{i.i.d.}{\sim}\bb{P}$
and $r$ is a $\Cal{L}^+_2(H)$-valued measurable function on $\Cal{X}$ satisfying $\int_\Cal{X} \Vert r(x)\Vert^2_{HS}\,d\bb{P}(x)<\infty$. Define $g_\lambda:=(R+\lambda I)^{-1}Rg$ for $g\in H$, $\lambda>0$,
and $\Cal{A}_0(\lambda):=\Vert g_\lambda-g\Vert_H$. Let $\alpha\ge 0$ and $\theta> 0$. Then the following hold:
\begin{itemize}
 \item[(i)] $\Vert (\hat{R}-R)(g_\lambda-g)\Vert_{H}=O_{\bb{P}}\left(\frac{\Cal{A}_0(\lambda)}{\sqrt{m}}\right)$.
 \item[(ii)] $\Vert R^{\alpha}(R+\lambda I)^{-\theta}\Vert\le\lambda^{\alpha-\theta}$.
 \item[(iii)] $\Vert \hat{R}^{\alpha}(\hat{R}+\lambda I)^{-\theta}\Vert\le\lambda^{\alpha-\theta}$.
 \item[(iv)] $\Vert (R+\lambda I)^{-\theta}(\hat{R}-R)\Vert=O_{\bb{P}}\left(\sqrt{\frac{1}{m\,\lambda^{2\theta}}}\right)$.
 \item[(v)] $\Vert R^\alpha (\hat{R}+\lambda I)^{-1}\Vert\lesssim \lambda^{\alpha-1}$ for $m\ge \frac{c}{\lambda^2}$ where is $c$ is a sufficiently large constant that depends on
 $\int \Vert r(x)\Vert^2_{HS}\,d\bb{P}(x)$ but not on $m$ and $\lambda$.
\end{itemize}
\end{appxpro}
\begin{proof}
(i) Note that for any $f\in H$, $$\bb{E}_\bb{P}\Vert (\hat{R}-R)f\Vert^2_{H}=\bb{E}_\bb{P}\Vert \hat{R}f\Vert^2_H+\Vert{R}f\Vert^2_H-2\bb{E}_\bb{P}\langle \hat{R}f,Rf\rangle_H,$$ 
where $\bb{E}_\bb{P}\langle \hat{R}f,Rf\rangle_H=\frac{1}{n}\sum^n_{a=1}\bb{E}_\bb{P}\langle r(X_a)f,Rf\rangle_H=\frac{1}{n}\sum^n_{a=1}\bb{E}_\bb{P}\langle r(X_a),f\otimes Rf\rangle_{HS}$.
Since $\int_\Cal{X} \Vert r(x)\Vert^2_{HS}\,d\bb{P}(x)<\infty$, $r(x)$ is $\bb{P}$-integrable in the Bochner sense (see \citealp*[Definition 1 and Theorem
2]{Diestel-77}), and therefore it follows from \citet[Theorem 6]{Diestel-77} that $\bb{E}_\bb{P}\langle r(X_a),f\otimes Rf\rangle_{HS}=\langle \int_\Cal{X} r(x)\,d\bb{P}(x),f\otimes Rf\rangle_{HS}=\Vert Rf\Vert^2_H$. Therefore,
$$\bb{E}_\bb{P}\Vert (\hat{R}-R)f\Vert^2_{H}=\bb{E}_\bb{P}\Vert \hat{R}f\Vert^2_H-\Vert{R}f\Vert^2_H,$$
where $$\bb{E}_\bb{P}\Vert \hat{R}f\Vert^2_H=\bb{E}_\bb{P}\left\Vert\frac{1}{m}\sum^m_{a=1}r(X_a)f\right\Vert^2_H=\frac{1}{m^2}\sum^m_{a,b=1}\bb{E}_\bb{P}\langle r(X_a)f,r(X_b)f\rangle_H.$$
Splitting the sum into two parts (one with $a=b$ and the other with $a\ne b$), it is easy to verify that $\bb{E}_\bb{P}\Vert \hat{R}f\Vert^2_H=\frac{1}{m}\int_\Cal{X} \Vert r(x)f\Vert^2_H\,d\bb{P}(x)+\frac{m-1}{m}\Vert Rf\Vert^2_H$, thereby
yielding \begin{eqnarray}\bb{E}_\bb{P}\Vert (\hat{R}-R)f\Vert^2_{H}=\frac{1}{m}\left(\int_\Cal{X} \Vert r(x)f\Vert^2_H\,d\bb{P}(x)-\Vert Rf\Vert^2_H\right)&{}\le{}&\frac{1}{m}\int_\Cal{X} \Vert r(x)f\Vert^2_H\,d\bb{P}(x)\nonumber\\
          &{}\le{}& \frac{\Vert f\Vert^2_H}{m}\int_\Cal{X} \Vert r(x)\Vert^2_H\,d\bb{P}(x).\nonumber
         \end{eqnarray}
Using $f=g_\lambda-g$, an application of Chebyshev's inequality yields the result.
\vspace{2mm}\\
(ii, iii) $\Vert R^{\alpha}(R+\lambda I)^{-\theta}\Vert=\sup_i\frac{\gamma^\alpha_i}{(\gamma_i+\lambda)^\theta}=\sup_i\left[\left(\frac{\gamma_i}{\gamma_i+\lambda}\right)^{\alpha}\frac{1}{(\gamma_i+\lambda)^{\theta-\alpha}}\right]
\le\sup_i\frac{1}{(\gamma_i+\lambda)^{\theta-\alpha}}\le \lambda^{\alpha-\theta}$, where $(\gamma_i)_{i\in\bb{N}}$ are the eigenvalues of $R$. (iii) follows by replacing $(\gamma_i)_{i\in\bb{N}}$
with the eigenvalues of $\hat{R}$.\vspace{2mm}\\
(iv) Since $\Vert (R+\lambda I)^{-\theta}(\hat{R}-R)\Vert\le \Vert (R+\lambda I)^{-\theta}(\hat{R}-R)\Vert_{HS}$, consider $\bb{E}_\bb{P}\Vert (R+\lambda I)^{-\theta}(\hat{R}-R)\Vert^2_{HS}$, which using the technique in the proof of (i),
can be shown to be bounded as 
\begin{equation}\label{eq:temp-aa}
\bb{E}_\bb{P}\Vert (R+\lambda I)^{-\theta}(\hat{R}-R)\Vert^2_{HS}\le \frac{1}{m}\int_\Cal{X} \Vert (R+\lambda I)^{-\theta}r(x)\Vert^2_{HS}\,d\bb{P}(x). 
\end{equation}
Note that 
\begin{eqnarray}\label{eq:temp-ab}
\Vert (R+\lambda I)^{-\theta}r(x)\Vert^2_{HS}&{}={}& \langle (R+\lambda I)^{-\theta}r(x),(R+\lambda I)^{-\theta}r(x)\rangle_{HS}\nonumber\\
&{}={}& \Vert (R+\lambda I)^{-2\theta}\Vert \text{Tr}\left(r(x) r(x)\right)=\Vert (R+\lambda I)^{-2\theta}\Vert \Vert r(x)\Vert^2_{HS}\nonumber\\
&{}\le{}&\lambda^{-2\theta}\Vert r(x)\Vert^2_{HS},
\end{eqnarray}
where the last inequality follows from (iii). Using (\ref{eq:temp-ab}) in (\ref{eq:temp-aa}), we obtain
\begin{eqnarray}\bb{E}_\bb{P}\Vert (R+\lambda I)^{-\theta}(\hat{R}-R)\Vert^2_{HS}&{}\le{}&\frac{1}{m\lambda^{2\theta}}\int_\Cal{X} \Vert r(x)\Vert^2_{HS}\,d\bb{P}(x).\nonumber	
\end{eqnarray}
The result therefore follows by an application of Chebyshev's inequality.\vspace{2mm}\\
(v) We use the idea in Step 2.1 of the proof of Theorem 4 in \cite{Caponnetto-07}, where $R^\alpha(\hat{R}+\lambda I)^{-1}$ is written equivalently as follows: Note that $\hat{R}+\lambda I=(\hat{R}-R)+(R+\lambda I)$, which implies
$$(\hat{R}+\lambda I)^{-1}=\left((\hat{R}-R)+(R+\lambda I)\right)^{-1}=(R+\lambda I)^{-1}\left(I-(R-\hat{R})(R+\lambda I)^{-1}\right)^{-1}$$ and so $R^\alpha(\hat{R}+\lambda I)^{-1}=R^\alpha(R+\lambda I)^{-1}\left(I-(R-\hat{R})(R+\lambda I)^{-1}\right)^{-1}$. Using the Von Neumann series representation, we have
$$R^\alpha(\hat{R}+\lambda I)^{-1}=R^\alpha(R+\lambda I)^{-1}\sum^\infty_{j=0}\left((R-\hat{R})(R+\lambda I)^{-1}\right)^{j}$$ so that
\begin{eqnarray}\Vert R^\alpha(\hat{R}+\lambda I)^{-1}\Vert&{}\le{}& \Vert R^\alpha(R+\lambda I)^{-1}\Vert \sum^\infty_{j=0}\Vert(R-\hat{R})(R+\lambda I)^{-1}\Vert^{j}_{HS}\nonumber\\
&{}\le{}& \lambda^{\alpha-1}\sum^\infty_{j=0}\Vert(R-\hat{R})(R+\lambda I)^{-1}\Vert^{j}_{HS}.\nonumber\end{eqnarray} From the proof of (iv), we have
that for any $\delta>0$, with probability at least $1-\delta$, 
$\Vert(R-\hat{R})(R+\lambda I)^{-1}\Vert_{HS}\le \sqrt{\frac{\int_\Cal{X} \Vert r(x)\Vert^2_{HS}\,d\bb{P}(x)}{m\lambda^2\delta}}.$ Suppose $m\ge \frac{\int_\Cal{X} \Vert r(x)\Vert^2_{HS}\,d\bb{P}(x)}{s^2\lambda^2\delta}$ where $s<1$. Then 
$\sum^\infty_{j=0}\Vert(R-\hat{R})(R+\lambda I)^{-1}\Vert^j_{HS}\le\sum^\infty_{j=0}s^j=\frac{1}{1-s}$. This means for $m\ge \frac{c}{\lambda^2}$ where $c$ is sufficiently large, we obtain
$\Vert R^\alpha(\hat{R}+\lambda I)^{-1}\Vert \lesssim\lambda^{\alpha-1}$.\vspace{-5mm}
\end{proof}
\subsection{Interpolation Space}\label{subsec:interpolation}
In this section, we briefly recall the definition of interpolation spaces of the real method. To this end,
let $E_0$ and $E_1$ be two arbitrary Banach spaces that are continuously
embedded in some topological (Hausdorff) vector space $\Cal{E}$. Then, for
$x\in E_0+E_1:=\{x_0+x_1:x_0\in E_0,\,x_1\in E_1\}$ and $t>0$, the
\emph{$K$-functional} of the real interpolation method (see \citealp*[Definition
1.1, p.~293]{Bennett-88}) is defined by
$$K(x,t,E_0,E_1):=\inf\{\Vert x_0\Vert_{E_0}+t\Vert x_1\Vert_{E_1}\,:\,x_0\in
E_0,\,x_1\in E_1,\,x=x_0+x_1\}.$$
Suppose $E$ and $F$ are two Banach spaces that satisfy $F\hookrightarrow E$
(i.e., $F\subset E$ and the inclusion operator $\text{id}:F\rightarrow E$ is
continuous), then the $K$-functional reduces to
\begin{equation}K(x,t,E,F)=\inf_{y\in F}\Vert x-y\Vert_{E}+t\Vert
y\Vert_F.\label{Eq:kfunc}\end{equation}
The $K$-functional can be used to define interpolation norms, for $0<\theta<1$,
$1\le s\le\infty$ and $x\in E_0+E_1$, as
$$\Vert
x\Vert_{\theta,s}:=\begin{cases}\left(\int
\left(t^{-\theta}K(x,t)\right)^st^{-1} \,
dt\right)^ { 1/s }, &\mbox {} 1\le s<\infty\\
\sup_{t>0}t^{-\theta}K(x,t),&\mbox{} s=\infty.\end{cases}$$
Moreover, the corresponding interpolation spaces \citep[Definition 1.7,
p.~299]{Bennett-88} are defined as
$$[E_0,E_1]_{\theta,s}:=\left\{x\in E_0+E_1\,:\,\Vert
x\Vert_{\theta,s}<\infty\right\}.$$
\section{Appendix: Miscellaneous Results}\label{Sec:supp-results}
In this appendix, we present the proofs of some claims that we made in Sections~\ref{Sec:Introduction}, \ref{Sec:Theory} and \ref{Sec:misspecified}.
\subsection{Relation between Fisher and Kullback-Leibler Divergences}\label{subsec:kl-J}
The following result provides a relationship between Fisher and Kullback-Leibler
divergences.
\begin{appxpro}\label{pro:supp-kl-J}
Let $p$ and $q$ be probability densities defined on $\bb{R}^d$. Define
$p_t:=p\ast
N(0,tI_d)$ and $q_t:=q\ast N(0,tI_d)$ where 
$N(0,tI_d)$ denotes a normal distribution on $\bb{R}^d$ with mean zero and
diagonal covariance with $t>0$. Suppose $p_t$ and $q_t$ satisfy
$$\partial_i p_t(x)\log p_t(x)=o\left(\Vert x\Vert^{\alpha}_2\right),\,\,\partial_i
p_t(x)\log q_t(x)=o\left(\Vert x\Vert^{\alpha}_2\right)\,\,\text{and}\,\,\partial_i
\log q_t(x)p_t(x)=o\left(\Vert x\Vert^{\alpha}_2\right)$$
as $\Vert x\Vert_2\rightarrow\infty$ for all $i\in[d]$ where $\alpha=1-d$.
Then 
\begin{equation}\label{Eq:id-inequality}
KL(p\Vert q)=\int^\infty_0 J(p_t\Vert q_t)\,dt,
\end{equation}
where $J$ is defined in (\ref{Eq:fisher}).
\end{appxpro}
\begin{proof}
Under the conditions mentioned on $p_t$ and $q_t$, it can be shown that
\begin{equation}
\frac{d}{dt}KL(p_t\Vert q_t)=-J(p_t\Vert q_t).
\label{Eq:identity} 
\end{equation}
See Theorem 1 in \cite{Lyu-09} for a proof. The above identity is a
simple generalization of de Bruijn's identity that relates the Fisher
information to the derivative of the Shannon entropy (see \citealp*[Theorem
16.6.2]{Cover-91}). Integrating w.r.t.~$t$ on both sides of (\ref{Eq:identity}),
we obtain $KL(p_t\Vert q_t)\Big{|}^{\infty}_{t=0}=-\int^\infty_0
J(p_t\Vert q_t)\,dt$
which yields the equality in (\ref{Eq:id-inequality}) as
$KL(p_t\Vert q_t)\rightarrow 0$ as
$t\rightarrow \infty$ and $KL(p_t\Vert q_t)\rightarrow KL(p\Vert q)$ as
$t\rightarrow 0$.\vspace{-5mm}
\end{proof}
\subsection{Estimation of $p_0$: Unbounded $k$} \label{subsec:unbounded-kernel}
To handle the case of unbounded $k$, in the
following, we assume that there exists a positive constant $M$ such that $\Vert
f_0\Vert_\eu{H}\le M$, so that an estimator of $f_0$ can be constructed as
\begin{equation}
    \breve{f}_{\lambda,n}=\arg\inf_{f\in\eu{H}}
\hat{J}_\lambda(f)\,\,\,\text{subject to
}\,\,\|f\|_\eu{H} \leq M,\label{Eq:unbounded-program}
\end{equation}
where $\hat{J}_\lambda$ is defined in Theorem~\ref{Thm:score}(iv). This modification
yields a
valid estimator $p_{\breve{f}_{\lambda,n}}$ as long as $k$ satisfies $\int_\Omega
e^{M\sqrt{k(x,x)}}q_0(x)\,dx<\infty,$ since this implies
$\breve{f}_{\lambda,n}\in\Cal{F}$. The construction of
$\breve{f}_{\lambda,n}$ requires the knowledge of $M$, however, which we assume is known
\emph{a
priori}. Using the representer theorem in RKHS, it can
be shown
(see Section~\ref{subsubsec:supp-qcqp}) that
$$\breve{f}_{\lambda,n}=\breve{\delta}\hat{\xi}+\sum^n_{b=1}
\sum^d_ { j=1 } \breve{\beta}_{(b-1)d+j}
\partial_j k(X_b,\cdot)$$
where $\breve{\delta}$ and $\breve{\bm{\beta}}$ are obtained by solving the
following quadratically constrained quadratic program (QCQP),
\begin{equation}
(\breve{\bm{\beta}},\breve{\delta})=:\breve{\Theta}=\arg\min_{\Theta\in\bb{R}^{nd+1}
} \frac { 1}{2}\Theta^T \bm{H}\Theta+\Theta^T\Delta\,\,\,\text{subject to
}\,\,\Theta^T\bm{K}\Theta\le M^2,\nonumber
\end{equation}
with $\Delta:=(\bm{h},\|\hxi\|^2_\eu{H})$, $\Theta:=
(\bm{\beta},\delta)$
and $\bm{K}$, $\bm{H}$ being defined in the proof of Theorem~\ref{Thm:representer} and the remark following it.
The following result investigates the consistency and convergence rates for $p_{\breve{f}_{\lambda,n}}$. 
\begin{appxthm}[Consistency and rates for
$p_{\breve{f}_{\lambda,n}}$]\label{Thm:density-2}
Let $M \ge \|f_0\|_\eu{H}$ be a fixed constant, and $\breve{f}_{n,\lambda}$ be a
clipped estimator given by (\ref{Eq:unbounded-program}). 
Suppose \emph{\textbf{(A)}}--\emph{\textbf{(D)}}
with $\varepsilon=2$ hold. Let $\emph{supp}(q_0)=\Omega$ and $\int_\Omega
e^{M\sqrt{k(x,x)}}q_0(x)\,dx<\infty$. Define
$\eta(x)=\sqrt{k(x,x)}e^{M\sqrt{k(x,x)}}$. Then, as $\lambda \sqrt{n}
\rightarrow \infty,\,\lambda\rightarrow
0\,\,\text{and}\,\,n\rightarrow \infty$,
\begin{itemize}
 \item[(i)] $\Vert p_{\breve{f}_{\lambda,n}}-p_0\Vert_{L^1(\Omega)}\rightarrow
0$, $KL(p_0\Vert p_{\breve{f}_{\lambda,n}})\rightarrow 0$ if $\eta \in
L^1(\Omega,q_0)$;
\item[(ii)] for $1<r\le\infty$, $\Vert
p_{\breve{f}_{\lambda,n}}-p_0\Vert_{L^r(\Omega)}\rightarrow 0$ if $\eta q_0\in
L^1(\Omega)\cap L^r(\Omega)$ and $e^{M\sqrt{k(\cdot,\cdot)}}q_0\in
L^r(\Omega)$;
\item[(iii)] $h(p_{\breve{f}_{\lambda,n}},p_0)\rightarrow 0$ if
$\sqrt{k(\cdot,\cdot)}\eta \in L^1(\Omega,q_0)$;
\item[(iv)] $J(p_0\Vert p_{\breve{f}_{\lambda,n}})\rightarrow 0$.
\end{itemize}
In addition, if $f_0\in\Cal{R}(C^\beta)$
for some $\beta>0$, then $\Vert p_{
\breve{f}_{\lambda,n}}-p_0\Vert_{L^r(\Omega)}=O_{p_0}(\theta_n),\,h(p_0,p_{
\breve{f}_
{\lambda,n}})=O_{p_0}(\theta_n),\,KL(p_0\Vert p_{\breve{f}_{\lambda,n}})=O_{p_0}
(\theta_n)\,\,\text{and}\,\,J(p_0\Vert p_{\breve{f}_{\lambda,n}})=O_{p_0}
(\theta^2_n)$
where $\theta_n:=n^{-\min\left\{\frac{1}{4},\frac {\beta}{2(\beta+1)}\right\}}$
with $\lambda=n^{-\max\left\{\frac{1}{4},\frac{1}{2(\beta+1)}\right\}}$
assuming the respective conditions in (i)-(iii) above hold.
\end{appxthm}
\begin{proof}
For any $x\in\Omega$, since $|f_0(x)|\le \Vert f_0\Vert_\eu{H}\sqrt{k(x,x)}\le
M\sqrt{k(x,x)}$ and 
$|\breve{f}_{\lambda,n}(x)|\le M\sqrt{k(x,x)}$, we have 
\begin{equation}\label{Eq:ttt}
    \bigl|e^{\breve{f}_{\lambda,n}(x)}-e^{f_0(x)}\bigr| 
    \leq e^{M\sqrt{k(x,x)}}\bigl| \breve{f}_{\lambda,n}(x)- f_0(x)\bigr|  
  \leq \eta(x)\bigl\| \breve{f}_{\lambda,n}-
f_0\bigr\|_\eu{H},
\end{equation}
where we used the fact that $|e^x-e^y|\le e^a|x-y|$ for $x,y\in [-a,a]$
and $\eta(x):=\sqrt{k(x,x)}e^{M\sqrt{k(x,x)}}$. In the following, we obtain
bounds for $\bigl\| p_{\breve{f}_{\lambda,n}}-p_{0}\bigr\|_{L^r(\Omega)}$ for
any $1\le r\le \infty$, $h(p_{\breve{f}_{\lambda,n}},p_{0})$ and
$KL(p_{0}\Vert p_{\breve{f}_{\lambda,n}})$ in terms of $\|\breve{f}_{\lambda,n}-
f_0\bigr\|_\eu{H}$. Define $B(f):=\int_\Omega e^f
q_0\,dx$. Since $k$ satisfies $\int_\Omega e^{M\sqrt{k(x,x)}}q_0(x)\,dx<\infty$, then
it is
clear that $\breve{f}_{\lambda,n}\in\Cal{F}$ as $B(\breve{f}_{\lambda,n})<\infty$ since $$\int_\Omega
e^{\breve{f}_{\lambda,n}(x)}q_0(x)\,dx\le \int_\Omega
e^{\Vert\breve{f}_{\lambda,n}\Vert_\eu{H}\sqrt{k(x,x)}}q_0(x)\,dx\le \int_\Omega
e^{M\sqrt{k(x,x)}}q_0(x)\,dx<\infty.$$ Similarly, it is easy to verify that 
$B(f_0)<\infty$.\vspace{1mm}\\
(i) Recalling (\ref{Eq:list}), we have
\begin{equation}\bigl\| p_{\breve{f}_{\lambda,n}}-p_{0}\bigr\|_{L^r(\Omega)} \le
\frac{|B(\breve{f}_{\lambda,n})-B(f_0)|\Vert
e^{f_0}q_0\Vert_{L^r(\Omega)}}{B(\breve{f}_{\lambda,n})B(f_0)}
+\frac{\Vert
(e^{f_0}-e^{\breve{f}_{\lambda,n}})q_0\Vert_{L^r(\Omega)}}{B(\breve{f}_{
\lambda,n})}.\nonumber\end{equation}
If $r=1$, we obtain
\begin{equation}\bigl\| p_{\breve{f}_{\lambda,n}}-p_{0}\bigr\|_{L^1(\Omega)} \le
\frac{|B(\breve{f}_{\lambda,n})-B(f_0)|}{B(\breve{f}_{\lambda,n})}
+\frac{\Vert
(e^{f_0}-e^{\breve{f}_{\lambda,n}})q_0\Vert_{L^1(\Omega)}}{B(\breve{f}_{
\lambda,n})}.\nonumber
\end{equation}
Using (\ref{Eq:ttt}), we bound $|B(\breve{f}_{\lambda,n}) - B(f_0) |$ as
\begin{equation*}\label{Eq:L1_convergence}
|B(\breve{f}_{\lambda,n}) - B(f_0) | 
 \leq \int_\Omega \bigl|e^{\breve{f}_{\lambda,n}(x)}-e^{f_0(x)}\bigr|q_0(x)\,dx 
 \leq \|\eta\|_{L^1(\Omega,q_0)} \bigl\| \breve{f}_{\lambda,n}-
f_0\bigr\|_\eu{H}. 
\end{equation*}
Also for
any $f\in\eu{H}$ with $\Vert f\Vert_\eu{H}\le M$, we have $B(f)\ge \int_\Omega
e^{-M\sqrt{k(x,x)}}q_0(x)\,dx=:\theta$, where $\theta>0$. Again
using (\ref{Eq:ttt}), we have
$$\Vert
(e^{f_0}-e^{\breve{f}_{\lambda,n}})q_0\Vert_{L^r(\Omega)}\le \Vert \eta
q_0\Vert_{L^r(\Omega)}\Vert \breve{f}_{\lambda,n}-f_0\Vert_\eu{H}$$
and $\Vert e^{f_0}q_0\Vert_{L^r(\Omega)}\le \Vert
e^{M\sqrt{k(x,x)}}q_0\Vert_{L^r(\Omega)}$.
Therefore,
\begin{eqnarray}\bigl\| p_{\breve{f}_{\lambda,n}}-p_{0}\bigr\|_{L^r(\Omega)}
&{}\le{}&
\frac{\|\eta\|_{L^1(\Omega,q_0)} \Vert
e^{M\sqrt{k(x,x)}}q_0\Vert_{L^r(\Omega)}\bigl\| \breve{f}_{\lambda,n}-
f_0\bigr\|_\eu{H}}{\theta^2}\nonumber\\
&{}{}&\qquad\qquad+\frac{\Vert \eta
q_0\Vert_{L^r(\Omega)}\Vert \breve{f}_{\lambda,n}-f_0\Vert_\eu{H}}{\theta}\nonumber
\end{eqnarray}
and for $r=1$, $$\bigl\| p_{\breve{f}_{\lambda,n}}-p_{0}\bigr\|_{L^1(\Omega)} \le
\frac{2\,\|\eta\|_{L^1(\Omega,q_0)} \bigl\| \breve{f}_{\lambda,n}-
f_0\bigr\|_\eu{H}}{\theta}.$$
(ii) Also \begin{eqnarray}KL(p_0\Vert p_{\breve{f}_{\lambda,n}})&{}={}&\int_\Omega
p_0\log\frac{p_0}{p_{\breve{f}_{\lambda,n}}}\,dx=\int_\Omega \log
 \left(e^{f_0-\breve{f}_{\lambda,n}}\frac{B(\breve{f}_{\lambda,n})}{
B(f_0) }\right)p_0(x)\,dx\nonumber\\
&{}={}&\int_\Omega
 \left(f_0-\breve{f}_{\lambda,n}+\log\frac{B(\breve{f}_{\lambda,n})}{
B(f_0)}\right)p_0(x)\,dx\nonumber\\
&{}\le{}&\frac{|B(\breve{f}_{\lambda,n})-B(f_0)|}{B(f_0)}+\Vert\breve{f}_{\lambda,n}-f_0\Vert_{L^1(\Omega,p_0)}
\le\frac{2\,\Vert \eta
q_0\Vert_{L^1(\Omega)}}{\theta}\Vert
\breve{f}_{\lambda,n}-f_0\Vert_\eu{H}.\nonumber
\end{eqnarray}
(iii) It is easy to verify that 
\begin{eqnarray}h(p_{\breve{f}_{\lambda,n}},p_0)&{}={}&\left\Vert
\frac{e^{\breve{f}_{\lambda,n}/2}}{\Vert e^{\breve{f}_{\lambda,n}/2}
\Vert_{L^2(\Omega,q_0)}}-\frac{e^{f_0/2}}{\left\Vert e^{f_0/2}
\right\Vert_{L^2(\Omega,q_0)}}\right\Vert_{ L^2(\Omega , q_0) }\nonumber\\
&{}\le{}&
\frac{2\Vert e^{\breve{f}_{\lambda,n}/2}-e^{f_0/2}
\Vert_{L^2(\Omega,q_0)}}{\left\Vert e^{f_0/2}
\right\Vert_{L^2(\Omega,q_0)}}\nonumber
\end{eqnarray}
where the above inequality is obtained by carrying out and simplifying the
decomposition as in (\ref{Eq:list}). Using (\ref{Eq:ttt}), we therefore have
$$h(p_{\breve{f}_{\lambda,n}},p_0)\le\sqrt{\frac{\int_\Omega
k(x,x)e^{M\sqrt{k(x,x)}}q_0\,dx}{\theta}}\Vert
\breve{f}_{\lambda,n}-f_0\Vert_\eu{H}.$$
(iv) As $f_0,\,\breve{f}_{\lambda,n}\in\Cal{F}$, by Theorem~\ref{Thm:score},
we obtain
$J(p_0\Vert p_{\breve{f}_{\lambda,n}})=\frac{1}{2}\Vert
\sqrt{C}(\breve{f}_{\lambda,n}-f_0)\Vert^2_\eu{H}\le
\frac{1}{2}\Vert\sqrt{C}\Vert^2\Vert
\breve{f}_{\lambda,n}-f_0\Vert^2_\eu{H}.$
\vspace{1mm}\\
Note that we have bounded the various distances between
$p_{\breve{f}_{\lambda,n}}$ and $p_0$ in terms of $\Vert
\breve{f}_{\lambda,n}-f_0\Vert_\eu{H}$. Since
$\breve{f}_{\lambda,n}=f_{\lambda,n}$ with probability converging to 1,
the assertions on consistency are proved by Theorem~\ref{Thm:rates}(i) in
combination with Lemma~\ref{lem:support}---as we did not explicitly assume
$f_0\in\overline{\Cal{R}(C)}$---and the rates follow from
Theorem~\ref{Thm:rates}(iii).\vspace{-5mm}
\end{proof}
\begin{rem}
The following observations can be made while comparing the scenarios of using bounded vs.~unbounded
kernels in the problem of estimating $p_0$ through Theorems~\ref{Thm:density}
and \ref{Thm:density-2}. First, the consistency results in $L^r$, Hellinger and
KL distances are the same but for additional integrability conditions on $k$
and $q_0$. The additional integrability
conditions are not too difficult to hold in practice as they involve $k$ and
$q_0$ which can be chosen appropriately. However, the unbounded situation in
Theorem~\ref{Thm:density-2} requires the knowledge of $M$ which is usually not
known. On the other hand, the consistency result in $J$ in
Theorem~\ref{Thm:density-2} is slightly weaker than in
Theorem~\ref{Thm:density}. This may be an artifact of our analysis as
we are not able to adapt the bounding technique used in the proof of
Theorem~\ref{Thm:density} to bound
$J(p_0\Vert p_{\breve{f}_{\lambda,n}})=\frac{1}{2}\Vert
\sqrt{C}(\breve{f}_{\lambda,n}-f_0)\Vert^2_\eu{H}$ as it critically depends on
the boundedness of $k$. Therefore, we used a trivial
bound of $J(p_0\Vert p_{\breve{f}_{\lambda,n}})=\frac{1}{2}\Vert
\sqrt{C}(\breve{f}_{\lambda,n}-f_0)\Vert^2_\eu{H}\le
\frac{1}{2}\Vert\sqrt{C}\Vert^2\Vert
\breve{f}_{\lambda,n}-f_0\Vert^2_\eu{H}$, which yields the result through
Theorem~\ref{Thm:rates}(i). Due to the same reason, we also obtain
a slower rate of convergence in $J$. Second, the rate of convergence in KL is
slower than in Theorem~\ref{Thm:density-2}, which again may be an artifact of
our analysis. The convergence rate for KL in Theorem~\ref{Thm:density} is based
on the application of Theorem~\ref{Thm:rates}(ii) in
Lemma~\ref{lem:distances}, where the bound on KL in
Lemma~\ref{lem:distances} 
critically
uses the boundedness to upper bound KL in terms of squared Hellinger distance. 
\end{rem}
\subsubsection{Derivation of $\breve{f}_{\lambda,n}$}\label{subsubsec:supp-qcqp}
Any $f\in\eu{H}$ can be decomposed as
$f=f_{\Vert}+f_{\perp}$ where $$f_\Vert\in\text{span}\left\{\hat{\xi},
(\partial_j
k(X_b,\cdot))_{b,j}\right\}=:\eu{H}_\Vert,$$ which is a closed subset of $\eu{H}$
and $f_\perp\in \eu{H}^\perp_\Vert:=\left\{g\in \eu{H}:\langle g,h
\rangle_\eu{H}=0,\,\forall\,h\in\eu{H}_\Vert\right\}$ so that
$\eu{H}=\eu{H}_\Vert\oplus \eu{H}^\perp_\Vert$. Since the objective function in
(\ref{Eq:unbounded-program}) matches with the one in
Theorem~\ref{Thm:representer}, using the above decomposition in
(\ref{Eq:unbounded-program}), it is easy to verify that $\hat{J}$ depends only
on $f_\Vert\in\eu{H}_\Vert$ so that (\ref{Eq:unbounded-program}) reduces to
\begin{equation}
(\breve{f}^\Vert_{\lambda,n},\breve{f}^\perp_{\lambda,n})=\arg\inf_{
\substack{f_\Vert\in
\eu{H}_\Vert, f_\perp\in\eu{H}^\perp\\ \Vert
f_\Vert\Vert^2_\eu{H}+\Vert
f_\perp\Vert^2_\eu{H} \leq M^2}}
\hat{J}_\lambda(f_\Vert)+\frac{\lambda}{2}\Vert
f_\Vert\Vert^2_\eu{H}+\frac{\lambda}{2}\Vert
f_\perp\Vert^2_\eu{H}\label{Eq:unbounded-program-1}
\end{equation}
and
$\breve{f}_{\lambda,n}=\breve{f}^\Vert_{\lambda,n}+\breve{f}^\perp_{\lambda,n}
$. Since $f_\Vert$ is of the form in (\ref{Eq:rep}), using it in
(\ref{Eq:unbounded-program-1}), it is easy to show that
$\hat{J}_\lambda(f_\Vert)+\frac{\lambda}{2}\Vert
f_\Vert\Vert^2_\eu{H}=\frac{1}{2}\Theta^T
\bm{H}\Theta+\Theta^T\Delta$. Similarly, it can be shown that $\Vert
f_\Vert\Vert^2_\eu{H}=\Theta^T
\bm{K}\Theta$. Since $f_\perp$
appears in (\ref{Eq:unbounded-program-1}) only through $\Vert
f_\perp\Vert^2_\eu{H}$, (\ref{Eq:unbounded-program-1}) reduces to
\begin{equation}
 (\Theta_\Vert,c_\perp)=\arg\inf_{\substack{\Theta\in \bb{R}^{nd+1}, c\ge
0\\\Theta^T\bm{K}\Theta+c\le M^2} }
\frac{1}{2}\Theta^T
\bm{H}\Theta+\Theta^T\Delta+\frac{\lambda}{2}c,\label{Eq:unbounded-program-2}
\end{equation}
where $\breve{f}^\Vert_{\lambda,n}$ is constructed as in
(\ref{Eq:rep}) using
$\Theta_\Vert$ and $\breve{f}^\perp_{\lambda,n}$ is such that
$\Vert\breve{f}^\perp_{\lambda.n}\Vert^2_\eu{H}=c_\perp$. The necessary and
sufficient conditions for the optimality of $(\Theta_\Vert,c_\perp)$ is given
by the following Karush-Kuhn-Tucker conditions,
\begin{eqnarray}
 (\bm{H}+2\tau
\bm{K})\Theta_\Vert+\Delta=0,\,\,\frac{\lambda}{2}+\eta-\tau=0\qquad\text{
(Stationarity) } \nonumber\\
\Theta^T_\Vert \bm{K}\Theta_\Vert+c_\perp\le M^2,\,\,c_\perp\ge
0\qquad\text{(Primal feasibility)}\nonumber\\
\eta\ge 0,\,\,\tau\ge 0\qquad\text{(Dual feasibility)}\nonumber\\
\tau c_\perp=0,\,\,\eta(\Theta^T_\Vert \bm{K}\Theta_\Vert+c_\perp-
M^2)=0\qquad\text{(Complementary slackness)}\nonumber
\end{eqnarray}
Combining the dual feasibility and stationary conditions, we have
$\eta=\tau-\frac{\lambda}{2}\ge 0$, i.e., $\tau\ge\frac{\lambda}{2}$. Using
this in the complementary slackness involving $\tau$ and $c_\perp$, it follows
that $c_\perp=0$. Since $\Vert \breve{f}^\perp_{\lambda,n}\Vert^2=c_\perp$, we
have $\breve{f}^{\perp}_{\lambda,n}=0$, i.e., $\breve{f}_{\lambda,n}$ is
completely determined by $\breve{f}^\Vert_{\lambda,n}$. Therefore
$\breve{f}^\Vert_{\lambda,n}$ is of the form in (\ref{Eq:rep})
and (\ref{Eq:unbounded-program-2}) reduces to a quadratically constrained
quadratic program.
\subsection{$\Cal{R}(C^\beta)$ and Interpolation Spaces}\label{sec:interpolate}
Proposition~\ref{Thm:interpolation} presents an interpretation for
$\Cal{R}(C^\beta)$ ($\beta>0$ and $\beta\notin\bb{N}$) as interpolation spaces
between $\Cal{R}(C^{\lceil\beta\rceil})$ and
$\Cal{R}(C^{\lfloor\beta\rfloor})$
where $\Cal{R}(C^0):=\eu{H}$. An inspection
of its proof shows that Proposition~\ref{Thm:interpolation} holds for any self-adjoint,
bounded, compact operator defined on a separable Hilbert space. 
\begin{appxpro}\label{Thm:interpolation}
Suppose \textbf{\emph{(B)}} and \textbf{\emph{(D)}} hold with $\varepsilon=1$.
Then for
all $\beta>0$ and $\beta\notin\bb{N}$
$$\Cal{R}(C^\beta)=\left[\Cal{R}(C^{\lfloor\beta\rfloor}),\Cal{R}(C^{\lceil
\beta\rceil}) \right]_{\beta-\lfloor\beta\rfloor,2}$$
where $\Cal{R}(C^0):=\eu{H}$, and the spaces
$\Cal{R}(C^\beta)$ and $\left[\Cal{R}(C^{\lfloor\beta\rfloor}),\Cal{R}(C^{\lceil
\beta\rceil}) \right]_{\beta-\lfloor\beta\rfloor,2}$ have equivalent norms.
\end{appxpro}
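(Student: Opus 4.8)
The plan is to reduce the claim to the weighted-$L^2$ interpolation identity of Lemma~\ref{lem:tartar} by diagonalising $C$. By Theorem~\ref{Thm:score}, under \textbf{(B)} and \textbf{(D)} with $\varepsilon=1$ the operator $C$ is positive, self-adjoint and trace-class, hence compact, on the separable Hilbert space $\eu{H}$ (separability via Remark~\ref{rem:assumptions}(i)). So the spectral theorem gives a countable orthonormal basis $(\phi_\theta)_{\theta\in\Theta}$ of $\eu{H}$ indexed by $\Theta=I\sqcup I'$, where $(\phi_i)_{i\in I}$ are the eigenvectors of $C$ for its positive eigenvalues $(\alpha_i)_{i\in I}$ (with $\alpha_i\le\Vert C\Vert$, and $\alpha_i\to 0$ if $I$ is infinite), and $(\phi_j)_{j\in I'}$ spans $\Cal{N}(C)$. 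Let $\nu$ be the counting measure on $\Theta$ and $U:\eu{H}\to L^2(\Theta,\nu)$, $f\mapsto(\langle f,\phi_\theta\rangle_\eu{H})_\theta$, the associated unitary. First I would record, for each integer $\ell\ge 0$, that $U$ restricts to an isometric isomorphism of $\Cal{R}(C^\ell)$ — normed by $\Vert g\Vert_{\Cal{R}(C^\ell)}:=\Vert C^{-\ell}g\Vert_\eu{H}$ for $\ell\ge 1$, and by $\Vert\cdot\Vert_\eu{H}$ for $\ell=0$ since $\Cal{R}(C^0):=\eu{H}$ — onto $L^2(\Theta,w_\ell\,d\nu)$, where $w_0\equiv 1$ and, for $\ell\ge1$, $w_\ell(i):=\alpha_i^{-2\ell}$ on $I$ and $w_\ell(j):=\infty$ on $I'$ (with the convention $0\cdot\infty:=0$); this is exactly the computation underlying (\ref{Eq:range-smooth}).

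Next, fix $\beta>0$ with $\beta\notin\bb{N}$, put $m:=\lfloor\beta\rfloor$ so that $\lceil\beta\rceil=m+1$ and $\theta:=\beta-m\in(0,1)$. Since $w_{m+1}\ge\Vert C\Vert^{-2}w_m$ pointwise, $L^2(w_{m+1}\,d\nu)\hookrightarrow L^2(w_m\,d\nu)$ continuously, and correspondingly $\Cal{R}(C^{m+1})\hookrightarrow\Cal{R}(C^m)$, so the $K$-functionals of both couples take the form (\ref{Eq:kfunc}). Applying Lemma~\ref{lem:tartar} on $(\Theta,\nu)$ with $w_0:=w_m$ and $w_1:=w_{m+1}$, and noting that $w_m^{1-\theta}w_{m+1}^\theta$ equals $\alpha_i^{-2\beta}$ on $I$ and $\infty$ on $I'$, I would conclude
$$[L^2(w_m\,d\nu),L^2(w_{m+1}\,d\nu)]_{\theta,2}=L^2\big(\,(w_m^{1-\theta}w_{m+1}^\theta)\,d\nu\big)=U\big(\Cal{R}(C^\beta)\big),$$
with equivalent norms, the last equality again by the diagonalisation of $C^\beta$.

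Finally, I would transfer this identity across $U$: being a surjective isometry that carries the compatible couple $(\Cal{R}(C^m),\Cal{R}(C^{m+1}))$ onto $(L^2(w_m\,d\nu),L^2(w_{m+1}\,d\nu))$ and matching the respective continuous embeddings, $U$ intertwines the two $K$-functionals, hence the interpolation norms, and therefore maps $[\Cal{R}(C^m),\Cal{R}(C^{m+1})]_{\theta,2}$ isometrically onto $[L^2(w_m\,d\nu),L^2(w_{m+1}\,d\nu)]_{\theta,2}$. Combining the three steps gives $\Cal{R}(C^\beta)=[\Cal{R}(C^{\lfloor\beta\rfloor}),\Cal{R}(C^{\lceil\beta\rceil})]_{\beta-\lfloor\beta\rfloor,2}$ with equivalent norms. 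The hard part will be the bookkeeping around $\Cal{N}(C)$ — justifying the convention $\Cal{R}(C^0)=\eu{H}$ and the use of the value $+\infty$ for the weights on $I'$ so that Lemma~\ref{lem:tartar} applies verbatim — together with the verification that $U$ genuinely transports the real interpolation functor, i.e.\ that the $K$-functionals coincide under $U$; the remaining spectral-calculus identities are routine.
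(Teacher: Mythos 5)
Your proposal is correct and follows essentially the same route as the paper's proof: diagonalise $C$, identify $\Cal{R}(C^\ell)$ with a weighted $\ell_2$-space, and invoke Lemma~\ref{lem:tartar} on the counting measure with the $0\cdot\infty:=0$ convention to absorb $\Cal{N}(C)$. The only difference is cosmetic: you phrase the reduction through an explicit unitary $U$ and treat all $\beta\notin\bb{N}$ uniformly, whereas the paper computes the $K$-functionals directly in sequence space and splits into the cases $0<\beta<1$ and $\beta>1$; the ``bookkeeping'' you flag is handled there simply by setting $c_j:=0$ for $j$ indexing the null space.
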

To prove Proposition~\ref{Thm:interpolation}, we need the following result 
which we quote from \citet[Lemma 6.3]{Steinwart-12} (also
see
\citealp*[Lemma 23.1]{Tartar-07}) that interpolates $L^2$-spaces whose underlying
measures are absolutely continuous with respect to a measure $\nu$.
\begin{appxlem}\label{lem:tartar}
Let $\nu$ be a measure on a measurable space $\Theta$ and
$w_0:\Theta\rightarrow [0,\infty)$ and $w_1:\Theta\rightarrow [0,\infty)$ be
measurable functions. For $0<\beta<1$, define $w_\beta:=w^{1-\beta}_0w^\beta_1$.
Then we have
$$[L^2(w_0\,d\nu),L^2(w_1\,d\nu)]_{\beta,2}=L^2(w_\beta\,d\nu)$$
and the norms on these two spaces are equivalent. Moreover, this result still
holds for weights $w_0:\Theta\rightarrow (0,\infty)$ and $w_1:\Theta\rightarrow
[0,\infty]$, if one uses the convention $0\cdot\infty:=0$ in the definition of
the weighted spaces.
\end{appxlem}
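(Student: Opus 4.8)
The plan is to compute the $K$-functional of the couple $(L^2(w_0\,d\nu),L^2(w_1\,d\nu))$ explicitly up to universal constants, substitute it into the definition of $\|\cdot\|_{\beta,2}$, and recognise the result as a weighted $L^2$-norm. First I would replace $K(f,t):=K(f,t,L^2(w_0\,d\nu),L^2(w_1\,d\nu))$ by the equivalent quantity $\tilde K(f,t)^2:=\inf\{\|f_0\|^2_{L^2(w_0\,d\nu)}+t^2\|f_1\|^2_{L^2(w_1\,d\nu)}:f=f_0+f_1\}$; the elementary bounds $a^2+b^2\le(a+b)^2\le 2(a^2+b^2)$ for $a,b\ge 0$ give $\tilde K\le K\le\sqrt2\,\tilde K$, the upper estimate coming from testing $K$ on the decomposition that is optimal for $\tilde K$. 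The advantage of $\tilde K$ is that its infimum is over an integral of a nonnegative integrand and can be carried out pointwise: for fixed $\theta$, a one-line Lagrange computation (with the obvious reading when a weight vanishes) shows that $w_0(\theta)|a_0|^2+t^2w_1(\theta)|a_1|^2$ subject to $a_0+a_1=f(\theta)$ is minimised at the value $\tfrac{t^2w_0(\theta)w_1(\theta)}{w_0(\theta)+t^2w_1(\theta)}|f(\theta)|^2$, attained at explicit $a_0^*(\theta),a_1^*(\theta)$ that are measurable functions of $f,w_0,w_1$. For $f$ in the sum space $L^2(w_0\,d\nu)+L^2(w_1\,d\nu)$ the integral of this pointwise minimum is finite, since for any admissible split it is dominated by $2\bigl(w_0|f_0|^2+t^2w_1|f_1|^2\bigr)$; hence $a_0^*,a_1^*$ are themselves admissible and $\tilde K(f,t)^2=\int_\Theta\frac{t^2w_0w_1}{w_0+t^2w_1}\,|f|^2\,d\nu$.

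Next I would insert this into $\|f\|_{\beta,2}^2=\int_0^\infty\bigl(t^{-\beta}K(f,t)\bigr)^2\,t^{-1}\,dt\asymp\int_0^\infty t^{-2\beta-1}\tilde K(f,t)^2\,dt$, exchange the $t$- and $\nu$-integrals by Tonelli (all integrands nonnegative), and evaluate the inner integral $\int_0^\infty\frac{t^{1-2\beta}\,w_0w_1}{w_0+t^2w_1}\,dt$ for fixed $\theta$ with $w_0,w_1\in(0,\infty)$. The substitution $t=s\sqrt{w_0/w_1}$ converts this into $w_0^{1-\beta}w_1^{\beta}\int_0^\infty\frac{s^{1-2\beta}}{1+s^2}\,ds$, and the last integral equals the finite positive constant $c_\beta=\pi/(2\sin\pi\beta)$ precisely because $0<\beta<1$ (convergence at $0$ needs $\beta<1$, at $\infty$ needs $\beta>0$). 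On the exceptional sets $\{w_0=0\}$ or $\{w_1=0\}$ the integrand $\tfrac{t^2w_0w_1}{w_0+t^2w_1}$ vanishes and $w_\beta=0$, so the identity holds there trivially. Therefore $\|f\|_{\beta,2}^2\asymp c_\beta\int_\Theta|f|^2\,w_0^{1-\beta}w_1^{\beta}\,d\nu=c_\beta\,\|f\|_{L^2(w_\beta\,d\nu)}^2$.

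It then remains to match the underlying sets. By definition $[L^2(w_0\,d\nu),L^2(w_1\,d\nu)]_{\beta,2}\subseteq L^2(w_0\,d\nu)+L^2(w_1\,d\nu)$, and conversely $L^2(w_\beta\,d\nu)\subseteq L^2(w_0\,d\nu)+L^2(w_1\,d\nu)$: splitting $\Theta=\{w_0\le w_1\}\cup\{w_0>w_1\}$ and using $w_\beta\ge w_0$ on the first set and $w_\beta\ge w_1$ on the second exhibits an admissible decomposition of any $f\in L^2(w_\beta\,d\nu)$. Combined with the norm equivalence just proved, this gives $[L^2(w_0\,d\nu),L^2(w_1\,d\nu)]_{\beta,2}=L^2(w_\beta\,d\nu)$ with equivalent norms. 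For the extension to $w_0:\Theta\to(0,\infty)$ and $w_1:\Theta\to[0,\infty]$, the same computation goes through once one reads $\tfrac{t^2w_0w_1}{w_0+t^2w_1}=w_0$ on $\{w_1=\infty\}$ (its limiting value); there the inner $t$-integral $\int_0^\infty t^{1-2\beta}w_0\,dt$ diverges, matching $w_\beta=\infty$ on $\{w_1=\infty\}$, so both norms are infinite exactly when $f$ does not vanish $\nu$-a.e. on that set, consistently with the convention $0\cdot\infty:=0$. The step I expect to be the main obstacle — everything else being routine calculus and Tonelli — is the careful bookkeeping on the degenerate sets $\{w_0=0\}$, $\{w_1=0\}$, $\{w_1=\infty\}$, together with verifying the measurability (and hence admissibility) of the pointwise-optimal pair $a_0^*,a_1^*$ so that they genuinely define elements of $L^2(w_0\,d\nu)$ and $L^2(w_1\,d\nu)$.
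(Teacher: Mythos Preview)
Your proof is correct and follows the standard route (explicit computation of the $K$-functional via pointwise minimisation, then Tonelli and a scaling substitution). Note, however, that the paper does not actually prove this lemma: it is quoted verbatim from \cite[Lemma 6.3]{Steinwart-12} (see also \cite[Lemma 23.1]{Tartar-07}), so there is no ``paper's own proof'' to compare against. What you have written is essentially the argument one finds in Tartar's lecture notes, and it is complete; your flagged concern about measurability of the optimal pair $(a_0^*,a_1^*)$ is harmless, since they are explicit rational functions of $f,w_0,w_1,t$ and hence measurable whenever $f,w_0,w_1$ are.
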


\noindent \emph{Proof of Proposition~\ref{Thm:interpolation}.}
The proof is based on the ideas used in the proof of Theorem 4.6 in
\cite{Steinwart-12}. Recall that by the Hilbert-Schmidt theorem, $C$ has the
following
representation,
$$C=\sum_{i\in I}\alpha_i\phi_i\langle \phi_i,\cdot\rangle_\eu{H}$$
where $(\alpha_i)_{i\in I}$ are the positive eigenvalues of
$C$, $(\phi_i)_{i\in I}$ are the corresponding unit eigenvectors that form an
ONB for $\Cal{R}(C)$ and $I$ is an index set which is either finite (if $\eu{H}$
is finite-dimensional) or $I=\bb{N}$ with $\lim_{i\rightarrow\infty}\alpha_i=0$
(if $\eu{H}$ is infinite dimensional). Let $(\psi_i)_{i\in J}$ be an ONB for
$\Cal{N}(C)$ where $J$ is some index set so that any $f\in\eu{H}$ can be written
as
$$f=\sum_{i\in I}\langle f,\phi_i\rangle_\eu{H}\phi_i+\sum_{i\in J}\langle
f,\psi_i\rangle_\eu{H}\psi_i=:\sum_{i\in I\cup J}a_i\theta_i$$
where $\theta_i:=\phi_i$ if $i\in I$ and $\theta_i:=\psi_i$ if $i\in J$ with
$a_i:=\langle f,\theta_i\rangle_\eu{H}$. Let $\beta> 0$. By definition,
$g\in\Cal{R}(C^\beta)$ is equivalent to $\exists h\in\eu{H}$ such that
$g=C^\beta h$, i.e.,
$$g=\sum_{i\in I}\alpha^\beta_i\langle
h,\phi_i\rangle_\eu{H}\phi_i=:\sum_{i\in I}b_i\alpha^\beta_i\phi_i$$
where $b_i:=\langle h,\phi_i\rangle_\eu{H}$. Clearly $\sum_{i\in I}
b^2_i=\sum_{i\in I}\langle h,\phi_i\rangle^2_\eu{H}\le \Vert
h\Vert^2_\eu{H}<\infty$, i.e., $(b_i)\in\ell_2(I)$. Therefore
$$\Cal{R}(C^\beta)=\left\{\sum_{i\in
I}b_i\alpha^\beta_i\phi_i\,:\,(b_i)\in\ell_2(I)\right\}=\left\{\sum_{i\in
I}c_i\phi_i\,:\,(c_i)\in\ell_2(I,\alpha^{-2\beta})\right\}$$
where $\alpha:=(\alpha_i)_{i\in I}$. Let us equip this space with the
bilinear form
$$\left\langle \sum_{i\in
I}c_i\phi_i,\sum_{i\in I}d_i\phi_i\right\rangle_{\Cal{R}(C^\beta)}:=\left\langle
(c_i),(d_i)\right\rangle_{\ell_2(I,\alpha^{-2\beta})}$$ so that it induces the
norm
$$\left\Vert \sum_{i\in
I}c_i\phi_i\right\Vert_{\Cal{R}(C^\beta)}:=\left\Vert
(c_i)\right\Vert_{\ell_2(I,\alpha^{-2\beta})}.$$ It is easy to verify that
$(\alpha^\beta_i\phi_i)_{i\in I}$ is an ONB of $\Cal{R}(C^\beta)$. Also since
$\Cal{R}(C^{\beta_1})\subset\Cal{R}(C^{\beta_2})$ for $0<\beta_2<\beta_1<\infty$
and $\text{id}:\Cal{R}(C^{\beta_1})\rightarrow \Cal{R}(C^{\beta_2})$ is
continuous, i.e., for any $g\in \Cal{R}(C^{\beta_1})$,
\begin{eqnarray}\Vert g\Vert_{\Cal{R}(C^{\beta_2})}=\Vert
(c_i)\Vert_{\ell_2(I,\alpha^{-2\beta_2})}=\sqrt{\sum_{i\in
I}\frac{c^2_i}{\alpha^{2\beta_2}_i}}&{}\le{}&\sup_{i\in
I}|\alpha_i|^{\beta_1-\beta_2}
\Vert(c_i)\Vert_{\ell_2(I,\alpha^{-2\beta_1})}\nonumber\\
&{}={}&\Vert
C\Vert^{\beta_1-\beta_2}\Vert g\Vert_{\Cal{R}(C^{\beta_1})}<\infty\nonumber\end{eqnarray}
and so $\Cal{R}(C^{\beta_1})\hookrightarrow\Cal{R}(C^{\beta_2})$. Similarly, we
can show that $\Cal{R}(C)\hookrightarrow \eu{H}$. In the following, we first
prove the result for $0<\beta<1$ and then for $\beta>1$. 

(a) \underline{$0<\beta<1$:} For any
$f\in\eu{H}$ and $g\in\Cal{R}(C)$, we have
$$\Vert f-g\Vert^2_\eu{H}=\left\Vert\sum_{i\in I\cup J}a_i\theta_i-\sum_{i\in
I}c_i\phi_i\right\Vert^2_\eu{H}=\left\Vert\sum_{i\in I\cup
J}(a_i-c_i)\theta_i\right\Vert^2_\eu{H}=\Vert(a_i-c_i)\Vert^2_{\ell_2(I\cup
J)}$$ where we define $c_i:=0$ for $i\in J$. For $t>0$, we find
\begin{eqnarray}K(f,t,\eu{H},\Cal{R}(C))&{}={}&\inf_{g\in\Cal{R}(C)}\Vert
f-g\Vert_\eu{H}+t\Vert g\Vert_{\Cal{R}(C)}\nonumber\\&{}={}&\inf_{(c_i)\in
\ell_2(I,\alpha^{-2})}\Vert (a_i-c_i)\Vert_{\ell_2(I\cup J)}+t\Vert
(c_i)\Vert_{\ell_2(I,\alpha^{-2})}\nonumber\\
&{}={}& K(a,t,\ell_2(I\cup J),\ell_2(I,\alpha^{-2})).\nonumber
\end{eqnarray}
From this we immediately obtain the equivalence
$$f\in [\eu{H},\Cal{R}(C)]_{\beta,2}\,\,\Longleftrightarrow\,\,(a_i)\in
[\ell_2(I\cup J),\ell_2(I,\alpha^{-2})]_{\beta,2}$$
where $0<\beta<1$. Applying the second part of Lemma~\ref{lem:tartar} to the
counting measure on $I\cup J$ yields
$$[\ell_2(I\cup
J),\ell_2(I,\alpha^{-2})]_{\beta,2}=\ell_2(I,\alpha^{-2\beta}).$$ Since
$\Cal{R}(C^\beta)$ and $\ell_2(I,\alpha^{-2\beta})$ are isometrically
isomorphic, we obtain $\Cal{R}(C^\beta)=[\eu{H},\Cal{R}(C)]_{\beta,2}$.

(b) \underline{$\beta>1$ and $\beta\notin\bb{N}$:} Define $\gamma:=\lfloor\beta\rfloor$. Let
$f\in\Cal{R}(C^\gamma)$ and $g\in\Cal{R}(C^{\gamma+1})$, i.e.,
$\exists\,(c_i)\in\ell_2(I,\alpha^{-2\gamma})$ and
$(d_i)\in\ell_2(I,\alpha^{-2\gamma-2})$ such that $f=\sum_{i\in I}c_i\phi_i$
and $g=\sum_{i\in I}d_i\phi_i$. Since 
$$\Vert f-g\Vert^2_{\Cal{R}(C^\gamma)}=\Vert
(c_i-d_i)\Vert^2_{\ell_2(I,\alpha^{-2\gamma})},$$ for $t>0$, we have
\begin{eqnarray}K(f,t,\Cal{R}(C^\gamma),\Cal{R}(C^{\gamma+1}))&{}={}&\inf_{
g\in\Cal { R } (C^{\gamma+1})} \Vert
f-g\Vert_{\Cal{R}(C^\gamma)}+t\Vert
g\Vert_{\Cal{R}(C^{\gamma+1})}\nonumber\\
&{}={}&\inf_{(d_i)\in
\ell_2(I,\alpha^{-2\gamma-2})}\Vert
(c_i-d_i)\Vert_{\ell_2(I,\alpha^{-2\gamma})}+t\Vert
(d_i)\Vert_{\ell_2(I,\alpha^{-2\gamma-2})}\nonumber\\
&{}={}&
K(c,t,\ell_2(I,\alpha^{-2\gamma}),\ell_2(I,\alpha^{-2\gamma-2})),\nonumber
\end{eqnarray}
from which we obtain the following equivalence
$$f\in
[\Cal{R}(C^\gamma),\Cal{R}(C^{\gamma+1})]_{\beta-\gamma,2}\,\,
\Longleftrightarrow\, \, (c_i)\in
[\ell_2(I,\alpha^{-2\gamma}),\ell_2(I,\alpha^{-2\gamma-2})]_{\beta-\gamma,2}
\stackrel{(\ast)}{=}\ell_2(I,\alpha^{-2\beta}),$$
where $(\ast)$ follows from Lemma~\ref{lem:tartar} and the result is obtained
by noting that $\ell_2(I,\alpha^{-2\beta})$ and $\Cal{R}(C^\beta)$ are
isometrically isomorphic.\QEDA
\subsection{Denseness of $I_k\eu{H}$ in $W_2(\bb{R}^d,p)$}\label{subsec:supp-dense}
In this section, we discuss the denseness of $I_k\eu{H}$ in $W_2(\bb{R}^d,p)$ 
for a given $p\in \Cal{P}_{\text{FD}}$, where $\Cal{P}_{\text{FD}}$ is defined
in
Theorem~\ref{Thm:approx-fd}, which is equivalent to the injectivity of $S_k$
(see \citealp*[Theorem 4.12]{Rudin-91}). To this end, in the following result we
show that under certain conditions on a bounded continuous translation invariant
kernel on $\bb{R}^d$, the restriction of $S_k$ to $\Cal{W}^\sim_2(\bb{R}^d,p)$ is
injective when $d=1$, while the result for any general $d>1$ is open. However,
even for $d=1$, this does not guarantee the injectivity of $S_k$ (which is
defined on $W_2(\bb{R}^d,p)$). Therefore, the question of characterizing the injectivity
of $S_k$ (or equivalently the denseness of $I_k\eu{H}$ in $W_2(\bb{R}^d,p)$) is open.
\begin{appxpro}\label{pro:dense}
Suppose $k(x,y)=\psi(x-y),\,x,y\in\bb{R}^d$ where $\psi\in C_b(\bb{R}^d)\cap
L^1(\bb{R}^d)$, $\int \Vert\omega\Vert_2\psi^\wedge(\omega)\,d\omega<\infty$
and $\text{\emph{supp}}(\psi^\wedge)=\bb{R}^d$. If $d=1$, then the restriction
of $S_k$ to $\Cal{W}^\sim_2(\bb{R}^d,p)$ is injective for 
any $p\in\Cal{P}_{\text{FD}}$.
\end{appxpro}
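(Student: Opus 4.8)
The plan is to translate the equation $S_k[h]_\sim=0$ into a convolution identity on $\R$ and then exploit the spectral condition $\text{supp}(\psi^\wedge)=\R$. Set $g:=h'p$, extended by zero outside $\Omega$. Since $h\in\Cal{W}_2(p)$ we have $h'\in L^2(\Omega,p)$, so Cauchy--Schwarz gives $\int|h'|\,p\,dx\le\big(\int(h')^2p\,dx\big)^{1/2}\big(\int p\,dx\big)^{1/2}<\infty$, i.e.\ $g\in L^1(\R)$. Because $k(x,y)=\psi(x-y)$, so that $\tfrac{\partial}{\partial x}k(y,x)=-\psi'(y-x)$, the formula for $S_k$ in Proposition~\ref{pro:operators} gives, for every $y\in\R$ (the formula is stated there for $y\in\Omega$, but $S_k[h]_\sim$ is an element of $\eu{H}$, hence a function on all of $\R$), $S_k[h]_\sim(y)=-\int\psi'(y-x)g(x)\,dx=-(\psi'\ast g)(y)$. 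Thus $S_k[h]_\sim=0$ is equivalent to $\psi'\ast g\equiv 0$.

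The next step is to replace $\psi'$ by $\psi$. From $\int\Vert\omega\Vert_2\psi^\wedge(\omega)\,d\omega<\infty$, together with $\psi^\wedge\ge 0$ and $\psi^\wedge\in C_b(\R)$ (since $\psi\in L^1$), one checks $\psi^\wedge\in L^1(\R)$; Fourier inversion then recovers $\psi$ pointwise, and differentiation under the integral sign shows $\psi'$ is bounded and continuous with $(\psi')^\wedge(\omega)=i\omega\,\psi^\wedge(\omega)$. Since $\psi'\in C_b(\R)$ and $g\in L^1(\R)$, dominated convergence yields $(\psi\ast g)'=\psi'\ast g\equiv 0$, so $\psi\ast g$ is constant; as $\psi\ast g\in L^1(\R)$ (Young's inequality), this constant must be $0$, i.e.\ $\psi\ast g\equiv 0$.

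Finally I would invoke the support hypothesis. Taking Fourier transforms of $\psi\ast g=0$ gives $\psi^\wedge\,\widehat g=0$ on $\R$. Both $\psi^\wedge$ and $\widehat g$ are continuous, and the open set $U:=\{\omega:\psi^\wedge(\omega)\ne 0\}$ is dense because $\overline U=\text{supp}(\psi^\wedge)=\R$; hence $\widehat g$ vanishes on $U$, and by continuity $\widehat g\equiv 0$, so $g=0$ a.e.\ by injectivity of the Fourier transform on $L^1(\R)$. Then $h'p=0$ Lebesgue-a.e., whence $(h')^2p=0$ a.e.\ and $\Vert[h]_\sim\Vert^2_{\Cal{W}^\sim_2}=\int(h')^2p\,dx=0$, i.e.\ $[h]_\sim=0$. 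This proves injectivity of the restriction of $S_k$ to $\Cal{W}^\sim_2(p)$.

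The delicate point is the passage from $\psi'\ast g=0$ back to $g=0$: the function $\psi'$ need not lie in $L^1(\R)$ (only in $C_b$), so one cannot take its Fourier transform as an $L^1$ function directly---hence the detour through the genuinely $L^1$ identity $\psi\ast g=0$; moreover $\text{supp}(\psi^\wedge)=\R$ only forces $\psi^\wedge$ to be nonzero on a dense open set (it may vanish on a set of positive Lebesgue measure), so $\widehat g\equiv 0$ has to be obtained by continuity rather than pointwise a.e. This is exactly where $d=1$ is used: for $d>1$ the same computation yields $\psi^\wedge(\omega)\,\langle\omega,\widehat{p\,\nabla h}(\omega)\rangle=0$, equivalently $\text{div}\big(\psi\ast(p\,\nabla h)\big)=0$, which only says the vector field $p\,\nabla h$ is divergence free after smoothing; as there is an infinite-dimensional space of divergence-free fields, this does not force $p\,\nabla h=0$, so the argument breaks down and the case $d>1$ is left open.
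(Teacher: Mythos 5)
Your proof is correct and follows the same Fourier-analytic route as the paper's: reduce $S_k[h]_\sim=0$ to a statement about the Fourier transform of $g:=h'p$ and use $\text{supp}(\psi^\wedge)=\R$ to conclude $\widehat g\equiv 0$. The paper reaches the same conclusion by interchanging integration order (Fubini) in the expression for $S_k[h]_\sim(y)$ and applying uniqueness of the Fourier transform; you instead rewrite $S_k[h]_\sim$ as a convolution $-\psi'\ast g$ and pass to $\psi\ast g$. These are the same computation seen from two angles, and both hinge on $g\in L^1(\R)$, which you establish by Cauchy--Schwarz.

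Two places where your write-up is more careful than the paper's. First, you note correctly that $\psi'$ need not be in $L^1$, so one cannot take its Fourier transform directly; your detour through the genuinely $L^1$ function $\psi\ast g$ (showing it is a constant, then zero) handles this cleanly, whereas the paper's Fubini manipulation needs the integrability hypothesis $\int\Vert\omega\Vert_2\psi^\wedge(\omega)\,d\omega<\infty$ at exactly the same point. Second, and more substantively, you observe that $\text{supp}(\psi^\wedge)=\R$ only forces $\{\psi^\wedge\neq 0\}$ to be open and dense, not of full measure, so the cancellation $\widehat g=0$ on that set must be upgraded to $\widehat g\equiv 0$ via the \emph{continuity} of $\widehat g$ (as the Fourier transform of an $L^1$ function). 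The paper writes ``$\sum_j\phi_j(\omega)=0$ a.e.'' at this step, which is a slightly imprecise shortcut: the correct justification is exactly the continuity argument you give. Your remark at the end about why the argument stalls for $d>1$ (one only obtains a divergence-free condition on the smoothed vector field $p\,\nabla h$) also matches the paper's reason for restricting to $d=1$.
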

\begin{proof}
Fix any $p\in\Cal{P}_{\text{FD}}$. 
We need to show that for $[f]_\sim\in\Cal{W}^\sim_2(\bb{R}^d,p)$, $S_k[f]_\sim=0$
implies $[ f]_\sim=0$. 
From Proposition~\ref{pro:operators}, we have
\begin{eqnarray}
S_k[f]_\sim&{}={}&\int_{\bb{R}^d}\sum^d_{j=1}\partial_j k(x,\cdot)\partial_jf(x)\,p(x)\,dx= \int_{\bb{R}^d}\sum^d_{j=1}\partial_j \psi(x-\cdot)\partial_jf(x)\,p(x)\,dx\nonumber\\
&{}={}& \int_{\bb{R}^d}\sum^d_{j=1}\frac{1}{(2\pi)^{d/2}}\int_{\bb{R}^d}
i\omega_j\psi^\wedge(\omega)e^{i\langle
\omega,\cdot-x\rangle}\,d\omega\,\partial_jf(x)\,p(x)\,dx\nonumber\\
&{}={}&\int_{\bb{R}^d}\sum^d_{j=1}\phi_j(\omega)
\psi^\wedge(\omega)e^{i\langle
\omega,\cdot\rangle}\,d\omega\nonumber
\end{eqnarray}
where $$\phi_j(\omega):=\frac{1}{(2\pi)^{d/2}}\int_{\bb{R}^d} (i\omega_j)
e^{-i\langle\omega,x\rangle}\partial_jf(x)\,p(x)\,dx.$$ $S_kf=0$
implies $\sum^d_{j=1}\phi_j(\omega)
\psi^\wedge(\omega)=0$ for all $\omega\in\bb{R}^d$. Since
$\text{supp}(\psi^\wedge)=\bb{R}^d$, we have $\sum^d_{j=1}\phi_j(\omega)
=0$ a.e., i.e., for $\omega$-a.e., 
\begin{equation}0=\sum^d_{j=1}\int_{\bb{R}^d}(i\omega_j)\partial_jf(x)\,p(x)\,e^{
-i\langle\omega,x\rangle}\,dx
=\sum^d_{j=1}(i\omega_j)(p\partial_jf)^\wedge(\omega).\nonumber
\end{equation}
For $d=1$, this implies $(\partial_jf)p=0$ a.e.~and so $\Vert f\Vert_{W_2}=0$.
\end{proof}
Examples of kernels that
satisfy the conditions in Proposition~\ref{pro:dense} include the
Gaussian, Mat\'{e}rn (with $\beta>1$) and inverse multiquadrics on $\bb{R}$.

\end{appendices}

\section*{Acknowledgments}
Theorem~\ref{thm:representer}
and the proof are due to an anonymous reviewer. We thank Dougal Sutherland for a careful reading of the paper which helped to fix minor errors.
A part of the work was carried out while BKS was a Research Fellow in the Statistical
Laboratory, Department of Pure Mathematics and Mathematical Statistics,
University of Cambridge. BKS thanks
Richard Nickl for many valuable comments and insightful discussions. KF is supported in part by
MEXT Grant-in-Aid for Scientific Research on Innovative Areas 25120012.
\bibliography{Reference}

\end{document}